 \theoremstyle{plain}
\newtheorem{theorem}{Theorem}[section]
\newtheorem{corollary}[theorem]{Corollary}
\newtheorem{lemma}{Lemma}[section] 
\newtheorem{proposition}{Proposition}[section]
\newtheorem{remark}{Remark}[section]
\newcommand{\prob}{\mathbb P}
\newcommand{\umbral}{u}
\newcommand{\esp}{\mathbb E}
\newcommand{\var}{\mathbb{V}ar}
\newcommand{\cov}{\mathbb{C}ov}
\newcommand{\pl}{\mathrm{PL}}
\newcommand\bX {\mathbf X}
\newcommand\bSi {\mbox{\boldmath $\Sigma$}}
\newcommand\itS {{\mathcal{S}}}
\newcommand\itU {{\mathcal{U}}}
\newcommand\indica {\mathbb{I}}
\newcommand{\trasp}{^\top}
\def\real{\mathbb{R}}
\def\natu{\mathbb{N}}
\newcommand{\convprob}{ \buildrel{p}\over\longrightarrow}
\newcommand{\convpp}{ \buildrel{a.s.}\over\longrightarrow}
\newcommand{\convdist}{ \buildrel{D}\over\longrightarrow}
\def\argmin{\mathop{\mbox{argmin}}}
\newcommand{\itI}{\mathcal{I}}
\begin{document}

%\title{Linear regression beyond a threshold: a penalized-based approach o  A penalized-based approach to detect linear trend change}

%
%\title{Beyond the change point model. Detecting  linearity.}

\title{Threshold detection under a semiparametric regression model}

\author{Graciela Boente{$^{1}$}, Florencia Leonardi{$^{2}$},
Daniela Rodriguez{$^{1}$} and Mariela Sued{$^{1}$}}

\footnotetext[1]{Universidad de Buenos Aires and CONICET}
\footnotetext[2]{Universidade de S\~ao Paulo}

\date{}

\maketitle

\begin{abstract}
Linear regression models have been extensively considered in the literature. However, in some practical applications they may not be appropriate all over the range of the covariate. In this paper, a more flexible model is introduced by considering a regression model $Y=r(X)+\varepsilon$ where the regression function $r(\cdot)$ is assumed to be linear for large values  in the domain of the predictor variable $X$.  More precisely,   we  assume that $r(x)=\alpha_0+\beta_0 x$ for $x> \umbral_0$,  where the value $\umbral_0$ is identified as   the smallest value satisfying such a  property. A penalized procedure  is introduced to estimate the threshold $\umbral_0$. The considered proposal focusses on a semiparametric approach since no parametric model is assumed for the regression function for values smaller than  $\umbral_0$. Consistency properties of both the threshold estimator and   the estimators of $(\alpha_0,\beta_0)$ are derived, under mild assumptions. Through a numerical study,  the small sample properties of the proposed procedure and the importance of introducing a penalization are investigated. The analysis of a real data set  allows us to demonstrate the usefulness of the penalized estimators.

\noindent\textbf{Keywords }:  Change point;  Model selection;  Penalization; Regression models; Regularization; Threshold value. 
\end{abstract}
\section{Introduction}
Predicting or understanding the structural relationship between a response variable $Y$ based on a scalar explanatory variable $X$  is the primary goal of the so-called regression methods. From the foundational work of \citet{galton1886regression} where simple regression models were introduced, a wide variety of models and estimation procedures have been developed. In this paper, we consider the situation of additive errors, that is we assume that $Y=r(X)+\varepsilon$, where $X$ and $\varepsilon$ are independent random variables and  $\esp(\varepsilon)=0$ $\var(\epsilon)=\sigma^ 2<\infty$.

Parametric and and nonparametric regression models are two main branches where different procedures have been proposed to   estimate the regression function $r$. The former usually provide parameter estimators with root-$n$ rates of convergence and predictions easy to interpret, while the latter are more flexible since  only some degree of smoothness for the regression function is assumed. Among the first ones, linear regression is one of the most popular models considered among applied practitioners. However, in some situations, the linear assumption does not hold over the whole support of the covariates. To deal with this problem, threshold regression models are commonly used to model some non--linear relationships between the response and the explanatory variable  by introducing one or more threshold parameters, also known as change points. Compared with nonparametric regression, threshold regression models are a simple but interpretable alternative, allowing at the same time to provide  threshold estimates and inferences.
 Most of the literature consider that, within each interval defined by the thresholds, the regression function follows a parametric model, generally a linear one. Unlike these models, in this paper the regression function is modelled as linear beyond the  threshold, but we do not assume   a specific parametric form for the regression function  for values of the covariates below the change point. Up to our knowledge, this flexible approach  has not been previously considered in the literature. 

Even when, in some papers the word threshold has been  used to describe regression models where there is no effect on the response before of such a value, throughout this paper  no distinction is made between the words threshold or change point to refer to the value where   the regression function changes.

We briefly overview some of the contributions done to estimate the regression function when it shows change points. Both, the selection of topics and given
references, are far from being exhaustive and we meant to highlight the differences between the existent procedures and our approach to the problem. 

Among the pioneering papers in the field,   \citet{sprent1961some} and \citet{bacon1971estimating}   considered a two-phase regression model where the regression function is defined by two straight lines, whose slopes and/or intercepts  change before and after the threshold. The goal in two-phase regression models or bent line regression ones is to estimate both the coefficients of each linear functions and the threshold. When many thresholds are admitted, allowing for an easy and natural way to gain flexibility in modelling, these models are referred to as segmented regression and most of the proposals were given in piecewise polynomial regression functions with known or unknown change-points.   
Segmented regression models are very popular in economy, ecology, and medicine, since they  permit the practitioner to contemplate different regimes in a unique model, making possible at the same time to estimate where the transitions occur.   Their flexibility allowed  applications  in different disciplines, giving rise to many publications where these techniques have successfully been used.  In particular, change point models have provided interesting results modelling Covid data, as shown in \citet{dehning2020inferring, coughlin2021early}, among others.

Different estimation procedures for segmented regression models have been extensively developed and implemented, among others we can mention \citet{muggeo2008segmented, muggeo2017interval, fasola2018heuristic, fong2017chngpt}. We also refer to  \citet{muggeo2003estimating} where a description of  procedures to estimate the change points and the regression coefficients is reviewed.  As in other regression models, large values of the residuals, associated to the so--called vertical outliers, may affect the estimations. Robust proposals based on ranks have   been  recently studied by \citet{Zhanli2017} and \citet{shi2020robust} for bent line regression and piecewise linear regression models with multiple change points, respectively.  Since the first works, presented in \citet{hinkley1969inference} and \citet{mczgee1970piecewise}, segmented regression models have been extended to many other scenarios and constitute an active area of research. For instance,  \citet{muggeo2003estimating} and \citet{liang2008estimating} considered generalized linear segmented models where the natural parameter of the conditional distribution of the response given the explanatory variable is modelled by   two straight lines. More general change-point models have received  attention from the statistical community over the last decades. \citet{khodadadi2008change} include a detailed list of papers related to change-points, most of them dealing with situations where the regression is linear in each interval or it is assumed to be 0 before the threshold, meaning that the covariate has no effect on the response  if the threshold limit is not exceeded.   Change points in time series analysis were studied, among others, in \citet{chan:etal:2015} who considered a threshold autoregressive   model with multiple-regimes and introduced a procedure penalizing the autoregressive parameter differences through a LASSO penalty.  Threshold models were also studied in high--dimensional settings where variable selection is an important issue, see \citet{Lee:etal:2020} who assumes that the regression is linear before and after the change point and penalizes the regression parameters with an adaptive LASSO penalty. 
%They have also be extended to the situation where the unknown relationship between two variables nonparametrically determines the threshold, see \citet{Lee:wang:2023}.

%
%
%Going back to the regression setting, as we mentioned at the beginning of this introduction, in many situations  the linear model cannot properly describe the relationship between the predictor and the response over the entire range of the predictor. 

As mentioned above, in this paper we seek for the  threshold above which the regression function behaves linearly. Unlike other models considered in the literature, we do not assume a specific form for the regression function for values smaller than the threshold,   as   done when modelling through   piecewise polynomials or bent regression lines. It is worth mentioning that   we do not intend to provide estimators for the regression function $r(x)$ on the whole support   $[a,b]$  of the covariates, since the goal is to predict responses for large values of the covariates, where it can be assumed that the linear regression model provides a proper fit.  In this sense, our approach is more semiparametric than parametric, since it combines a region, let us say $(a, \umbral_0)$,  where no assumptions are made on the shape of the regression function and an interval $[\umbral_0, b)$ where it is known to be linear. More precisely, for values of the covariates smaller than $\umbral_0$,  we do not assume that $r(x)$ is known up to a finite number of parameters to be estimated as in the papers mentioned above. For that reason and due to their flexibility, we call the model to be introduced in Section \ref{sec:proposal}   \lq\lq\textsl{threshold semiparametric regression  model}\rq\rq.

Our motivating example consists of the \texttt{airquality} real data set available in \texttt{R}. The data set corresponds to 
153 daily air quality measurements in the New York region between May and September, 1973 \citep[see][]{Chambers:etal:1983}. The interest is
in explaining mean Ozone concentration (``Ozone'', measured in ppb) as a function of the wind speed (``Wind'', in mph). \citet{cleveland1985} considered the 111 cases that do not contain missing observations and found that the relationship between ozone concentration and wind speed is non-linear, with higher wind speeds associated to lower Ozone concentrations due to the ventilation that   higher speeds bring. We add to his analysis the flexibility of our approach, while maintaining a parametric framework for large values of the covariates. Figure \ref{fig:ozone-lowess} displays the observations together to a nonparametric regression estimator obtained using local polynomials through the function \texttt{lowess} in \texttt{R}, see \citet{cleveland:1979,cleveland:1981}.

%ESTAS  FIGURAS VAN
\begin{figure}[ht!]
\centering
\includegraphics[width=2.2in]{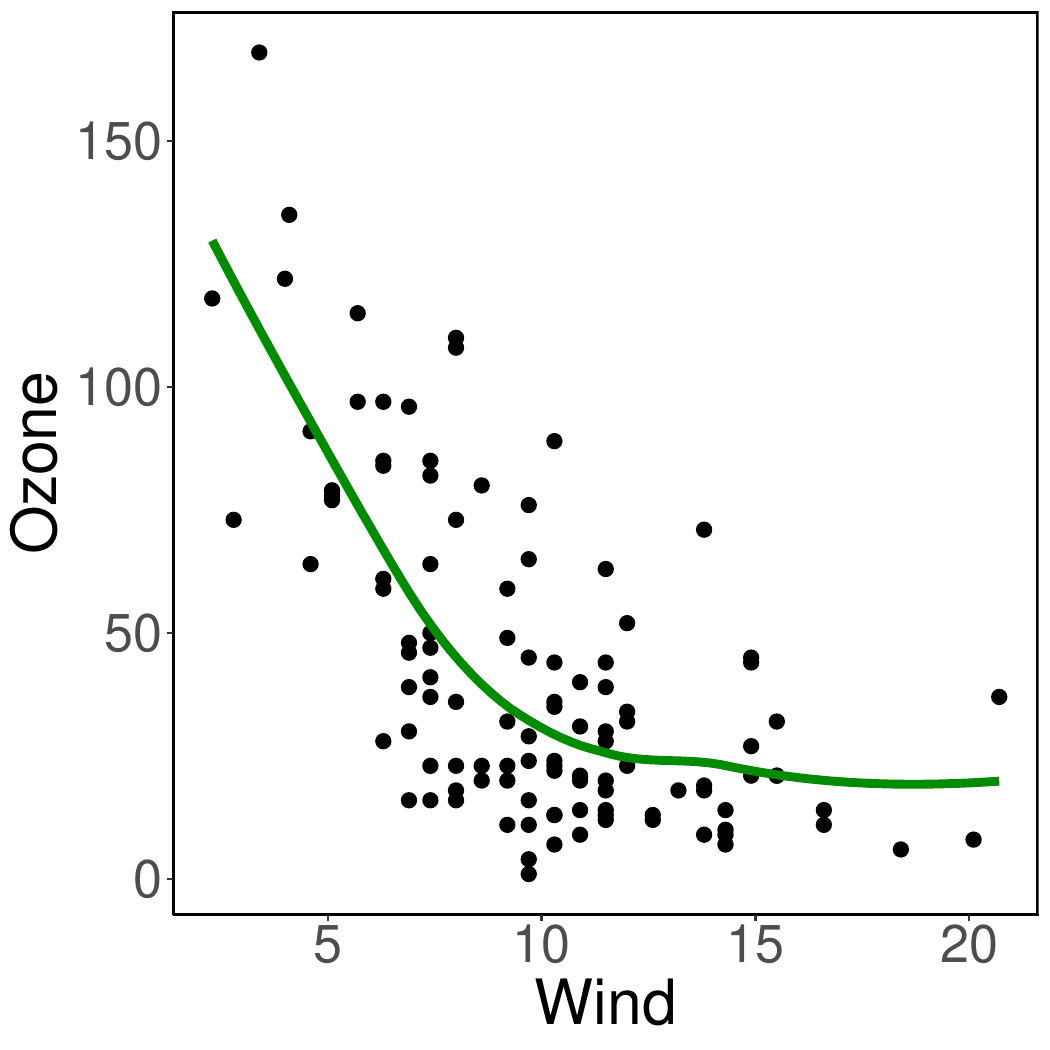} 
\vskip-0.1in
\caption{\small\label{fig:ozone-lowess} Air quality data (black points) and \texttt{lowess} fit. }
\end{figure}

The local smoother  in Figure \ref{fig:ozone-lowess} suggests that the linear fit is not appropriate all over the support of the covariates, but it may be adequate for large  values of the wind speed. In particular, the fit for speeds smaller than 10.5 seems far from a linear one. This motivates the need of providing appropriate estimators of the wind speed from which ozone concentration decreases linearly with wind speed to facilitate predictions.  Our approach through threshold semiparametric regression models suits this purpose, since it postulates that the regression function is linear beyond a threshold $\umbral_0$ without assuming any parametric form before the change point.

The procedure considered in this paper can easily be adapted to the situation where the model may be assumed to be linear up to an unknown point. This is the case in spectroscopy analysis, where the aim  is the determination of the concentration of a compound in a liquid sample, through the measurement of the light absorbed by the solution upon illumination with a lamp.  The degree of light absorbed, or absorbance, is related to the concentration of the compound. The Beer-Lambert law establishes that there is a linear relationship between the absorbance and the concentration. However, it is well known that deviations affect the Beer-Lambert equation at high concentrations. The region where these deviations occur are not always evident and depend on the nature of the  solution (compound and solvent) and the experimental setting. The identification of the interval with upper limit $\umbral_0$, where the linear relation may be assumed is therefore very important for an accurate quantification and it is usually determined graphically from a calibration plot. The  \textsl{threshold semiparametric regression model}  and the penalized threshold estimator to be considered provides a less visual dependent tool to estimate $\umbral_0$.

The \textsl{threshold regression semiparametric model}  provides  a  simple and easily interpretable model, since it  allows to obtain the threshold above which the regression function may assumed to be linear. In both examples described above obtaining this unknown change--point  becomes crucial.
To provide a consistent estimator of the threshold, we introduce a penalty function which penalizes large values of the threshold. This is another of the novelties of the paper, since up to our knowledge, penalties have only been considered to penalize the linear regression parameters but not   the change point candidates.   

In the rest of the paper we will introduce the model, the estimators of the threshold $\umbral_0$ and we will study
some of its properties. The paper is organized as follows. Section \ref{sec:modelo} describes the \textsl{threshold regression semiparametric model}  and provides a characterization of the threshold that will be a key point to define the estimators which are introduced in Section \ref{sec:estimators}. Section \ref{sec:asymptotics} summarize the asymptotic properties of the proposed estimators including  consistency results for the estimators of the threshold and of the intercept and slope  regression function assumed beyond $\umbral_0$. Asymptotic normality results are also derived for the   linear regression coefficient estimators obtained for values larger than $\widehat{\umbral}+\psi$, where $\widehat{\umbral}$ is the estimated threshold and $\psi>0$. Section \ref{sec:monte} reports the results of a Monte Carlo study  conducted to examine the small sample properties of the proposed procedure for different sample sizes according to the penalization constant, the values of the threshold and the regression smoothness. The usefulness of the proposed methodology is illustrated in Section
\ref{sec:realdata} on a real data set, while some final comments are presented in Section \ref{sec:coments}. All proofs are relegated to the Appendix.

\section{The threshold  and its estimation}\label{sec:proposal}
\subsection{The threshold regression semiparametric model}\label{sec:modelo}
Let $X$ and $Y$ be squared integrable  random variables taking values in $\real$, satisfying the regression model 
\begin{equation}
\label{eq:regression}
Y=r(X)+\varepsilon\;,  
\end{equation}
where $X$ and $\varepsilon$ are independent random variables and the error term $\varepsilon$ has mean zero and finite variance $\sigma^ 2$. Denote $\itS$ the support of the random variable  $X$ and assume that $\itS=[a,b]$, i.e., $\itS$ is a connected set, where $a$ and $b$ may be infinite.

In this paper, we deal with the situation where the regression function $r(x)$
is linear for values $x$ of the covariate large enough; namely,  $r(x)=\alpha+\beta \,x$ for $x> \umbral$, $ x\in \itS$.   Let $\umbral_0$ denotes   the smallest value of $\umbral$ above which $r(x)$ is linear. More precisely, define the threshold $\umbral_0$ as  
\begin{equation}
	\label{eq:u_0}
	\umbral_0=\inf\{\umbral\in \itS: \hbox{$u<b$ and for some} \; \alpha, \beta\in \mathbb R \quad \prob(r(X)=\alpha+\beta\, X\mid X>\umbral)=1 \}=\inf \itU\;. 
\end{equation}
If $\itU$ is empty,  then $\umbral_0=+\infty$. However,  assumption \ref{ass:u0finito} below avoids this case. {The possible values of $\umbral$ in \eqref{eq:u_0} are restricted to be in the support of the covariate to avoid an spurious threshold in the situation where the regression function is linear all over the support and $a\ne -\infty$. In such a case, with the above definition $\umbral_0=a$, while if the set $\itU$   allows for any $\umbral\in \real$, the infimum of $\itU$ would have been $-\infty$.}

It is also worth mentioning that if $\widetilde{\umbral}\in \itU$, then any $\umbral$ such that  $\umbral> \widetilde{\umbral}$ is also an element of $\itU$ and the regression function related to $\umbral$ will have the same coefficients as the one defined by $\widetilde{\umbral}$.
 This fact implies that, when $\itU$ is not empty,  whether $\umbral_0=-\infty$ or $\umbral_0$ belongs to $\itU$ meaning that the infimum in \eqref{eq:u_0} is indeed a minimum. {For covariates with unbounded support, the situation $\umbral_0=-\infty$ arises when the regression model is linear, i.e., when there exists $\alpha_0,\beta_0\in \real$ such that $\prob(r(X)=\alpha_0+\beta_0\,X)=1$. When  $\umbral_0\ne -\infty$, taking into account that $\umbral_0$ belongs to $\itU$, we also denote  $\alpha_0,\beta_0\in \real$ the values such that $\prob(r(X)=\alpha_0+\beta_0\, X\mid X>\umbral_0)=1$, implying that $\prob(r(X)=\alpha_0+\beta_0\, X\mid X>\umbral)=1$, for any { $\umbral\ge \umbral_0$.}}

The aim of this work  is to estimate the threshold  $\umbral_0$ when it is finite  on the basis of an i.i.d.  sample $ {\mathcal{Z}}_n=\{Z_i=(X_i,Y_i), 1\leq i\leq n\}$,  distributed as $(X,Y)$, following a model selection approach.
To do so, we first define a function allowing to characterize the threshold  $\umbral_0$.  In the sequel, we assume that $\umbral_0<+\infty$ and that the  distribution function $F_X$ of $X$ is continuous. In this way, probabilities and expectations conditioned on $\{X>\umbral\}$ or on $\{X\geq \umbral\}$ agree. Furthermore, to simplify the notation, when  $\umbral_0=-\infty$  conditioning on $\{X>\umbral_0\}$ or on  $\{X\geq \umbral_0\}$ means that  no conditions are imposed when conditioning. 

We first begin by defining the best linear predictor of the responses when considering only large values of the covariates. More precisely, for each $\umbral\in \real$ with $\umbral<b$,  define $(\alpha_{\umbral}, \beta_{\umbral})$ as the best coefficients to linearly predict  $Y$ based on $X$, when $X\geq \umbral$, that is, 
\begin{equation}
\label{eq:abu}
(\alpha_{\umbral}, \beta_{\umbral})=\argmin_{\alpha, \beta} \;\esp\left[\left\{Y-(\alpha+\beta \,X)\right \}^2\mid X\geq \umbral\right]\,.
\end{equation}
{Some facts should be highlighted regarding the coefficients $(\alpha_{\umbral}, \beta_{\umbral})$. When $\umbral_0\ne -\infty$, given   $\umbral\geq \umbral_0$, under \ref{ass:momento} below, we have that  $(\alpha_{\umbral}, \beta_{\umbral})$ is constant and equal  to $(\alpha_0,\beta_0)$. Moreover,   when $\umbral_0=-\infty$, we also have that $(\alpha_0,\beta_0)=(\alpha_{\umbral}, \beta_{\umbral})$ for any $\umbral<b$, since  $\prob(r(X)=\alpha_0+\beta_0\,X)=1$.}

{To show the first assertion, recall that, when $\umbral_0\ne -\infty$,   $\prob(r(X)=\alpha_0+\beta_0 \,X\mid X\ge\umbral)=1$,  for any $\umbral\geq \umbral_0$, since we are assuming that    $F_X$  is continuous. Hence,  when $\esp(\epsilon)=0$ as stated in \ref{ass:momento}, the independence between the errors and the covariates entail that 
\begin{align*}
\esp\left[\left\{Y-(\alpha+\beta \,X)\right \}^2\mid X\geq \umbral\right] &= \esp\left[\left\{r(X)-(\alpha+\beta \,X)\right \}^2\mid X\geq \umbral\right]+ \esp(\epsilon^2)\\
&= \esp\left[\left\{\alpha_0+\beta_0 \,X-(\alpha+\beta \,X)\right \}^2\mid X\geq \umbral\right]+ \esp(\epsilon^2)\,.
\end{align*} 
 Therefore,    for $\umbral\geq \umbral_0$,  the minimum of $\esp\left[\left\{Y-(\alpha+\beta \,X)\right \}^2\mid X\geq \umbral\right]$ is attained at $(\alpha_{\umbral}, \beta_{\umbral})$ when  $ \prob\left((\alpha_0- \alpha_{\umbral})+(\beta_0-\beta_{\umbral} ) \,X =0\mid X\geq \umbral\right)=1$. The continuity  of $F_X$, allow to conclude  that   $(\alpha_{\umbral}, \beta_{\umbral})=(\alpha_0,\beta_0)$, as desired.}

{To characterize the threshold $\umbral_0$, consider   the loss function   defined, for $\umbral<b$, as}
\begin{equation}
\label{eq:ell_pob}
\ell(u)=\esp [\{Y-(\alpha_{\umbral} +\beta_{\umbral} X)\}^2\mid X\geq  \umbral)]=\frac{\esp\left[\{Y-(\alpha_{\umbral} +\beta_{\umbral} X)\}^2\indica_{\{X\geq \umbral\}}\right]}{\prob(X\geq \umbral)}.
\end{equation}
It is worth noticing that $\ell(u)$ depends on $F$,  the joint distribution of $Z=(X, Y)$, i.e, we should  have used $\ell(u, F)$ to reinforce this dependence in lieu of $\ell(u)$, but we have decided to omit the dependence on $F$ to simplify the notation. 

One important issue to be considered when defining an estimator is if it is indeed estimating the target quantity, in our case, $(\umbral_0, \alpha_0, \beta_0)$. This property,   known as Fisher--consistency, is usually a first step before obtaining consistency results. When the estimator can be written as a functional  applied to the empirical distribution, this property has an easy representation, see for instance \citet{huber:ronchetti:2009}. 
 Even  in our case, where penalized estimators will be considered, the first step before defining them is to ensure that the function $\ell(\cdot)$ allows to characterize $\umbral_0$; this fact is related to the Fisher--consistency property described above. However, in our situation the function  $\ell(\cdot)$ will not have a unique minimum but will be constant for values larger than $\umbral_0$. Hence, the estimators to be defined below need to take into account this fact and to search for the \lq\lq smallest\rq\rq ~value where the minimum of the empirical counterpart of  $\ell(u)$ is attained. The penalty function to be introduced will be important to achieve this goal.

\vskip0.1in
The following set of assumptions will be needed in the sequel
\begin{enumerate}[label=\textbf{C\arabic*}]
\item\label{ass:momento} The errors and covariates are such that $\esp (X^2) <\infty$,   $\esp ( \varepsilon^2) <\infty$ and  $\esp (\varepsilon)=0$.
 \item\label{ass:dist-continua}  The distribution function $F_X$ of the random variable $X$ is continuous.
 \item\label{ass:u0finito} The threshold $\umbral_0$   defined  in \eqref{eq:u_0} is finite, i.e., $\umbral_0<+\infty$.

  \item\label{ass:u0mayorquea} The threshold $\umbral_0$   defined  in \eqref{eq:u_0} is such that $\umbral_0\ne a$, that is, for any $\alpha, \beta\in \real$,  $\prob(r(X)=\alpha+\beta\,X)<1$.
 \end{enumerate}

 Conditions \textbf{C3} and \textbf{C4} establish the semiparametric nature of the model  postulated in this paper: above the threshold $\umbral_0$ the regression is assumed to be linear, but  below it, no specific form is assumed for the relation between the response and the covariate.

Lemma \ref{lema-basico} states the characterization of the threshold $\umbral_0$ in terms of the loss function $\ell(u)$. 

\vskip0.1in
\begin{lemma}\label{lema-basico}
Let $(Y,X)$ be a random vector satisfying \eqref{eq:regression} and let $\umbral_0$ be defined as in \eqref{eq:u_0}. Assume that  \ref{ass:momento} to \ref{ass:u0finito}  hold. Then, 
\begin{enumerate}
\item[a)] The function  $\ell:(-\infty,b)\to \real$ defined by \eqref{eq:ell_pob} is continuous. 
\item[b)] Define $\ell(\umbral_0)=\esp(\varepsilon^2)$ when $\umbral_0=-\infty$. Then, 
\begin{equation}
\label{eq:l_minimiza}
\ell(\umbral_0)=\ell(\umbral)\; \hbox{for all $\umbral$ such that $\umbral_0\leq \umbral<b$}.
\end{equation}
\item[c)] If in addition \ref{ass:u0mayorquea} holds, we also have that
\begin{equation}
\label{eq:l_minimiza2}
\ell(\umbral )>\ell(\umbral_0) \hbox{ for all $\umbral <\umbral_0$ }\,.
\end{equation}
\item[d)] If in addition \ref{ass:u0mayorquea} holds, for any $\delta>0$, we have that \begin{equation}
\label{eq:lmayorlu0}
\inf_{u\le \umbral_0-\delta} \ell(u) > \ell(\umbral_0)\,.
\end{equation}
\end{enumerate}
\end{lemma}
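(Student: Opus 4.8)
\noindent\emph{Proof outline.} The plan rests on one identity: by the independence of $\varepsilon$ and $X$ and $\esp(\varepsilon)=0$, for every $\umbral<b$
\begin{equation*}
\ell(\umbral)=\esp\left[\{r(X)-(\alpha_{\umbral}+\beta_{\umbral}X)\}^2\mid X\geq \umbral\right]+\sigma^2 ,
\end{equation*}
the cross term vanishing because $\esp[h(X)\varepsilon\mid X\geq\umbral]=\esp[h(X)\indica_{\{X\geq\umbral\}}]\,\esp(\varepsilon)/\prob(X\geq\umbral)=0$ for any $X$-measurable $h$. Together with the fact, noted in the discussion preceding the statement, that $(\alpha_{\umbral},\beta_{\umbral})=(\alpha_0,\beta_0)$ whenever $\umbral\geq\umbral_0$ (and for every $\umbral<b$ when $\umbral_0=-\infty$), this reduces items (b) and (c) to a couple of lines, so the real content is the continuity claim (a) and the uniform bound (d).

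\noindent\emph{Item (a).} The plan is to write the minimiser of \eqref{eq:abu} in closed form. Introduce the truncated moments $m_k(\umbral)=\esp[X^k\indica_{\{X\geq\umbral\}}]$, $k=0,1,2$, $p_0(\umbral)=\esp[Y\indica_{\{X\geq\umbral\}}]$, $p_1(\umbral)=\esp[XY\indica_{\{X\geq\umbral\}}]$ and $q(\umbral)=\esp[Y^2\indica_{\{X\geq\umbral\}}]$; they are finite under \ref{ass:momento} (and the square integrability of $Y$), and, since $F_X$ is continuous so that $\prob(X=\umbral)=0$, dominated convergence makes each of them continuous in $\umbral$. The vector $(\alpha_{\umbral},\beta_{\umbral})$ solves the linear system with matrix $M(\umbral)$ having first row $(m_0(\umbral),m_1(\umbral))$, second row $(m_1(\umbral),m_2(\umbral))$ and right-hand side $(p_0(\umbral),p_1(\umbral))$; since $\det M(\umbral)=m_0(\umbral)^2\,\var(X\mid X\geq\umbral)>0$ for $\umbral<b$ — the positivity because $\prob(X\geq\umbral)>0$ and the continuity of $F_X$ prevents the conditional law of $X$ on $\{X\geq\umbral\}$ from being a point mass — the inverse $M(\umbral)^{-1}$ is continuous in $\umbral$, hence so is $(\alpha_{\umbral},\beta_{\umbral})$, and therefore $\ell(\umbral)=[q(\umbral)-2\alpha_{\umbral}p_0(\umbral)-2\beta_{\umbral}p_1(\umbral)+\alpha_{\umbral}^2m_0(\umbral)+2\alpha_{\umbral}\beta_{\umbral}m_1(\umbral)+\beta_{\umbral}^2m_2(\umbral)]/m_0(\umbral)$ is continuous (and finite) on $(-\infty,b)$.

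\noindent\emph{Items (b) and (c).} For $\umbral_0\leq\umbral<b$, substitute $(\alpha_{\umbral},\beta_{\umbral})=(\alpha_0,\beta_0)$ into the displayed identity; since $\prob(r(X)=\alpha_0+\beta_0X\mid X\geq\umbral)=1$, the expectation term is $0$ and $\ell(\umbral)=\sigma^2=\ell(\umbral_0)$, which is \eqref{eq:l_minimiza} (the case $\umbral_0=-\infty$ being identical with vacuous conditioning). For (c), adding \ref{ass:u0mayorquea} rules out $\umbral_0=-\infty$ and gives $\umbral_0>a$; fix $\umbral<\umbral_0$, so the identity yields $\ell(\umbral)\geq\sigma^2=\ell(\umbral_0)$, and equality would force $\prob(r(X)=\alpha_{\umbral}+\beta_{\umbral}X\mid X\geq\umbral)=1$. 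If $\umbral\geq a$ this says $\umbral\in\itU$, contradicting $\umbral<\umbral_0=\inf\itU$; if $\umbral<a$ the conditioning event is almost sure and it contradicts \ref{ass:u0mayorquea}. Hence $\ell(\umbral)>\ell(\umbral_0)$, i.e. \eqref{eq:l_minimiza2}.

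\noindent\emph{Item (d).} By (c), $\ell>\ell(\umbral_0)$ at every point of $(-\infty,\umbral_0)$, so on a compact interval $[K,\umbral_0-\delta]$ the continuous $\ell$ attains a minimum strictly above $\ell(\umbral_0)$. The one delicate point — and the main obstacle, since $(-\infty,\umbral_0-\delta]$ is not compact and pointwise strict inequality plus continuity alone leave open the possibility that $\inf_{\umbral\leq\umbral_0-\delta}\ell(\umbral)=\ell(\umbral_0)$ — is the left tail: using the continuity of the truncated moments one shows that $\ell(\umbral)\to\esp[\{Y-(\alpha_\star+\beta_\star X)\}^2]=\esp[\{r(X)-(\alpha_\star+\beta_\star X)\}^2]+\sigma^2$ as $\umbral\to-\infty$, where $(\alpha_\star,\beta_\star)$ is the unconditional best linear predictor of $Y$ by $X$ (and if $a>-\infty$ then $\ell$ is constant equal to this value on $(-\infty,a]$). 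By \ref{ass:u0mayorquea} this limit exceeds $\sigma^2=\ell(\umbral_0)$, so a sufficiently negative $K$ makes $\inf_{\umbral\leq K}\ell(\umbral)>\ell(\umbral_0)$ as well; combining the two bounds gives \eqref{eq:lmayorlu0}.
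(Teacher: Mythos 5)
Your proof is correct and follows essentially the same route as the paper: the same decomposition $\ell(\umbral)=\esp\left[\{r(X)-(\alpha_{\umbral}+\beta_{\umbral}X)\}^2\mid X\ge \umbral\right]+\sigma^2$, continuity of $(\alpha_{\umbral},\beta_{\umbral})$ via dominated convergence applied to the truncated moments for (a), and for (d) the control of the non-compact left tail through the limit of $\ell$ at $-\infty$ (the unconditional best linear predictor) combined with \ref{ass:u0mayorquea}. Your compact-interval-plus-tail formulation of (d) is just a repackaging of the paper's minimizing-subsequence argument, and your explicit check that $\var(X\mid X\ge\umbral)>0$ (so the normal equations are uniquely solvable) is a detail the paper leaves implicit.
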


\subsection{The estimators}\label{sec:estimators}
The characterization of the threshold $\umbral_0$ presented in equations \eqref{eq:l_minimiza} and \eqref{eq:l_minimiza2} of Lemma \ref{lema-basico}  implies that, for any $\lambda>0$ and any positive and increasing   function $f$, the minimum of $\ell(\umbral)+ \lambda\, f(\umbral)$ on $[\umbral_0, b)$ is achieved when $\umbral=\umbral_0$. This suggests that the threshold can be estimated combining an estimator of the loss function $\ell(\cdot)$ with a penalization term. The behaviour of $\ell(\umbral)+ \lambda\, f(\umbral)$ on $[\umbral_0, b)$ clarifies why the penalization is needed to avoid estimating the threshold  with large values, which  could be  induced as a natural consequence of an over-fitting phenomena.  

Throughout the paper, to shorten the expressions below, we will use a notation closely related to the one used in empirical processes.  Let $(X_i, Y_i)$, $1\le i\le n$, be i.i.d. observations,  $\prob_n$   stands  for the empirical mean operator
$$
\prob_n(g(X,Y))=\frac{1}{n}\sum_{i=1}^n g(X_i,Y_i)\,,
$$ 
which is the empirical counterpart of $\prob(g(X,Y))=\esp (g(X,Y))$.

Given a sample $ \mathcal Z_n=\{Z_i=(X_i,Y_i), 1\leq i\leq n\}$   distributed as  $(X, Y)$, to define an estimator of  $\ell$, we first introduce  estimators
$(\widehat{\alpha}_{\umbral} ,\widehat{\beta}_{\umbral})$ of $(\alpha_{\umbral}, \beta_{\umbral})$ by considering the least squares regression estimators for the regression of $Y$ on $X$ using only those observations with $X_i\geq  \umbral$, that is,
\begin{equation}
\label{eq:alfabetahat}
(\widehat{\alpha}_{\umbral} ,\widehat{\beta}_{\umbral})= \argmin_{\alpha, \beta} \prob_n \left\{\left(Y -  \alpha -\beta  X \right)^2 \indica_{\{X \geq \umbral\}}\right\}\,.
%\argmin_{\alpha, \beta} \sum_{i=1}^n \left(Y_i-  \alpha -\beta  X_i\right)^2 \indica_{\{X_i\geq \umbral\}}=\,.
\end{equation} 
 We now consider the  estimator  $\widehat{\ell}(\umbral, {\mathcal{Z}}_n)$  of $\ell(\umbral)$ given by 
\begin{equation}
\label{eq:ell_empirica}
\widehat{\ell} (\umbral, {\mathcal{Z}}_n)=\frac{\prob_n \left[\left\{Y-(\widehat{\alpha}_{\umbral} +\widehat{\beta}_{\umbral} X)\right\}^2 \indica_{\{X\geq \umbral\}}\right]}{\prob_n\left(\indica_{\{X\geq\umbral\}}\right)}\,.
\end{equation}

\begin{remark}
Two facts should be mentioned regarding the estimator $\widehat{\ell}$.
 The first observation  is  related to its numerical evaluation. Notice that,  as a function of $\umbral$,  $\widehat{\ell}(u,{\mathcal{Z}}_n)$ is constant between consecutive  sample predictor values. Therefore, to compute the loss function  it is enough to evaluate $\widehat{\ell}(u,{\mathcal{Z}}_n)$ only on the sample points and not on the entire domain of the distribution, reducing the numerical complexity.  Furthermore, for any $\umbral< \min_{1\le i\le n}X_i$, we have that $\widehat{\ell} (\umbral, {\mathcal{Z}}_n)=\sum_{i=1}^n r_i^2/n$, where $r_i=Y_i-(\widehat{\alpha}  +\widehat{\beta}  X_i)$ and 
  $(\widehat{\alpha} ,\widehat{\beta})$ is the least squares estimators of the coefficients of a simple regression model using the complete sample.

 The second one regards the  proper definition of $\widehat{\ell}$. When estimating the denominator 
$\prob(X\geq\umbral)$ in $\ell$ by its empirical counterpart $\prob_n\left(\indica_{\{X\geq \umbral\}}\right)=\#\{X_i\geq \umbral\}/n$, we have to take into account that some fraction of points  lying at the right-hand side of $u$ is needed to guarantee that $\prob_n(\indica_{\{X\geq \umbral\}})>0$. Hence, only values of $u$ smaller than the maximum of the sample are allowed. This is similar to the fact that  $\ell(u)$ is finite for values of $u$ smaller than $b$. However, to define a consistent estimator of the threshold using the characterization provided by \eqref{eq:l_minimiza}, we will need to upper bound the possible sets of values for $u$. More precisely, we will assume below that for some $0<\eta_1<1$, the threshold $\umbral_0$ is small than the $1-\eta_1$ quantile  of the covariates distribution, $\gamma=F_X^{-1}(1-\eta_1)$. Furthermore, we will denote as $\gamma_n$   the corresponding  empirical quantile, that is, we ensure that  $\prob_n(\indica_{\{X\geq \umbral\}})\geq \eta_1$ for all $u\leq \gamma_n$.
 \end{remark}
  
To define the objective function allowing to estimate $\umbral_0$, we   add a penalization term to the empirical loss function $\widehat{\ell}(\umbral, {\mathcal{Z}}_n)$ and define  
\begin{equation}
\label{eq:PL}
\pl(\umbral  , {\mathcal{Z}}_n)=  \widehat{\ell}(\umbral,{\mathcal{Z}}_n)+ \lambda_n f(\umbral)\,, 	
\end{equation}	
where $f$ is a  non--negative and non--decreasing   function. 
The threshold estimator $\widehat{\umbral}_n$ is defined as
\begin{equation}
\label{eq:def_hat_umbral}
\widehat{\umbral}_n=  \argmin_{\umbral:\umbral \leq \gamma_n}\, 	\pl(\umbral  , {\mathcal{Z}}_n). 
\end{equation}
Observe that  $\widehat{\umbral}_n$ satisfies 
\begin{eqnarray*}
	\pl(\widehat{\umbral}_n, {\mathcal{Z}}_n)\; \leq \; \pl(\umbral  , {\mathcal{Z}}_n)\;,\quad\hbox{for all $\umbral \leq \gamma_n$}. 
\end{eqnarray*}

\section{Asymptotic properties}\label{sec:asymptotics}
To derive consistency results for  the threshold estimators $\widehat{\umbral}_n$ and for the linear regression  estimators $ (\widehat{\alpha}_{\widehat{\umbral}},\widehat{\beta}_{\widehat{\umbral}})$, where $(\widehat{\alpha}_{\umbral} ,\widehat{\beta}_{\umbral})$ is defined in \eqref{eq:alfabetahat},
we will need the following additional assumptions.

\begin{enumerate}[label=\textbf{C\arabic*}]
\setcounter{enumi}{4} 
\item\label{ass:cuantil} For some $0<\eta_1<1$, $\gamma=F_X^{-1}(1-\eta_1)> \umbral_0$ and $\gamma_n$ is the $1-\eta_1$ empirical quantile of $X_1,\dots, X_n$.
\item\label{ass:fcreciente}  The function $f:\real \to \real$ is a non--negative, non--decreasing function, strictly increasing in a neighbourhood of  $\umbral_0$.
\item\label{ass:momento4}The covariates and responses are such that $\esp ( X^2 Y^2) <\infty$.
\end{enumerate}

\begin{remark}
As mentioned above assumption \ref{ass:cuantil} prevents the denominator $\prob_n\left(\indica_{\{X\geq \umbral\}}\right)$ to be equal to 0. Clearly, under  \ref{ass:cuantil} assumption \ref{ass:u0finito} holds, for that reason in the results   below we omit assumption \ref{ass:u0finito} in their statements. It is also worth mentioning that  \ref{ass:u0finito} entails that $\prob(r(X)=\alpha_0+\beta_0\, X\,\mid\, X\geq\umbral_0)=1$, moreover, $\prob(r(X)=\alpha_0+\beta_0\, X\,\mid\, X\geq\umbral)=1$ for any $\umbral\ge \umbral_0$.

Assumption \ref{ass:fcreciente}  holds when $f$ is strictly increasing on the support $\itS$ of $X$. One possible choice for $f$ is to consider $f(u)=\arctan(u)$. When $a\ne -\infty$, we can also select $f(u)= u-a$ for $u\ge a$ and $0$ otherwise, that corresponds to the penalizing function selected in our numerical study and in {the real data analysis.}

Condition  \ref{ass:momento} is a standard assumption when considering least squares regression estimators. Assumption  \ref{ass:momento4} is needed to ensure that $\widehat{\beta}_{\umbral}-  {\beta}_{\umbral}$ has a root-$n$ rate of convergence, uniformly on $\umbral\le F_{X}^{-1}(1-\eta)$, with $\eta>0$, as stated in Lemma \ref{lema:todo}. Note that for values of $\umbral<  \umbral_0$, the regression function is not linear and for that reason a stronger moment requirement than the usual for linear regression models is needed.  From the Cauchy Schwartz inequality we obtain that assumption \ref{ass:momento4} holds when  $\esp (X^4) <\infty$ and   $\esp ( Y^4) <\infty$. 

It is also worth mentioning that \ref{ass:momento4} is  a consequence of \ref{ass:momento}   when the regression function $r$ is bounded on the support of $X$. For instance, if $\itS$ is bounded and the regression function $r$ is   continuous, assumption \ref{ass:momento} implies assumption \ref{ass:momento4}. Hence, if $r$ is bounded on $\itS$,  \ref{ass:momento4} may be omitted   in the statement of Theorems \ref{teo:tiro_del_final} and \ref{teo:distriasint} and in Corollary \ref{coro:au}.
\end{remark}

 Theorem \ref{teo:tiro_del_final} below states that the least squares  penalized loss  gives  rise to a consistent estimator of the threshold $\umbral_0$. However, consistency results for threshold estimator are not restricted to the specific loss function given by \eqref{eq:ell_pob} and its empirical counterpart given by  \eqref{eq:ell_empirica} if a penalty term is included as in \eqref{eq:PL}. Any continuous loss function characterizing the threshold through \eqref{eq:l_minimiza} and any reasonable  estimator of such a loss function, such as its empirical counterpart, would work as well, as shown in Proposition \ref{prop:consistencia_ver2}.

\begin{proposition}\label{prop:consistencia_ver2}  
Let $(X,Y)$ be a random vector of squared integrable variables  satisfying the regression model \eqref{eq:regression} and let $\umbral_0$ be defined as in \eqref{eq:u_0}.  
Consider  a continuous loss function $\ell:(-\infty,b) \to \real$ satisfying \eqref{eq:l_minimiza}, \eqref{eq:l_minimiza2} and \eqref{eq:lmayorlu0}.
Define $\pl(\umbral  , {\mathcal{Z}}_n)$ as in \eqref{eq:PL}, where the function $f$ satisfies \ref{ass:fcreciente} and  the function $\widehat{\ell}(u, {\mathcal{Z}}_n)$ is such that  
\begin{enumerate}
\item[(a)]  $ \sup_{\{\umbral\colon  \umbral\leq \umbral_0\}} \vert\widehat{\ell}(u, {\mathcal{Z}}_n)-\ell (u)\vert \convprob 0$.
\item[(b)] For some sequence $a_n$ converging to zero and for any $\widetilde{\gamma}=F_X^{-1}(1-\eta)$ with $\eta>0$
$$
\lim_{n\to \infty} \prob\left(\sup_{u\in [\umbral_0,\widetilde{\gamma}]} \vert\widehat{\ell}(u, {\mathcal{Z}}_n)-\ell (u)\vert\leq a_n\right)=1\,.
$$
\end{enumerate}
 Let $\widehat{\umbral}_n$ be defined through \eqref{eq:def_hat_umbral}. Then, under \ref{ass:momento},  \ref{ass:dist-continua} and \ref{ass:cuantil}, if  the sequence  $\lambda_n>0 $ is such that $\lambda_n\to 0$ and $a_n/\lambda_n \to 0$ when $n\to\infty$, we have that $\widehat{\umbral}_n\convprob \umbral_0$.
\end{proposition}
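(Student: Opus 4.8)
The statement is a consistency result for an argmin estimator, so the plan is the classical \emph{basic inequality} argument: fix $\epsilon>0$ and bound $\prob(\widehat{\umbral}_n\ge\umbral_0+\epsilon)$ and $\prob(\widehat{\umbral}_n\le\umbral_0-\epsilon)$ separately, showing each tends to $0$. First I would set up the feasibility of $\umbral_0$ in the optimization. Choose $\eta\in(0,\eta_1)$ with $\widetilde{\gamma}:=F_X^{-1}(1-\eta)>\gamma$ (possible since $F_X$ is continuous by \ref{ass:dist-continua} and $\gamma<b$ by \ref{ass:cuantil}). Since $\gamma_n\convprob\gamma$, there is an event $A_n$ with $\prob(A_n)\to1$ on which simultaneously $\umbral_0<\gamma_n<\widetilde{\gamma}$ and, by hypothesis (b), $\sup_{u\in[\umbral_0,\widetilde{\gamma}]}|\widehat{\ell}(u,\mathcal Z_n)-\ell(u)|\le a_n$. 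On $A_n$ the value $\umbral_0$ is admissible in \eqref{eq:def_hat_umbral}, so $\pl(\widehat{\umbral}_n,\mathcal Z_n)\le\pl(\umbral_0,\mathcal Z_n)$; combined with $\widehat{\ell}(\umbral_0,\mathcal Z_n)\le\ell(\umbral_0)+a_n$, this yields the basic inequality $\widehat{\ell}(\widehat{\umbral}_n,\mathcal Z_n)+\lambda_n f(\widehat{\umbral}_n)\le\ell(\umbral_0)+a_n+\lambda_n f(\umbral_0)$ on $A_n$.

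For the upper bound I would use that $\ell$ is flat on $[\umbral_0,\widetilde{\gamma})$ by \eqref{eq:l_minimiza}, so that there only the penalty separates candidates, while $a_n/\lambda_n\to0$ makes the penalty gap beat the stochastic error. Let $\rho>0$ be a radius on which $f$ is strictly increasing (assumption \ref{ass:fcreciente}) and put $\epsilon'=\min(\epsilon,\rho)$; it suffices to control $\prob(\widehat{\umbral}_n\ge\umbral_0+\epsilon')$. On $A_n\cap\{\widehat{\umbral}_n\ge\umbral_0+\epsilon'\}$ one has $\widehat{\umbral}_n\in[\umbral_0,\widetilde{\gamma})$, hence $\ell(\widehat{\umbral}_n)=\ell(\umbral_0)$ and therefore $\widehat{\ell}(\widehat{\umbral}_n,\mathcal Z_n)\ge\ell(\umbral_0)-a_n$; inserting this in the basic inequality cancels $\ell(\umbral_0)$ and gives $f(\widehat{\umbral}_n)-f(\umbral_0)\le 2a_n/\lambda_n$. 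But $f$ non-decreasing with $\umbral_0+\epsilon'$ inside the strict-increase window forces $f(\widehat{\umbral}_n)-f(\umbral_0)\ge c_1:=f(\umbral_0+\epsilon')-f(\umbral_0)>0$, so $c_1\le 2a_n/\lambda_n$, which is impossible for $n$ large. Hence $\prob(\widehat{\umbral}_n\ge\umbral_0+\epsilon)\le\prob(A_n^c)+\prob\big(A_n\cap\{\widehat{\umbral}_n\ge\umbral_0+\epsilon'\}\big)\to0$.

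For the lower bound I would use the left-sided well-separation \eqref{eq:lmayorlu0} together with the uniform control (a); only $\lambda_n\to0$ is needed on the penalty side. Set $c_2:=\inf_{u\le\umbral_0-\epsilon}\ell(u)-\ell(\umbral_0)$, positive by \eqref{eq:lmayorlu0}, and $R_n:=\sup_{u\le\umbral_0}|\widehat{\ell}(u,\mathcal Z_n)-\ell(u)|$, which tends to $0$ in probability by (a). On $\{\widehat{\umbral}_n\le\umbral_0-\epsilon\}$ one has $\widehat{\ell}(\widehat{\umbral}_n,\mathcal Z_n)\ge\ell(\widehat{\umbral}_n)-R_n\ge\ell(\umbral_0)+c_2-R_n$, whereas dropping the non-negative term $\lambda_n f(\widehat{\umbral}_n)$ in the basic inequality gives $\widehat{\ell}(\widehat{\umbral}_n,\mathcal Z_n)\le\ell(\umbral_0)+a_n+\lambda_n f(\umbral_0)$ on $A_n$. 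Together these force $c_2\le R_n+a_n+\lambda_n f(\umbral_0)$, whose right-hand side is $o_{\prob}(1)$ (using $\lambda_n\to0$ and that $f(\umbral_0)$ is a fixed finite constant), so $\prob(\widehat{\umbral}_n\le\umbral_0-\epsilon)\le\prob(A_n^c)+o(1)\to0$. Together with the previous paragraph this gives $\widehat{\umbral}_n\convprob\umbral_0$.

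I expect no genuine obstacle here: the core is bookkeeping once \eqref{eq:l_minimiza}, \eqref{eq:lmayorlu0}, (a) and (b) are available. The two points that need care are the alignment of the three ranges of $u$ involved, namely $(-\infty,\umbral_0]$ in (a), $[\umbral_0,\widetilde{\gamma}]$ in (b), and $(-\infty,\gamma_n]$ in \eqref{eq:def_hat_umbral}, which is why one takes $\widetilde{\gamma}=F_X^{-1}(1-\eta)$ with $\eta<\eta_1$ so that $\gamma_n<\widetilde{\gamma}$ on $A_n$; and the fact that $f$ is only strictly increasing near $\umbral_0$, which is handled by shrinking $\epsilon$ to $\epsilon'\le\rho$ in the upper-bound step. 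The argument moreover uses only that $\widehat{\umbral}_n$ is a (near-)minimizer, so it is unaffected by any existence or measurability subtleties in \eqref{eq:def_hat_umbral}.
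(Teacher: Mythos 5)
Your proposal is correct and follows essentially the same route as the paper: the same event $\mathcal{A}_n=\{\umbral_0<\gamma_n<\widetilde{\gamma}\}$, the same use of the flatness \eqref{eq:l_minimiza} plus $a_n/\lambda_n\to 0$ to rule out $\widehat{\umbral}_n\ge\umbral_0+\epsilon$, and the same use of the separation \eqref{eq:lmayorlu0} plus hypothesis (a) and $\lambda_n\to 0$ to rule out $\widehat{\umbral}_n\le\umbral_0-\epsilon$. The only difference is presentational: you evaluate the basic inequality at the argmin, while the paper establishes the event inclusions $\{W_n\le\nu\}$ and $\mathcal{A}_n\cap\{T_n\le a_n\}$ forcing $\pl(\umbral_0,\mathcal{Z}_n)<\pl(\umbral,\mathcal{Z}_n)$ outside a $\delta$-neighbourhood of $\umbral_0$.
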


As mentioned above Theorem \ref{teo:tiro_del_final} establishes the consistency of the threshold estimators when using the squared loss function.

\begin{theorem}\label{teo:tiro_del_final}
Assume $\{(X_i, Y_i): i\geq 1\}$ are i.i.d., distributed as  $(X, Y)$ satisfying the regression model \eqref{eq:regression}  and let $\umbral_0$ be defined as in \eqref{eq:u_0}.   Assume that \ref{ass:momento},  \ref{ass:dist-continua}  and \ref{ass:u0mayorquea} to  \ref{ass:momento4}  hold. 
Let $\widehat{\umbral}_n$ be defined as in \eqref{eq:def_hat_umbral}  and  $\widehat{\ell}(\umbral, {\mathcal{Z}}_n)$  defined in \eqref{eq:ell_empirica}. Then, if  $\lambda_n \to 0$,  and there exists $\varphi_n\to +\infty$, such that %$ n^{-1/2} \varphi_n \to 0$ and 
$n^{1/2}\lambda_n/ \varphi_n\to +\infty$, we   have that  $\widehat{\umbral}_n\convprob\umbral_0$.
\end{theorem}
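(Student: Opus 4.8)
The strategy is to obtain Theorem~\ref{teo:tiro_del_final} as a particular instance of Proposition~\ref{prop:consistencia_ver2}, applied with $\ell$ the population loss \eqref{eq:ell_pob} and with $\widehat{\ell}(\umbral,{\mathcal{Z}}_n)$ the least--squares empirical loss \eqref{eq:ell_empirica}; the whole task then reduces to checking the hypotheses of that proposition. The requirements on the population loss are furnished by Lemma~\ref{lema-basico}: part a) gives continuity of $\ell$ on $(-\infty,b)$, while parts b), c) and d) give \eqref{eq:l_minimiza}, \eqref{eq:l_minimiza2} and \eqref{eq:lmayorlu0}; these rest on \ref{ass:momento}--\ref{ass:u0finito} and \ref{ass:u0mayorquea}, all available here once one recalls (cf.\ the Remark following \ref{ass:momento4}) that \ref{ass:cuantil} forces \ref{ass:u0finito}. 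The condition on $f$ is precisely \ref{ass:fcreciente}. Hence the only substantive point is to verify conditions (a) and (b) of Proposition~\ref{prop:consistencia_ver2} for this particular $\widehat{\ell}$, with a null sequence $a_n$ compatible with the prescribed range of $\lambda_n$.

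The key quantitative input is a uniform--in--$\umbral$ rate for $\widehat{\ell}-\ell$ on half--lines bounded above by a quantile. Fix $\eta\in(0,1)$ and put $\widetilde{\gamma}=F_X^{-1}(1-\eta)$, so that $\prob(X\ge \umbral)\ge\eta>0$ for every $\umbral\le\widetilde{\gamma}$ and, by the uniform law of large numbers for the (Glivenko--Cantelli) class $\{\indica_{\{X\ge \umbral\}}\}$, with probability tending to one $\prob_n(\indica_{\{X\ge\umbral\}})$ stays bounded away from zero uniformly over $\{\umbral\le\widetilde{\gamma}\}$. Writing
$$
Y-(\widehat{\alpha}_{\umbral}+\widehat{\beta}_{\umbral}X)=\big(Y-(\alpha_{\umbral}+\beta_{\umbral}X)\big)-(\widehat{\alpha}_{\umbral}-\alpha_{\umbral})-(\widehat{\beta}_{\umbral}-\beta_{\umbral})X
$$
and expanding the square in the numerator of \eqref{eq:ell_empirica}, each resulting term is controlled by combining the uniform root-$n$ behaviour of $(\widehat{\alpha}_{\umbral},\widehat{\beta}_{\umbral})$ over $\{\umbral\le\widetilde{\gamma}\}$ (Lemma~\ref{lema:todo}, which is where \ref{ass:momento4} is used) with uniform laws of large numbers, valid under \ref{ass:momento} and \ref{ass:momento4}, for the averages $\prob_n[X^j\indica_{\{X\ge\umbral\}}]$, $j=0,1,2$, and for the mean--zero processes formed from the population residuals $Y-(\alpha_{\umbral}+\beta_{\umbral}X)$ and their products with $X$. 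Since the denominator is bounded away from zero, a routine manipulation of the ratio yields
$$
\sup_{\umbral\le\widetilde{\gamma}}\big|\widehat{\ell}(\umbral,{\mathcal{Z}}_n)-\ell(\umbral)\big|=O_p\!\left(n^{-1/2}\right).
$$
For every such $\widetilde{\gamma}$ this delivers condition (b) on $[\umbral_0,\widetilde{\gamma}]$ with $a_n=\varphi_n\,n^{-1/2}$, since $\prob\big(n^{1/2}\sup_{\umbral\in[\umbral_0,\widetilde{\gamma}]}|\widehat{\ell}-\ell|\le\varphi_n\big)\to1$ because $\varphi_n\to+\infty$; and, choosing in particular $\eta<\eta_1$ so that $\widetilde{\gamma}>\umbral_0$ (recall \ref{ass:cuantil}), it delivers condition (a) on $\{\umbral\le\umbral_0\}$.

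It remains to check that $a_n=\varphi_n\,n^{-1/2}$ meets the requirements of Proposition~\ref{prop:consistencia_ver2}: we have $\lambda_n\to0$ by hypothesis; $a_n/\lambda_n=\varphi_n/(n^{1/2}\lambda_n)\to0$, which is exactly $n^{1/2}\lambda_n/\varphi_n\to+\infty$; and $a_n=\lambda_n\cdot\varphi_n/(n^{1/2}\lambda_n)\to0$, so $a_n$ is indeed a null sequence. Since \ref{ass:momento}, \ref{ass:dist-continua} and \ref{ass:cuantil} hold and $f$ satisfies \ref{ass:fcreciente}, Proposition~\ref{prop:consistencia_ver2} now applies to $\widehat{\umbral}_n$ as defined in \eqref{eq:def_hat_umbral} and yields $\widehat{\umbral}_n\convprob\umbral_0$, which is the assertion of the theorem.

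The main obstacle, and essentially the only genuine work, is the uniform rate displayed above. The difficulty is that for $\umbral<\umbral_0$ the regression function is not linear, so $(\widehat{\alpha}_{\umbral},\widehat{\beta}_{\umbral})$ estimates the misspecified linear projection $(\alpha_{\umbral},\beta_{\umbral})$ rather than a structural parameter, and this fit must be controlled uniformly over an unbounded range of thresholds; this is exactly why the stronger moment condition \ref{ass:momento4} (rather than merely \ref{ass:momento}) is required, and turning the pointwise behaviour into uniform behaviour over $\{\umbral\le\widetilde{\gamma}\}$ is the technical heart of the argument (and of the auxiliary Lemma~\ref{lema:todo}). Once that uniform rate is in hand, the remainder is the deterministic comparison already carried out inside Proposition~\ref{prop:consistencia_ver2}: on $\{\widehat{\umbral}_n\le\umbral_0-\delta\}$ one contradicts \eqref{eq:lmayorlu0} using only the consistency of $\widehat{\ell}$, while on $\{\widehat{\umbral}_n\ge\umbral_0+\delta\}$ one contradicts the strict monotonicity of $f$ near $\umbral_0$ using $a_n/\lambda_n\to0$.
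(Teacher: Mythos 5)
Your proposal is correct and follows essentially the same route as the paper: it reduces the theorem to Proposition \ref{prop:consistencia_ver2} via Lemma \ref{lema-basico}, establishes the uniform root-$n$ rate $\sup_{\umbral\le\widetilde{\gamma}}|\widehat{\ell}(\umbral,{\mathcal{Z}}_n)-\ell(\umbral)|=O_{\prob}(n^{-1/2})$ from Lemma \ref{lema:todo} (the paper invokes its parts (b) and (c) directly, where you sketch their content), and then takes $a_n=\varphi_n n^{-1/2}$ so that $a_n/\lambda_n\to 0$ is exactly the stated condition on $\lambda_n$. The verification of conditions (a) and (b) of the proposition, including the choice of $\eta$ ensuring $\widetilde{\gamma}>\umbral_0$, matches the paper's argument.
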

Note that as a consequence of Theorem \ref{teo:tiro_del_final} $\widehat{\umbral}_n\convprob\umbral_0$, when the penalty parameter $\lambda_n$ converges to $0$ with almost root-$n$ rates, as in the following two cases:  $\lambda_n=cn^{-\xi}$ for some $0<\xi<1/2$,  or  $\lambda_n= c\,n^{-1/2}\,\log(n)$, since it is enough to choose $\varphi_n=n^{-\omega}$ with $\xi+\omega<1/2$ or $\varphi_n=\log(\log n)$, respectively.

An important issue is to show that the least squares estimators obtained using only the observations larger the consistent threshold $\widehat{\umbral}_n$ provide consistent estimators of the true parameters $\alpha_0,\beta_0$. Following the notation introduced in \eqref{eq:alfabetahat},  $ \widehat{\alpha}_{\widehat{\umbral}}$ and $\widehat{\beta}_{\widehat{\umbral}}  $ stand for the least squares estimators of the intercept and slope when the sub-sample $\{(X_i, Y_i): X_i\ge \widehat{\umbral}_n\}$ is used in the estimation process.

\begin{corollary}{\label{coro:au}}
Assume $\{(X_i, Y_i): i\geq 1\}$ are i.i.d., distributed as  $(X, Y)$ satisfying the regression model \eqref{eq:regression} and let $\umbral_0$ be defined as in \eqref{eq:u_0}.    Under \ref{ass:momento},  \ref{ass:dist-continua}  and \ref{ass:u0mayorquea} to  \ref{ass:momento4}, we have that $(\widehat{\alpha}_{\widehat{\umbral}},\widehat{\beta}_{\widehat{\umbral}} )\convprob(\alpha_0,\beta_0)$.
\end{corollary}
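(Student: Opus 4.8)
The plan is to split the estimation error into a stochastic part and a deterministic part:
\[
(\widehat{\alpha}_{\widehat{\umbral}}, \widehat{\beta}_{\widehat{\umbral}}) - (\alpha_0, \beta_0) = \big[(\widehat{\alpha}_{\widehat{\umbral}}, \widehat{\beta}_{\widehat{\umbral}}) - (\alpha_{\widehat{\umbral}}, \beta_{\widehat{\umbral}})\big] + \big[(\alpha_{\widehat{\umbral}}, \beta_{\widehat{\umbral}}) - (\alpha_0, \beta_0)\big]\,,
\]
where $(\alpha_{\umbral},\beta_{\umbral})$ is the population best linear predictor from \eqref{eq:abu}. First I would invoke Theorem \ref{teo:tiro_del_final}, which under the stated assumptions gives $\widehat{\umbral}_n\convprob\umbral_0$, together with \ref{ass:cuantil}, which ensures $\umbral_0<\gamma=F_X^{-1}(1-\eta_1)<b$. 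Fix $\epsilon>0$ with $\umbral_0+\epsilon<\gamma$; then $\prob(A_n)\to1$ for $A_n=\{\widehat{\umbral}_n\le \umbral_0+\epsilon\}$, and on $A_n$ (combined with $\gamma_n\convprob\gamma$) there are at least $\eta_1 n$ observations with $X_i\ge\widehat{\umbral}_n$, among which, by \ref{ass:dist-continua}, at least two distinct covariate values occur with probability tending to one, so that $(\widehat{\alpha}_{\widehat{\umbral}}, \widehat{\beta}_{\widehat{\umbral}})$ is well defined.

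For the stochastic bracket I would appeal to Lemma \ref{lema:todo}, which provides a uniform (root-$n$) rate for $(\widehat{\alpha}_{\umbral}, \widehat{\beta}_{\umbral}) - (\alpha_{\umbral}, \beta_{\umbral})$ over $\umbral\le F_X^{-1}(1-\eta)$ for any $\eta>0$; in particular $\sup_{\umbral\le \umbral_0+\epsilon}\big(|\widehat{\alpha}_{\umbral}-\alpha_{\umbral}|+|\widehat{\beta}_{\umbral}-\beta_{\umbral}|\big)\convprob 0$. On $A_n$ this supremum dominates the first bracket, so the first bracket is $o_{\prob}(1)$.

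For the deterministic bracket, the key is continuity of $\umbral\mapsto(\alpha_{\umbral},\beta_{\umbral})$ at $\umbral_0$. Expressing the coefficients through conditional moments, $\beta_{\umbral}=\cov(X,Y\mid X\ge\umbral)/\var(X\mid X\ge\umbral)$ and $\alpha_{\umbral}=\esp(Y\mid X\ge\umbral)-\beta_{\umbral}\esp(X\mid X\ge\umbral)$, each quantity $\esp(g(X,Y)\indica_{\{X\ge\umbral\}})/\prob(X\ge\umbral)$ with $g(x,y)\in\{1,x,y,x^2,xy\}$ is continuous in $\umbral$ near $\umbral_0$: the numerators converge by dominated convergence, since $\indica_{\{X\ge\umbral\}}\to\indica_{\{X\ge\umbral_0\}}$ $F_X$-a.s.\ by \ref{ass:dist-continua} and the integrands are dominated using \ref{ass:momento} and \ref{ass:momento4}, while the denominator is continuous and bounded below near $\umbral_0$ because $\umbral_0<\gamma<b$. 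Moreover $\var(X\mid X\ge\umbral_0)>0$, since otherwise $X$ would be a.s.\ constant on $\{X\ge\umbral_0\}$, contradicting the continuity of $F_X$ together with $\prob(X\ge\umbral_0)>0$. Hence $(\alpha_{\umbral},\beta_{\umbral})\to(\alpha_0,\beta_0)$ as $\umbral\to\umbral_0$ (using also that $(\alpha_{\umbral},\beta_{\umbral})=(\alpha_0,\beta_0)$ for $\umbral\ge\umbral_0$, as shown in Section \ref{sec:modelo}), so by the continuous mapping theorem $(\alpha_{\widehat{\umbral}},\beta_{\widehat{\umbral}})\convprob(\alpha_0,\beta_0)$. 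Adding the two brackets yields $(\widehat{\alpha}_{\widehat{\umbral}},\widehat{\beta}_{\widehat{\umbral}})\convprob(\alpha_0,\beta_0)$.

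I expect the main obstacle to be the treatment of the \emph{left} neighbourhood of $\umbral_0$: because $\widehat{\umbral}_n$ may fall below $\umbral_0$, it does not suffice that $(\alpha_{\umbral},\beta_{\umbral})=(\alpha_0,\beta_0)$ for $\umbral\ge\umbral_0$; one genuinely needs two-sided continuity of $(\alpha_{\umbral},\beta_{\umbral})$ at $\umbral_0$, and this is precisely where \ref{ass:momento4} (controlling the conditional second moments for $\umbral<\umbral_0$, where $r$ need not be linear) and the non-degeneracy of the conditional design are used.
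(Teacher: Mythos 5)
Your proposal is correct and follows essentially the same route as the paper: the same decomposition into $(\widehat{\alpha}_{\widehat{\umbral}},\widehat{\beta}_{\widehat{\umbral}})-(\alpha_{\widehat{\umbral}},\beta_{\widehat{\umbral}})$ plus $(\alpha_{\widehat{\umbral}},\beta_{\widehat{\umbral}})-(\alpha_0,\beta_0)$, with the first bracket controlled by the uniform convergence in Lemma \ref{lema:todo}(a) on an event where $\widehat{\umbral}_n$ stays below a fixed quantile, and the second by the two-sided continuity of $\umbral\mapsto(\alpha_{\umbral},\beta_{\umbral})$ (established via dominated convergence in the proof of Lemma \ref{lema-basico}) together with $\widehat{\umbral}_n\convprob\umbral_0$. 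The additional remarks on well-definedness of the least squares fit and on the non-degeneracy of the conditional variance are correct refinements but not departures from the paper's argument.
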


Theorem \ref{teo:distriasint} provides an asymptotic normality result for the intercept and slope regression coefficients obtained using the observations with $X_i\ge \widehat{\umbral}+\psi$, for any $\psi>0$. More precisely, it shows that given $\psi > 0$, $(\widehat{\alpha}_{\widehat{\umbral}+\psi}, \widehat{\beta}_{\widehat{\umbral}+\psi})$ asymptotically behaves as the estimators computed using the threshold $\umbral_0+\psi$, which are asymptotically normal according to the well known results for weighted least squares regression estimators. First recall that for any $\psi>0$, $\alpha_{\umbral_0+\psi}=\alpha_{\umbral_0}$ and $\beta_{\umbral_0+\psi}=\beta_{\umbral_0}$, so Lemma \ref{lema:todo} and the continuity of $\alpha_{\umbral}$ and $\beta_{\umbral}$ stated in the proof of Lemma \ref{lema-basico} implies that $(\widehat{\alpha}_{\widehat{\umbral}+\psi}, \widehat{\beta}_{\widehat{\umbral}+\psi})\convprob (\alpha_0,\beta_0)$.

\begin{theorem}\label{teo:distriasint}
Assume $\{(X_i, Y_i): i\geq 1\}$ are i.i.d., distributed as  $(X, Y)$ satisfying the regression model \eqref{eq:regression} and let $\umbral_0$ be defined as in \eqref{eq:u_0}.  Assume that \ref{ass:momento},  \ref{ass:dist-continua}  and \ref{ass:u0mayorquea} to  \ref{ass:momento4}   hold and that $\widehat{\umbral}_n\convprob\umbral_0$. Then, for any fixed positive constant $\psi$ we have that
$$\sqrt{n}\{(\widehat{\alpha}_{\widehat{\umbral}+\psi},\widehat{\beta}_{\widehat{\umbral}+\psi} )-(\alpha_0,\beta_0)\}\convdist {\mathcal N}(0, \bSi_{\umbral_0+\psi})\,,$$
where   $\bSi_{\umbral_0+\psi}=\sigma^2 \left\{\esp\left(\indica_{\{X\geq \umbral_0+\psi\}} \widetilde{\bX}\widetilde{\bX}\trasp\right)\right\}^{-1}$ with $\widetilde{\bX}= (1, X)\trasp$. 
\end{theorem}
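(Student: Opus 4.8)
The strategy is to reduce, on an event of probability tending to one, to a region of thresholds on which the model is a genuine homoscedastic linear model, write the weighted least squares estimator in closed form there, and then show that replacing the deterministic threshold $\umbral_0+\psi$ by the data-driven one $\widehat{\umbral}_n+\psi$ is asymptotically negligible by a stochastic equicontinuity argument. Since $\widehat{\umbral}_n\convprob\umbral_0$ and $\psi>0$ is fixed, the event $A_n=\{|\widehat{\umbral}_n-\umbral_0|<\psi/2\}$ has probability tending to one, and on $A_n$ the random threshold $\widehat{\umbral}_n+\psi$ lies in the fixed compact interval $J=[\umbral_0+\psi/2,\umbral_0+3\psi/2]$, every point of which strictly exceeds $\umbral_0$. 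By the definition of $\umbral_0$ in \eqref{eq:u_0} together with \ref{ass:dist-continua}, $\prob(r(X)=\alpha_0+\beta_0X\mid X\geq \umbral_0)=1$, so for each $i$ with $X_i\geq \umbral_0+\psi/2$ we have $Y_i-\alpha_0-\beta_0X_i=\varepsilon_i$; on the relevant region the model is exactly linear with errors independent of the covariate. Writing $\theta_0=(\alpha_0,\beta_0)\trasp$, $\widetilde{\bx}=(1,x)\trasp$, $\widehat{M}_n(\umbral)=\prob_n[\indica_{\{X\geq \umbral\}}\widetilde{\bX}\widetilde{\bX}\trasp]$, $M(\umbral)=\esp[\indica_{\{X\geq \umbral\}}\widetilde{\bX}\widetilde{\bX}\trasp]$ and $g_\umbral(x,y)=\indica_{\{x\geq \umbral\}}\widetilde{\bx}\,(y-\alpha_0-\beta_0x)$, one obtains from \eqref{eq:alfabetahat}, on $A_n$ and on the (eventually overwhelming) event where $\widehat{M}_n(\umbral)$ is invertible, the closed form
\[
\sqrt{n}\Bigl\{(\widehat{\alpha}_\umbral,\widehat{\beta}_\umbral)\trasp-\theta_0\Bigr\}=\widehat{M}_n(\umbral)^{-1}\,\bb{G}_n g_\umbral\,,\qquad \umbral\in J,
\]
where $\bb{G}_n g=\sqrt{n}\{\prob_n(g)-\prob(g)\}$ and we used $\prob g_\umbral=\esp[\indica_{\{X\geq \umbral\}}\widetilde{\bX}]\,\esp(\varepsilon)=0$.

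Two ingredients then remain. \emph{(i) The denominator.} The class $\{\indica_{\{\cdot\geq \umbral\}}\widetilde{\bX}\widetilde{\bX}\trasp:\umbral\in J\}$ has the $\prob$-integrable envelope $\indica_{\{X\geq \umbral_0+\psi/2\}}\widetilde{\bX}\widetilde{\bX}\trasp$ (integrable by \ref{ass:momento}) and is Glivenko--Cantelli, so $\sup_{\umbral\in J}\|\widehat{M}_n(\umbral)-M(\umbral)\|\convprob0$; combined with the continuity of $\umbral\mapsto M(\umbral)$ (dominated convergence, using \ref{ass:dist-continua} and \ref{ass:momento}) and $\widehat{\umbral}_n+\psi\convprob\umbral_0+\psi$, this yields $\widehat{M}_n(\widehat{\umbral}_n+\psi)\convprob M(\umbral_0+\psi)$. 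The limit is nonsingular: since $F_X$ is continuous with connected support $[a,b]$, provided $\umbral_0+\psi<b$ (implicit in the statement, as otherwise $\bSi_{\umbral_0+\psi}$ is undefined) the conditional law of $X$ given $X\geq \umbral_0+\psi$ is nondegenerate, and by definition $\bSi_{\umbral_0+\psi}=\sigma^2 M(\umbral_0+\psi)^{-1}$. \emph{(ii) The numerator, which is the crux.} One must show $\bb{G}_n g_{\widehat{\umbral}_n+\psi}=\bb{G}_n g_{\umbral_0+\psi}+o_{\prob}(1)$. The class $\mathcal{G}=\{g_\umbral:\umbral\in J\}$ consists of products of an indicator of a half-line with fixed functions, hence is a VC-subgraph class, and it has the square-integrable envelope $G(x,y)=\indica_{\{x\geq \umbral_0+\psi/2\}}(1+|x|)\,|y-\alpha_0-\beta_0x|$: using $Y-\alpha_0-\beta_0X=\varepsilon$ on $\{X\geq \umbral_0+\psi/2\}$ and the independence of $\varepsilon$ and $X$, $\esp G^2=\esp(\varepsilon^2)\,\esp[\indica_{\{X\geq \umbral_0+\psi/2\}}(1+|X|)^2]<\infty$ by \ref{ass:momento}. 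Thus $\mathcal{G}$ is $\prob$-Donsker, and since $\esp\|g_\umbral-g_{\umbral'}\|^2\leq\sigma^2\,\esp[\indica_{\{\umbral\wedge\umbral'\leq X<\umbral\vee\umbral'\}}(1+|X|)^2]\to0$ as $|\umbral-\umbral'|\to0$ (dominated convergence and \ref{ass:dist-continua}), the empirical process $\{\bb{G}_n g_\umbral\}_{\umbral\in J}$ is asymptotically equicontinuous; evaluating along the convergent sequence $\umbral=\widehat{\umbral}_n+\psi\to\umbral_0+\psi$ gives the claim. Alternatively, one may invoke the uniform-in-$\umbral$ asymptotic linear representation of $(\widehat{\alpha}_\umbral,\widehat{\beta}_\umbral)$ supplied by Lemma~\ref{lema:todo} and apply the equicontinuity argument only to its explicit leading term.

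Finally, by the ordinary multivariate central limit theorem $\bb{G}_n g_{\umbral_0+\psi}\convdist\mathcal{N}(0,\Gamma)$ with $\Gamma=\cov(g_{\umbral_0+\psi})=\esp[\indica_{\{X\geq \umbral_0+\psi\}}\widetilde{\bX}\widetilde{\bX}\trasp\varepsilon^2]=\sigma^2 M(\umbral_0+\psi)$, again by independence of $\varepsilon$ and $X$ and $\esp(\varepsilon^2)=\sigma^2$. Combining (ii) with Slutsky's lemma gives $\bb{G}_n g_{\widehat{\umbral}_n+\psi}\convdist\mathcal{N}(0,\sigma^2 M(\umbral_0+\psi))$, and then (i) together with Slutsky's lemma once more yields
\[
\sqrt{n}\Bigl\{(\widehat{\alpha}_{\widehat{\umbral}+\psi},\widehat{\beta}_{\widehat{\umbral}+\psi})\trasp-\theta_0\Bigr\}=\widehat{M}_n(\widehat{\umbral}_n+\psi)^{-1}\,\bb{G}_n g_{\widehat{\umbral}_n+\psi}\convdist\mathcal{N}\bigl(0,\,\sigma^2 M(\umbral_0+\psi)^{-1}\bigr)=\mathcal{N}(0,\bSi_{\umbral_0+\psi}),
\]
which is the assertion. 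The main obstacle is step (ii): controlling the empirical process uniformly over thresholds in a neighbourhood of $\umbral_0+\psi$ so that plugging in the data-driven $\widehat{\umbral}_n$ is harmless; the remaining steps are routine applications of the law of large numbers, the classical CLT, and Slutsky's lemma.
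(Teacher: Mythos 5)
Your proposal is correct and follows essentially the same route as the paper: both reduce the problem to showing that replacing the deterministic threshold $\umbral_0+\psi$ by $\widehat{\umbral}_n+\psi$ is asymptotically negligible, and both accomplish this via the Donsker property and asymptotic equicontinuity of an empirical process indexed by the threshold (the paper through the component classes $\mathcal{H}_{s,m}$ of Lemma \ref{lema:limite_empiricos} and the expansions of Lemma \ref{lema:todo}, you through the vector score class $\{g_\umbral\}$). Your version is somewhat more self-contained in that it derives the limiting covariance $\sigma^2 M(\umbral_0+\psi)^{-1}$ explicitly from the normal equations and the CLT rather than citing the standard weighted least squares result, but the key idea is the same.
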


\section{Monte Carlo study}\label{sec:monte}
In this section, we report the results of a small simulation study conducted to numerically explore the finite sample behaviour of the estimators defined in Section \ref{sec:proposal}. We aim to compare our proposal with the estimators obtained minimizing $\widehat{\ell}(\umbral,{\mathcal{Z}}_n)$ to illustrate that some degree of penalization is needed to obtain a consistent procedure. One of the goals of this numerical study is also to show the effect that the penalization term, the errors scale and the smoothness of the regression function may have on the   threshold estimators. 

The   threshold estimator was   computed  using the definition given through \eqref{eq:def_hat_umbral}, taking as $\gamma_n$  the 95\% empirical quantile of the predictors. In all scenarios,  the penalty parameter  was defined as $\lambda_n = cn^{-0.4}$ for different values of the constant $c> 0$ and the function $f$ equals the identity function in $[0,1]$, i.e., we chose $f(u) = u$ for $u\ge 0$ and $f(u)=0$ for $u<0$. Note that the non--penalized case  corresponds to $c=0$ and is included to show that, in this case,  the threshold  estimator does not converge to the true  regression function threshold. It is worth mentioning that according to Theorem \ref{teo:tiro_del_final}, the penalty parameter $\lambda_n$ should converge to $0$ with a rate slower than $\sqrt{n}$ to ensure consistency, for that reason we have chosen $\lambda_n = cn^{-0.4}$.  

To provide a broad comparison, we considered different regression functions depending on the threshold value $\umbral_0$ and also on a constant $\delta$ which allows us to include functions which are not differentiable  at $\umbral_0$. We also studied different  scenarios according to the errors standard deviations and to the sample size. In all cases, we performed $Nrep=1000$ replications.

To define the regression function, let us denote as $g:[0,1]\to \real$ the non-linear function defined as
$$
g(x)=\left\{
\begin{array}
[c]{lll}%
4x^2(3-4x) &  & \text{if $0\leq x\leq 0.5$}\,,\\
&  & \\
\displaystyle{\frac{4}{3}}\,x\,(4x^2-10x+7)-1 &  & \text{if $0.5\leq x\leq 1$}\,.
\end{array}
\right.
$$
The   family of  regression functions will be labelled according to the threshold value $\umbral_0$ and to the smoothing parameter $\delta\in \real$ involved in its definition, that is, we denote as $r_{\umbral_0,\delta}:[0,1]\to \real$, the function
$$
r_{\umbral_0,\delta}(x)=\left\{
\begin{array}
[c]{lll}%
g(x)  &  & \text{if $0\leq x \leq \umbral_0$}\,,\\
&  & \\
\{g^\prime(\umbral_0)+\delta\}(x-\umbral_0)+g(\umbral_0) &  & \text{if $\umbral_0\leq x\leq 1$}\,.
\end{array}
\right.
$$
It is worth mentioning that regardless of the value of  $\delta$, the function $r_{\umbral_0,\delta}(x)$ is continuous. However, it is only  differentiable at $\umbral_0$ when $\delta=0$. Note that when $\umbral_0=0.5$, $g(\umbral_0)=1$ and $g^\prime(\umbral_0)=0$. Hence for  $\umbral_0=0.5$ and $\delta=0$, the function $r_{\umbral_0,\delta}(x)$ is constant on $[\umbral_0,1]$, while for any $\delta$, on $[0,\umbral_0]$, $r_{\umbral_0,\delta}(x)=4x^2(3-4x)$. 
Figure \ref{fig:r_delta} shows the plots of $g(x)$ and the  regression function $r_{\umbral_0,\delta}(x)$ for different values of $\umbral_0$ and $\delta$. 

%ESTAS  FIGURAS VAN
\begin{figure}[t]
\begin{center}
\renewcommand{\arraystretch}{0.1}
\newcolumntype{G}{>{\centering\arraybackslash}m{\dimexpr.33\linewidth-1\tabcolsep}}
\begin{tabular}{GGG}
\multicolumn{1}{c}{$g(x)$} & \multicolumn{1}{c}{$\umbral_0=0.5$, $\delta=0$}& \multicolumn{1}{c}{$\umbral_0=0.5$, $\delta=\,-\, 1$}\\[-3ex]
\includegraphics[scale=0.34]{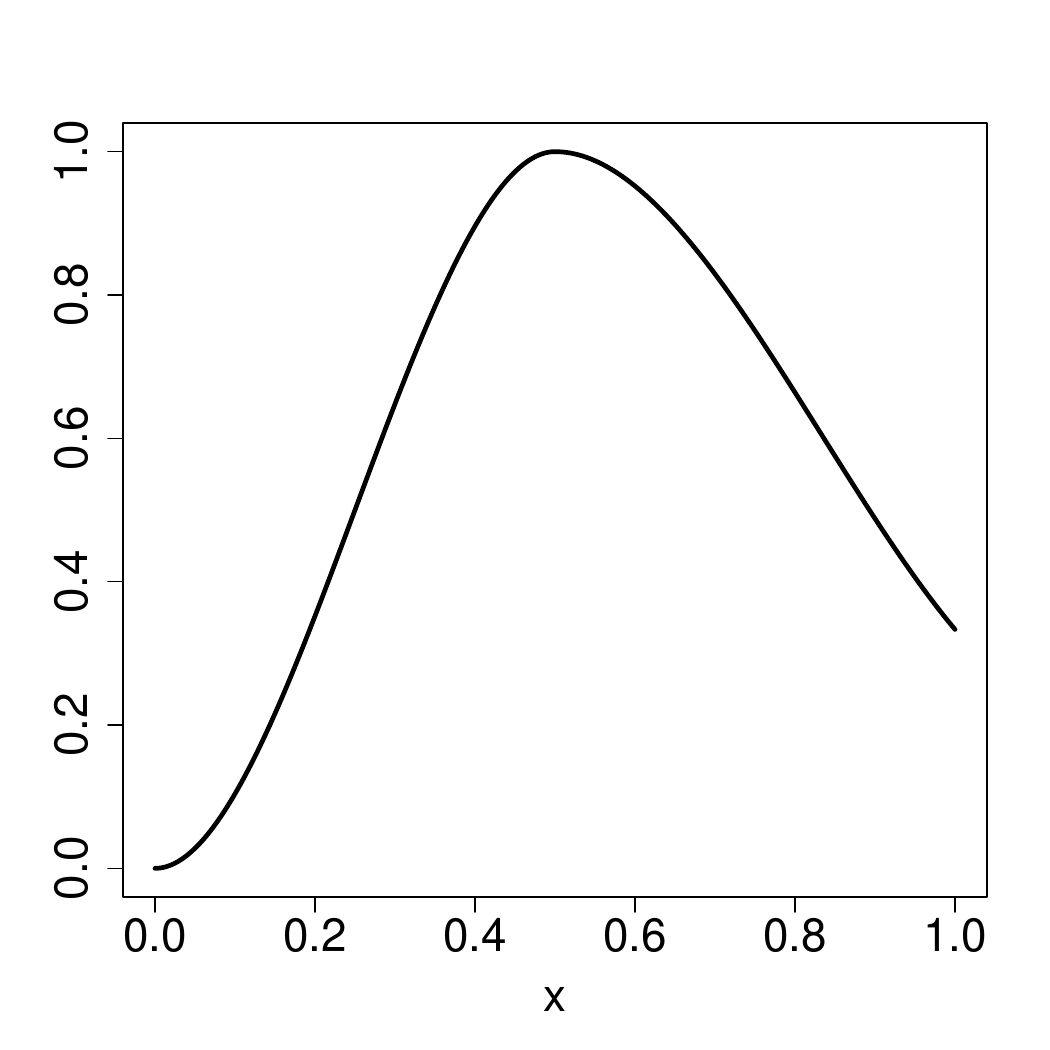}&
\includegraphics[scale=0.34]{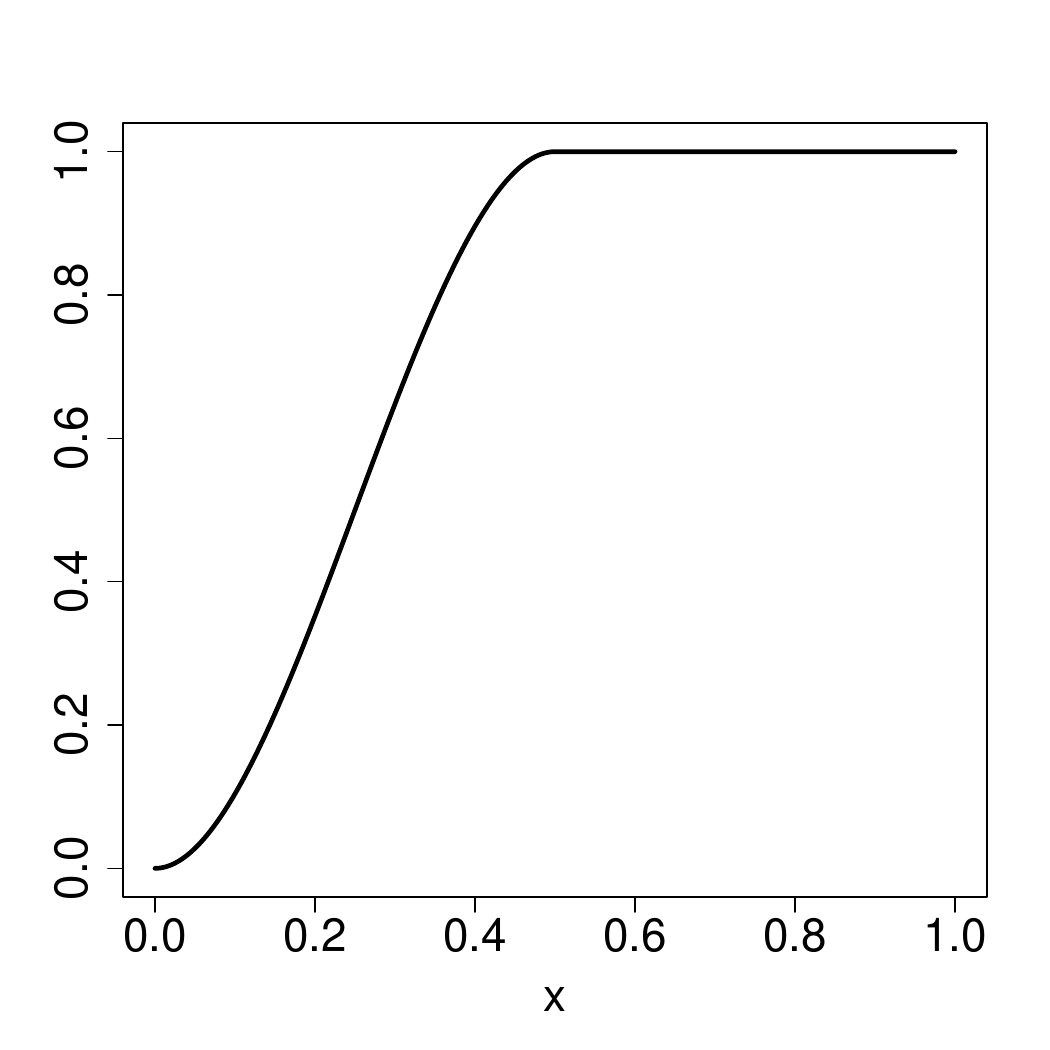}&
\includegraphics[scale=0.34]{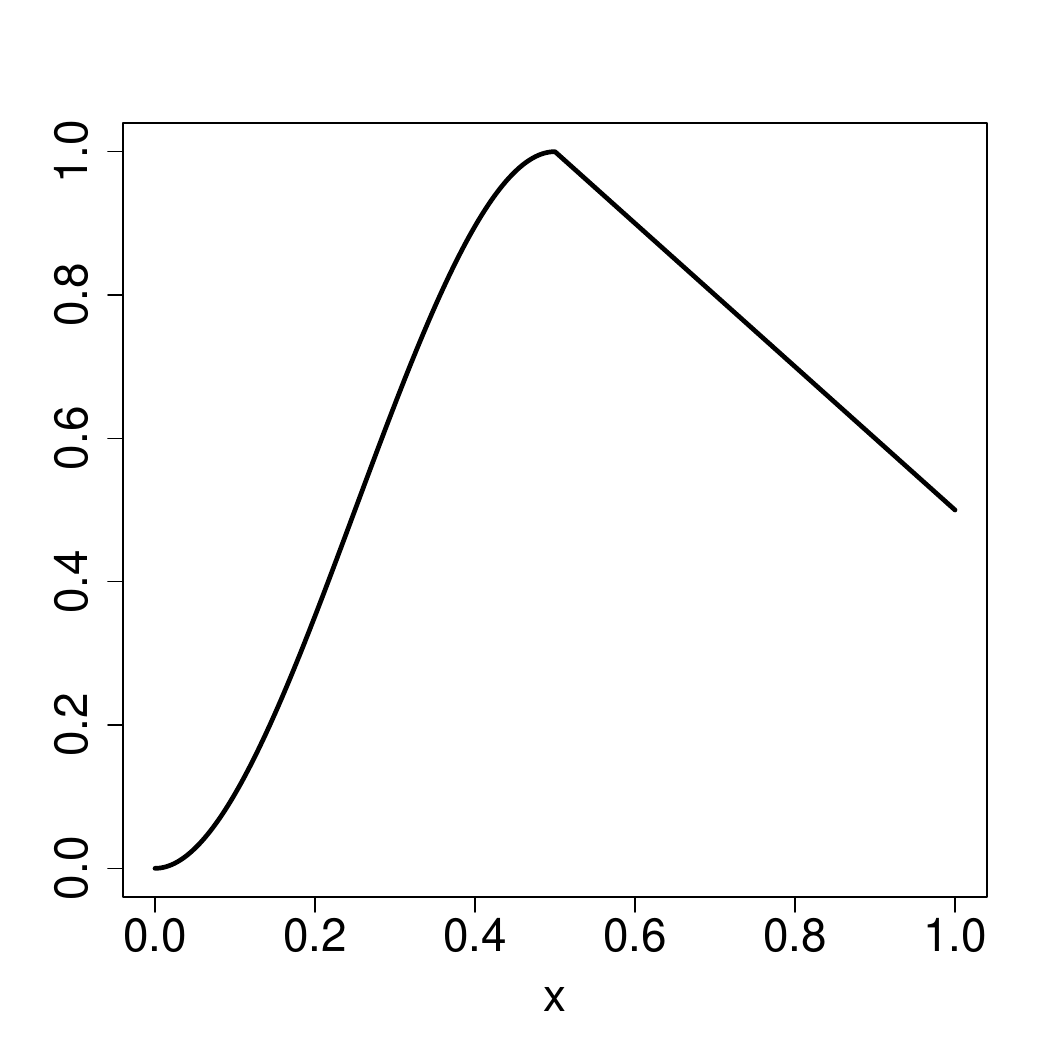}\\
\multicolumn{1}{c}{$\umbral_0=0.75$, $\delta=1$} & \multicolumn{1}{c}{$\umbral_0=0.75$, $\delta=0$}& \multicolumn{1}{c}{$\umbral_0=0.75$, $\delta=\,-\, 1$}\\[-3ex]
\includegraphics[scale=0.34]{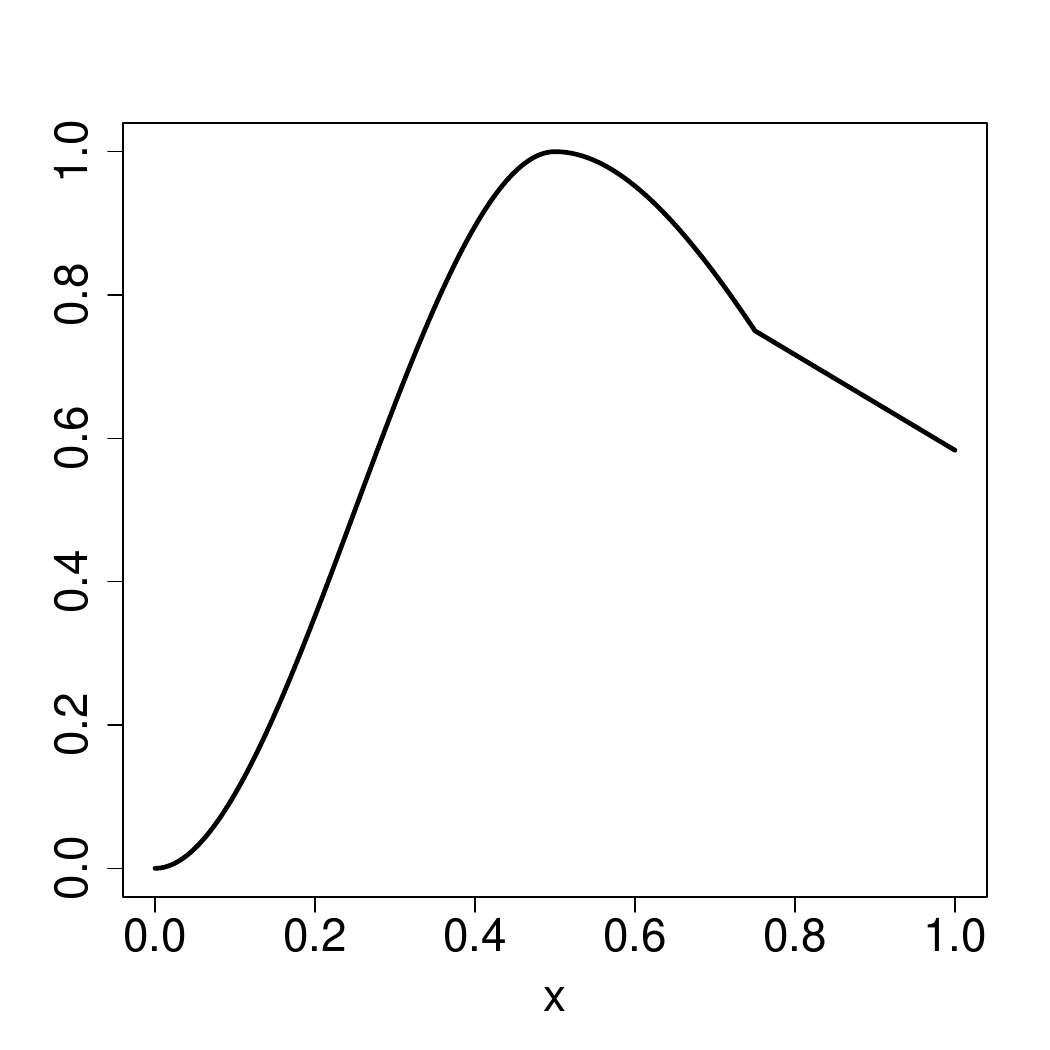}&
\includegraphics[scale=0.34]{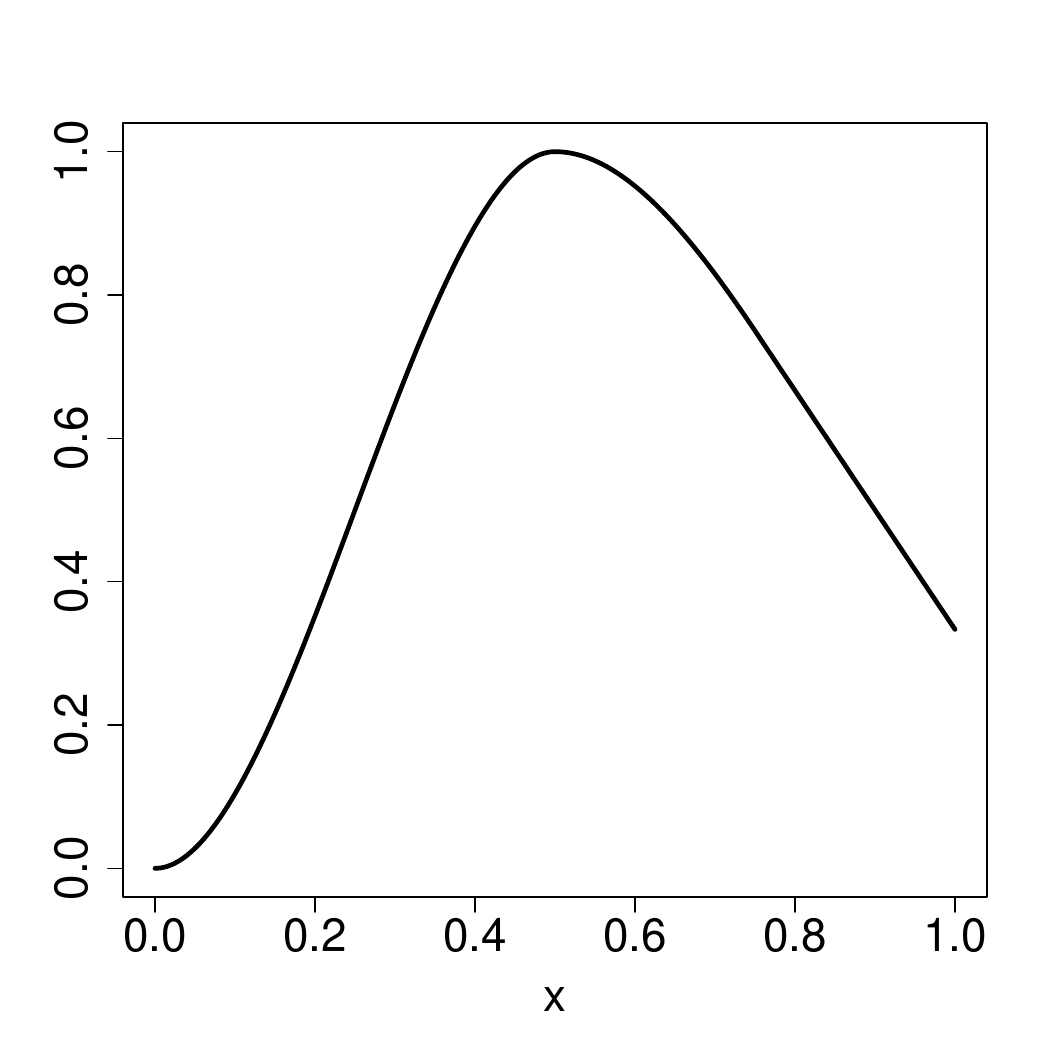}&
\includegraphics[scale=0.34]{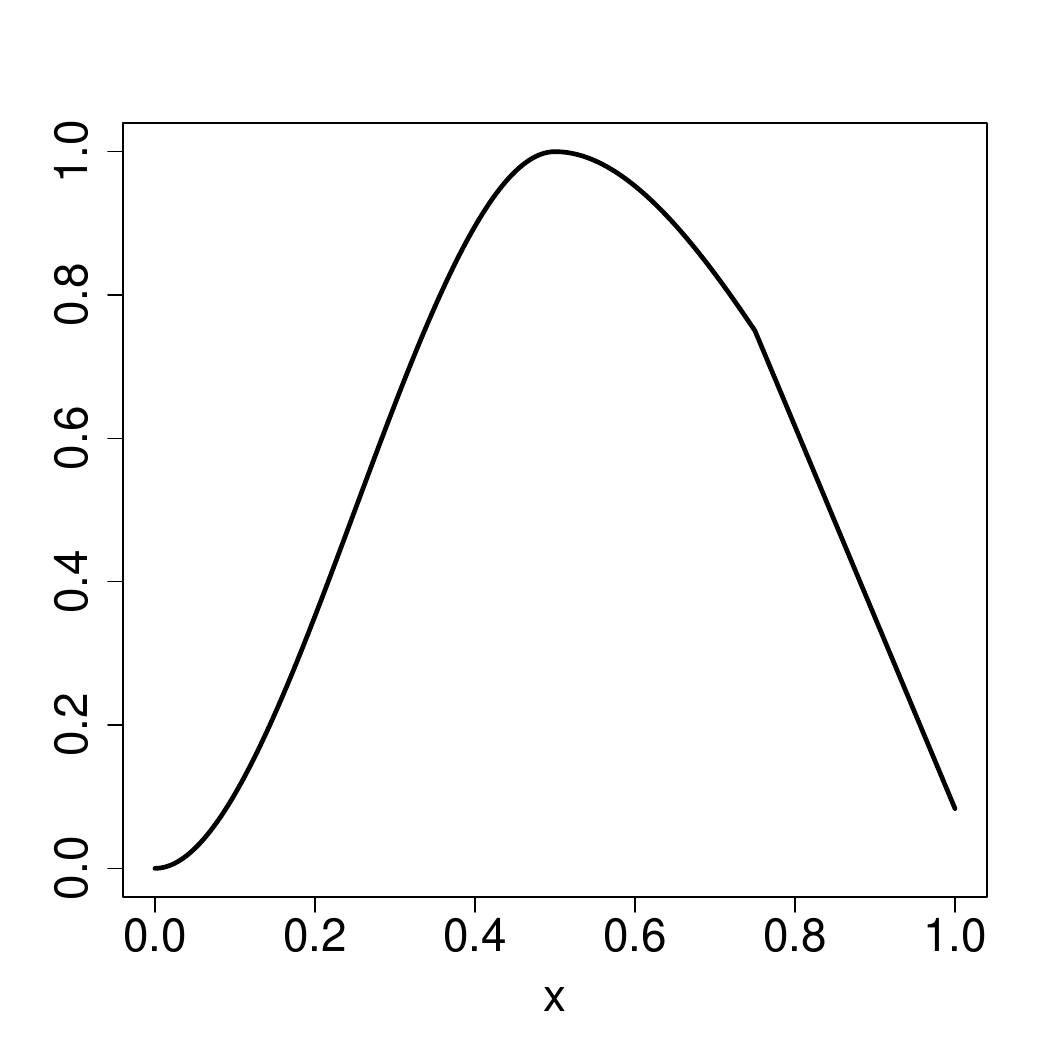}
\end{tabular}
\vskip-0.1in
\caption{\small \label{fig:r_delta}	Plot of the function $g(x)$ (upper left panel) and of the regression function $r_{\umbral_0,\delta}(x)$ for different values of the threshold $\umbral_0$ and smoothing parameter $\delta$.}  
\end{center}
\end{figure}

For each replication, we generate independent samples $(X_i, Y_i)\sim (X,Y)$, $i=1,\dots, n$, where $X\sim {\itU}(0,1)$ and $Y=r_{\umbral_0,\delta}(X)+\varepsilon$, with $\varepsilon\sim {\mathcal N}(0,\sigma^2)$, independent of $X$.   Note that for any value of $\delta$, the  observations satisfy the postulated model \eqref{eq:regression} with $b= g^\prime(\umbral_0)+\delta$, $a= g(\umbral_0)- b\, \umbral_0$ and threshold at  $\umbral_0$ as defined in \eqref{eq:u_0}. Across the different scenarios, we vary the sample sizes  from $n=100$ to $n=2000$ with step=100. Two values for  $\umbral_0$ were selected to illustrate the  performance of the proposed procedure $\umbral_0=0.5$ and $\umbral_0=0.7$ combined with three values for  $\delta$, $\delta= -1,0$ and $1$. In this way we get 
six different scenarios for the regression function $r_{\umbral_0,\delta}$, defined in \eqref{eq:regression}.  Finally,  three values for the errors standard deviation were selected, $\sigma=0.01, 0.05$ and $0.1$. 

To illustrate the structure of the different data sets obtained as $\umbral_0$ and $\delta$ vary, Figure \ref{fig:puntos} displays one of the pairs $(X_i, Y_i)$, $i=1,\dots, n$, obtained  when  $n=50$ together with the regression function $r_{\umbral_0,\delta}$ used to generate them. 
Note that when $\umbral_0=0.75$ the linear relationship becomes more evident  as   the absolute value of $\delta$ increases,. As shown below, this fact will be also confirmed by our numerical results.

%ESTAS  FIGURAS VAN
\begin{figure}[t]
\begin{center}
\renewcommand{\arraystretch}{0.1}
\newcolumntype{G}{>{\centering\arraybackslash}m{\dimexpr.33\linewidth-1\tabcolsep}}
\begin{tabular}{GGG}
\multicolumn{1}{c}{$\umbral_0=0.5$, $\delta=0$} & \multicolumn{1}{c}{$\umbral_0=0.5$, $\delta=\,-\, 1$}& \multicolumn{1}{c}{$\umbral_0=0.5$, $\delta=1$}\\ 
\includegraphics[scale=0.34]{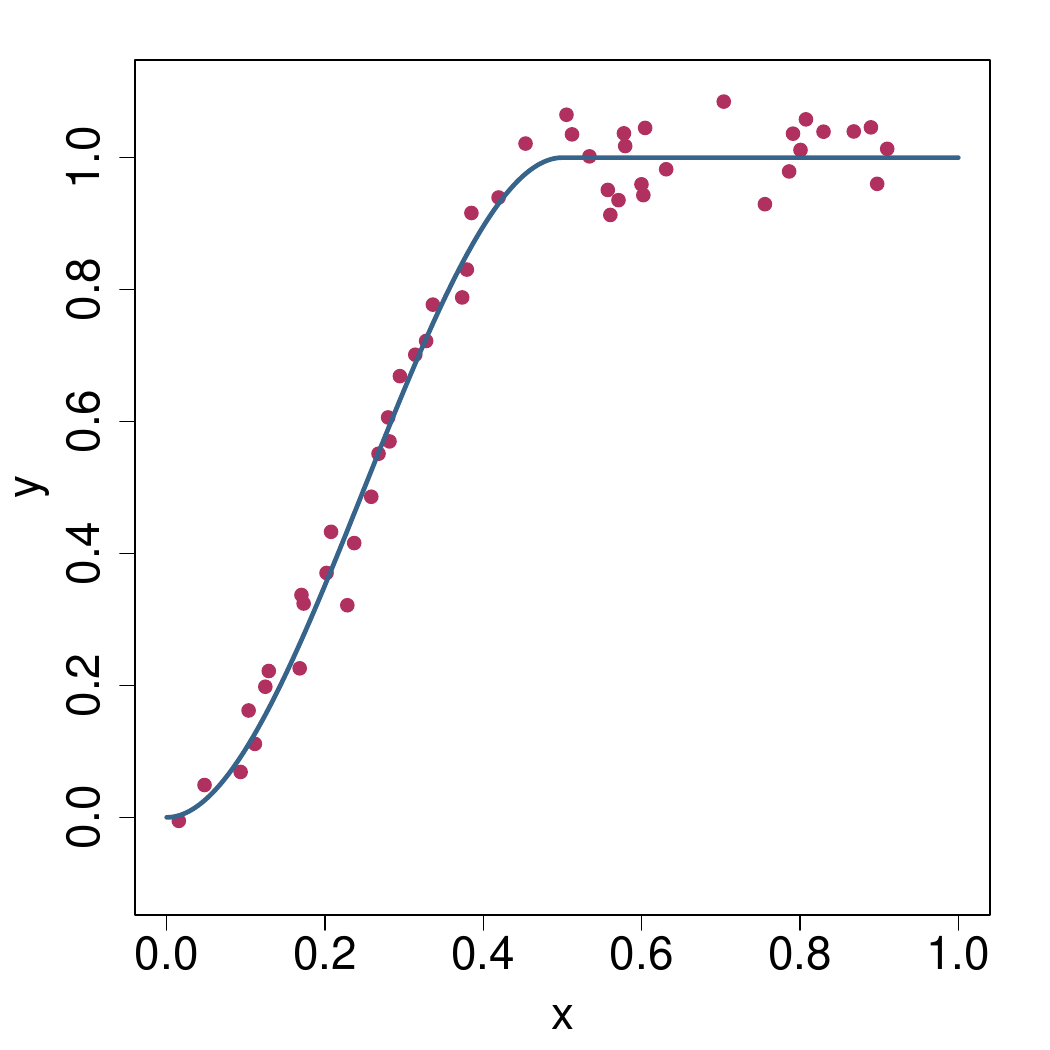}&
\includegraphics[scale=0.34]{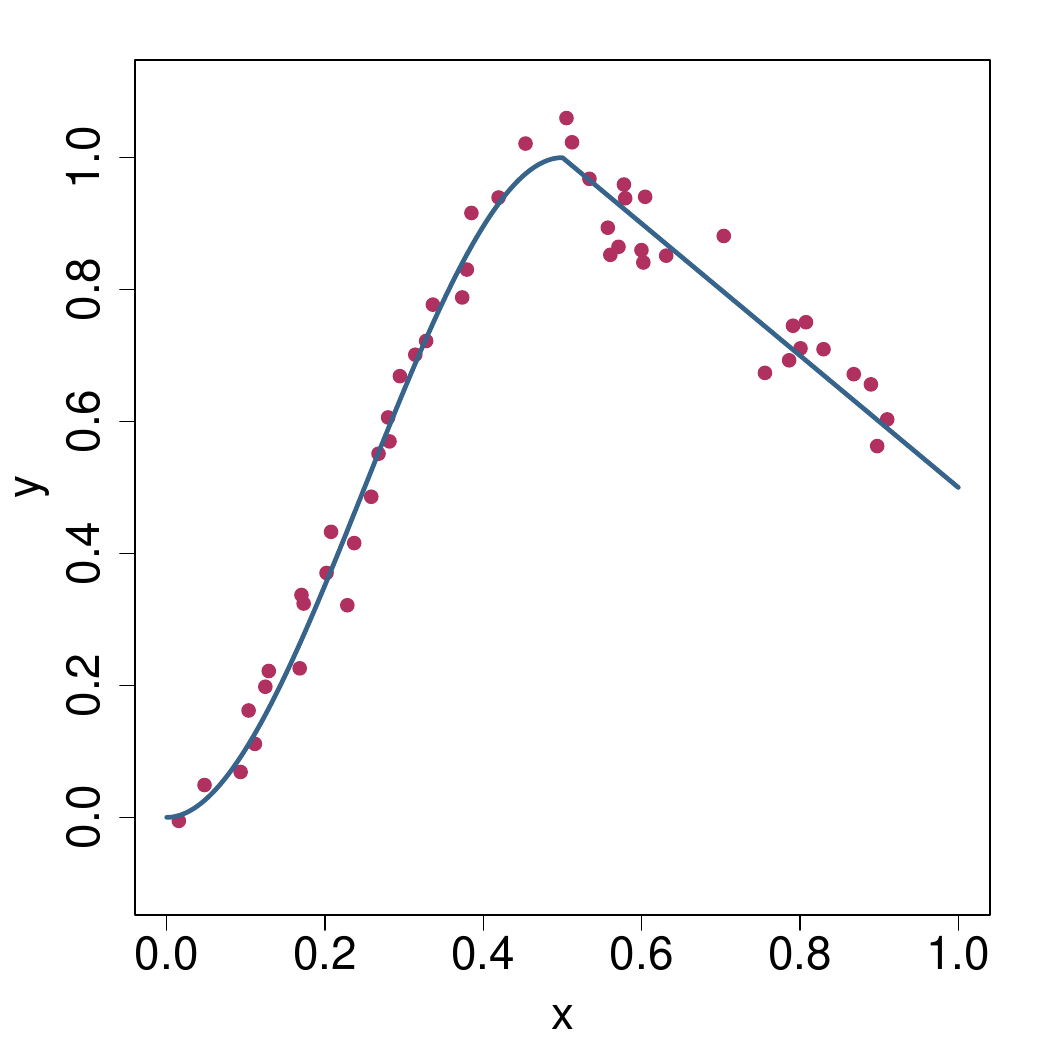}&
\includegraphics[scale=0.34]{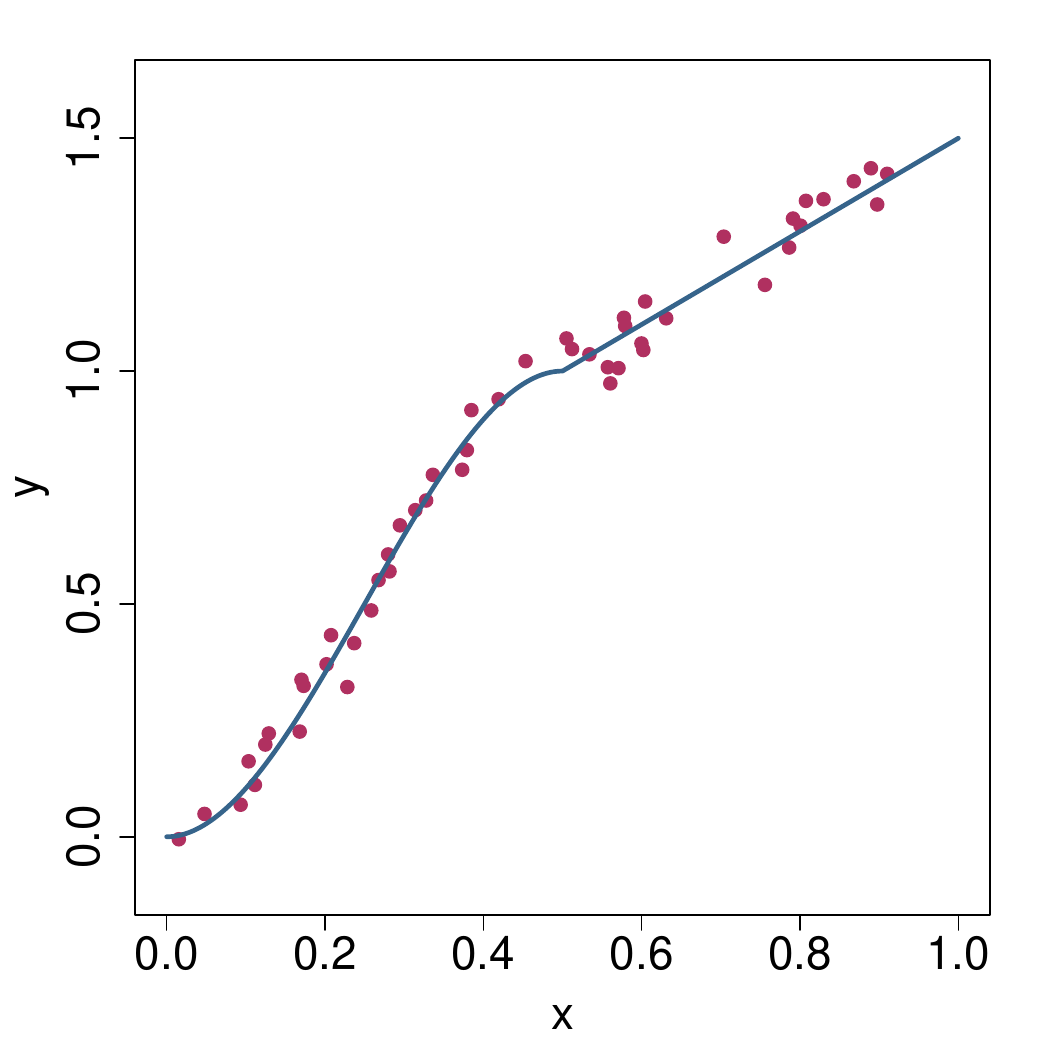}\\[1ex]
\multicolumn{1}{c}{$\umbral_0=0.75$, $\delta=0$} & \multicolumn{1}{c}{$\umbral_0=0.75$, $\delta=\,-\, 1$}& \multicolumn{1}{c}{$\umbral_0=0.75$, $\delta=1$}\\ 
\includegraphics[scale=0.34]{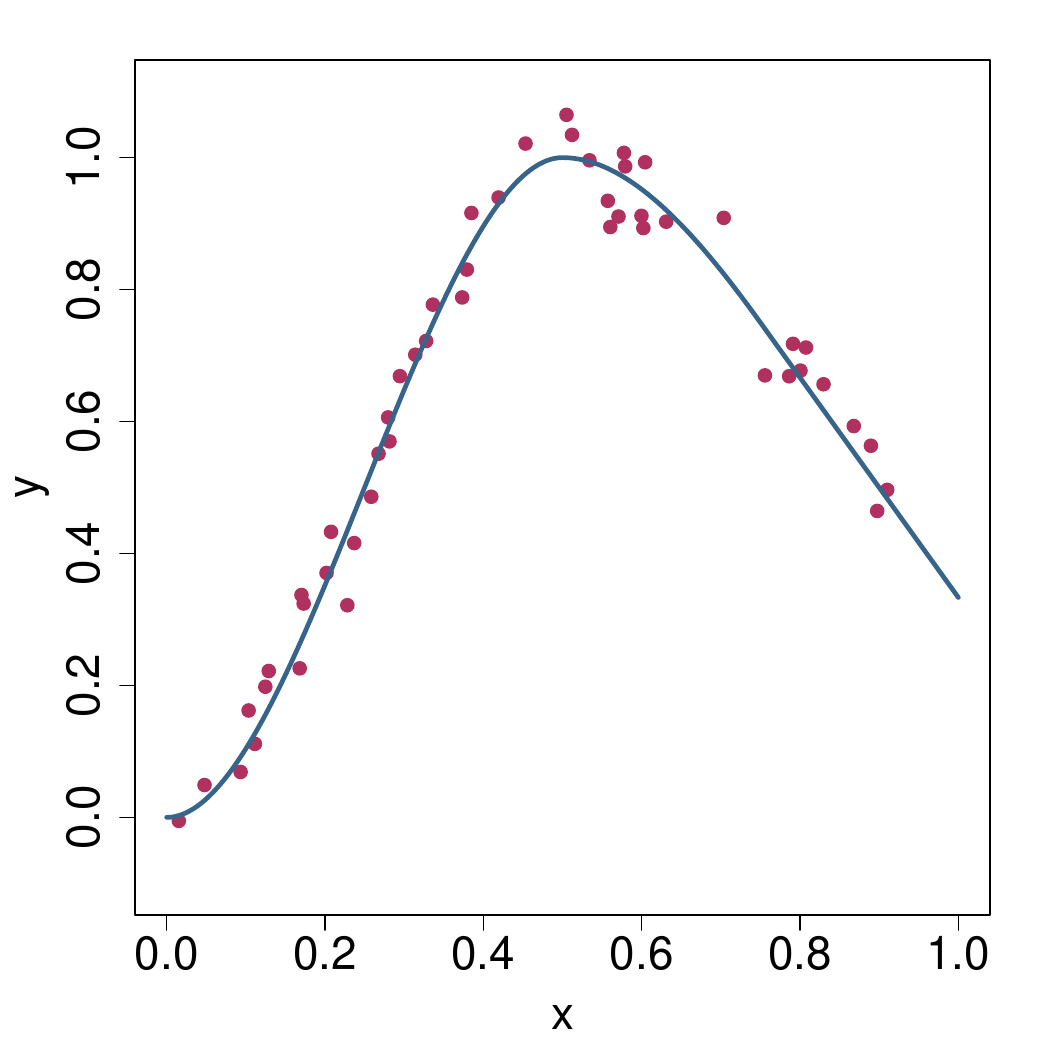}&
\includegraphics[scale=0.34]{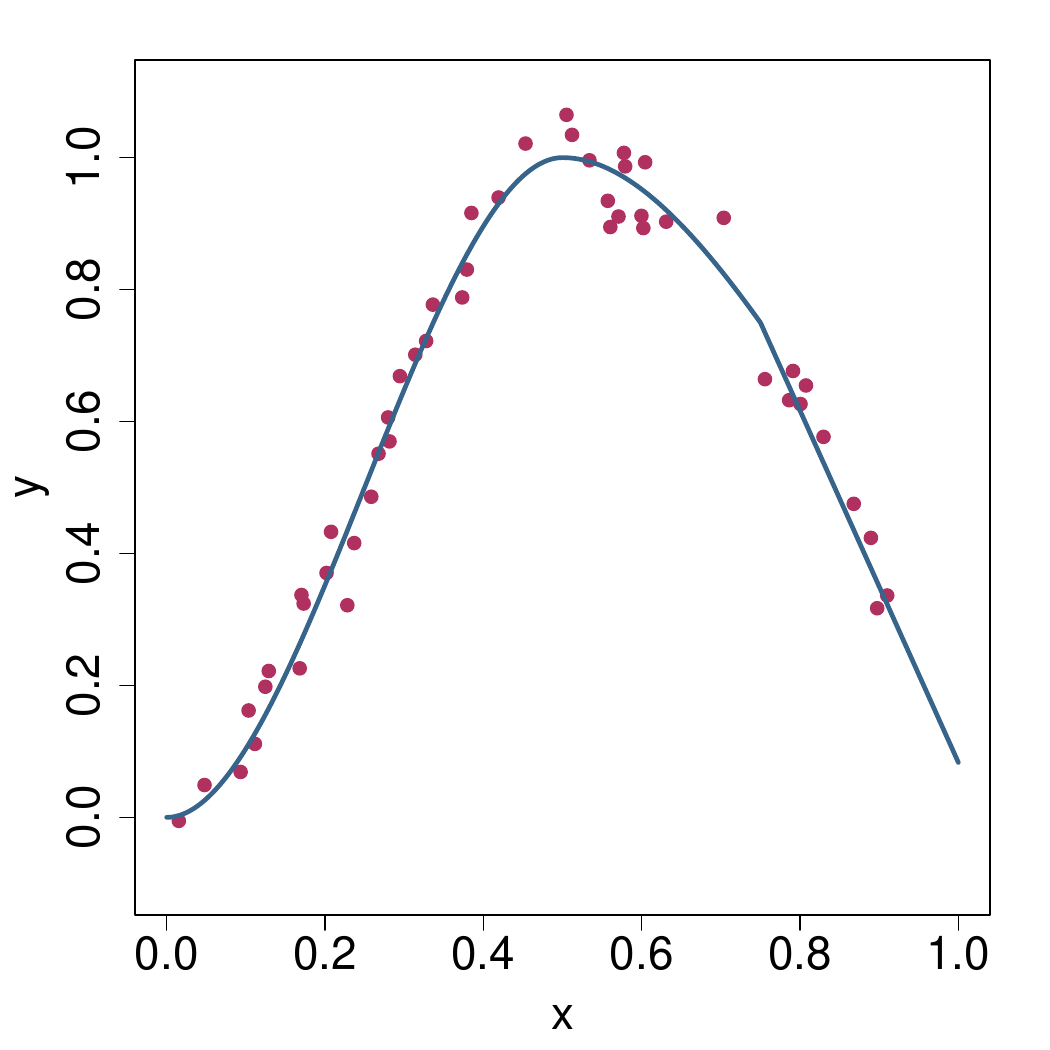}&
\includegraphics[scale=0.34]{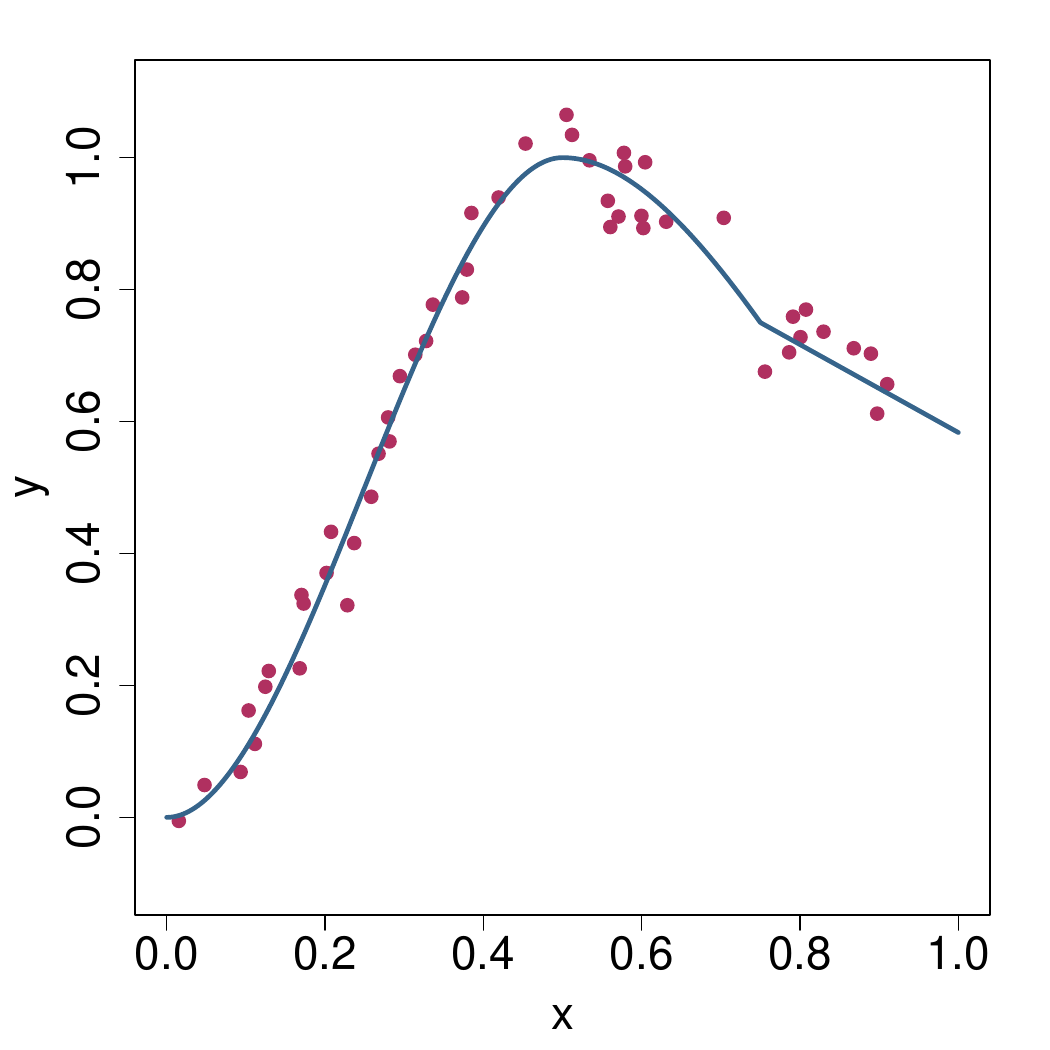}
\end{tabular}
\vskip-0.1in
\caption{\small \label{fig:puntos} Datasets with $n=50$ observations generated through the model \eqref{eq:regression} with different regression functions $r_{\umbral_0,\delta}(x)$ from Figure \ref{fig:r_delta} and $\epsilon\sim\mathcal N(0,0.05^2)$.} 
\end{center}
\end{figure}

To evaluate the performance of the estimators, we computed the empirical mean absolute  error (\textsc{EMAE}) of the estimator over replications defined as   
\begin{equation}
\label{MSE}
\hbox{\textsc{EMAE}}=\frac{1}{Nrep}\sum_{s=1}^{Nrep} |\widehat{\umbral}^{s}_n-\umbral_0|,
\end{equation}
where $\widehat{\umbral}_n^{s}$,  $s=1\ldots, Nrep$, stands for the  estimate obtained at  the $s$-th replication when the sample size equals $n$ and when the regression function equals $r_{\umbral_0,\delta}$. Clearly, the obtained value of  \textsc{EMAE}  varies across  the studied scenarios depending on each combination of   $\umbral_0$, $\delta$, $\sigma$, $n$ and  $c$.

%ESTAS  FIGURAS VAN
\begin{figure}[ht!]
\begin{center}
\renewcommand{\arraystretch}{0.1}
\newcolumntype{G}{>{\centering\arraybackslash}m{\dimexpr.33\linewidth-1\tabcolsep}}
\begin{tabular}{cc}
\multicolumn{1}{c}{$c=0$} & \multicolumn{1}{c}{$c=0.005$}\\[-1ex]
\includegraphics[scale=0.35]{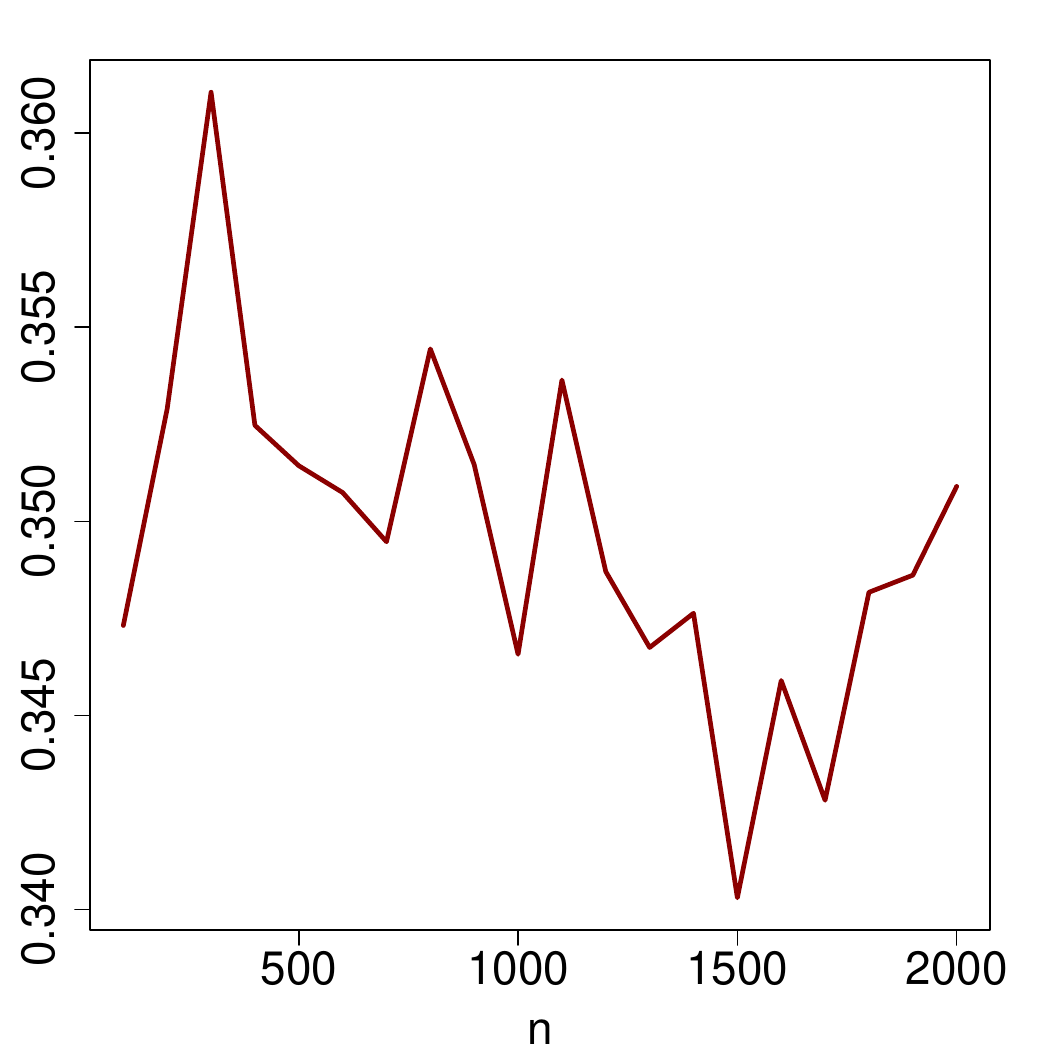}&
\includegraphics[scale=0.35]{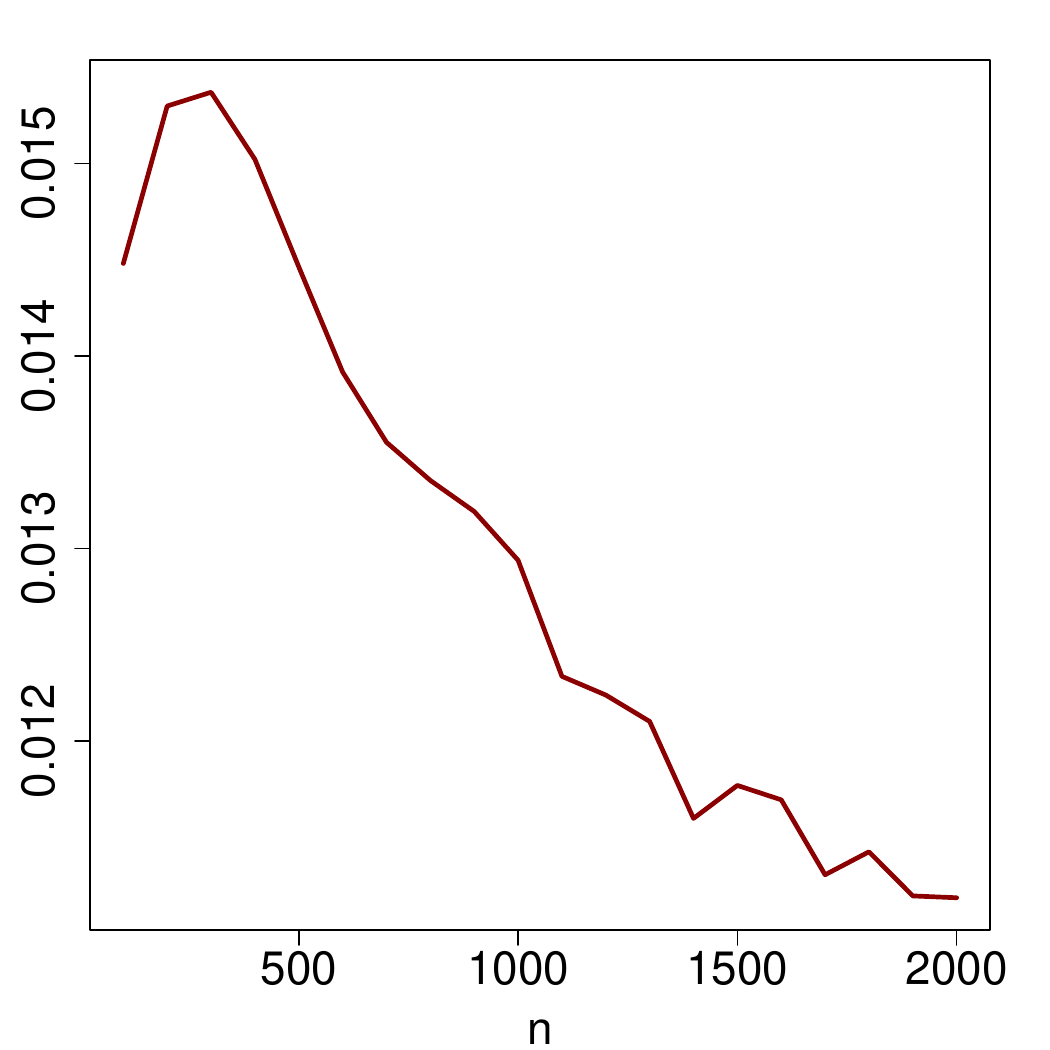}\\[2ex]
\multicolumn{1}{c}{$c=0.10$} & \multicolumn{1}{c}{$c=0.05$}\\[-1ex]
\includegraphics[scale=0.35]{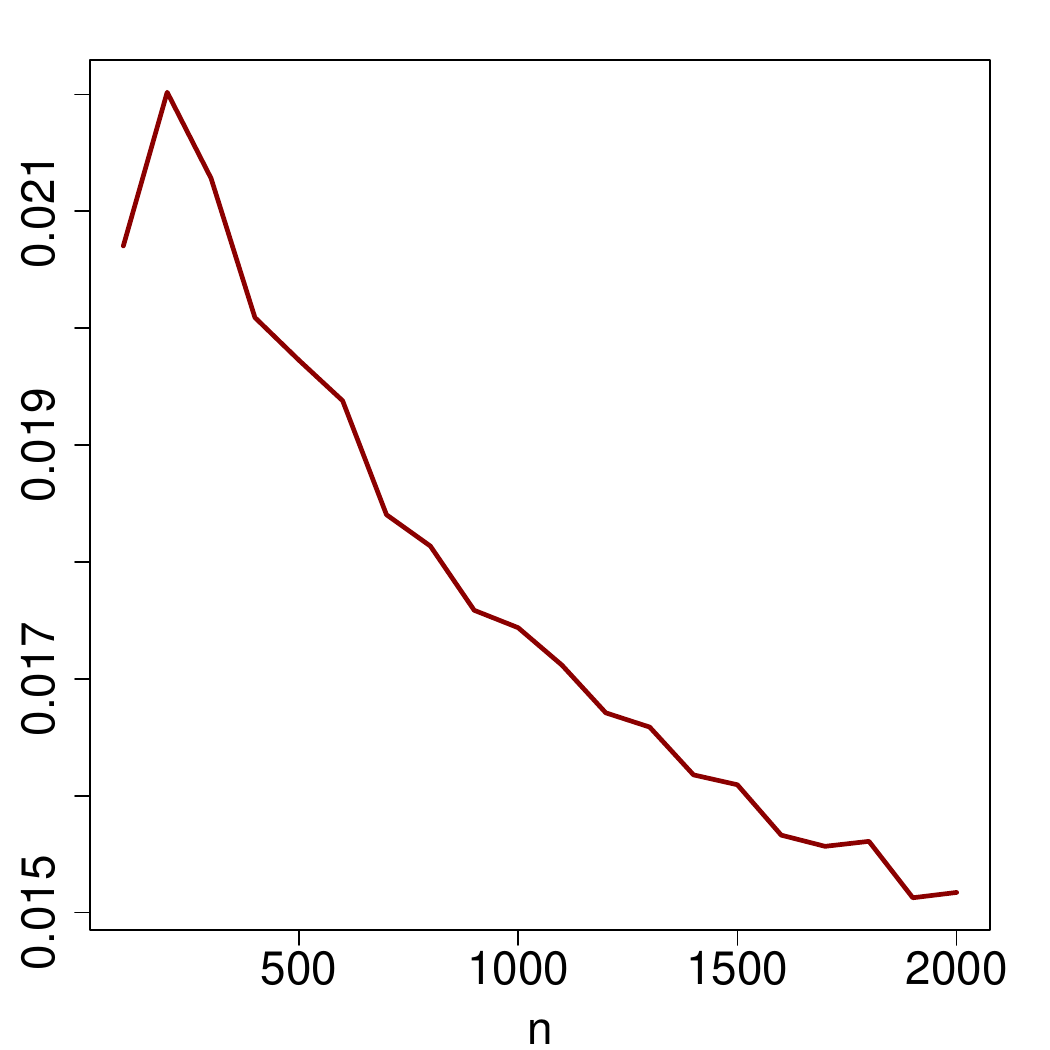}&
\includegraphics[scale=0.35]{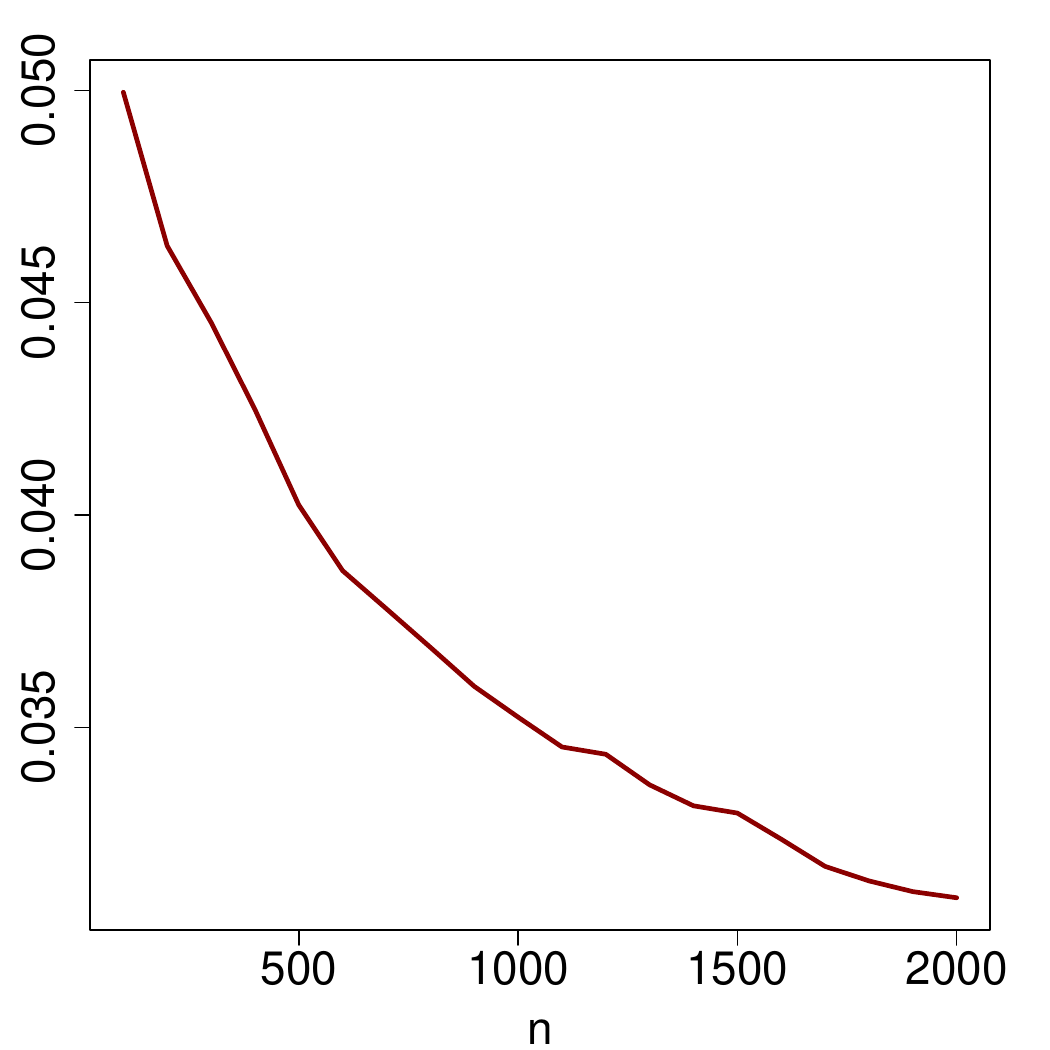}
\end{tabular}
\vskip-0.1in
\caption{\small \label{fig:EMAE-n} Behaviour of  \textsc{EMAE} as a function of $n$ for different values of $c>0$ when the penalty parameter equals $\lambda_n=cn^{-0.4}$. In all cases, $\umbral_0=0.5$, $\delta=-1$ and $\sigma=0.01$.} 
\end{center}
\end{figure}

To evaluate the behaviour of the threshold estimate as the sample size increases, Figure \ref{fig:EMAE-n} presents the plots of the empirical mean absolute  error as $n$ increases for $c=0, 0.005, 0.01$ and $0.05$.  In the four situations represented,  the error standard deviation equals $\sigma=0.01$ and the regression function corresponds to $r_{\umbral_0,\delta}(x)$ with $\umbral_0=0.5$ and $\delta=-1$.   Figure \ref{fig:EMAE-n} confirms the consistency result stated in Theorem \ref{teo:tiro_del_final} when penalizing the objective function. Effectively, for any $c>0$, the \textsc{EMAE} decreases with $n$ showing that $\widehat{\umbral}_n\to \umbral_0$ when $n\to\infty$. In contrast, when $c=0$ the empirical mean absolute  error  has an erratic behaviour as the sample size increases, showing that the penalizing term is necessary for the consistency of the estimator.

%ESTAS  FIGURAS VAN
\begin{figure}[ht!]
\begin{center}
\renewcommand{\arraystretch}{0.1}
\newcolumntype{G}{>{\centering\arraybackslash}m{\dimexpr.33\linewidth-1\tabcolsep}}
\begin{tabular}{cc}
\multicolumn{2}{c}{$r_{\umbral_0,\delta}(x)$, $\umbral_0=0.5$,  $\delta\,=\,-\,1$ }\\
\multicolumn{1}{c}{a)} & \multicolumn{1}{c}{b)}\\
\includegraphics[scale=0.33]{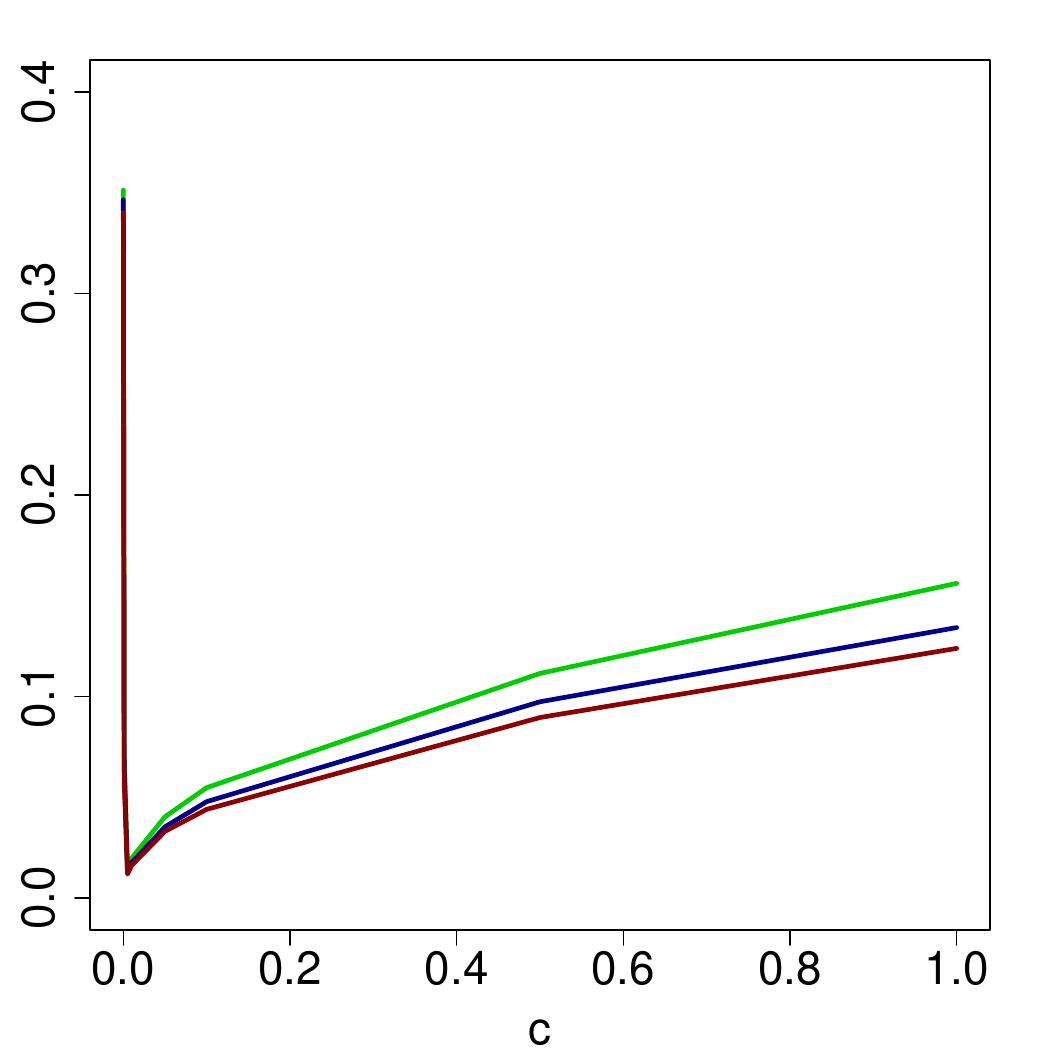} &
\includegraphics[scale=0.33]{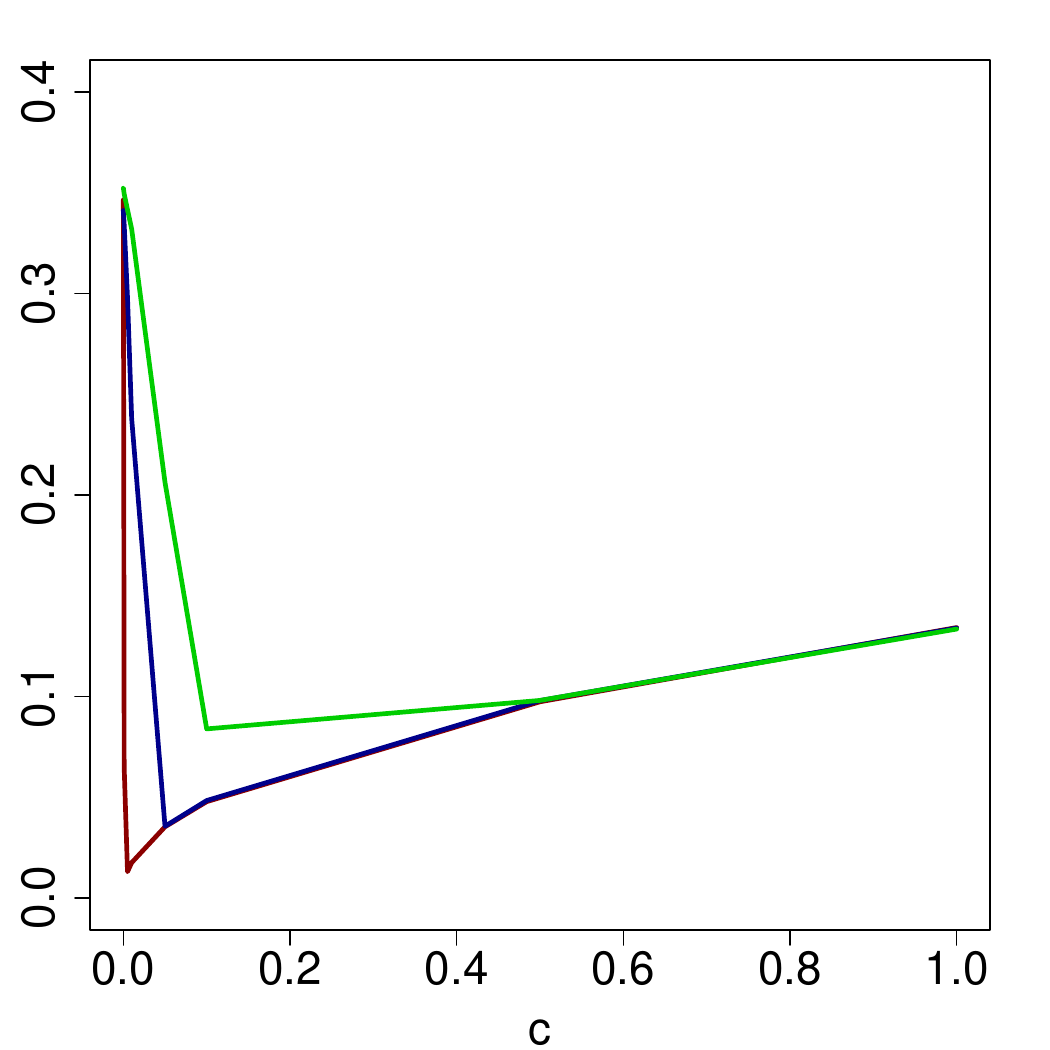}\\

\multicolumn{2}{c}{$r_{\umbral_0,\delta}(x)$, $\umbral_0=0.75$,  $\delta\,=\,-\,1$ }\\

\multicolumn{1}{c}{c)} & \multicolumn{1}{c}{d)}\\
\includegraphics[scale=0.33]{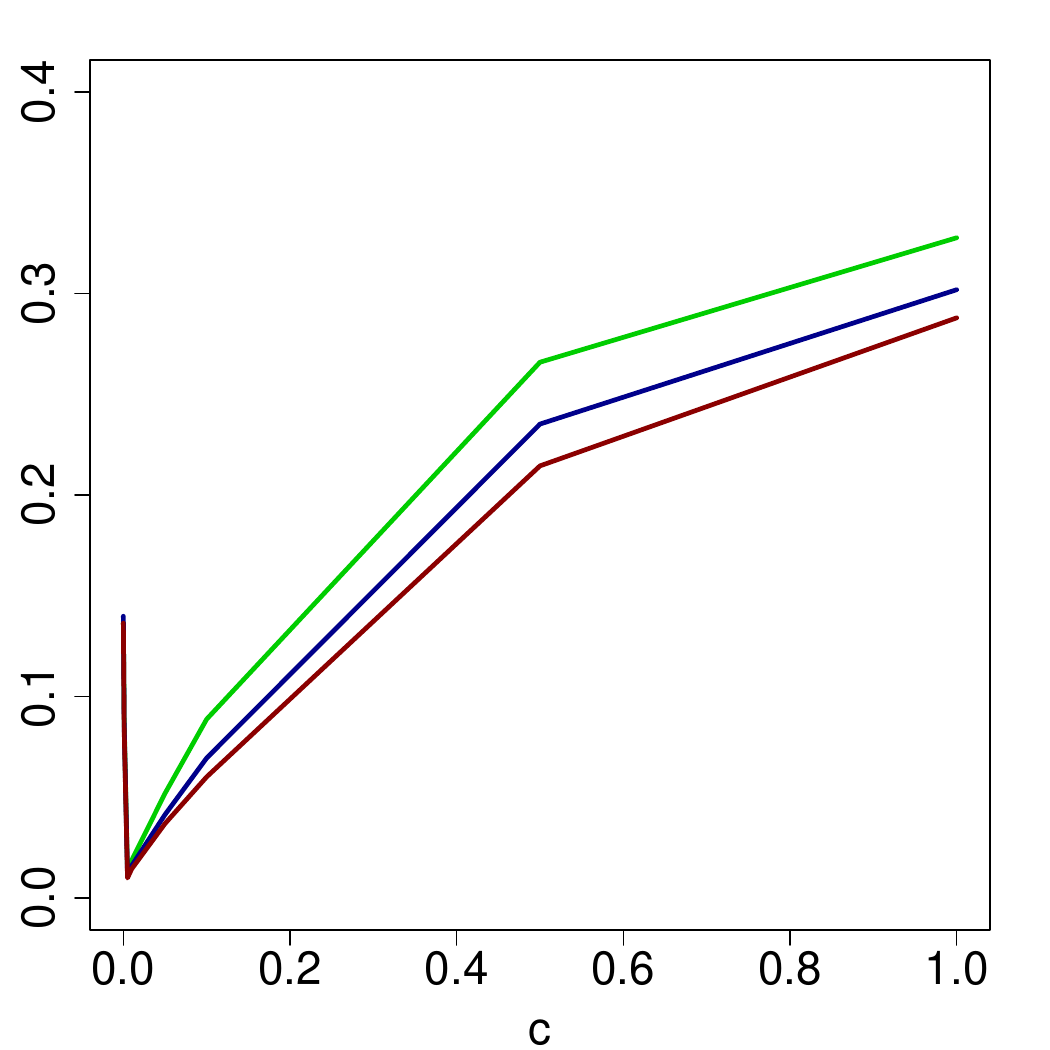} &
\includegraphics[scale=0.33]{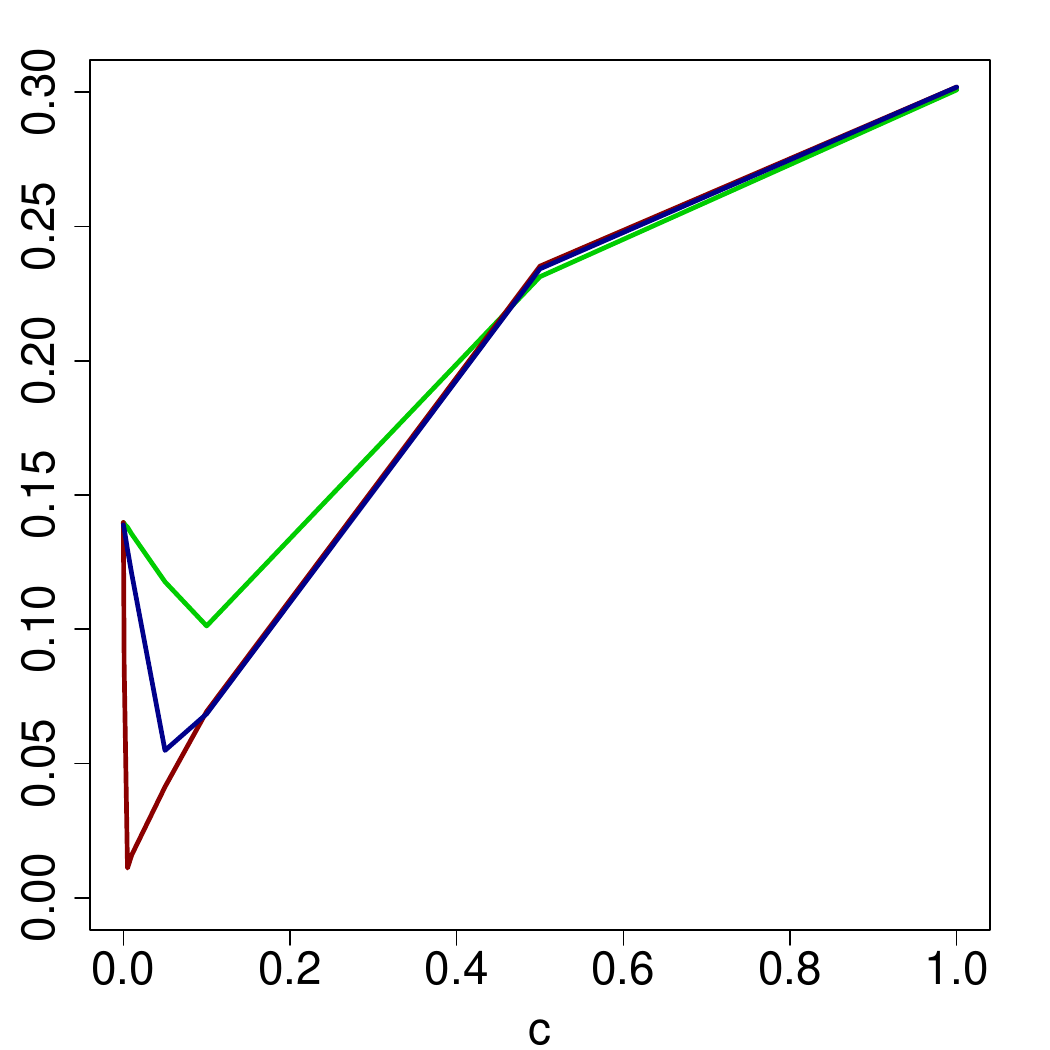} 
\end{tabular}
\vskip-0.1in
\caption{\small \label{fig:EMAE-c} Plot of  \textsc{EMAE} as a function of the penalizing constant $c$ for two regression functions. The a) and c) panels correspond to $\sigma=0.01$  and the green, blue and red lines represent the  \textsc{EMAE} for $n=500, 1000 $ and $1500$. The b) and d) panels correspond  to $n=1000$ and the solid  green,  blue and red lines correspond to $\sigma=0.10, 0.05$ and $0.01$, respectively.} 
\end{center}
\end{figure}

Taking into account that the estimators performance may depend on the penalizing constant $c$, we also explore the behaviour of \textsc{EMAE}  for different sample sizes and values of the errors standard deviation as a function of $c$, when  $c$ varies between $0$ and $1$ taking the values 0, $10^{-k}$ and $0.5\times 10^{-k}$, for $k=0, \ldots, 10$.  Figure \ref{fig:EMAE-c}  displays the   empirical mean absolute  error for $c\in [0,1]$. The left panels correspond to $\sigma=0.01$  with the green, blue and red lines representing the  \textsc{EMAE} for three sample sizes $n=500, 1000 $ and $1500$. The plots on the right panels represent  \textsc{EMAE} when the sample size equals $n=1000$ and three values of the standard error. In this case, the solid  green, blue and red lines correspond to $\sigma=0.10, 0.05$ and $0.01$, respectively. 
Figure \ref{fig:EMAE-c} clearly shows that, as expected, a discontinuity in the empirical mean absolute  error  occurs at $c=0$, since for that value of the penalty parameter the threshold estimator does not converge to the true threshold. Besides, even when the estimators are consistent for any $c>0$, the  \textsc{EMAE} increases with $c$ for the sample sizes and standard errors considered, since in this case, the penalty tends to dominate over the objective function $\widehat{\ell}(\umbral,{\mathcal{Z}}_n)$. The plots reveal that there is an optimal value for the constant $c$, that is, a value  $c_{\mbox{\sc\footnotesize opt}}$ of $c$ minimizing  the empirical  mean absolute  error,  for the different situations considered. Moreover, the   value of $c_{\mbox{\sc\footnotesize opt}}$ seems to coincide for the different values of $n$ when $\sigma$ is fixed (see  Figure  \ref{fig:EMAE-c}.a and \ref{fig:EMAE-c}.c). In contrast, as revealed in  Figure  \ref{fig:EMAE-c}.b and \ref{fig:EMAE-c}.d, when the  sample size equals $n=1000$ and the errors scale vary  the optimal $c$ increases with $\sigma$, suggesting that any data--driven rule to select $c$ should depend on the errors standard deviation estimator. 

%
%\begin{figure}[ht!]
%\begin{center}
%\renewcommand{\arraystretch}{0.1}
%\newcolumntype{G}{>{\centering\arraybackslash}m{\dimexpr.33\linewidth-1\tabcolsep}}
%\begin{tabular}{cc}
%%\textcolor{blue}{$c=0.01$} & \textcolor{blue}{$n=1000$}\\
%\multicolumn{2}{c}{$r_{\umbral_0,\delta}$, $\umbral_0=0.5$}\\
%\multicolumn{1}{c}{a)} & \multicolumn{1}{c}{b)}\\
%\includegraphics[scale=0.33]{} &
%\includegraphics[scale=0.33]{}\\
%
%\multicolumn{2}{c}{$r_{\umbral_0,\delta}$, $\umbral_0=0.75$}\\
%\multicolumn{1}{c}{c)} & \multicolumn{1}{c}{d)}\\
%\includegraphics[scale=0.33]{} &
%\includegraphics[scale=0.33]{}
%\end{tabular}
%\vskip-0.1in
%\caption{\small \label{fig:EMAE-delta}	 Plot of  \textsc{EMAE}, for two choices of the threshold $\umbral_0$. Panels a) and c) display the EMAE as a function  of the sample size $n$, when $c=0.01$, while panels b) and d) present the EMAE  as a function of the penalizing constant $c$   when $n=1000$  (b) and d). In all cases, the errors standard deviation equals   $\sigma=0.01$.   
%The black, green, blue, orange, magenta and red lines represent the  \textsc{EMAE} for $\delta=1, \, 0, \,-\, 0.5,\,-\,1,\,-\,1.5$  and $\,-\,2$.} 
%\end{center}
%\end{figure}

\begin{figure}[ht!]
\begin{center}
\renewcommand{\arraystretch}{0.1}
\newcolumntype{G}{>{\centering\arraybackslash}m{\dimexpr.33\linewidth-1\tabcolsep}}
\begin{tabular}{cc}
%\textcolor{blue}{$c=0.01$} & \textcolor{blue}{$n=1000$}\\
\multicolumn{2}{c}{$r_{\umbral_0,\delta}$, $\umbral_0=0.5$}\\
\multicolumn{1}{c}{a)} & \multicolumn{1}{c}{b)}\\
\includegraphics[scale=0.33]{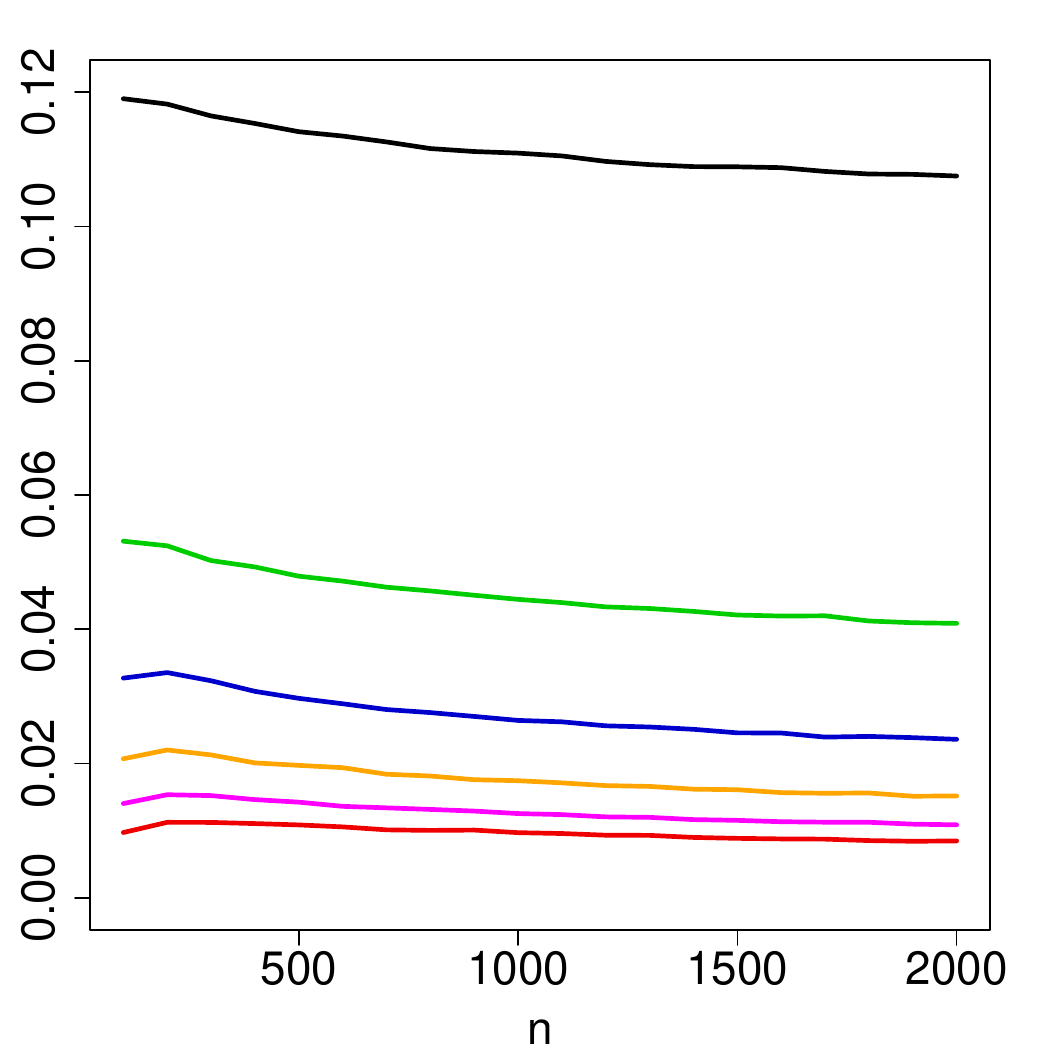} &
\includegraphics[scale=0.33]{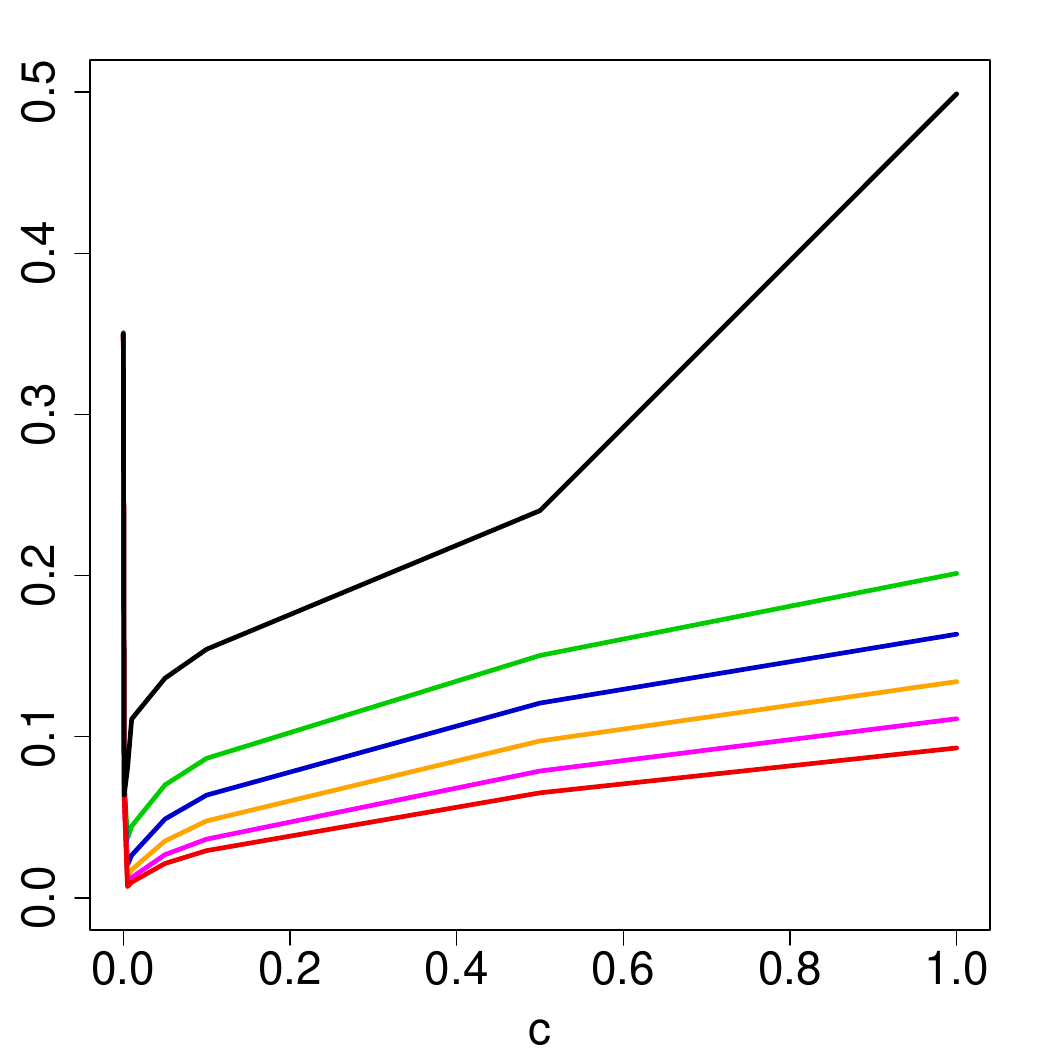}\\

\multicolumn{2}{c}{$r_{\umbral_0,\delta}$, $\umbral_0=0.75$}\\
\multicolumn{1}{c}{c)} & \multicolumn{1}{c}{d)}\\
\includegraphics[scale=0.33]{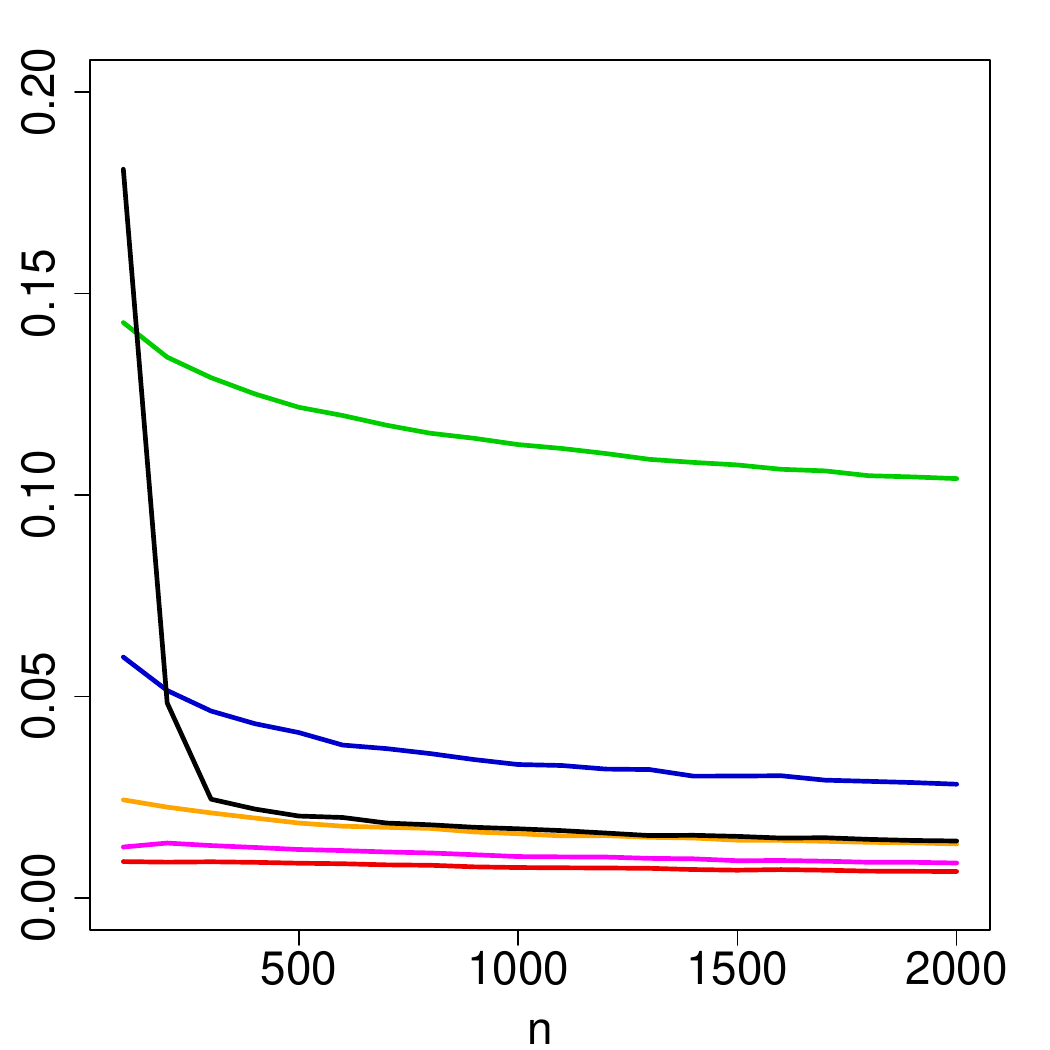} &
\includegraphics[scale=0.33]{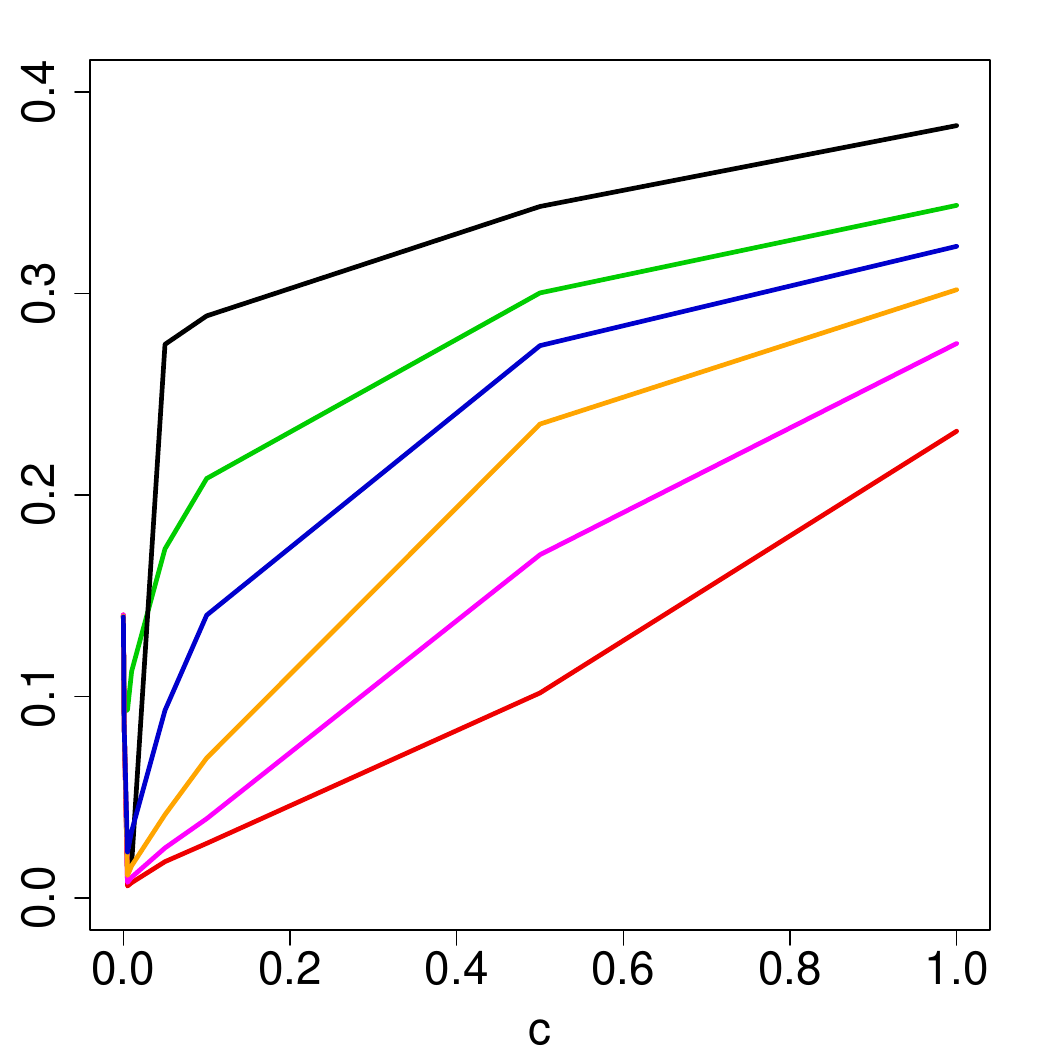}
\end{tabular}
\vskip-0.1in
\caption{\small \label{fig:EMAE-delta}	 Plot of  \textsc{EMAE}, for two choices of the threshold $\umbral_0$. Panels a) and c) display the EMAE as a function  of the sample size $n$, when $c=0.01$, while panels b) and d) present the EMAE  as a function of the penalizing constant $c$   when $n=1000$  (b) and d). In all cases, the errors standard deviation equals   $\sigma=0.01$.   
The black, green, blue, orange, magenta and red lines represent the  \textsc{EMAE} for $\delta=1, \, 0, \,-\, 0.5,\,-\,1,\,-\,1.5$  and $\,-\,2$.} 
\end{center}
\end{figure}
To analyse the behaviour of the \textsc{EMAE} according to the smoothness of the regression function $r_{\umbral_0,\delta}$ given by the parameter $\delta$, panels a) and c) in Figure \ref{fig:EMAE-delta}  present  a plot of  \textsc{EMAE} as a function  of $n$ for four values of $\delta$ when $c=0.01$, while  panels b) and d)    display the  \textsc{EMAE} as a function  of the penalizing constant $c$ also for four choices of $\delta$ and $n=1000$.  The standard deviation $\sigma$ was set equal to $0.01$ in all cases. When  $\umbral_0=0.5$, the green, violet, orange and magenta lines represent the  \textsc{EMAE} for $\delta=0, \,-\, 0.5,\,-\,1$ and $\,-\,1.5$, while for   $\umbral_0=0.75$, the blue, green,   orange and grey lines correspond to  $\delta=1, 0, ,\,-\,1$ and $\,-\,2$. Note that the case $\delta= \,-\, 1$ was already considered in the left panels of Figure \ref{fig:EMAE-c}, labelled a) and c), for $\sigma=0.01$ for three sample sizes. 
The obtained plots illustrate that the   empirical mean absolute  error   is smaller as the absolute value  of $\delta$ increases, confirming that the linear relationship is easier to  detect  as   the jump of the derivative of the regression function  at the threshold becomes larger.

\begin{figure}[ht!]
\begin{center}
\renewcommand{\arraystretch}{0.1}
\newcolumntype{G}{>{\centering\arraybackslash}m{\dimexpr.33\linewidth-1\tabcolsep}}
\begin{tabular}{GGG}
\multicolumn{3}{c}{$r_{\umbral_0,\delta}$, $\umbral_0=0.5$}\\[2ex]
\multicolumn{1}{c}{$\delta= \,-1$} & \multicolumn{1}{c}{$\delta= \,0$} & \multicolumn{1}{c}{$\delta= \,1$}\\[-2ex]
\includegraphics[scale=0.33]{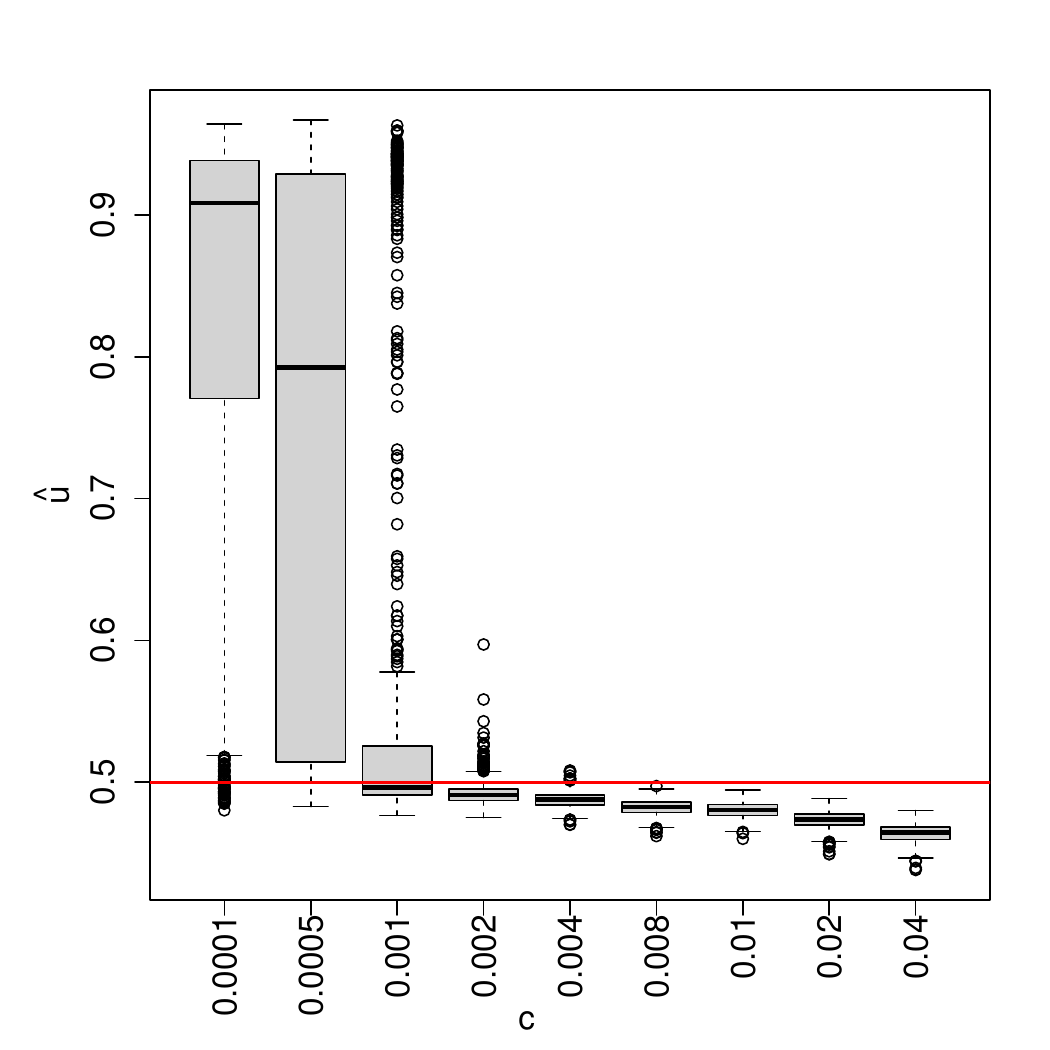} &
\includegraphics[scale=0.33]{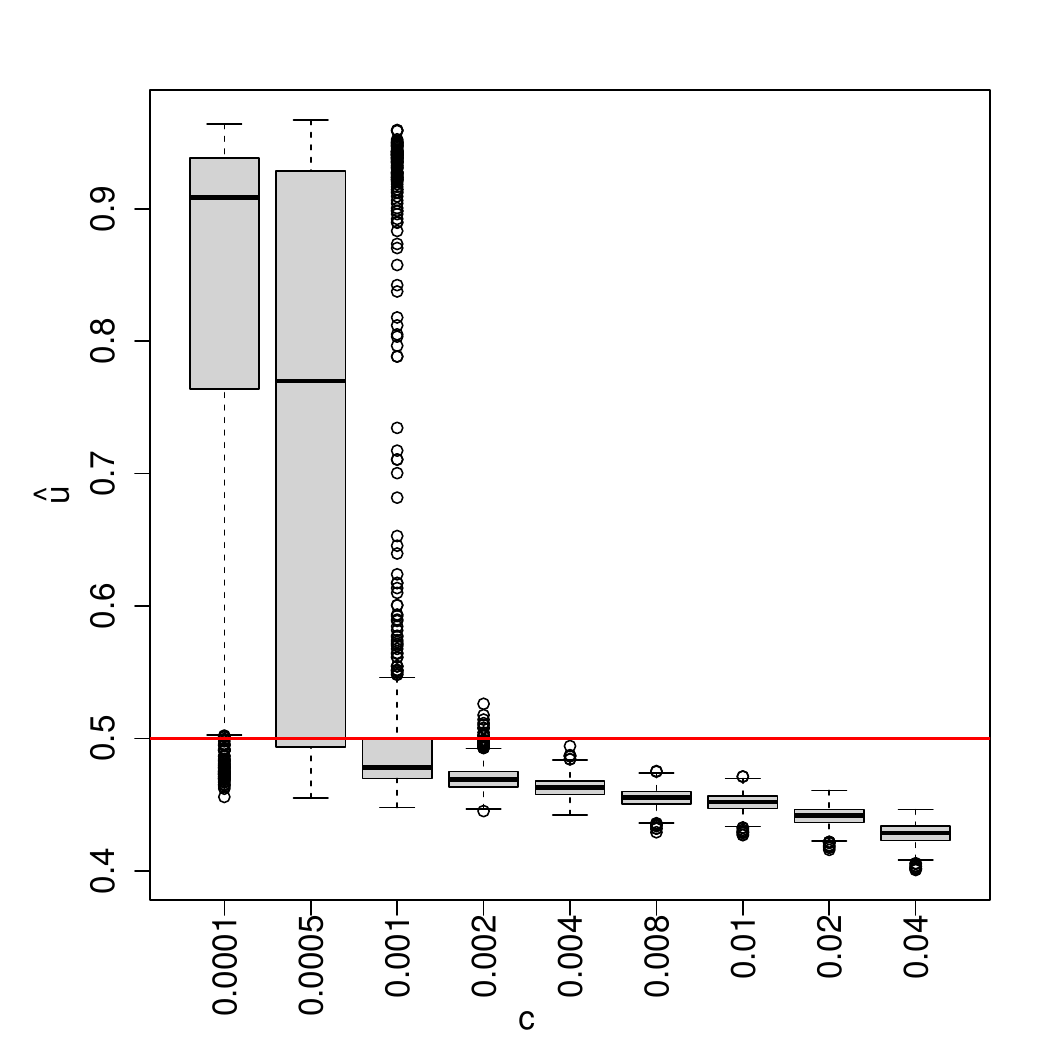} 
&
\includegraphics[scale=0.33]{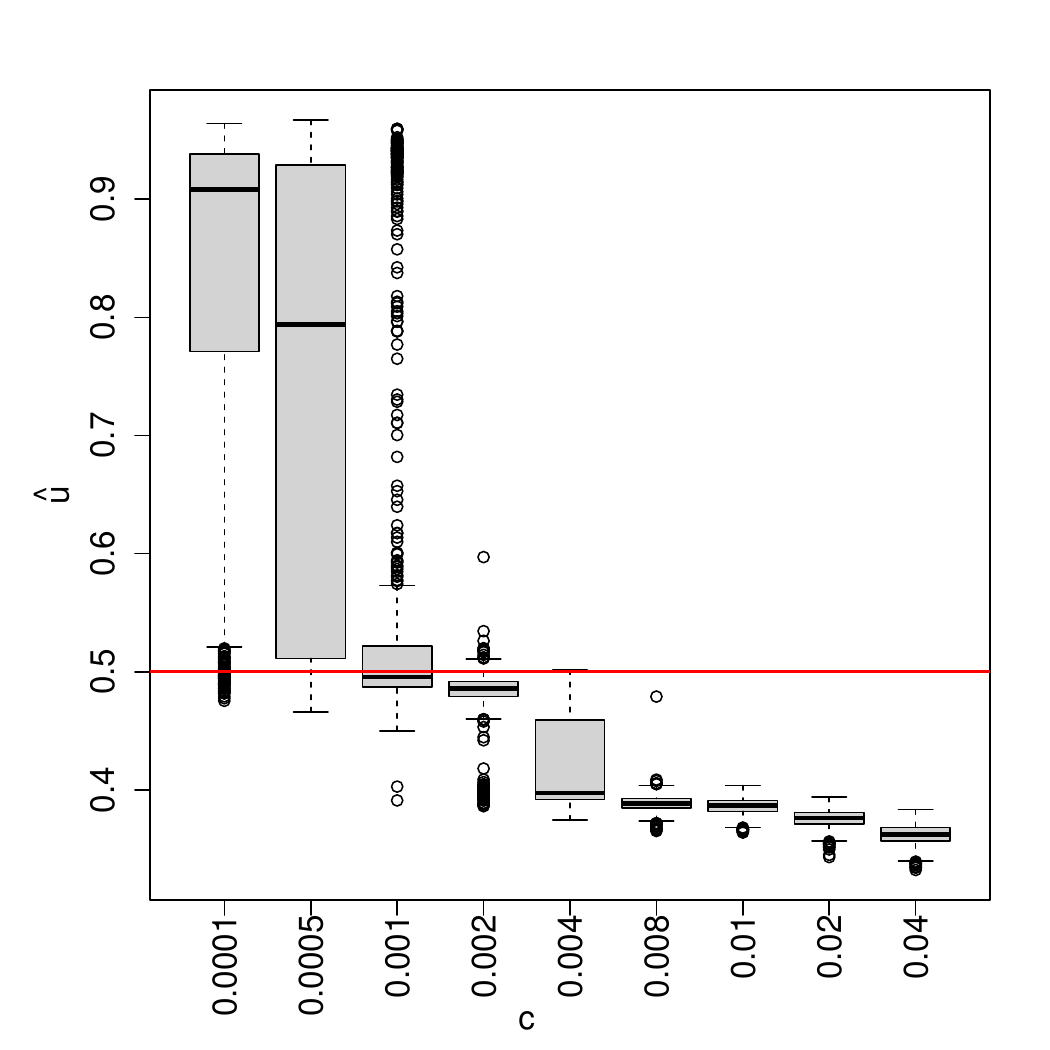}
\\

\multicolumn{3}{c}{$r_{\umbral_0,\delta}$, $\umbral_0=0.75$}\\[2ex]
\multicolumn{1}{c}{$\delta= \,-1$} & \multicolumn{1}{c}{$\delta= \,0$} & \multicolumn{1}{c}{$\delta= \,1$}\\[-2ex]
\includegraphics[scale=0.33]{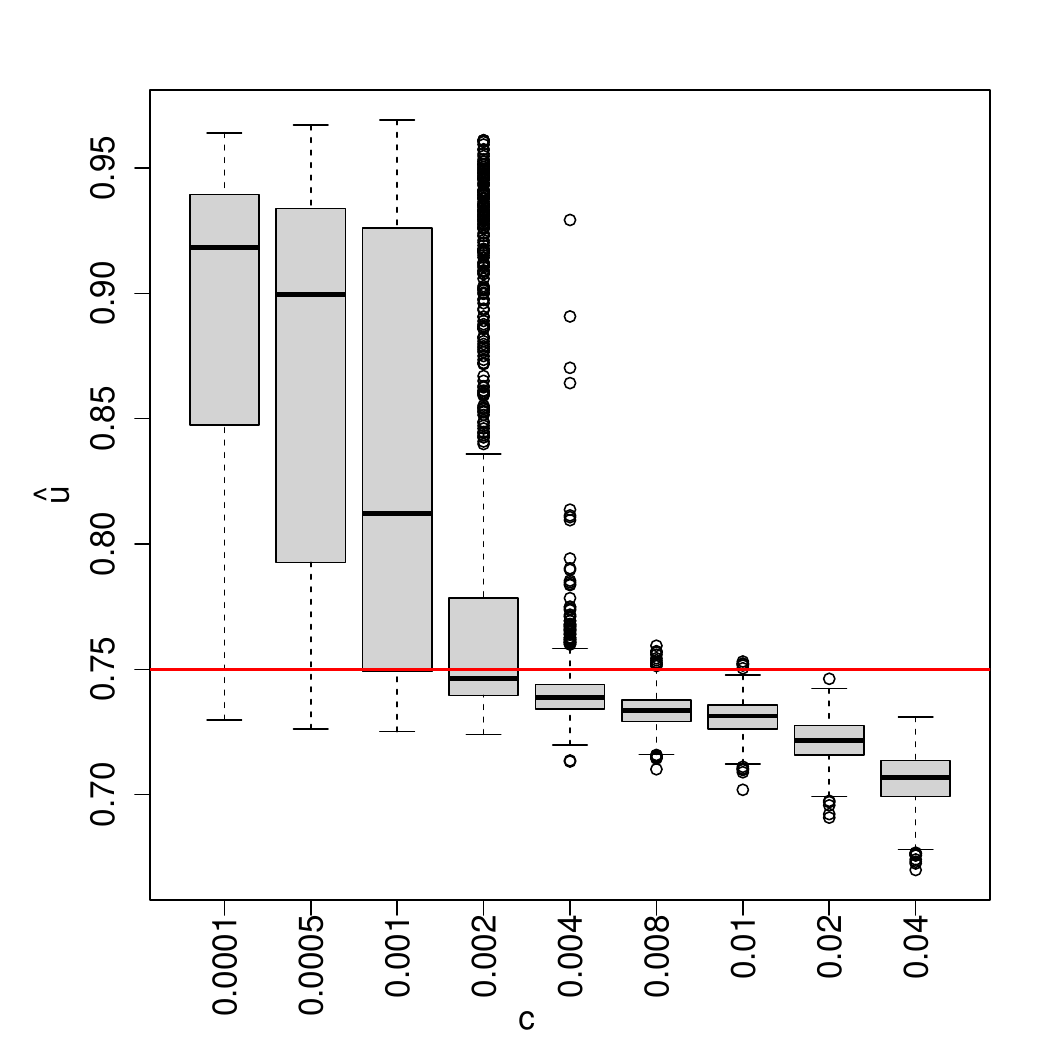} &
\includegraphics[scale=0.33]{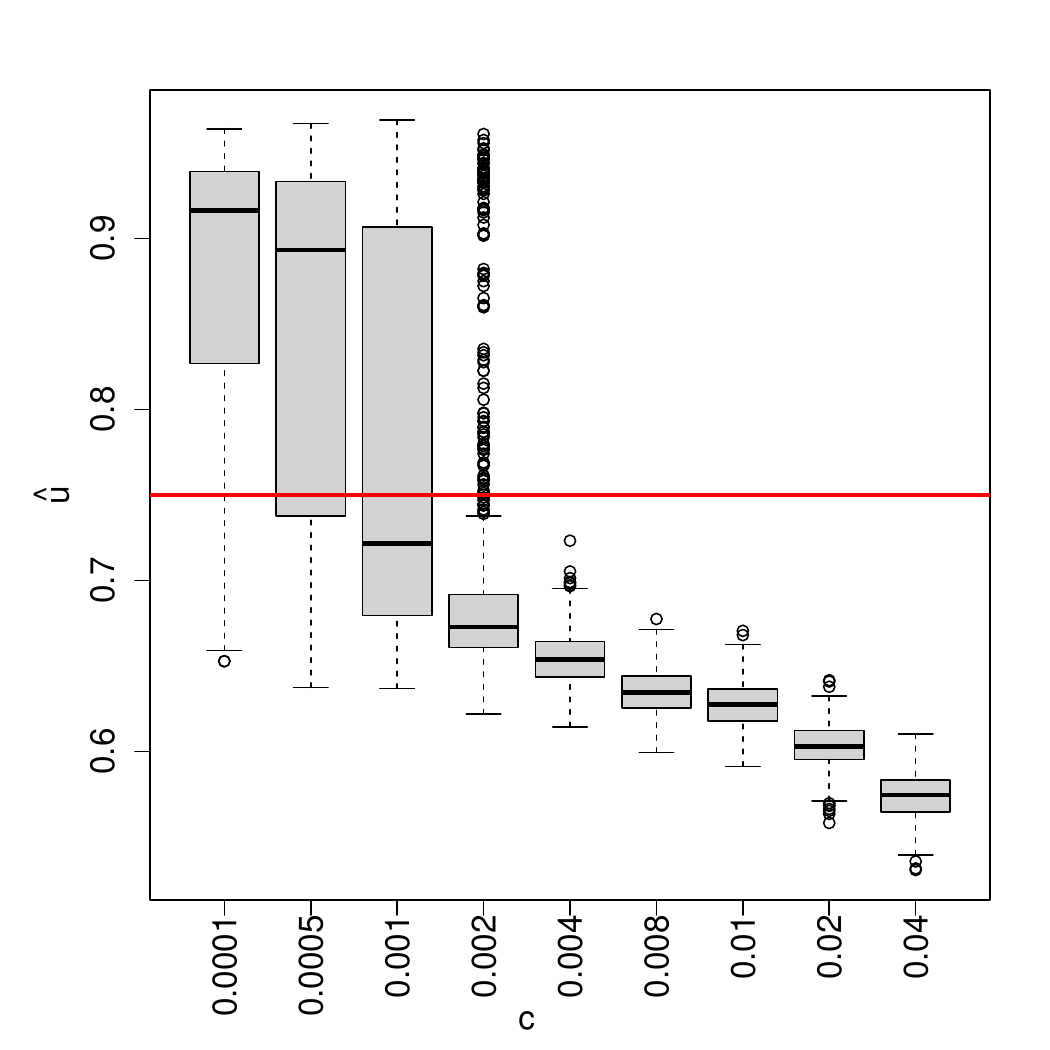}
&
\includegraphics[scale=0.33]{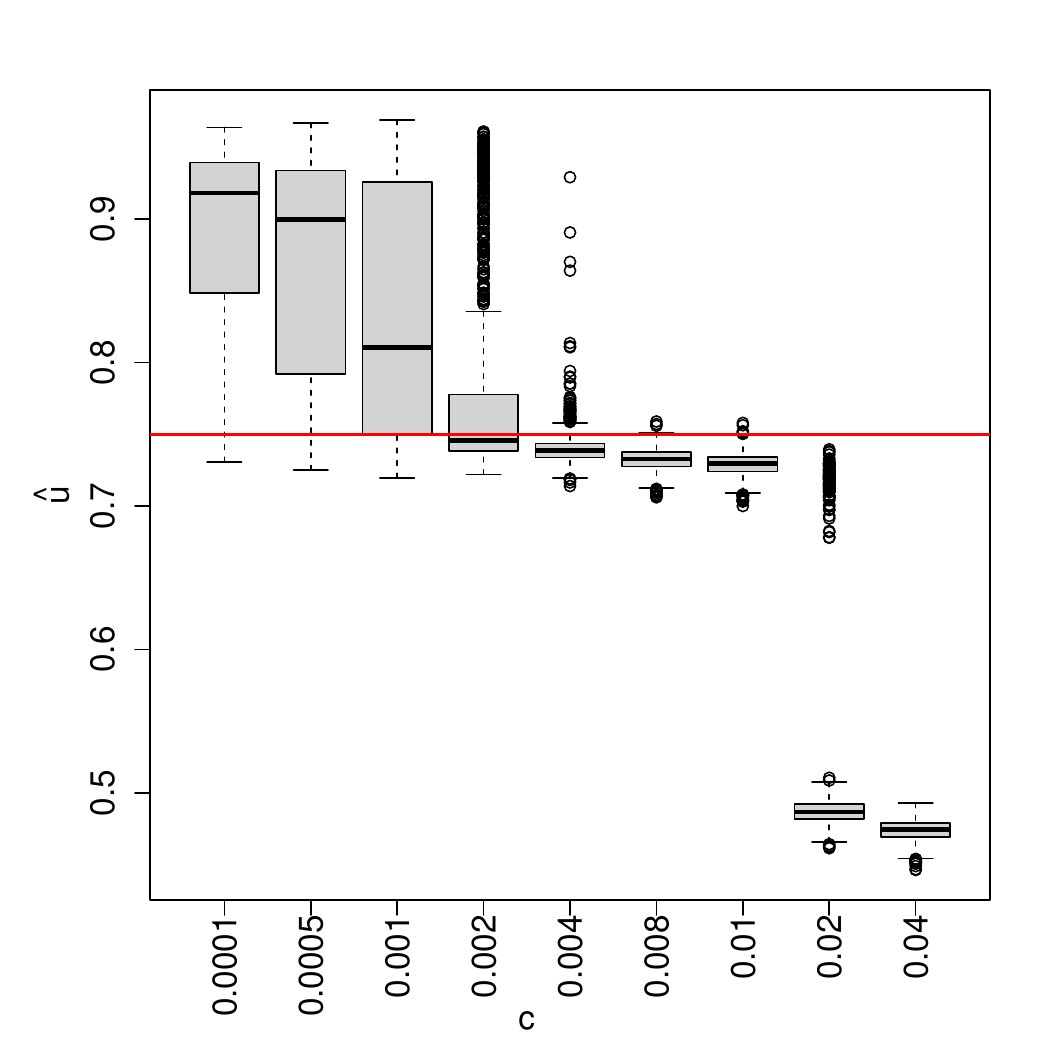}
\end{tabular}
\vskip-0.1in
\caption{\small \label{fig:bxp-uhat}	 Boxplots of the estimators $\widehat{\umbral}$ for different choices of the  penalizing constant $c$,   when $n=500$  and $\sigma=0.01$. The horizontal red line corresponds to the true value $\umbral_0$.} 
\end{center}
\end{figure}

 As an illustration of the finite sample estimators behaviour, Figure \ref{fig:bxp-uhat} displays parallel boxplots of the estimators of the threshold ${\umbral}_0$ for different choices of the penalizing constant $c$, when $u_0=0.5$ and $0.75$, $\delta=-1,0$ and $1$, $\sigma=0.01$ and the sample size equals 500. Similar plots are obtained for other values of $n$. The true parameter is indicated with the horizontal solid  red line. For both choices of $u_0$ and the three values of $\delta$, the constant $c$  takes values in   $\{ 0.0001, 0.0005,  0.001,0.002,0.004,$ $0.008, 0.01, 0.02 , 0.04\}$.
 Figure \ref{fig:bxp-uhat}  shows that, as expected, for small values of $c$ values, the threshold  estimates are larger than the target. In contrast, large values of $c$ lead to  boxplots of   $\widehat{\umbral}$ that lie below the true threshold.    In all the considered situations, for one of the selected values of $c$, the boxplot of the estimates is centered at  ${\umbral}_0$.   These plots also explain the \textsc{EMAE} results displayed in the left panel of Figure \ref{fig:EMAE-c}. Figures \ref{fig:bxp-uhat-5} and \ref{fig:bxp-uhat-10} display the boxplots when $\sigma=0.05$ and $0.10$, for proper grid values of $c$. Similar conclusions arise for these values of the standard deviation, but it should be noticed that, as expected, as expected, larger values of the constant $c$ are desirable as $\sigma$ increases. This behaviour was already observed in Figure  \ref{fig:EMAE-c}.

\begin{figure}[ht!]
\begin{center}
\renewcommand{\arraystretch}{0.1}
\newcolumntype{G}{>{\centering\arraybackslash}m{\dimexpr.33\linewidth-1\tabcolsep}}
\begin{tabular}{GGG}
\multicolumn{3}{c}{$r_{\umbral_0,\delta}$, $\umbral_0=0.5$}\\[2ex]
\multicolumn{1}{c}{$\delta= \,-1$} & \multicolumn{1}{c}{$\delta= \,0$} & \multicolumn{1}{c}{$\delta= \,1$}\\[-2ex]
\includegraphics[scale=0.33]{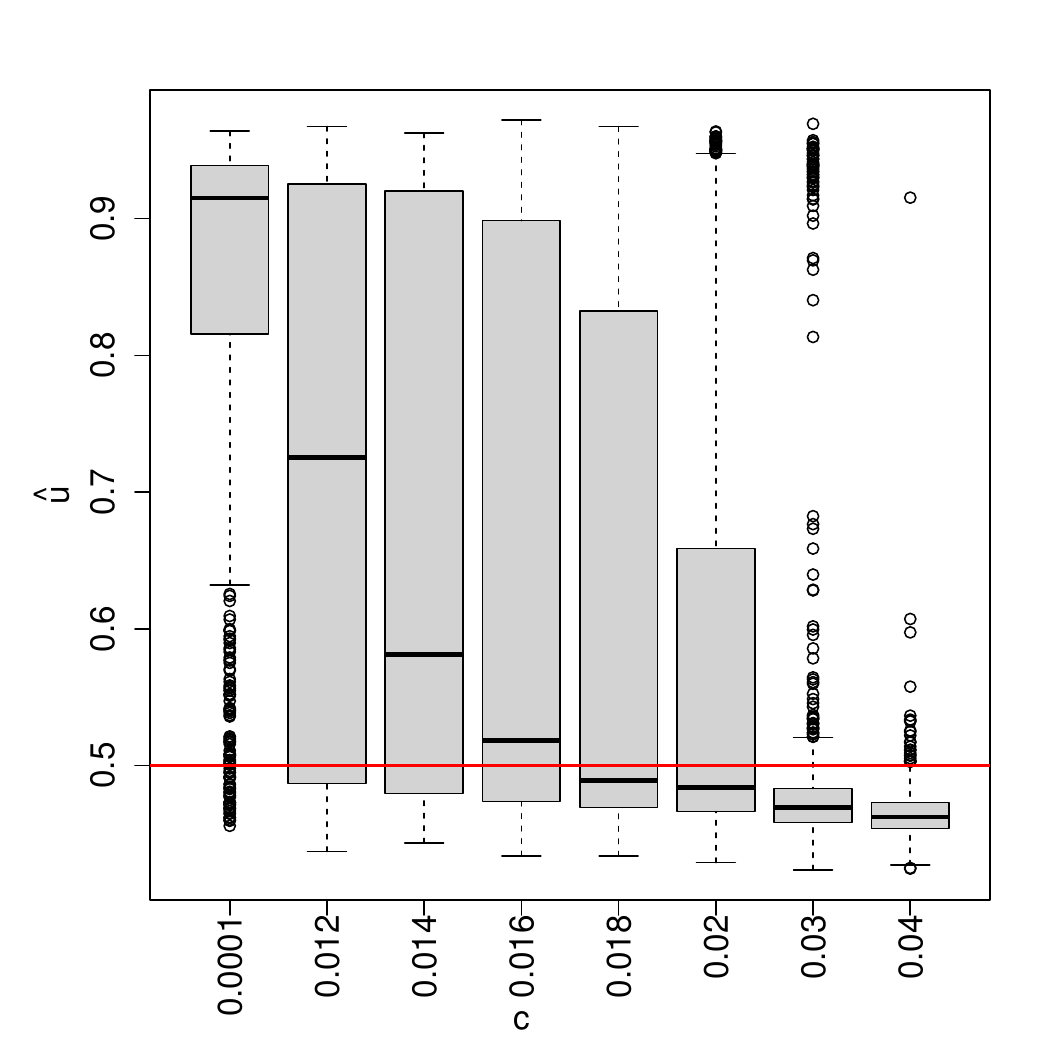} &
\includegraphics[scale=0.33]{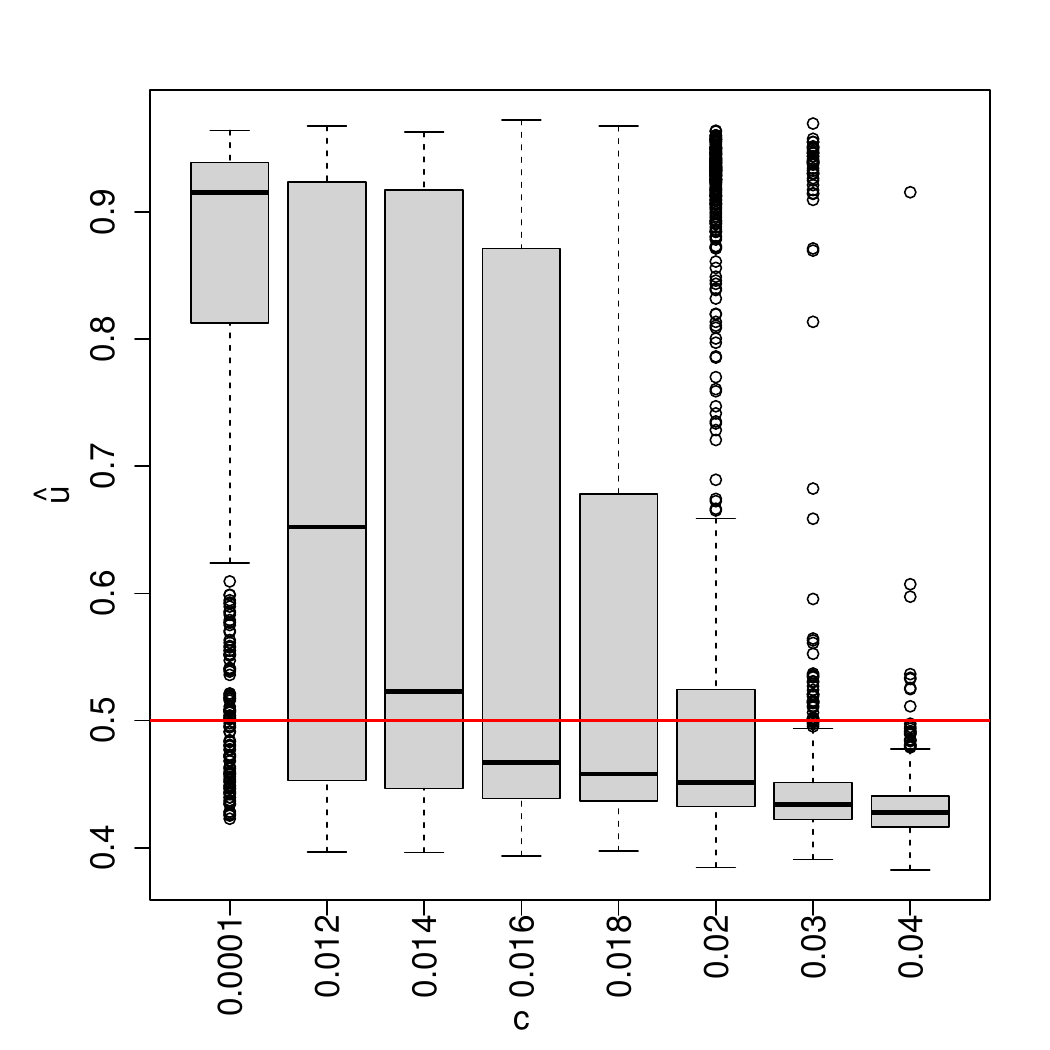} 
&
\includegraphics[scale=0.33]{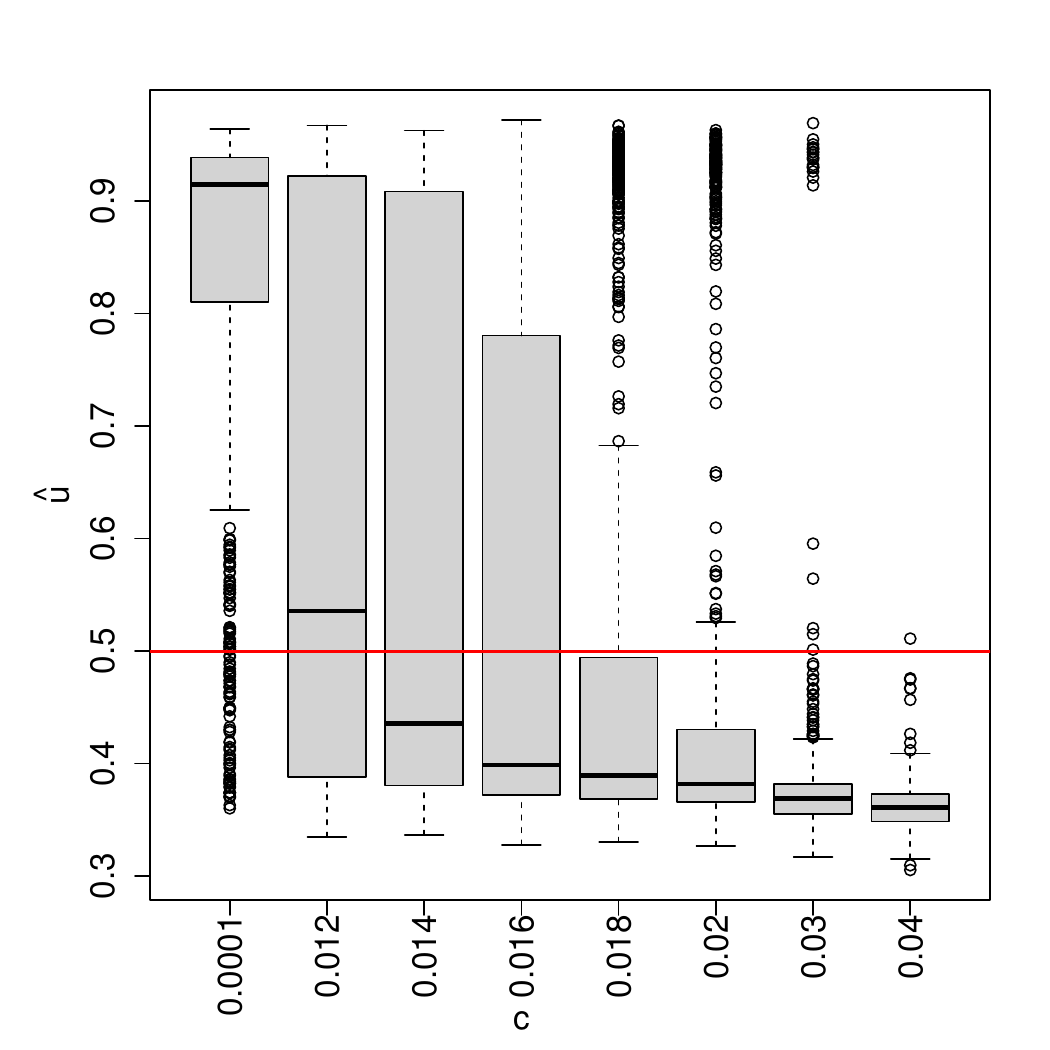}
\\

\multicolumn{3}{c}{$r_{\umbral_0,\delta}$, $\umbral_0=0.75$}\\[2ex]
\multicolumn{1}{c}{$\delta= \,-1$} & \multicolumn{1}{c}{$\delta= \,0$} & \multicolumn{1}{c}{$\delta= \,1$}\\[-2ex]
\includegraphics[scale=0.33]{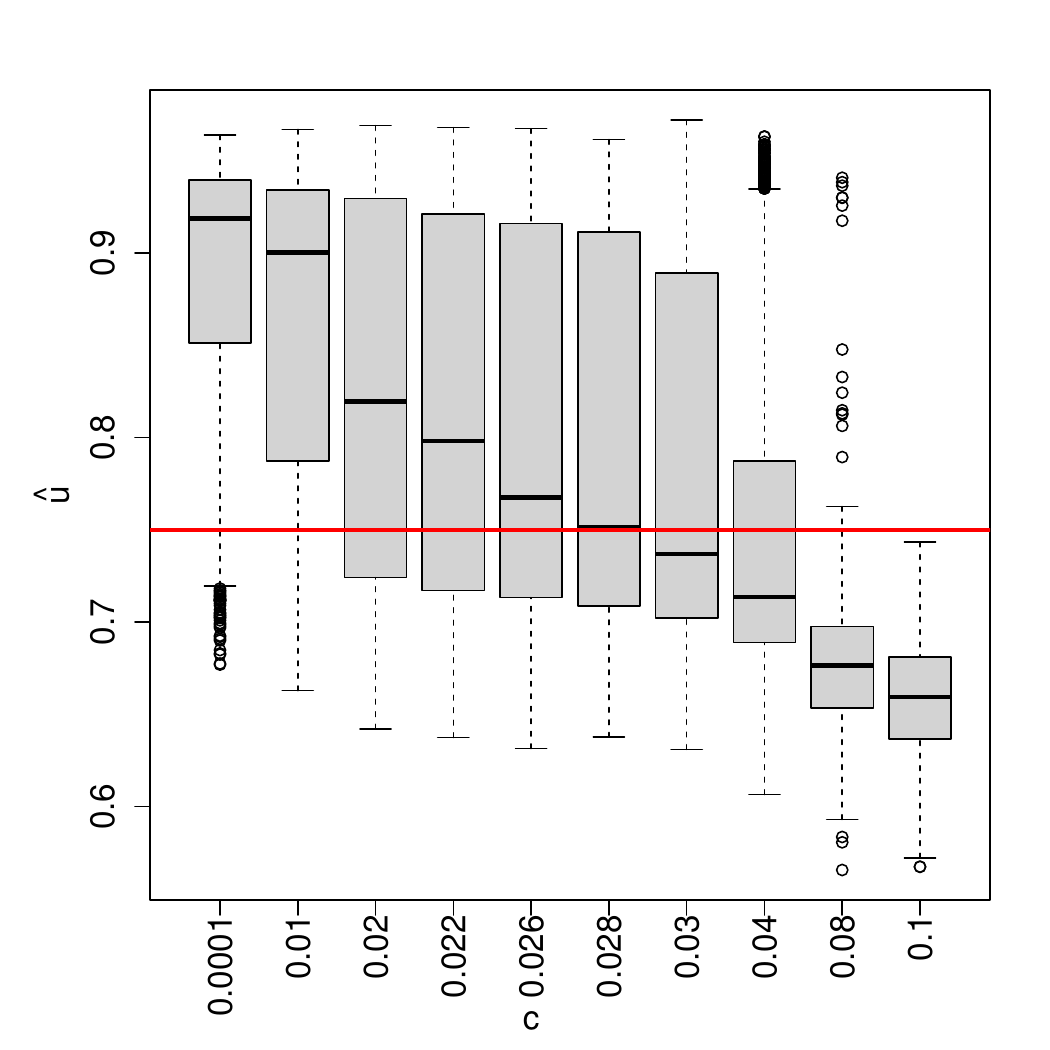} &
\includegraphics[scale=0.33]{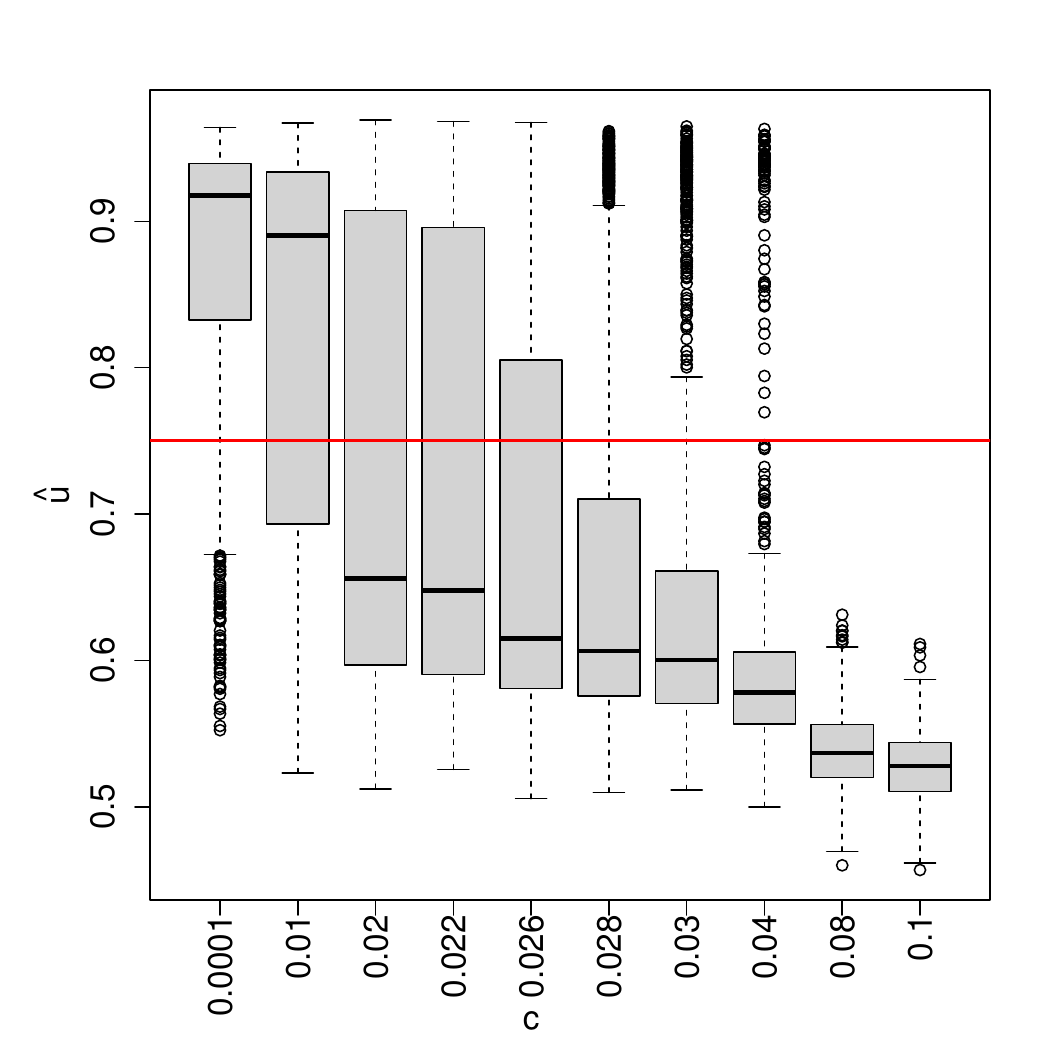}
&
\includegraphics[scale=0.33]{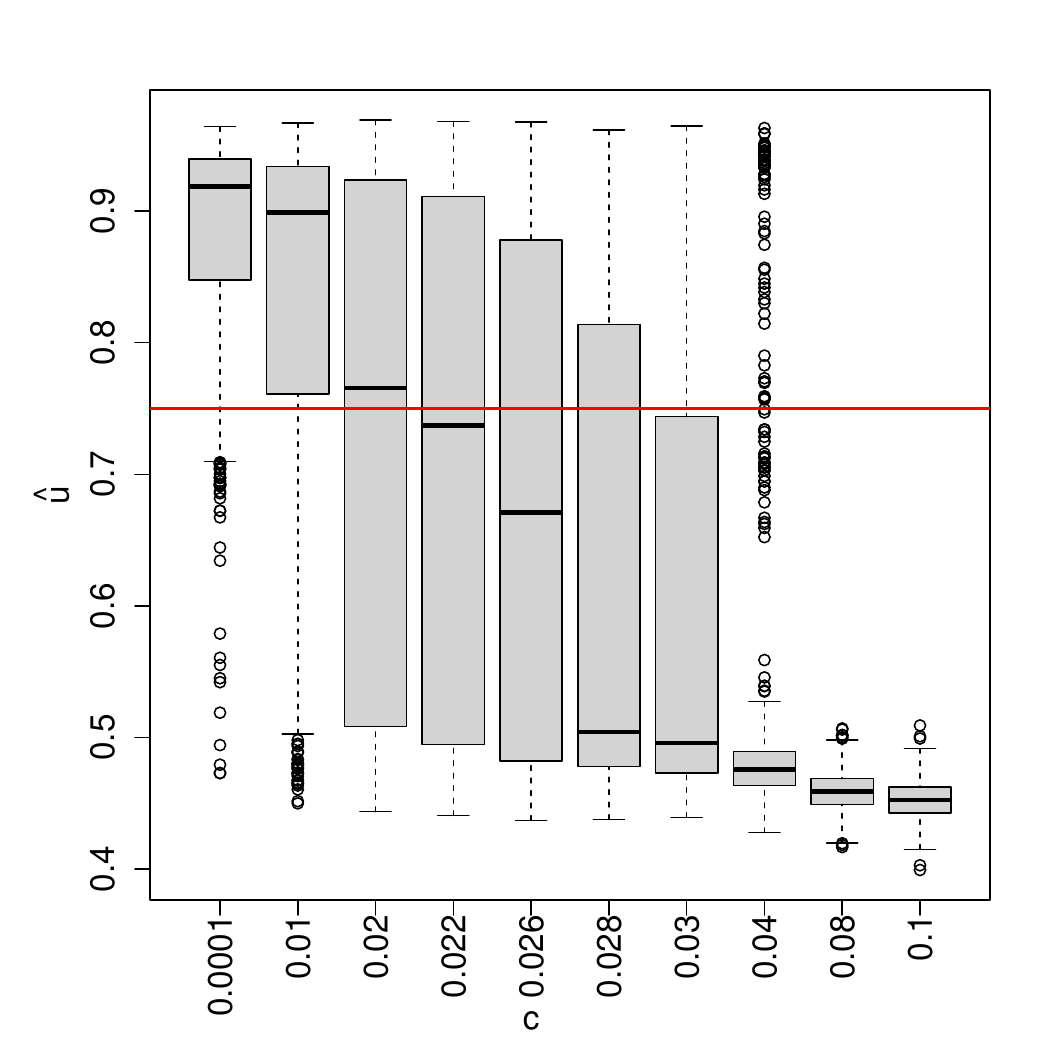}
\end{tabular}
\vskip-0.1in
\caption{\small \label{fig:bxp-uhat-5}	 Boxplots of the estimators $\widehat{\umbral}$ for different choices of the  penalizing constant $c$,   when $n=500$  and $\sigma=0.05$. The horizontal red line corresponds to the true value $\umbral_0$.} 
\end{center}
\end{figure}

\begin{figure}[ht!]
\begin{center}
\renewcommand{\arraystretch}{0.1}
\newcolumntype{G}{>{\centering\arraybackslash}m{\dimexpr.33\linewidth-1\tabcolsep}}
\begin{tabular}{GGG}
\multicolumn{3}{c}{$r_{\umbral_0,\delta}$, $\umbral_0=0.5$}\\[2ex]
\multicolumn{1}{c}{$\delta= \,-1$} & \multicolumn{1}{c}{$\delta= \,0$} & \multicolumn{1}{c}{$\delta= \,1$}\\[-2ex]
\includegraphics[scale=0.33]{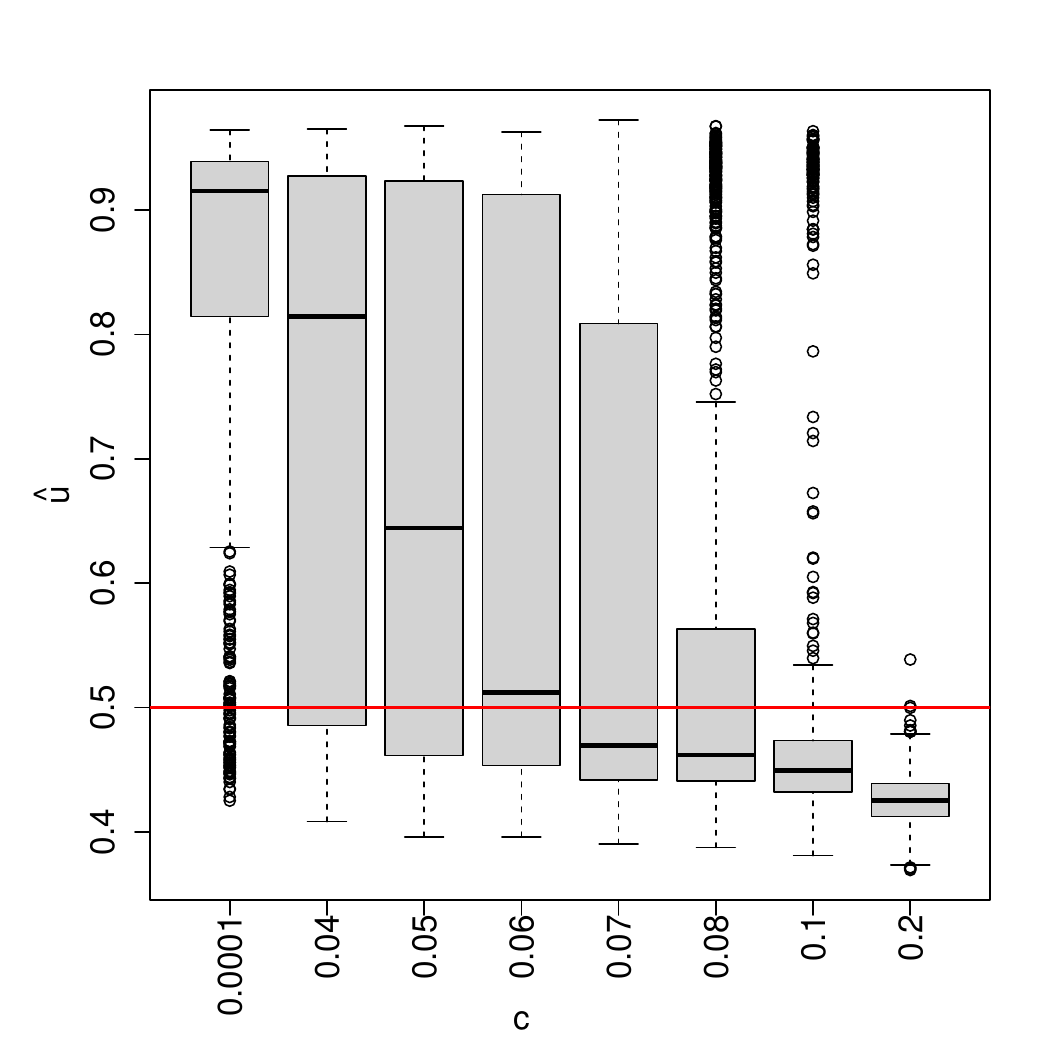} &
\includegraphics[scale=0.33]{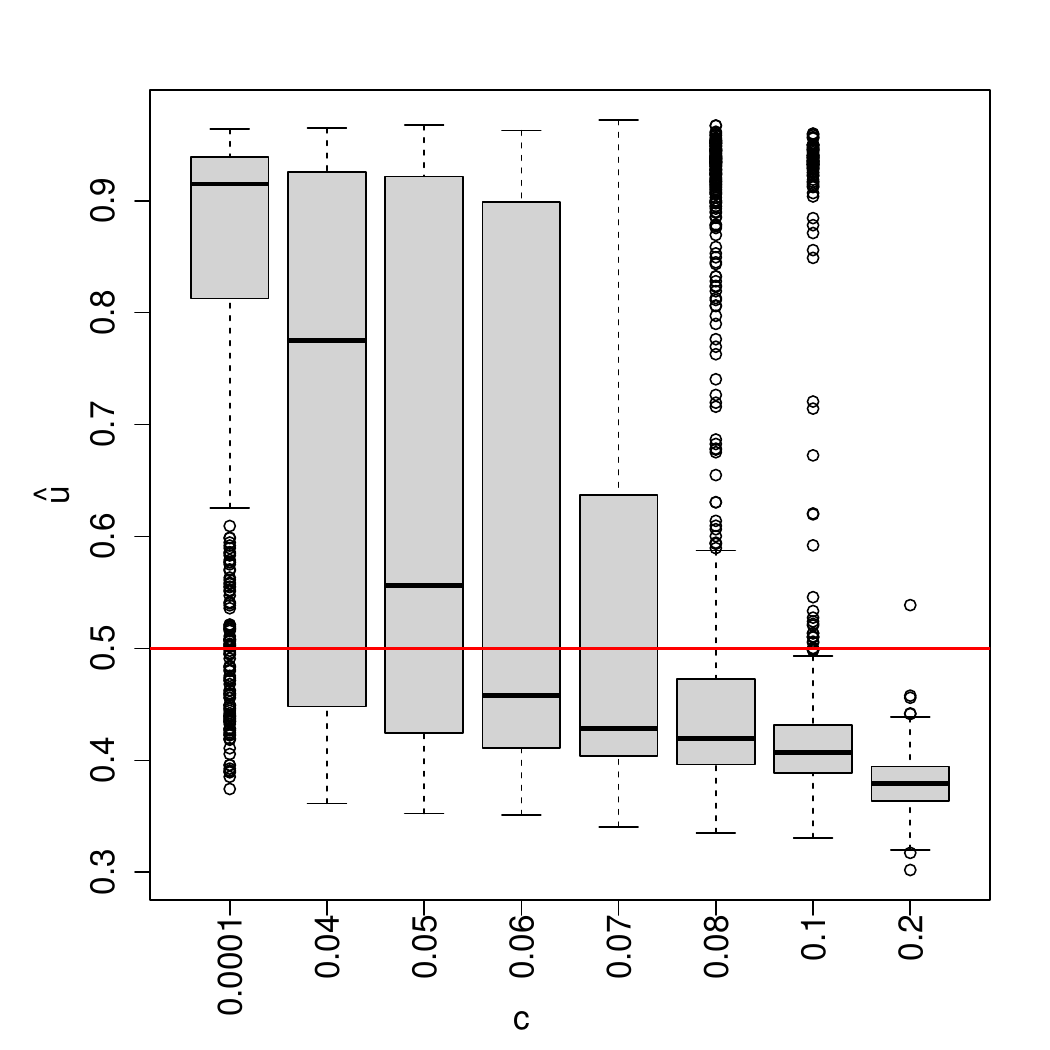} 
&
\includegraphics[scale=0.33]{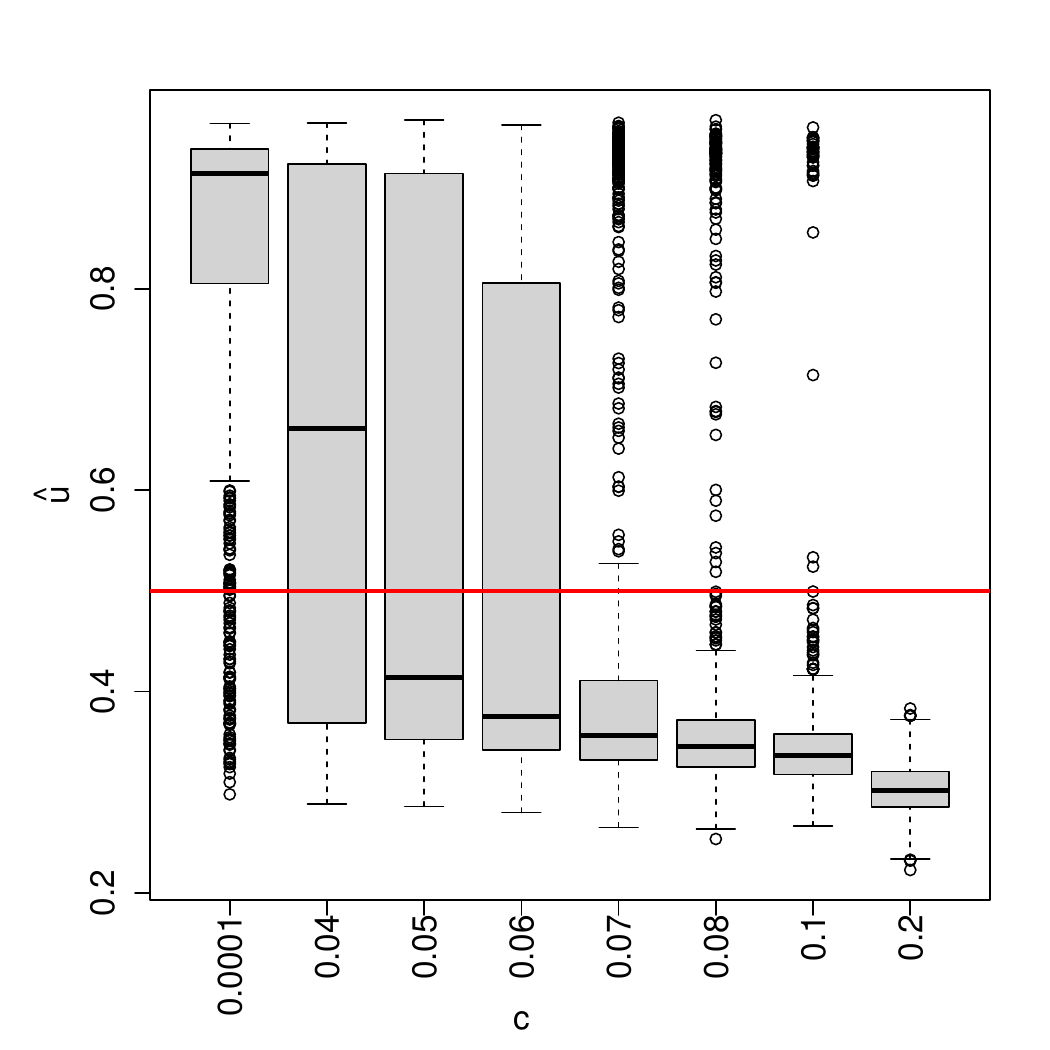}
\\

\multicolumn{3}{c}{$r_{\umbral_0,\delta}$, $\umbral_0=0.75$}\\[2ex]
\multicolumn{1}{c}{$\delta= \,-1$} & \multicolumn{1}{c}{$\delta= \,0$} & \multicolumn{1}{c}{$\delta= \,1$}\\[-2ex]
\includegraphics[scale=0.33]{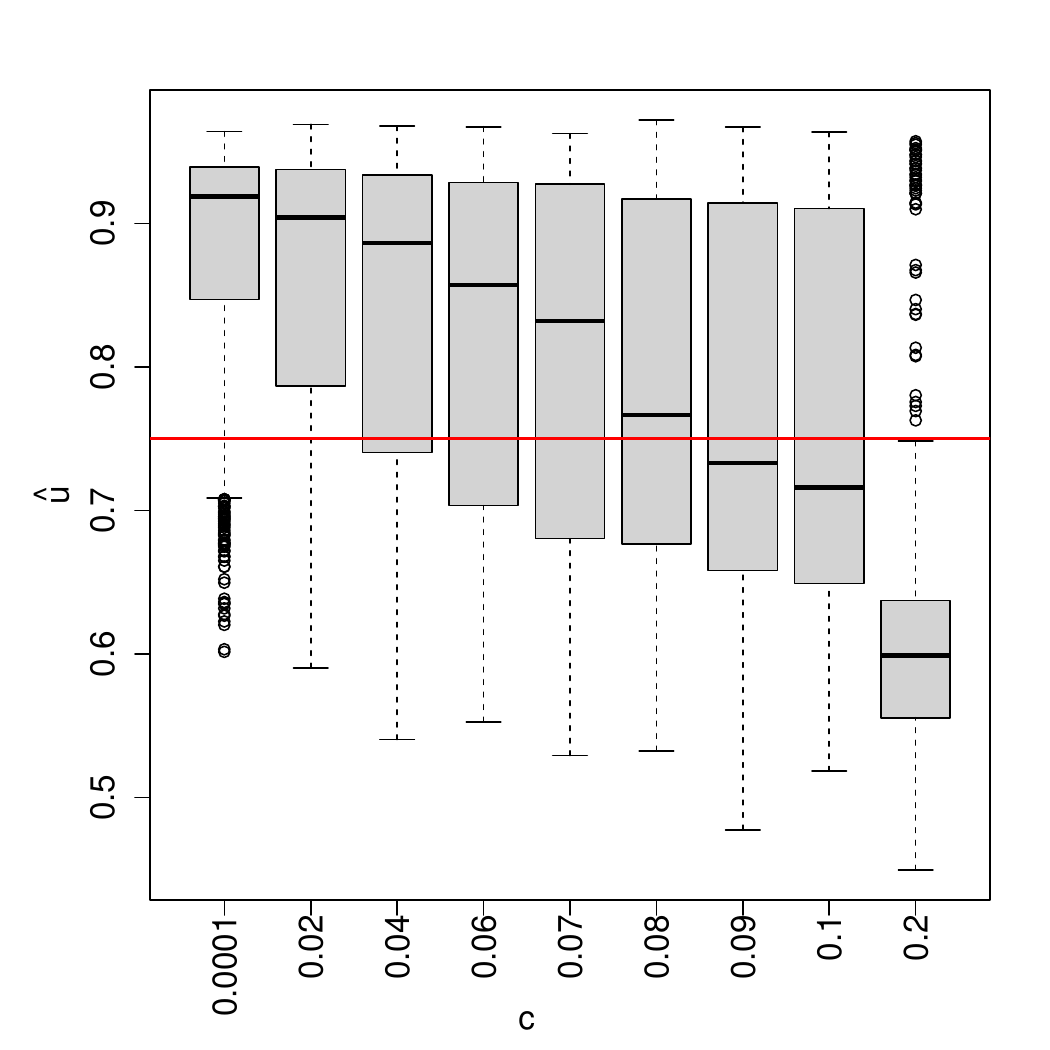} &
\includegraphics[scale=0.33]{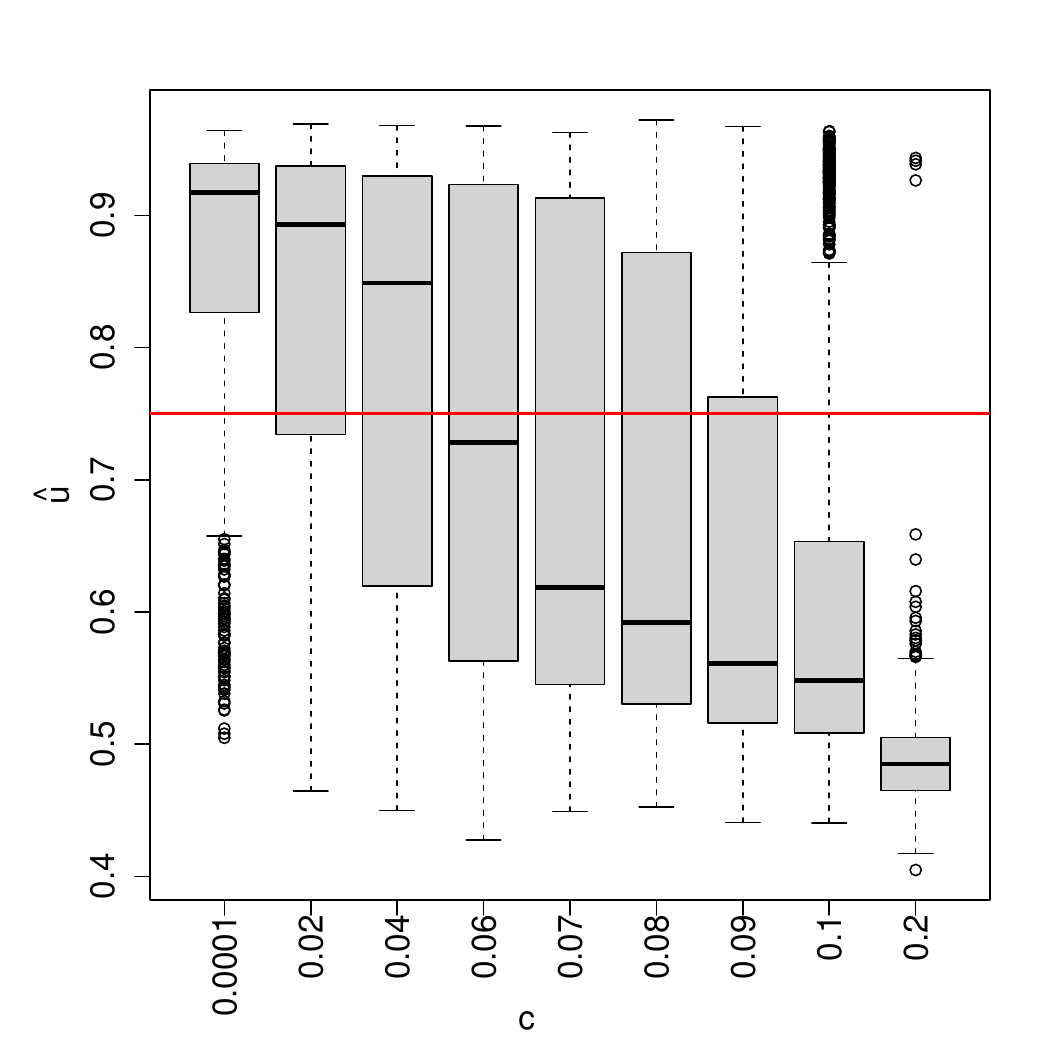}
&
\includegraphics[scale=0.33]{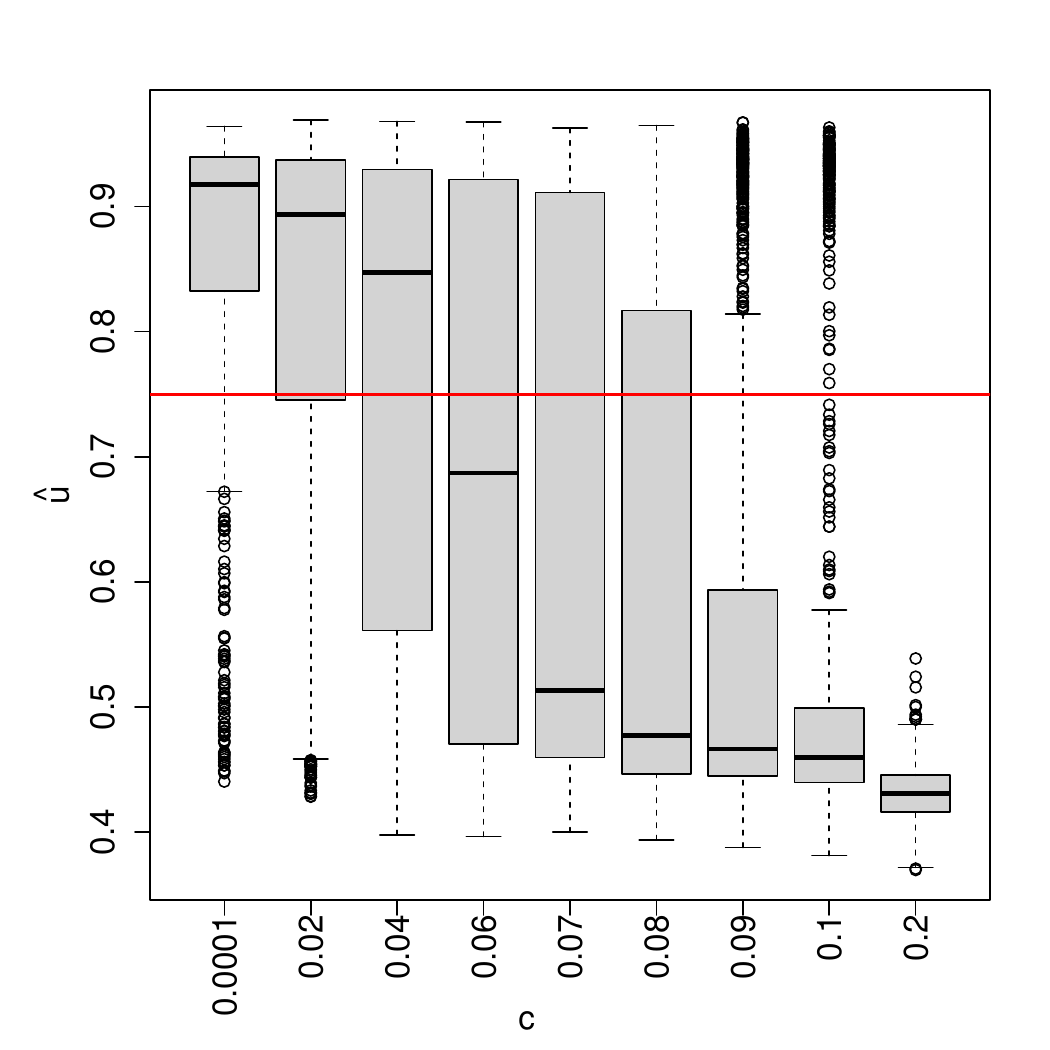}
\end{tabular}
\vskip-0.1in
\caption{\small \label{fig:bxp-uhat-10}	 Boxplots of the estimators $\widehat{\umbral}$ for different choices of the  penalizing constant $c$,   when $n=500$  and $\sigma=0.10$. The horizontal red line corresponds to the true value $\umbral_0$.} 
\end{center}
\end{figure}

\begin{figure}[ht!]
\begin{center}
\renewcommand{\arraystretch}{0.1}
\newcolumntype{G}{>{\centering\arraybackslash}m{\dimexpr.33\linewidth-1\tabcolsep}}
\begin{tabular}{GGG}
\multicolumn{3}{c}{$r_{\umbral_0,\delta}$, $\umbral_0=0.5$}\\[2ex]
\multicolumn{1}{c}{$\delta= \,-1$} & \multicolumn{1}{c}{$\delta= \,0$} & \multicolumn{1}{c}{$\delta= \,1$}\\[-2ex]
\includegraphics[scale=0.33]{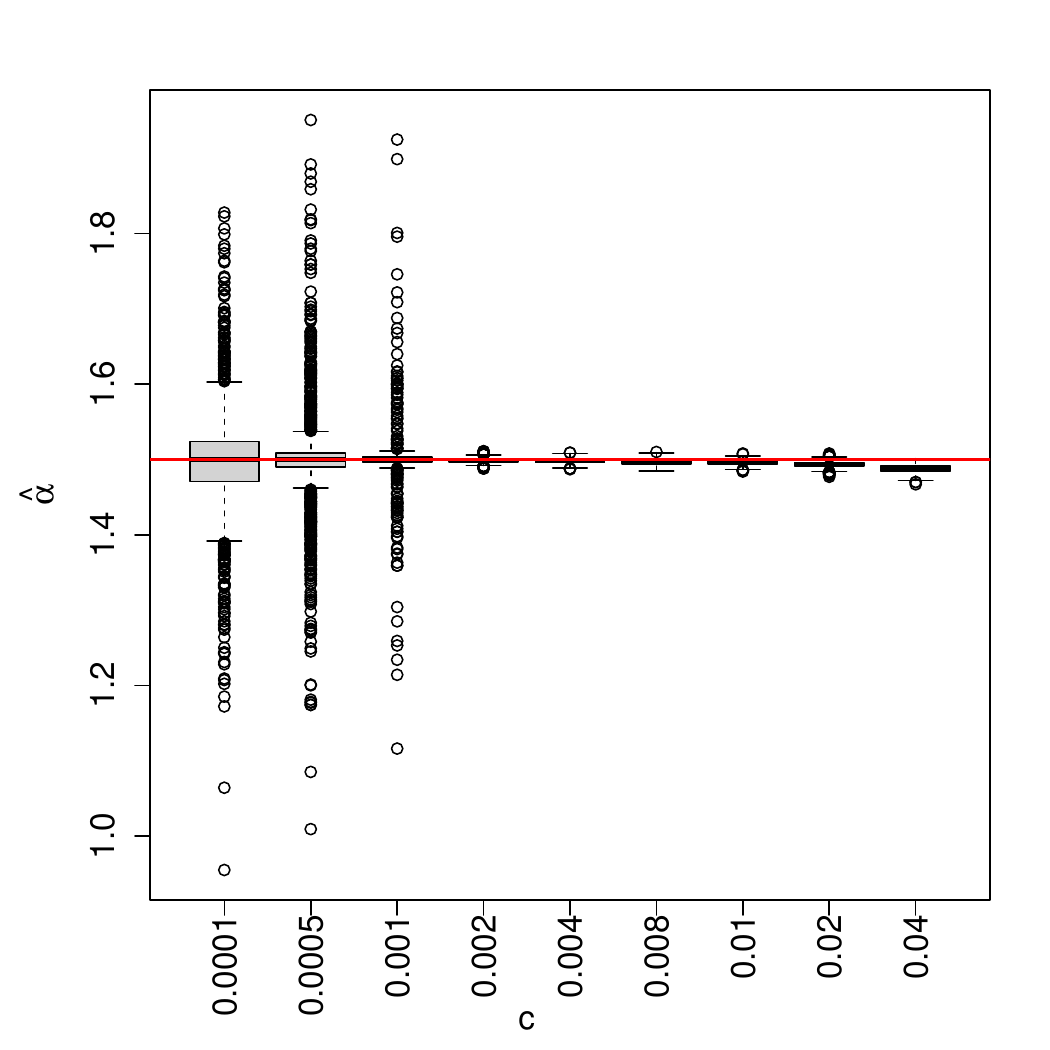} &
\includegraphics[scale=0.33]{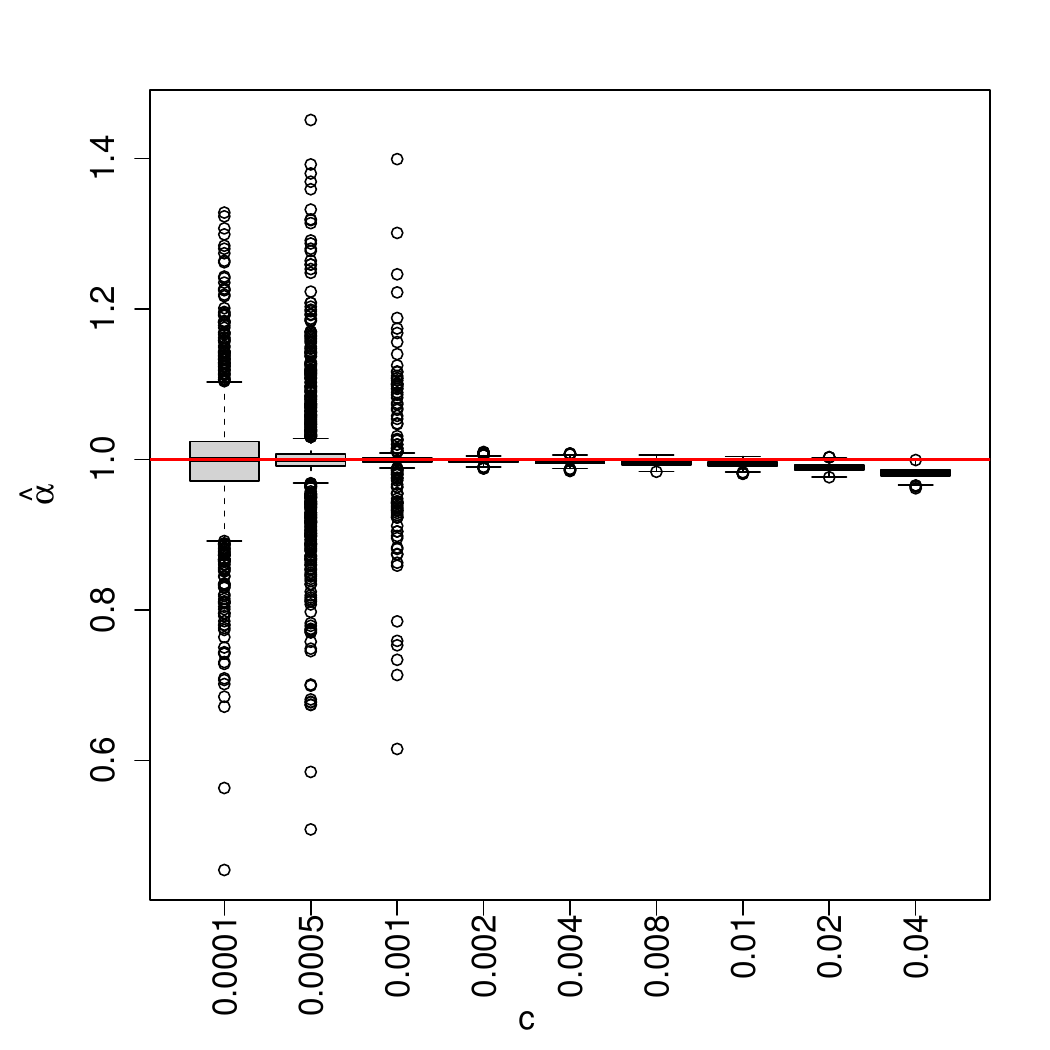}
&
\includegraphics[scale=0.33]{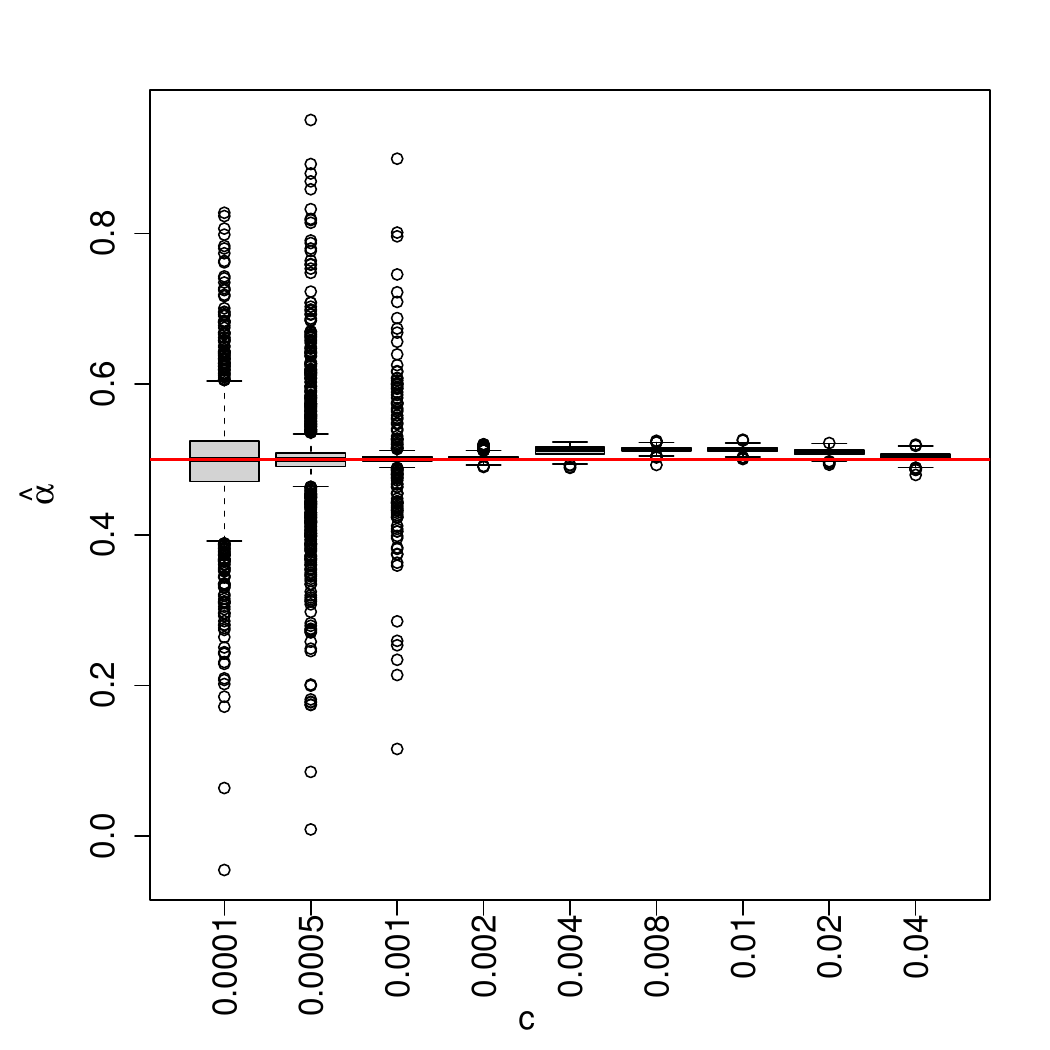}
\\

\multicolumn{3}{c}{$r_{\umbral_0,\delta}$, $\umbral_0=0.75$}\\[2ex]
\multicolumn{1}{c}{$\delta= \,-1$} & \multicolumn{1}{c}{$\delta= \,0$} & \multicolumn{1}{c}{$\delta= \,1$}\\[-2ex]
\includegraphics[scale=0.33]{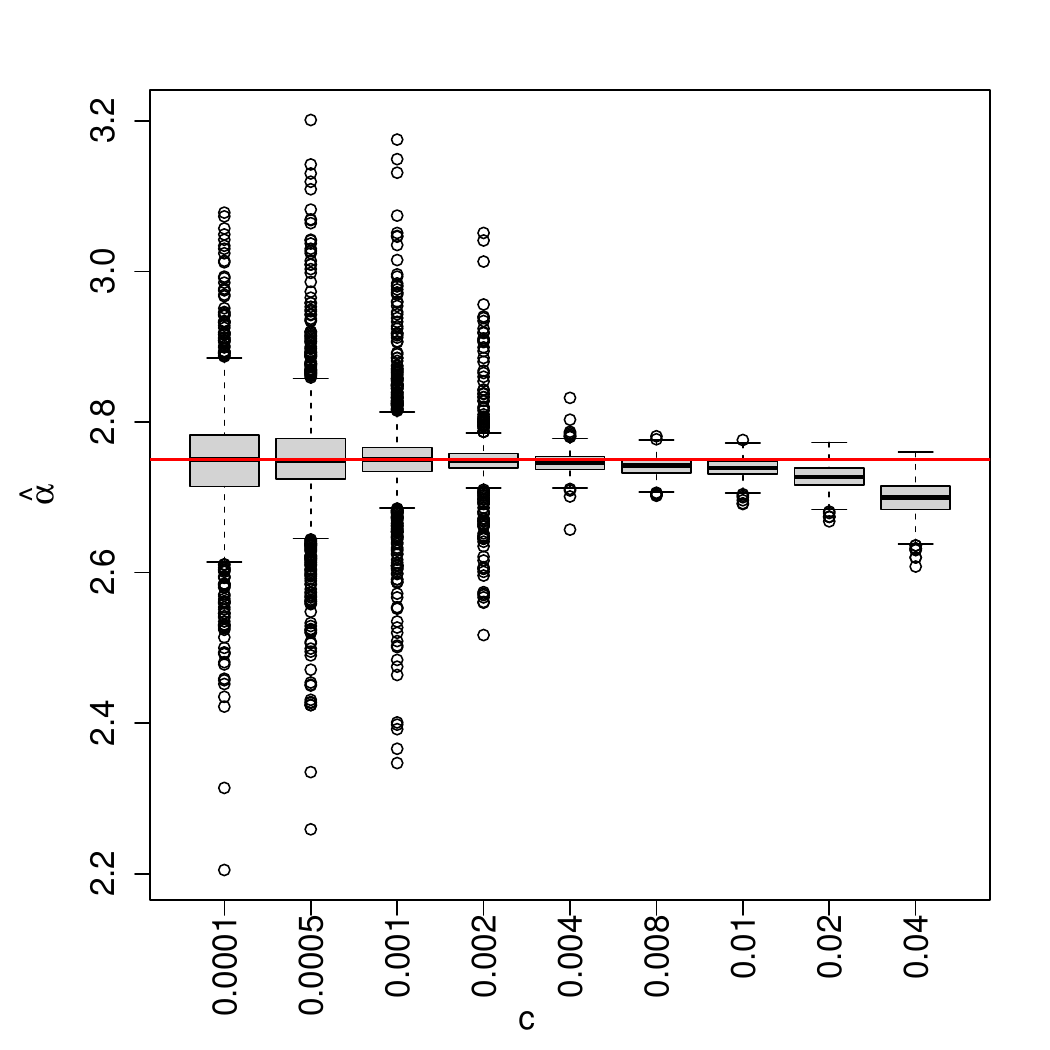} &
\includegraphics[scale=0.33]{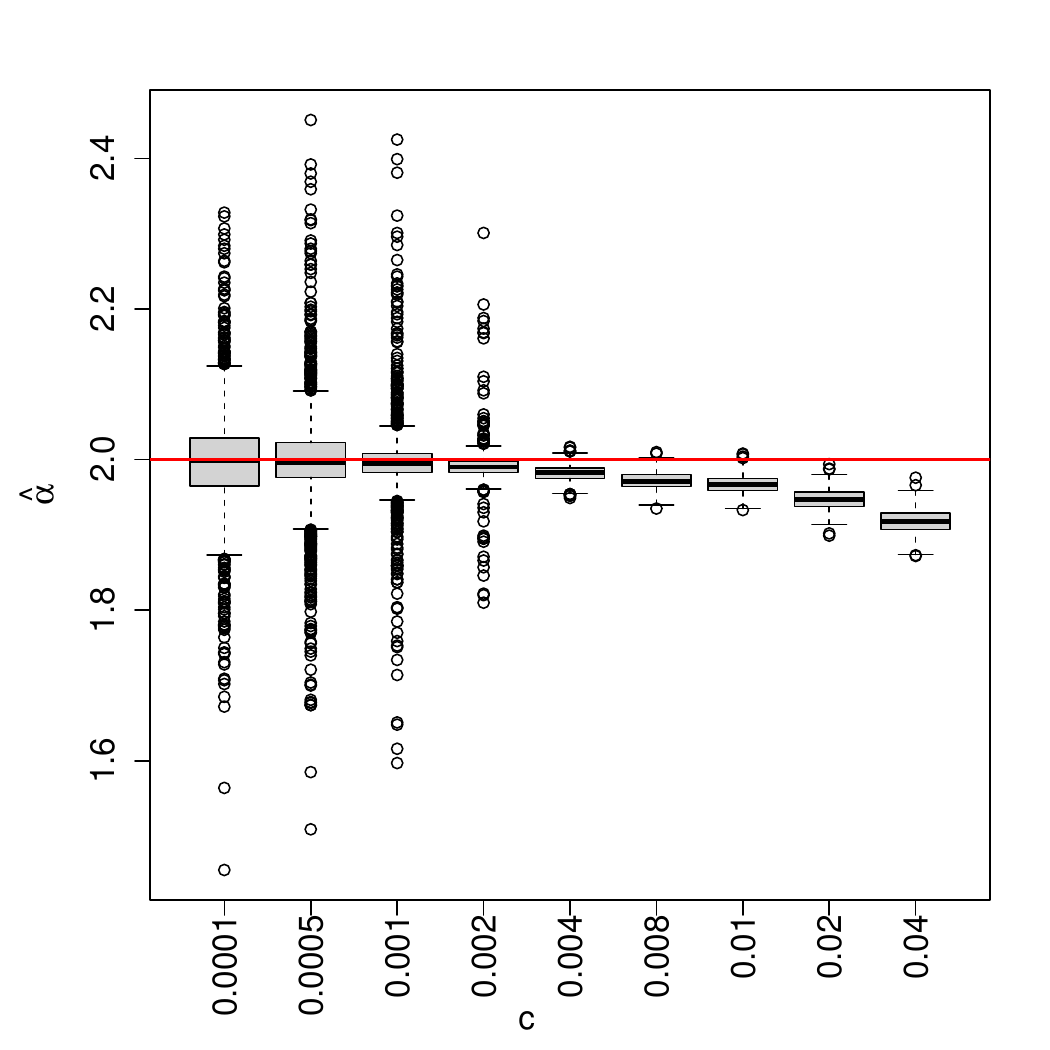} &
\includegraphics[scale=0.33]{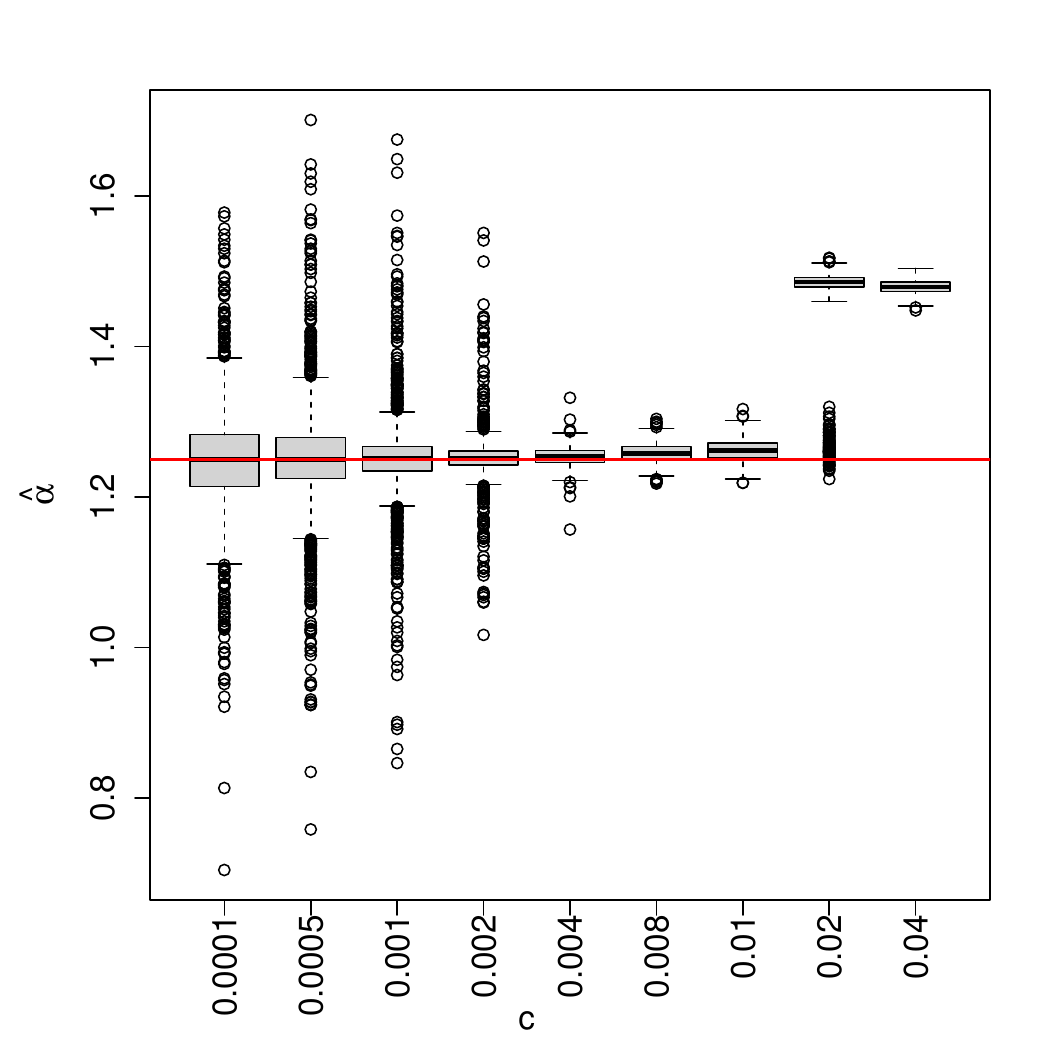}
\end{tabular}
\vskip-0.1in
\caption{\small \label{fig:bxp-alfa}	 Boxplots of the estimators $\widehat{\alpha}$ for different choices of the  penalizing constant $c$,   when $n=500$  and $\sigma=0.01$. The horizontal red line corresponds to the  value $\alpha_0$.} 
\end{center}
\end{figure}

Figures \ref{fig:bxp-alfa} and \ref{fig:bxp-beta} present the boxplots of the estimators $\widehat{\alpha}_{\widehat{\umbral}}$ and  $\widehat{\beta}_{\widehat{\umbral}}$, respectively when $\sigma=0.01$. The boxplots when $\sigma=0.05$ and $0.10$ are displayed in Figures \ref{fig:bxp-alfa-5} to \ref{fig:bxp-beta-10}.   In each Figure, the horizontal solid red line corresponds to the true parameters $\alpha_0$ and $\beta_0$, respectively. Taking into account that for values of $c$ smaller than $0.001$, the threshold  estimates are mainly  larger or equal  than the target,  the estimates of the slope and intercept accomplish the goal since their boxplots are centered around the true values. On the contrary,  for large values of $c$ some of the observations used to estimate the slope and intercept correspond to data generated using the function $g(x)$, that is, with  the nonparametric model component and for that reason, they  fail in their purpose.

\begin{figure}[ht!]
\begin{center}
\renewcommand{\arraystretch}{0.1}
\newcolumntype{G}{>{\centering\arraybackslash}m{\dimexpr.33\linewidth-1\tabcolsep}}
\begin{tabular}{GGG}
\multicolumn{3}{c}{$r_{\umbral_0,\delta}$, $\umbral_0=0.5$}\\[2ex]
\multicolumn{1}{c}{$\delta= \,-1$} & \multicolumn{1}{c}{$\delta= \,0$} & \multicolumn{1}{c}{$\delta= \,1$}\\[-2ex]
\includegraphics[scale=0.33]{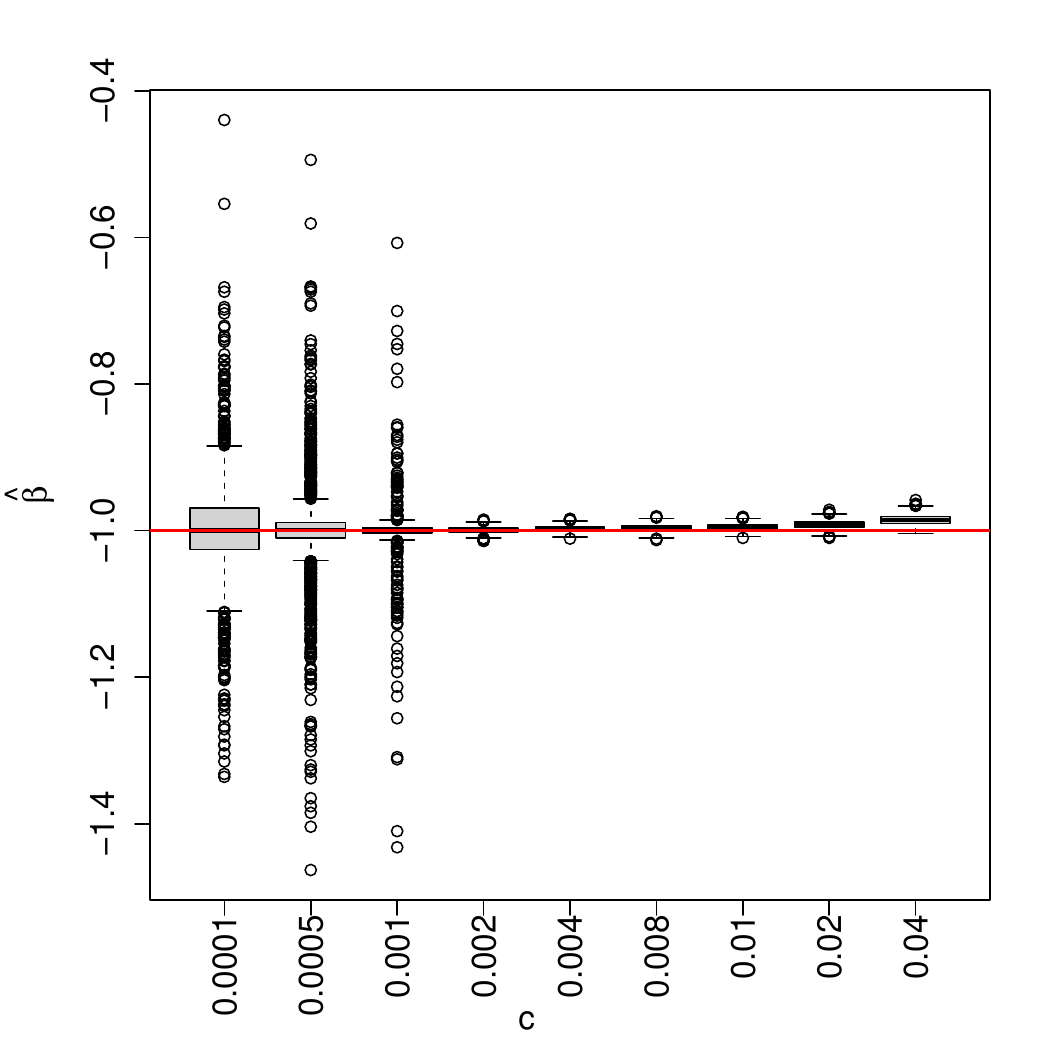} &
\includegraphics[scale=0.33]{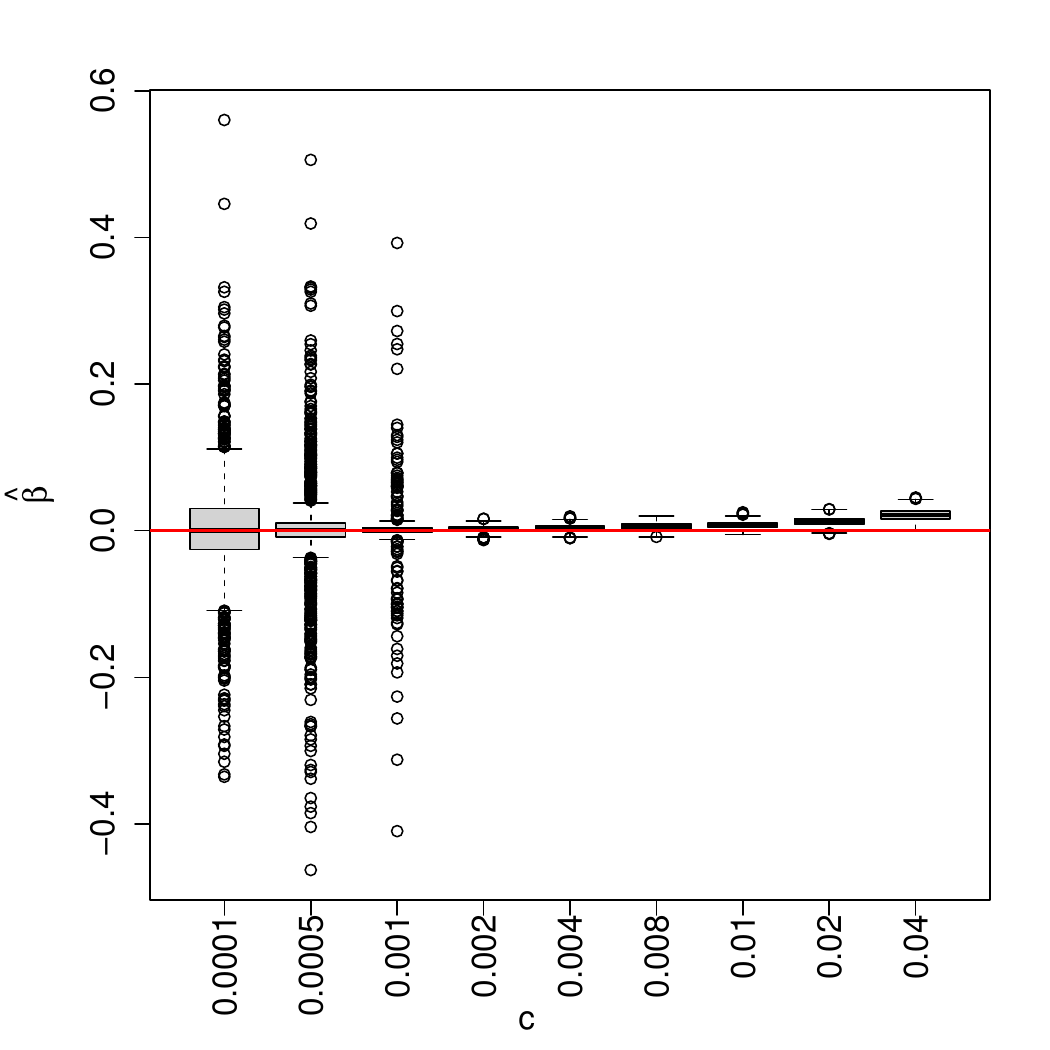}
&
\includegraphics[scale=0.33]{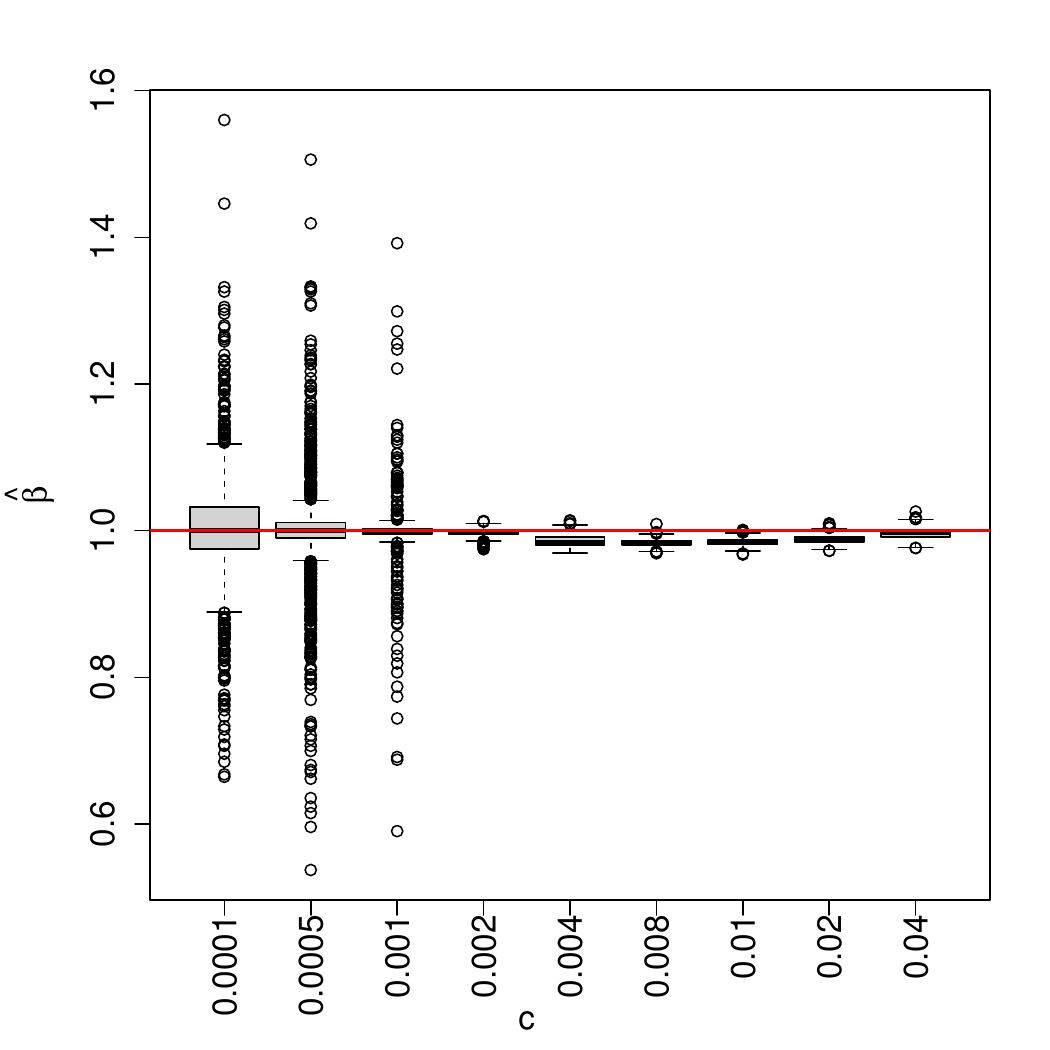}
\\

\multicolumn{3}{c}{$r_{\umbral_0,\delta}$, $\umbral_0=0.75$}\\[2ex]
\multicolumn{1}{c}{$\delta= \,-1$} & \multicolumn{1}{c}{$\delta= \,0$} & \multicolumn{1}{c}{$\delta= \,1$}\\[-2ex]
\includegraphics[scale=0.33]{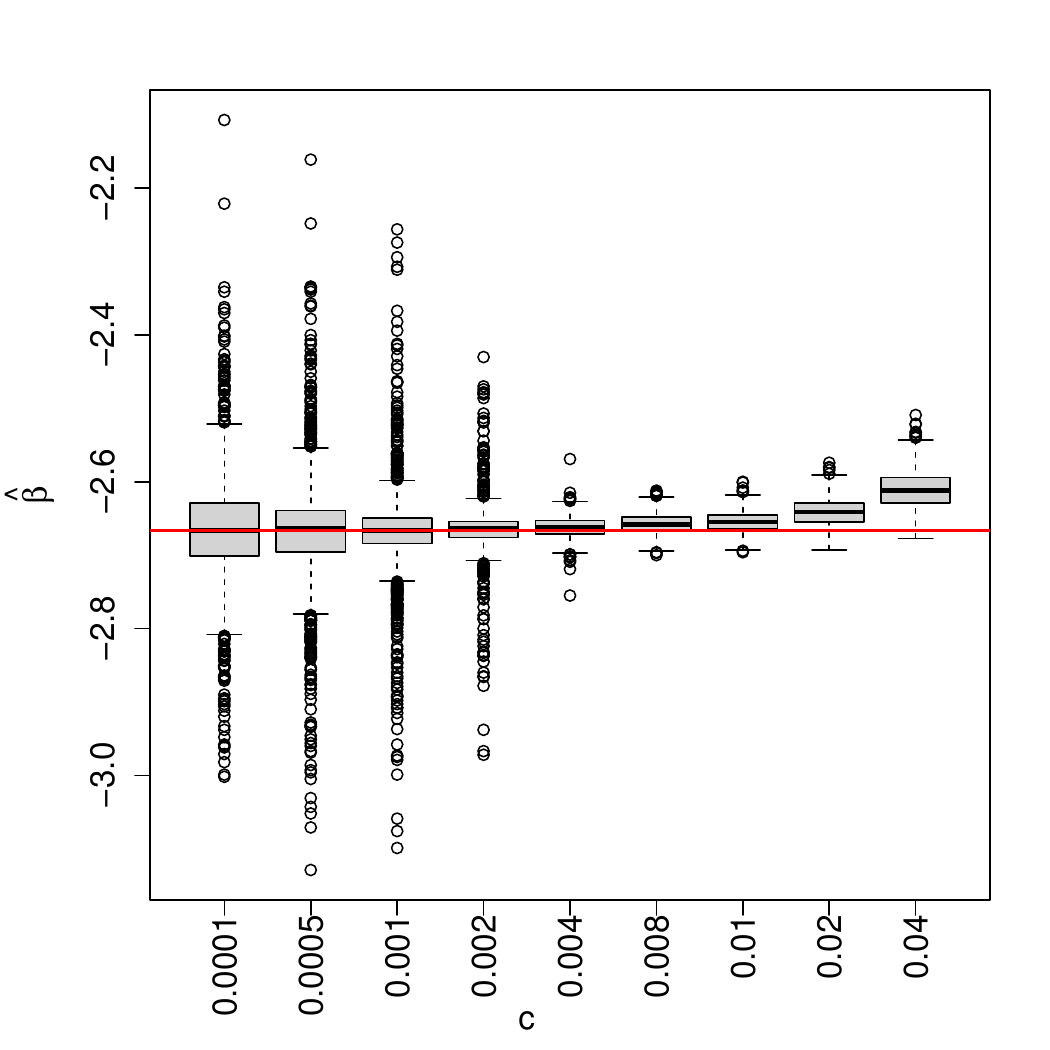} &
\includegraphics[scale=0.33]{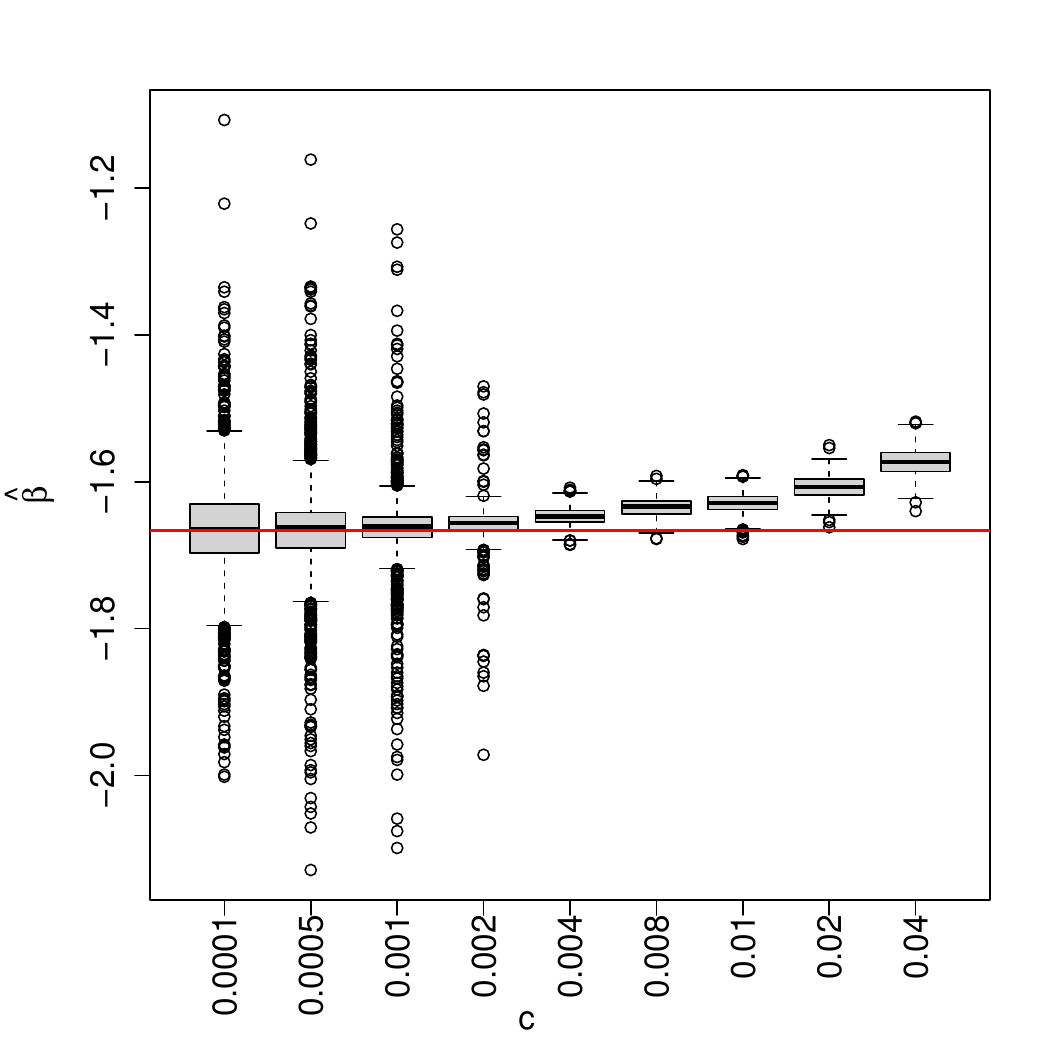}
&
\includegraphics[scale=0.33]{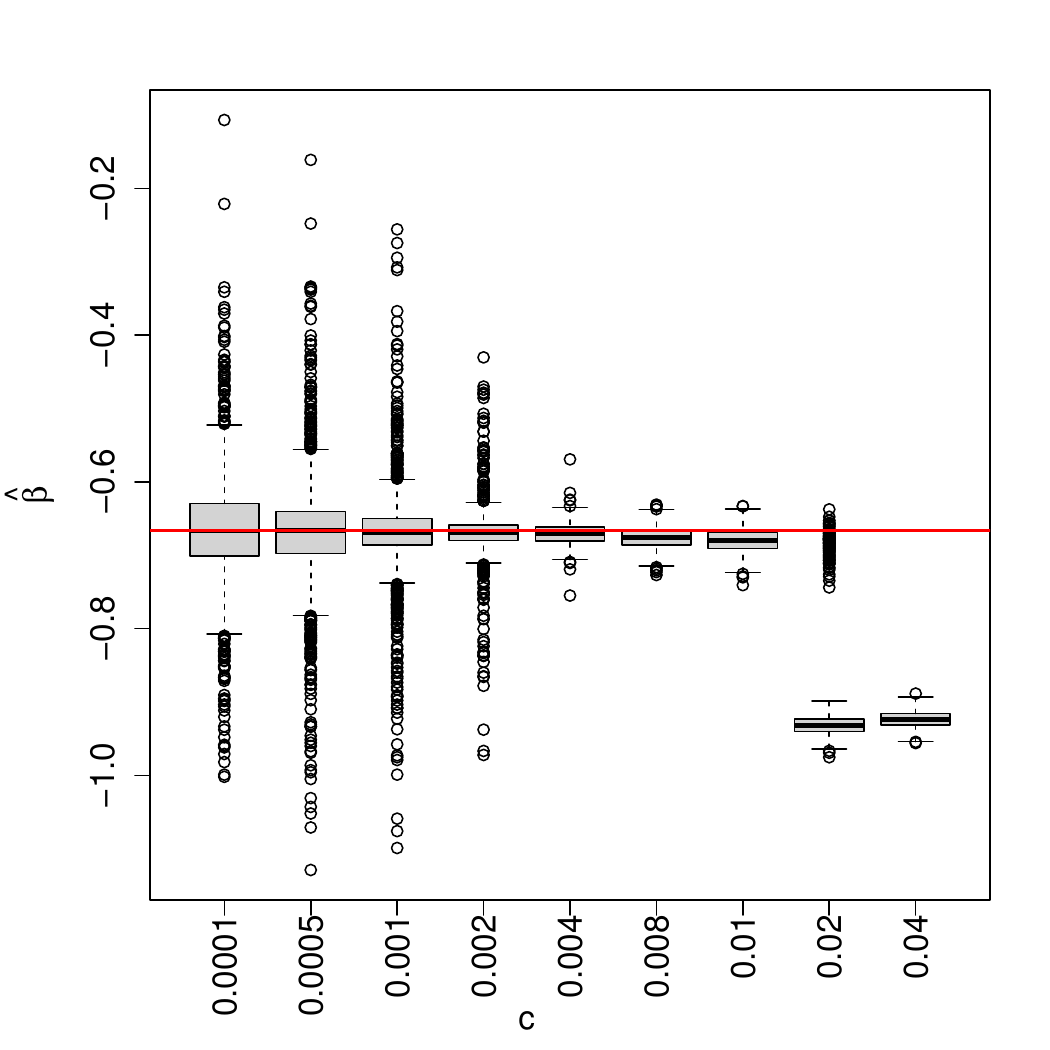}
\end{tabular}
\vskip-0.1in
\caption{\small \label{fig:bxp-beta}	 Boxplots of the estimators $\widehat{\beta}$ for different choices of the  penalizing constant $c$,   when $n=500$  and $\sigma=0.01$. The horizontal red line corresponds to the  value $\beta_0$.} 
\end{center}
\end{figure}

\begin{figure}[ht!]
\begin{center}
\renewcommand{\arraystretch}{0.1}
\newcolumntype{G}{>{\centering\arraybackslash}m{\dimexpr.33\linewidth-1\tabcolsep}}
\begin{tabular}{GGG}
\multicolumn{3}{c}{$r_{\umbral_0,\delta}$, $\umbral_0=0.5$}\\[2ex]
\multicolumn{1}{c}{$\delta= \,-1$} & \multicolumn{1}{c}{$\delta= \,0$} & \multicolumn{1}{c}{$\delta= \,1$}\\[-2ex]
\includegraphics[scale=0.33]{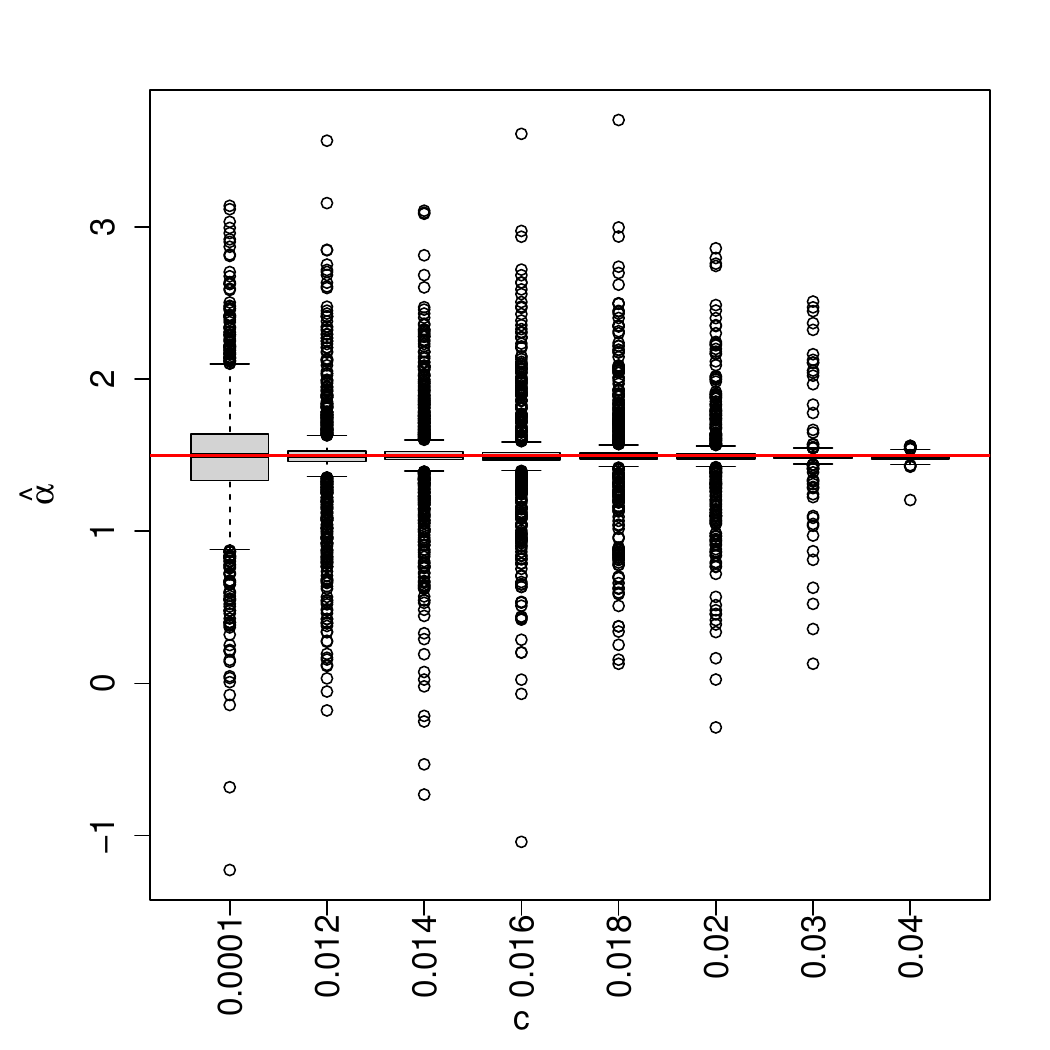} &
\includegraphics[scale=0.33]{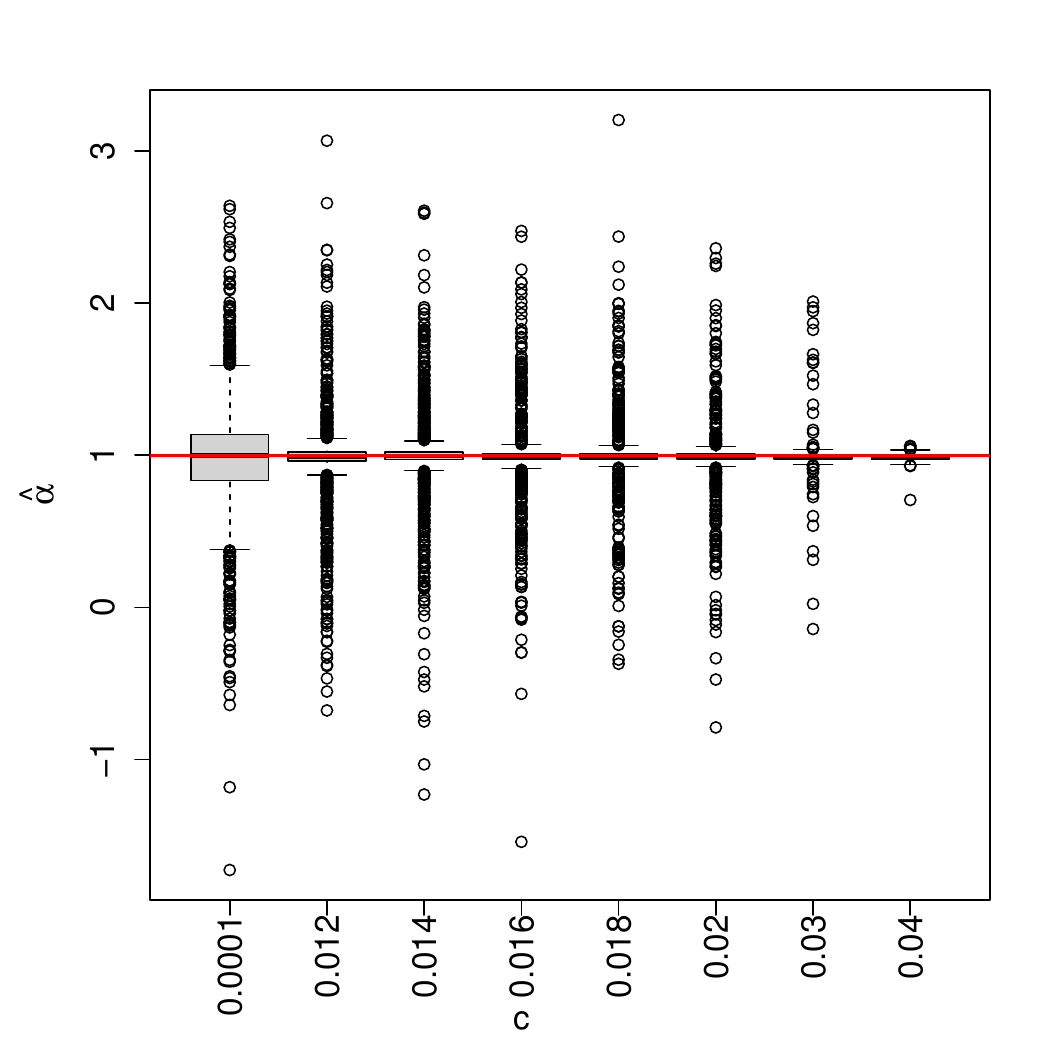}
&
\includegraphics[scale=0.33]{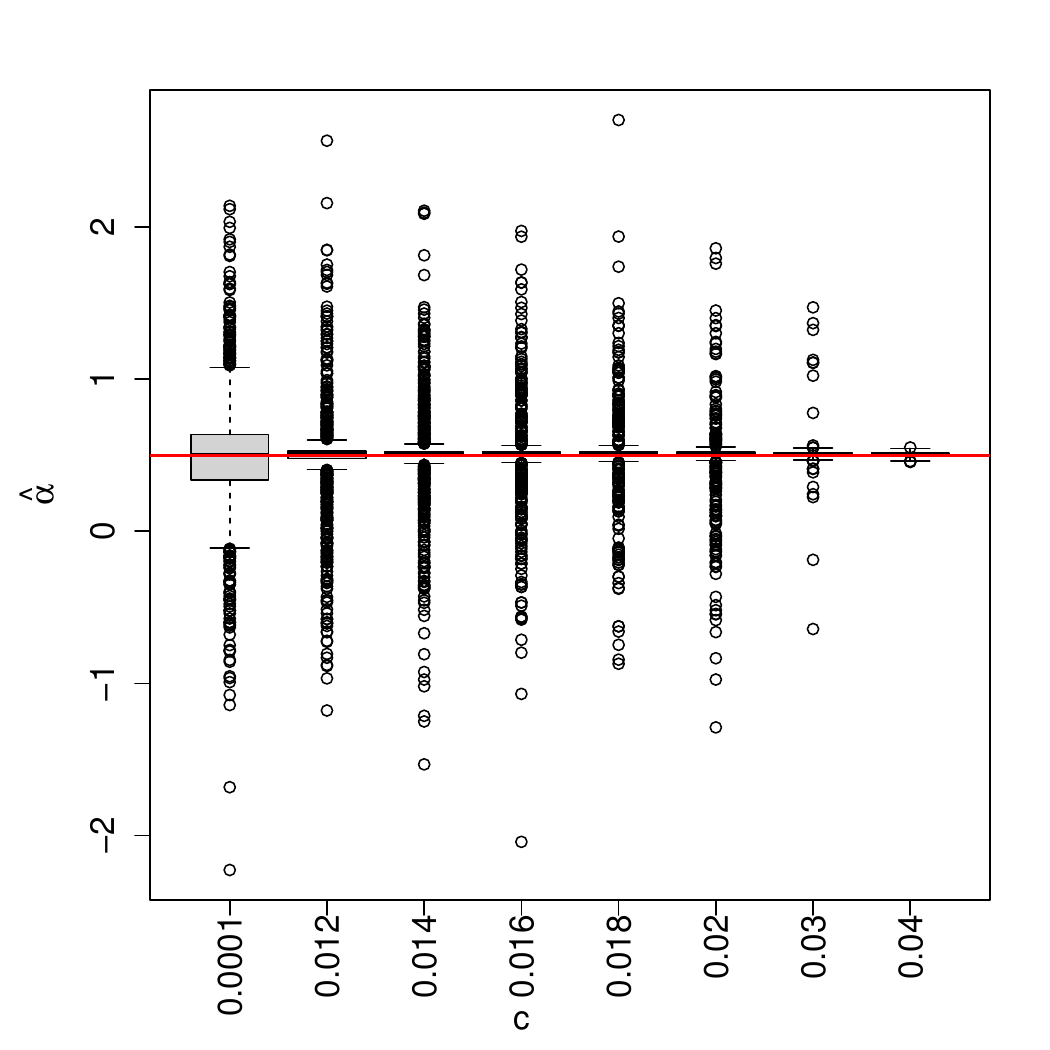}
\\

\multicolumn{3}{c}{$r_{\umbral_0,\delta}$, $\umbral_0=0.75$}\\[2ex]
\multicolumn{1}{c}{$\delta= \,-1$} & \multicolumn{1}{c}{$\delta= \,0$} & \multicolumn{1}{c}{$\delta= \,1$}\\[-2ex]
\includegraphics[scale=0.33]{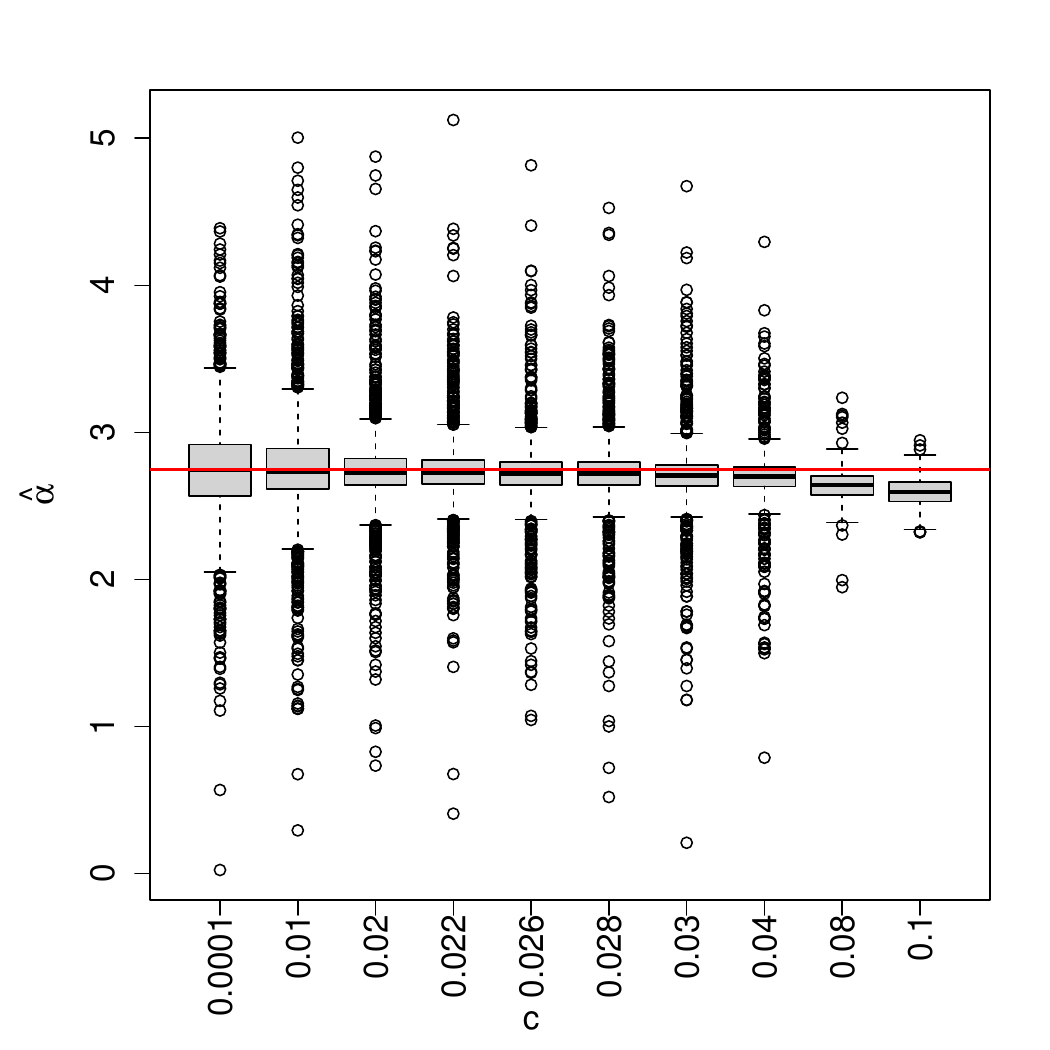} &
\includegraphics[scale=0.33]{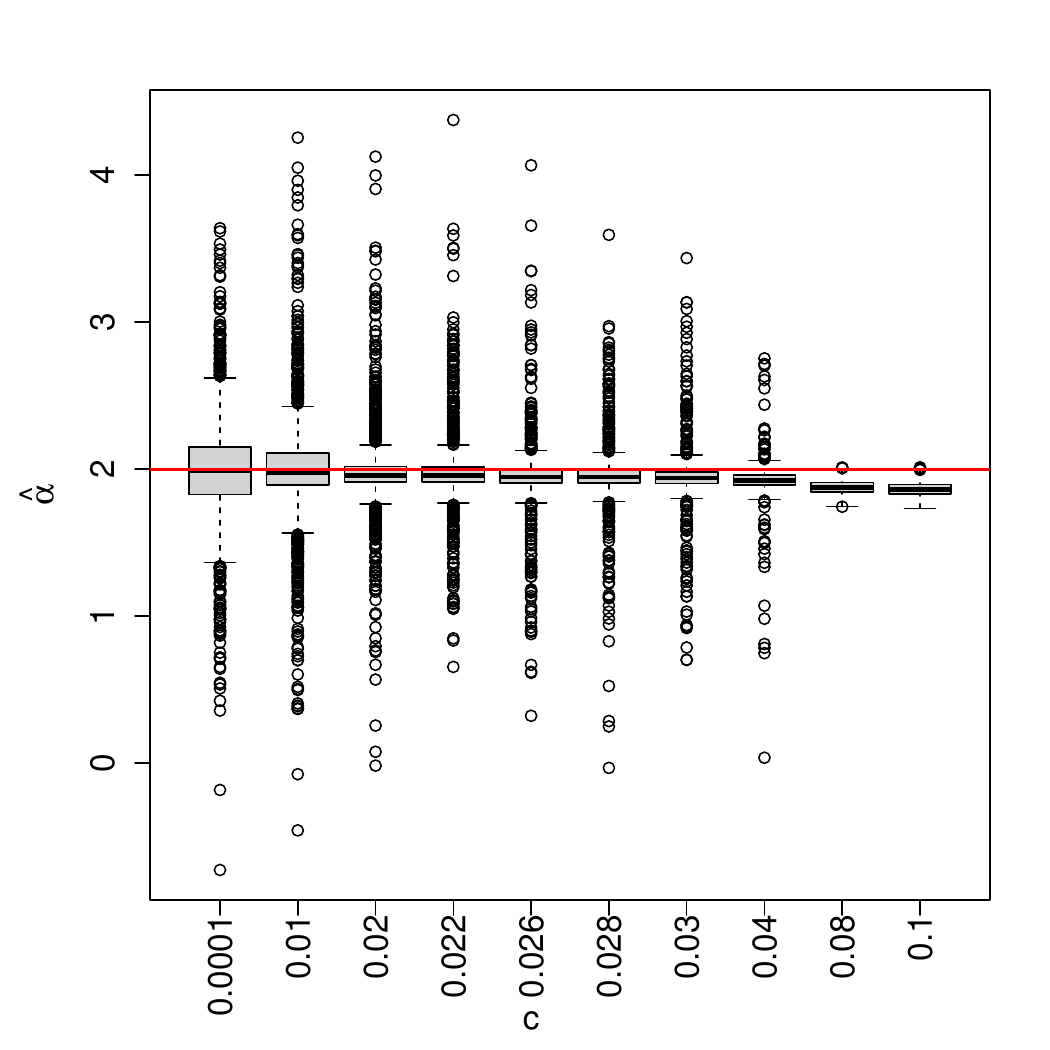} &
\includegraphics[scale=0.33]{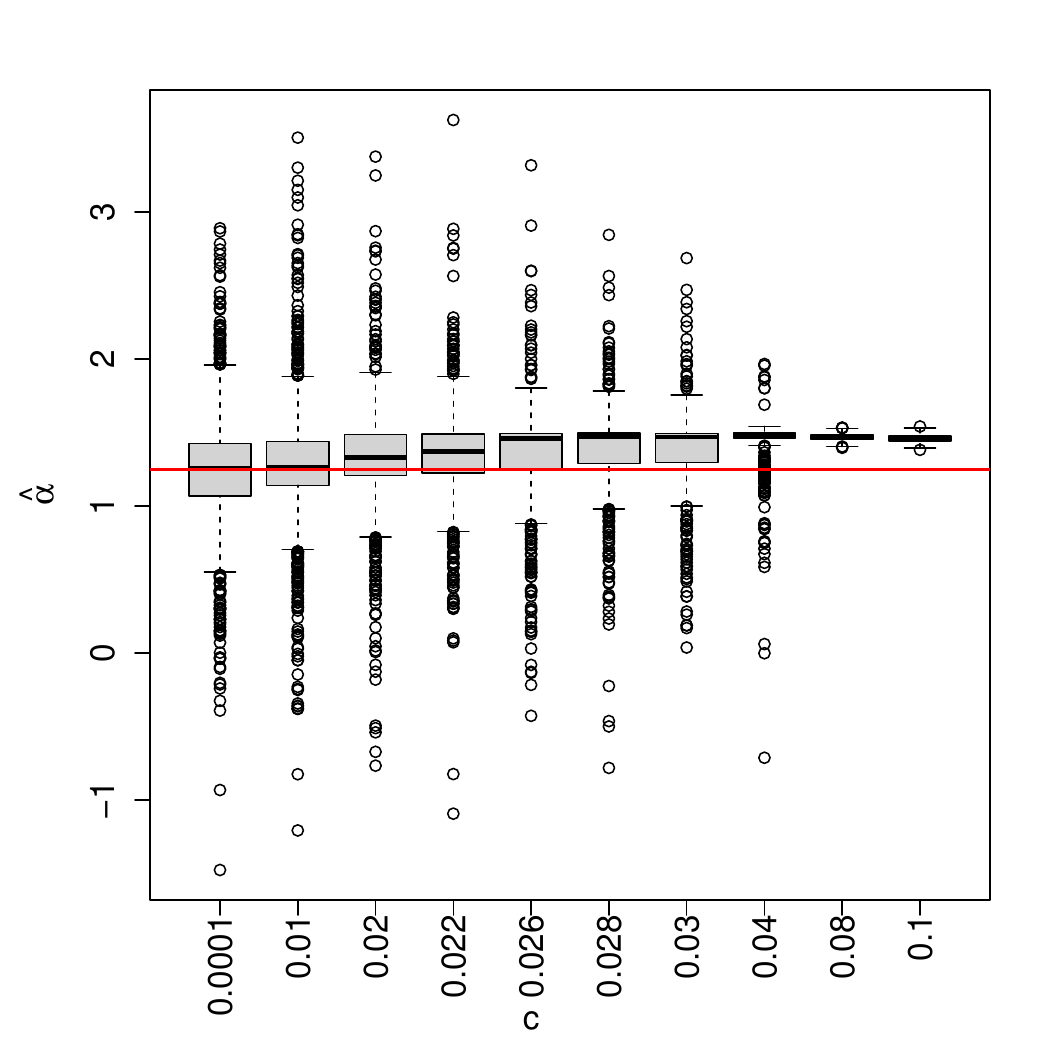}
\end{tabular}
\vskip-0.1in
\caption{\small \label{fig:bxp-alfa-5}	 Boxplots of the estimators $\widehat{\alpha}$ for different choices of the  penalizing constant $c$,   when $n=500$  and $\sigma=0.05$. The horizontal red line corresponds to the  value $\alpha_0$.} 
\end{center}
\end{figure}

\begin{figure}[ht!]
\begin{center}
\renewcommand{\arraystretch}{0.1}
\newcolumntype{G}{>{\centering\arraybackslash}m{\dimexpr.33\linewidth-1\tabcolsep}}
\begin{tabular}{GGG}
\multicolumn{3}{c}{$r_{\umbral_0,\delta}$, $\umbral_0=0.5$}\\[2ex]
\multicolumn{1}{c}{$\delta= \,-1$} & \multicolumn{1}{c}{$\delta= \,0$} & \multicolumn{1}{c}{$\delta= \,1$}\\[-2ex]
\includegraphics[scale=0.33]{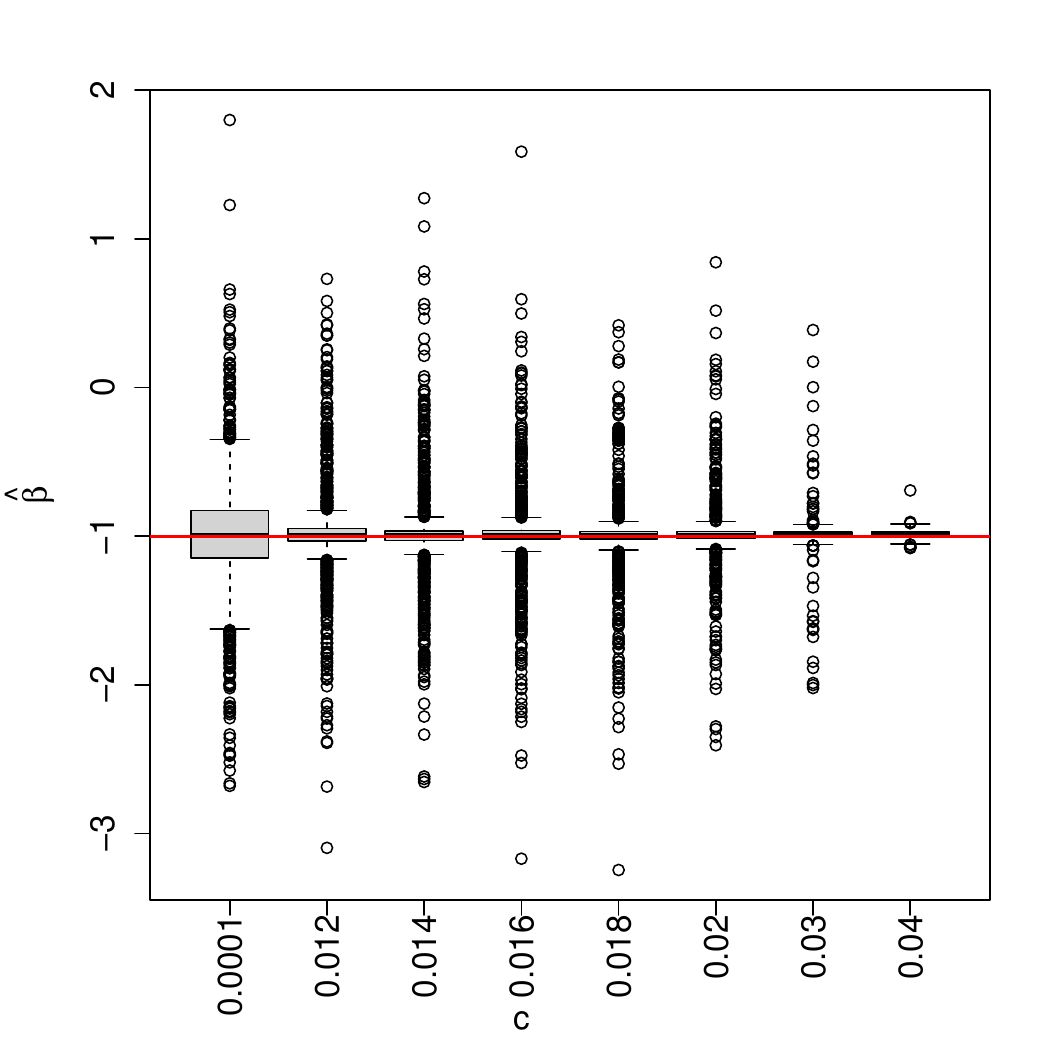} &
\includegraphics[scale=0.33]{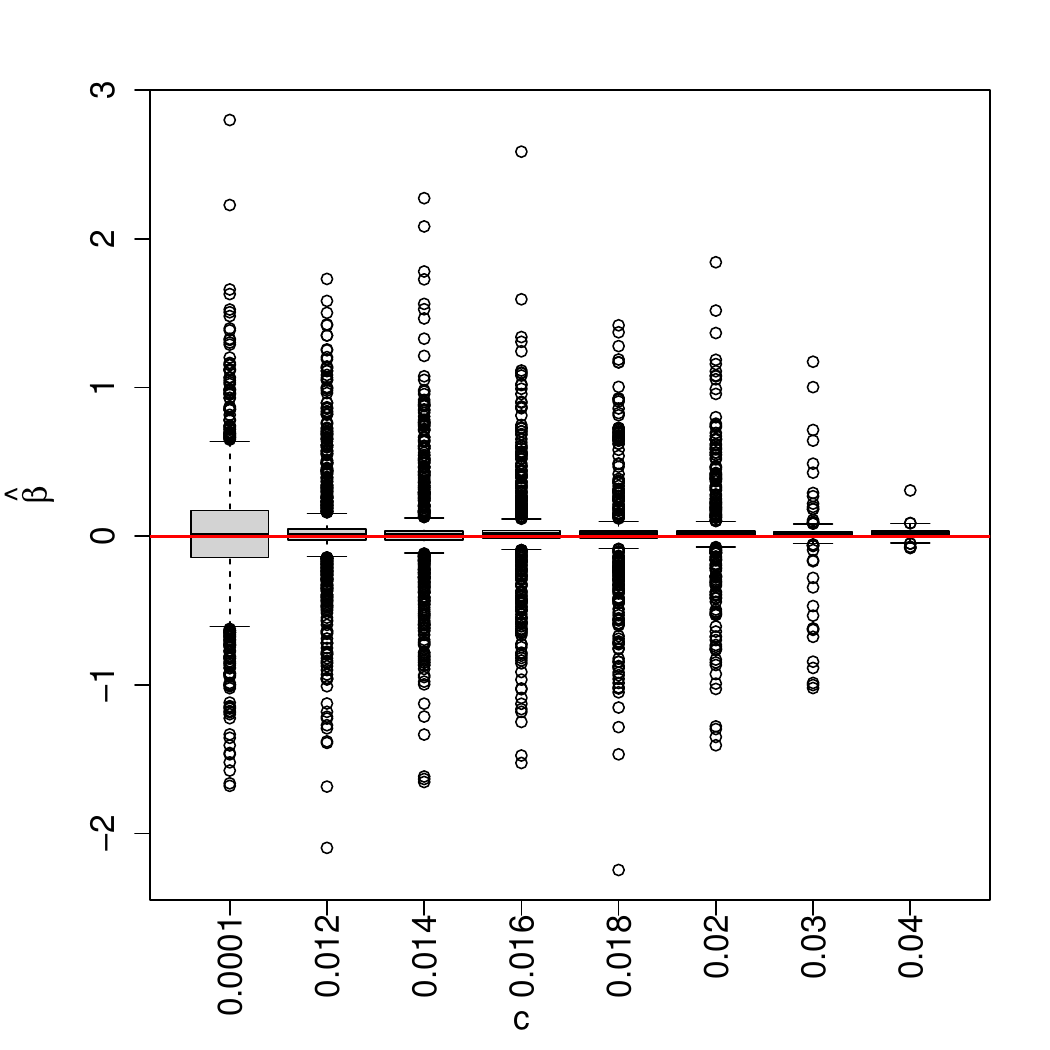}
&
\includegraphics[scale=0.33]{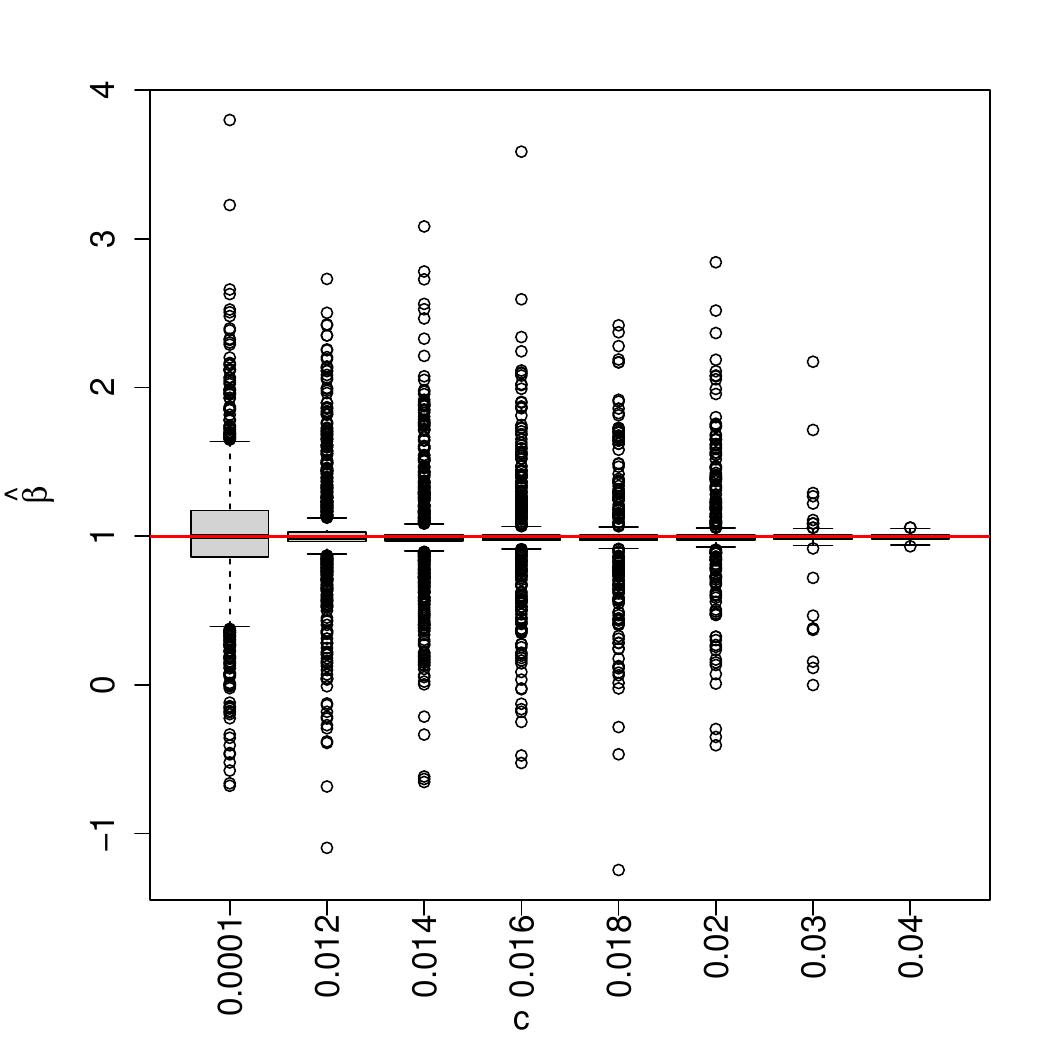}
\\

\multicolumn{3}{c}{$r_{\umbral_0,\delta}$, $\umbral_0=0.75$}\\[2ex]
\multicolumn{1}{c}{$\delta= \,-1$} & \multicolumn{1}{c}{$\delta= \,0$} & \multicolumn{1}{c}{$\delta= \,1$}\\[-2ex]
\includegraphics[scale=0.33]{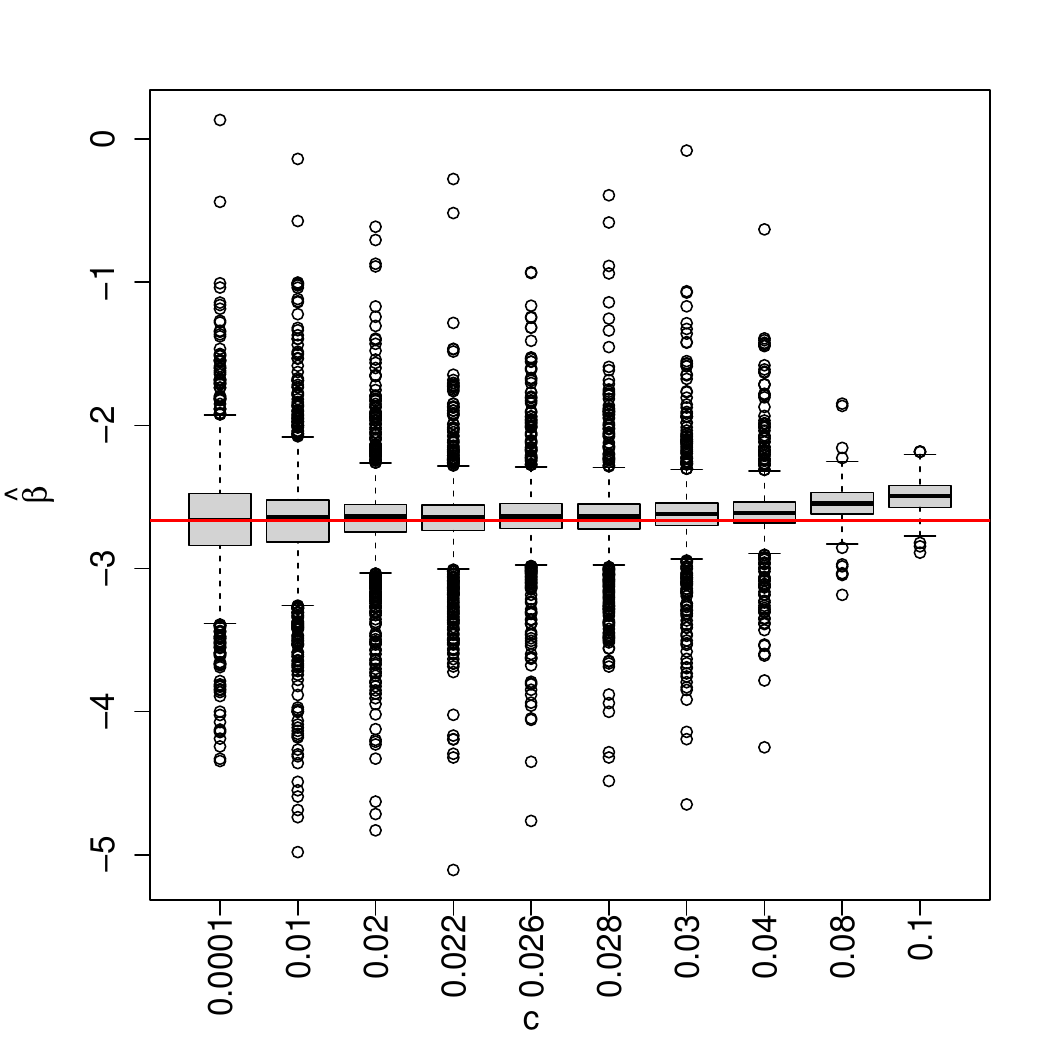} &
\includegraphics[scale=0.33]{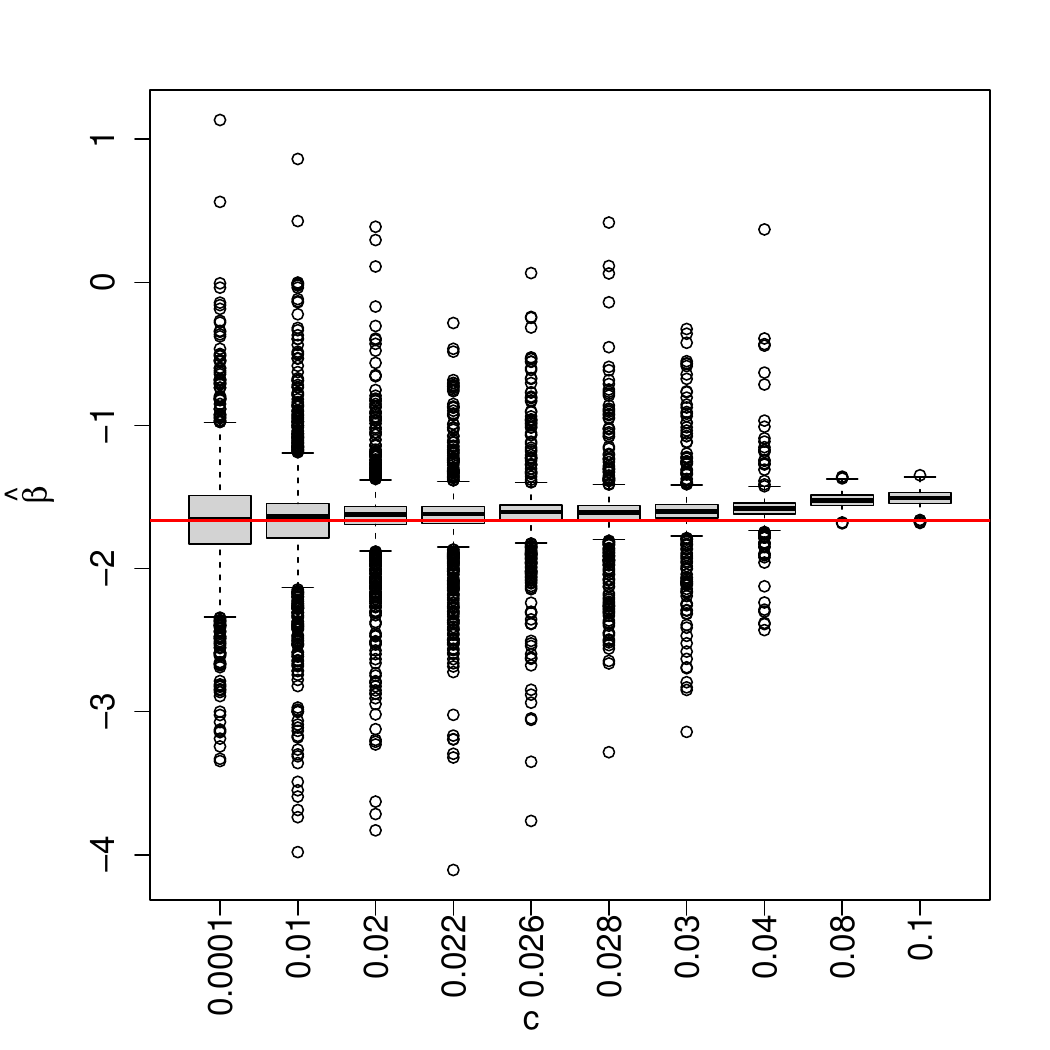}
&
\includegraphics[scale=0.33]{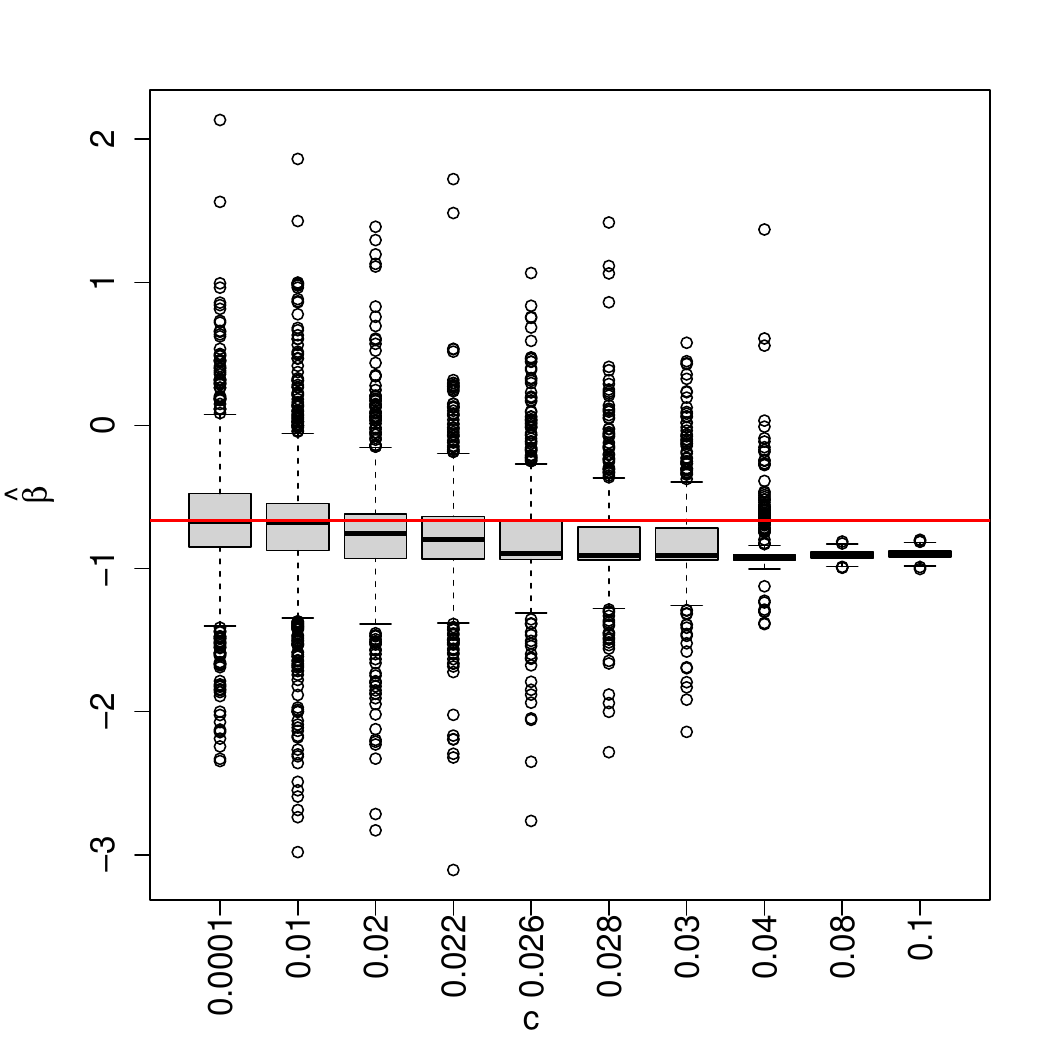}
\end{tabular}
\vskip-0.1in
\caption{\small \label{fig:bxp-beta-5}	 Boxplots of the estimators $\widehat{\beta}$ for different choices of the  penalizing constant $c$,   when $n=500$  and $\sigma=0.05$. The horizontal red line corresponds to the  value $\beta_0$.} 
\end{center}
\end{figure}

\begin{figure}[ht!]
\begin{center}
\renewcommand{\arraystretch}{0.1}
\newcolumntype{G}{>{\centering\arraybackslash}m{\dimexpr.33\linewidth-1\tabcolsep}}
\begin{tabular}{GGG}
\multicolumn{3}{c}{$r_{\umbral_0,\delta}$, $\umbral_0=0.5$}\\[2ex]
\multicolumn{1}{c}{$\delta= \,-1$} & \multicolumn{1}{c}{$\delta= \,0$} & \multicolumn{1}{c}{$\delta= \,1$}\\[-2ex]
\includegraphics[scale=0.33]{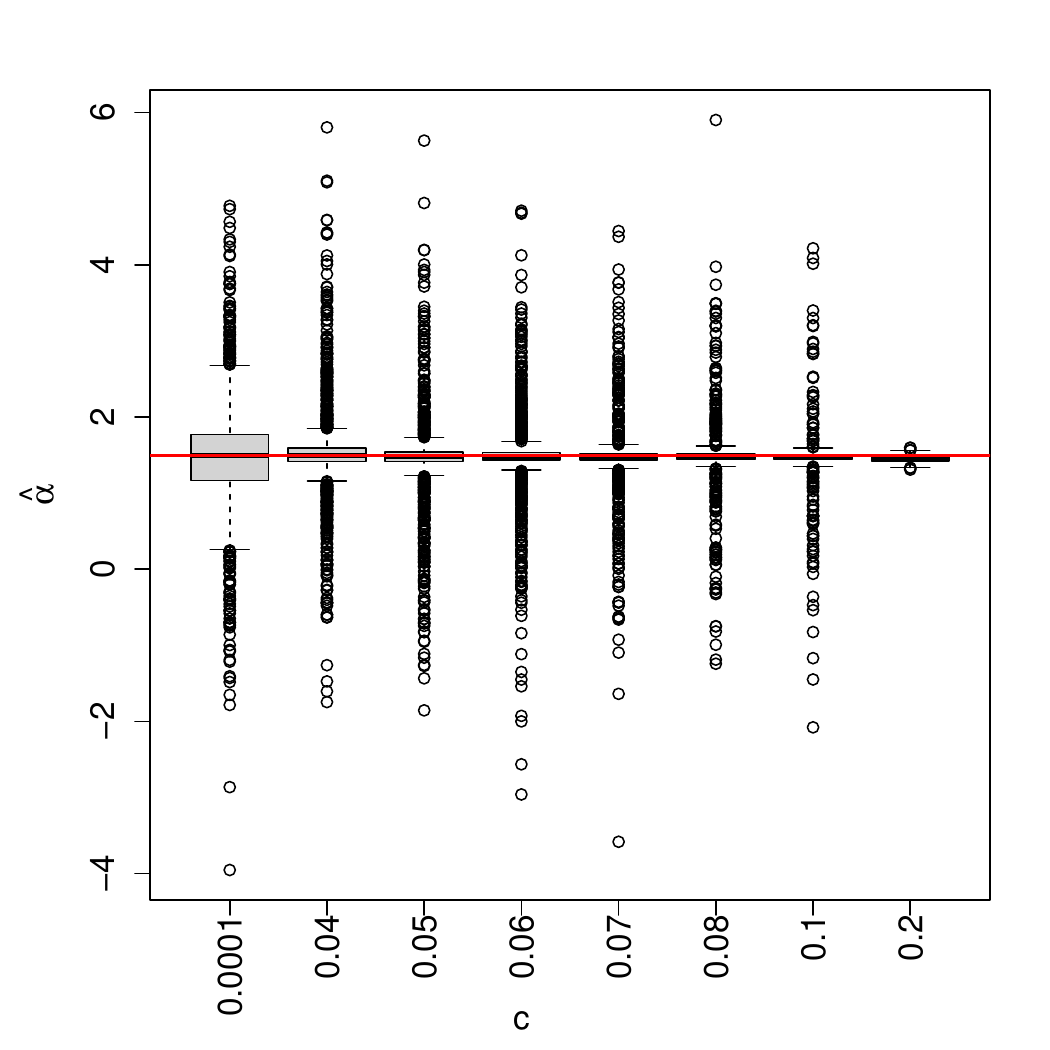} &
\includegraphics[scale=0.33]{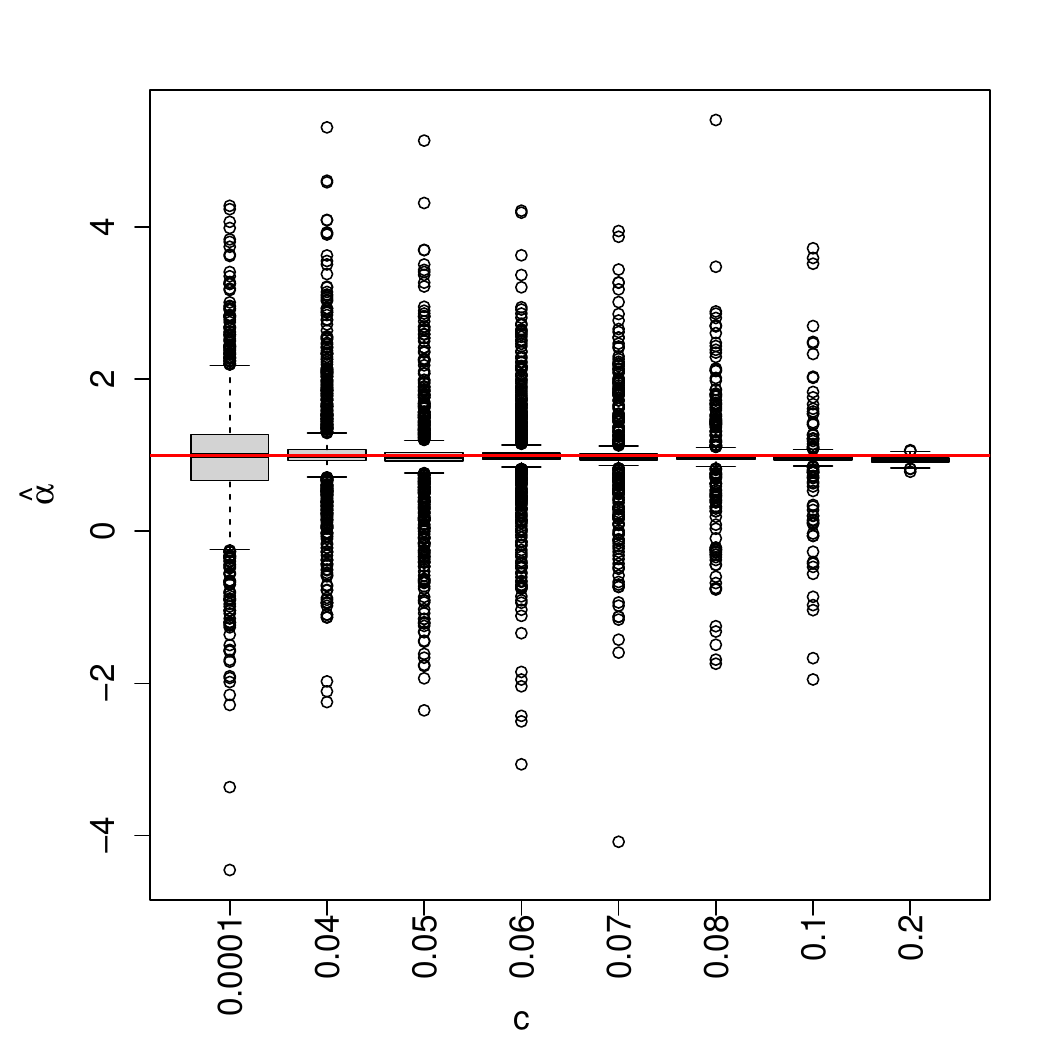}
&
\includegraphics[scale=0.33]{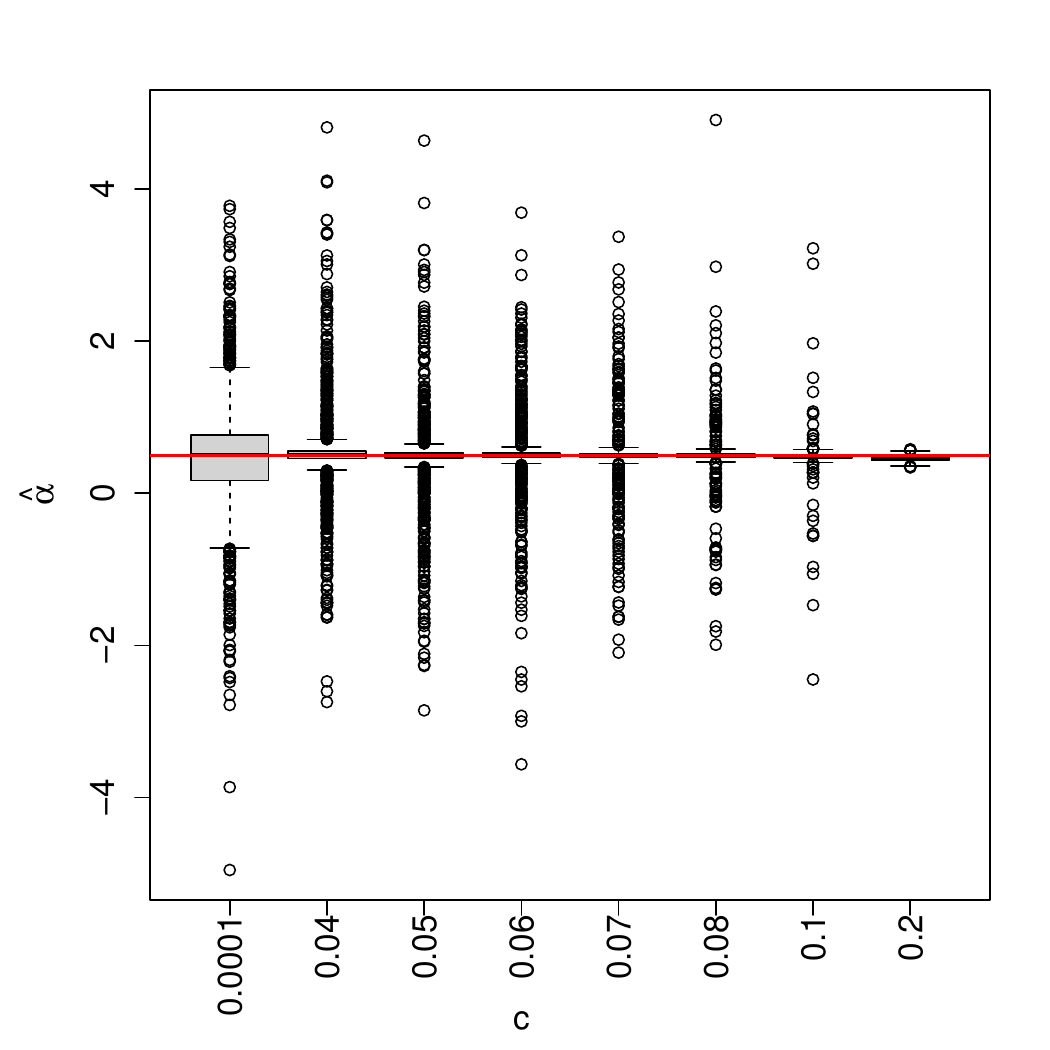}
\\

\multicolumn{3}{c}{$r_{\umbral_0,\delta}$, $\umbral_0=0.75$}\\[2ex]
\multicolumn{1}{c}{$\delta= \,-1$} & \multicolumn{1}{c}{$\delta= \,0$} & \multicolumn{1}{c}{$\delta= \,1$}\\[-2ex]
\includegraphics[scale=0.33]{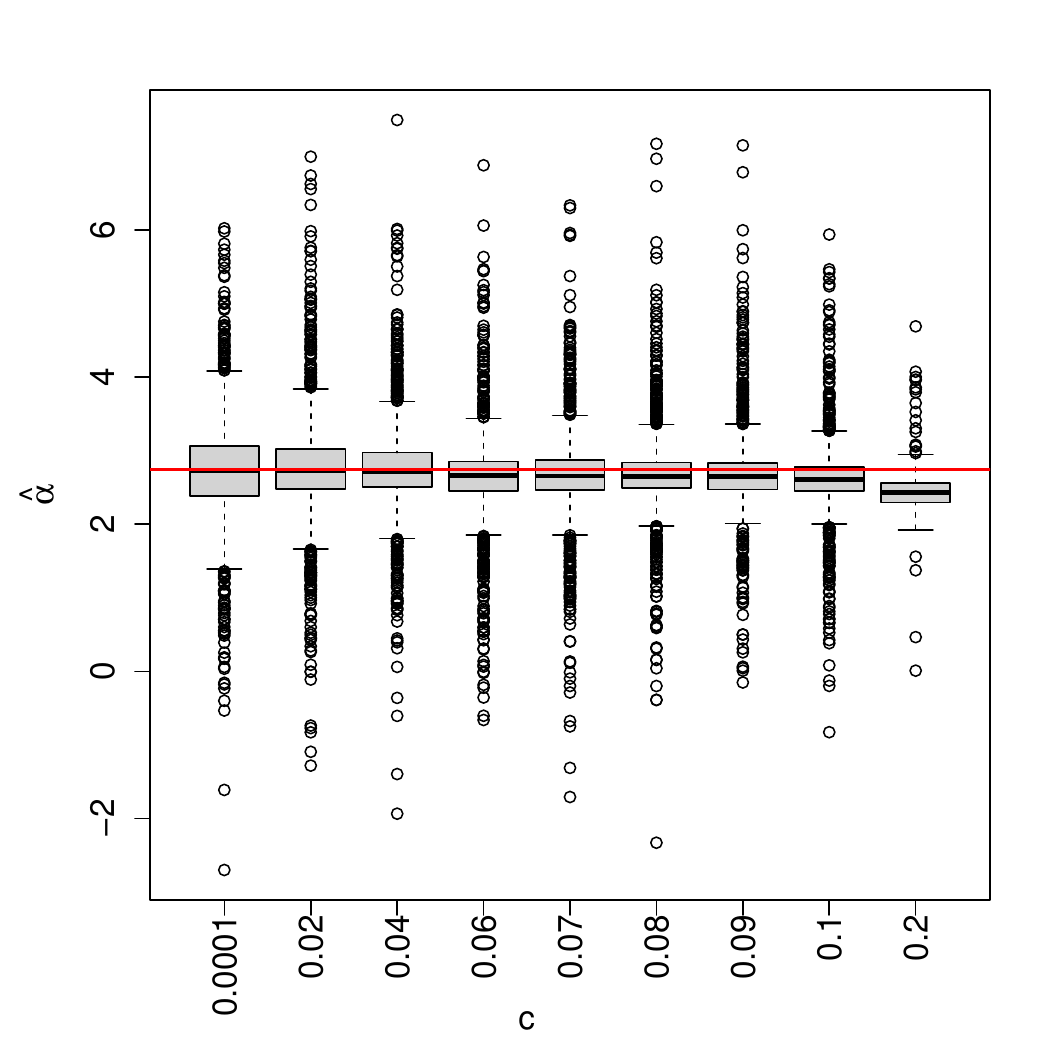} &
\includegraphics[scale=0.33]{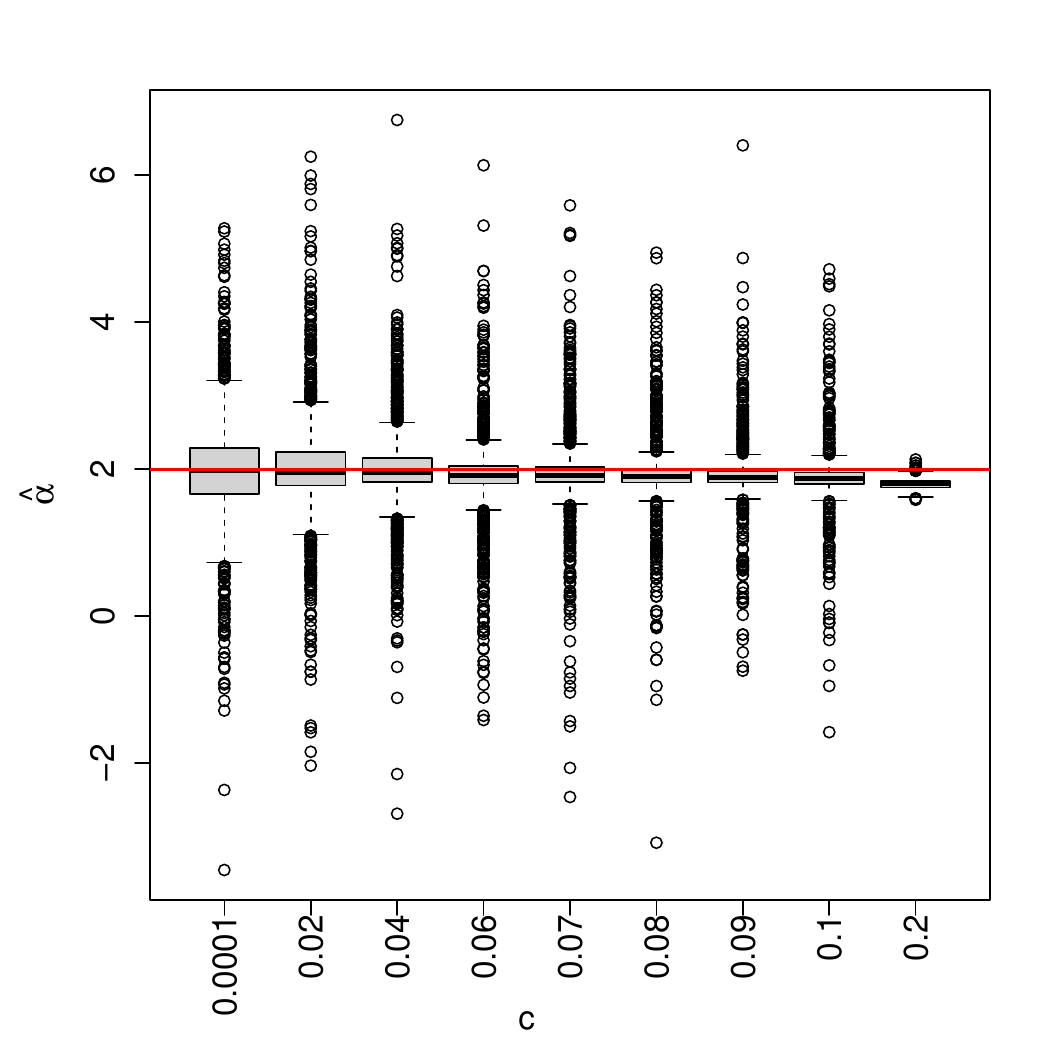} &
\includegraphics[scale=0.33]{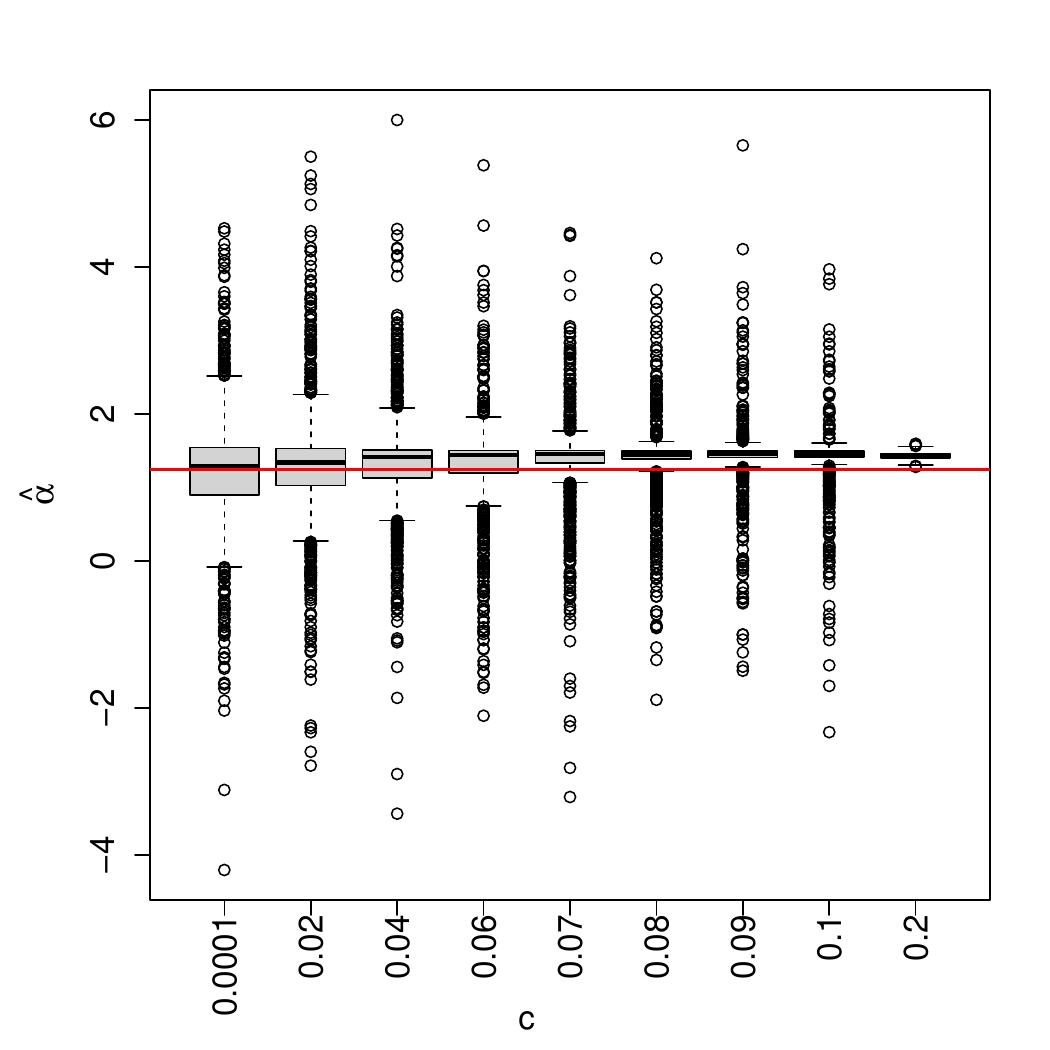}
\end{tabular}
\vskip-0.1in
\caption{\small \label{fig:bxp-alfa-10}	 Boxplots of the estimators $\widehat{\alpha}$ for different choices of the  penalizing constant $c$,   when $n=500$  and $\sigma=0.10$. The horizontal red line corresponds to the  value $\alpha_0$.} 
\end{center}
\end{figure}

\begin{figure}[ht!]
\begin{center}
\renewcommand{\arraystretch}{0.1}
\newcolumntype{G}{>{\centering\arraybackslash}m{\dimexpr.33\linewidth-1\tabcolsep}}
\begin{tabular}{GGG}
\multicolumn{3}{c}{$r_{\umbral_0,\delta}$, $\umbral_0=0.5$}\\[2ex]
\multicolumn{1}{c}{$\delta= \,-1$} & \multicolumn{1}{c}{$\delta= \,0$} & \multicolumn{1}{c}{$\delta= \,1$}\\[-2ex]
\includegraphics[scale=0.33]{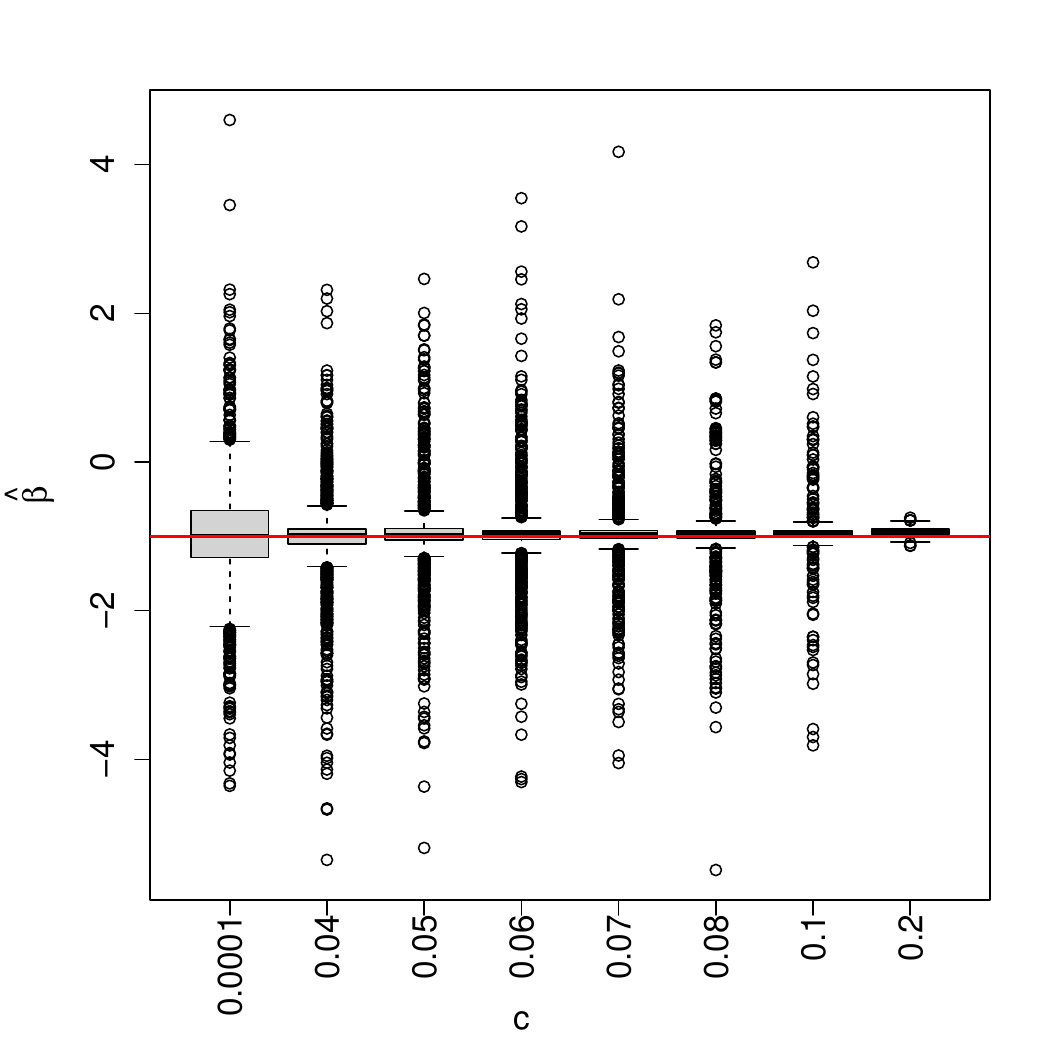} &
\includegraphics[scale=0.33]{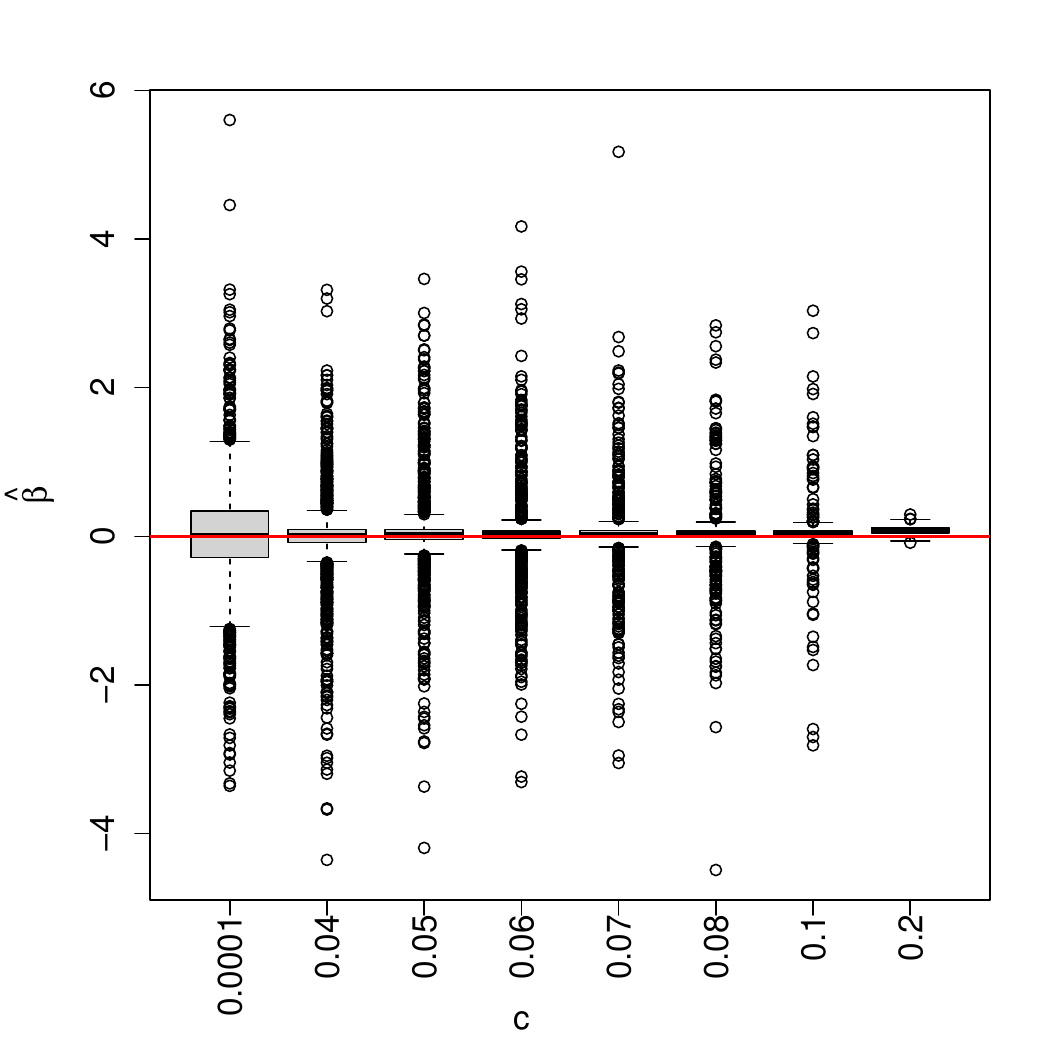}
&
\includegraphics[scale=0.33]{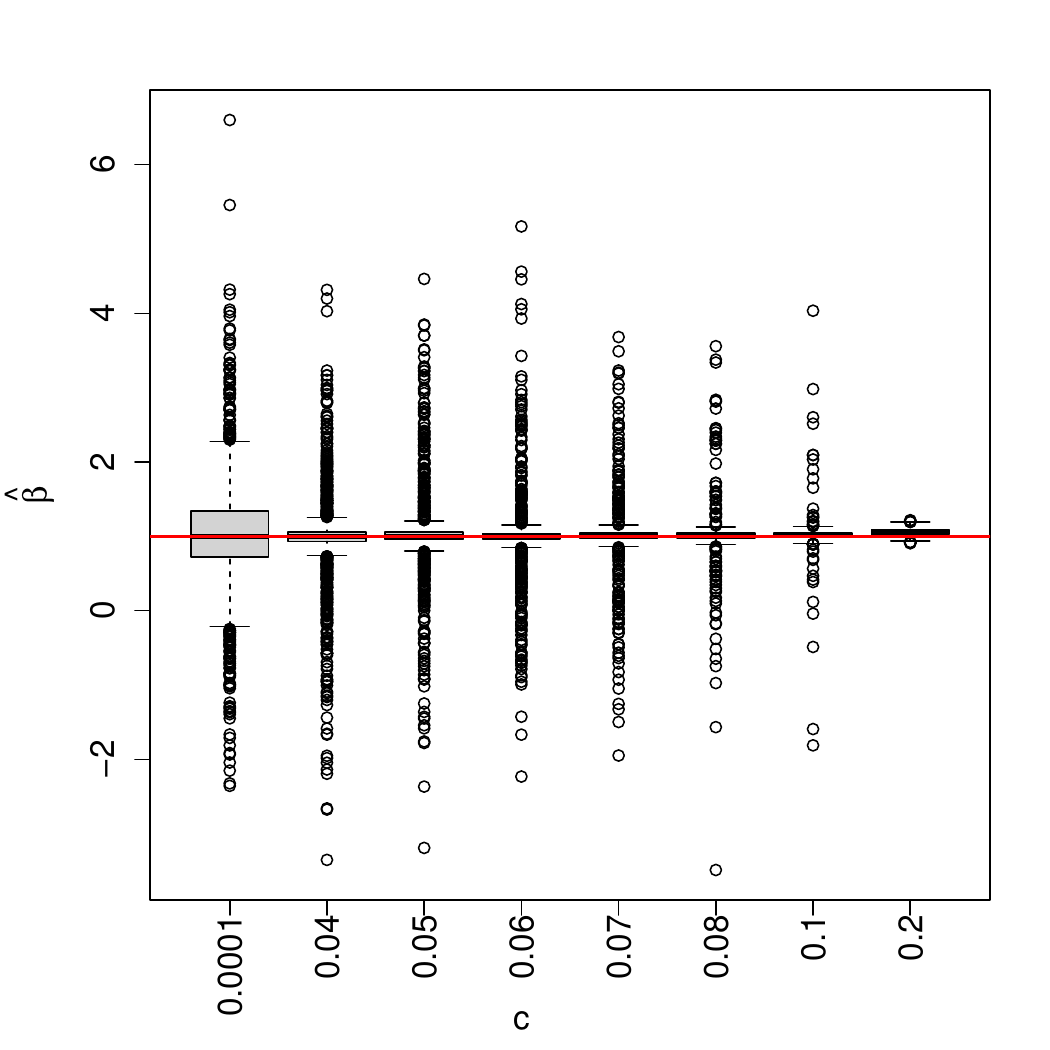}
\\

\multicolumn{3}{c}{$r_{\umbral_0,\delta}$, $\umbral_0=0.75$}\\[2ex]
\multicolumn{1}{c}{$\delta= \,-1$} & \multicolumn{1}{c}{$\delta= \,0$} & \multicolumn{1}{c}{$\delta= \,1$}\\[-2ex]
\includegraphics[scale=0.33]{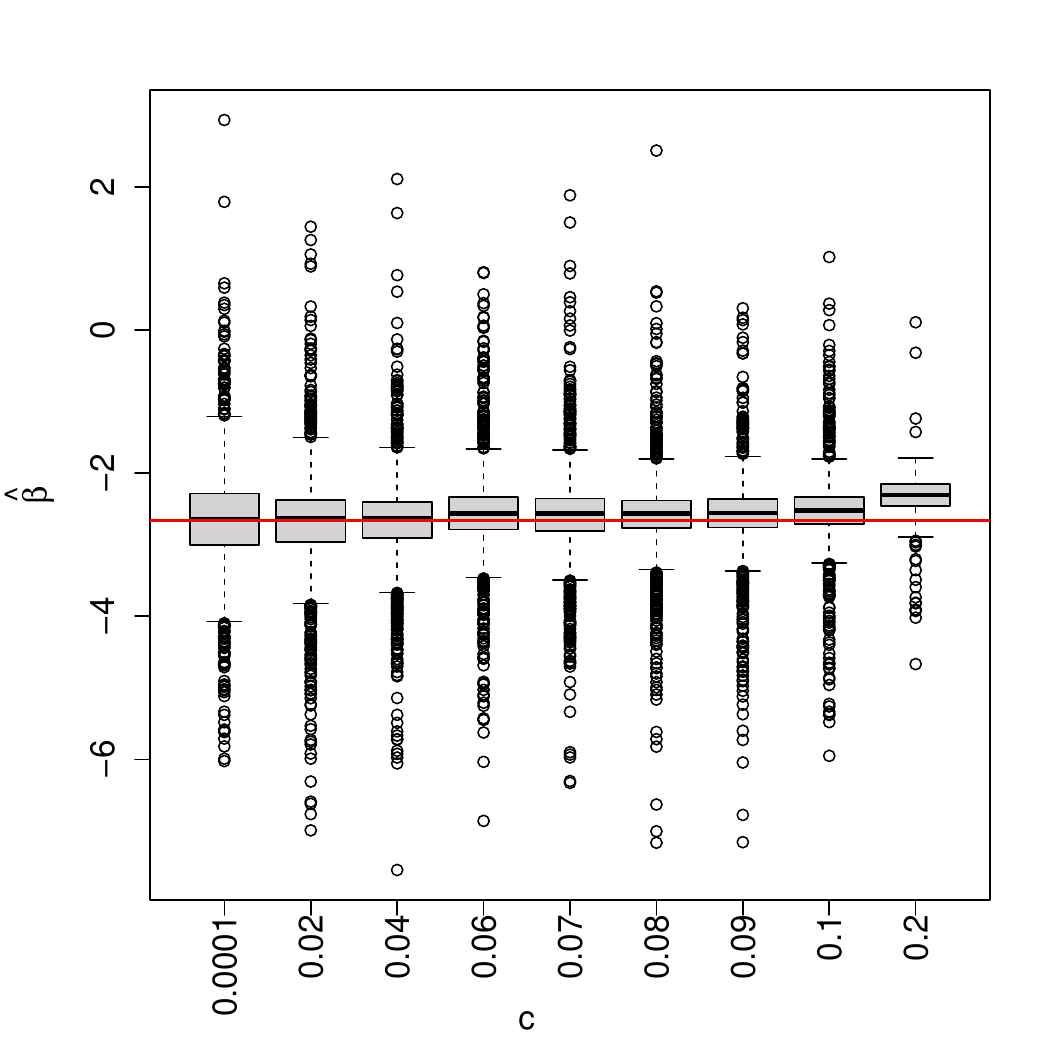} &
\includegraphics[scale=0.33]{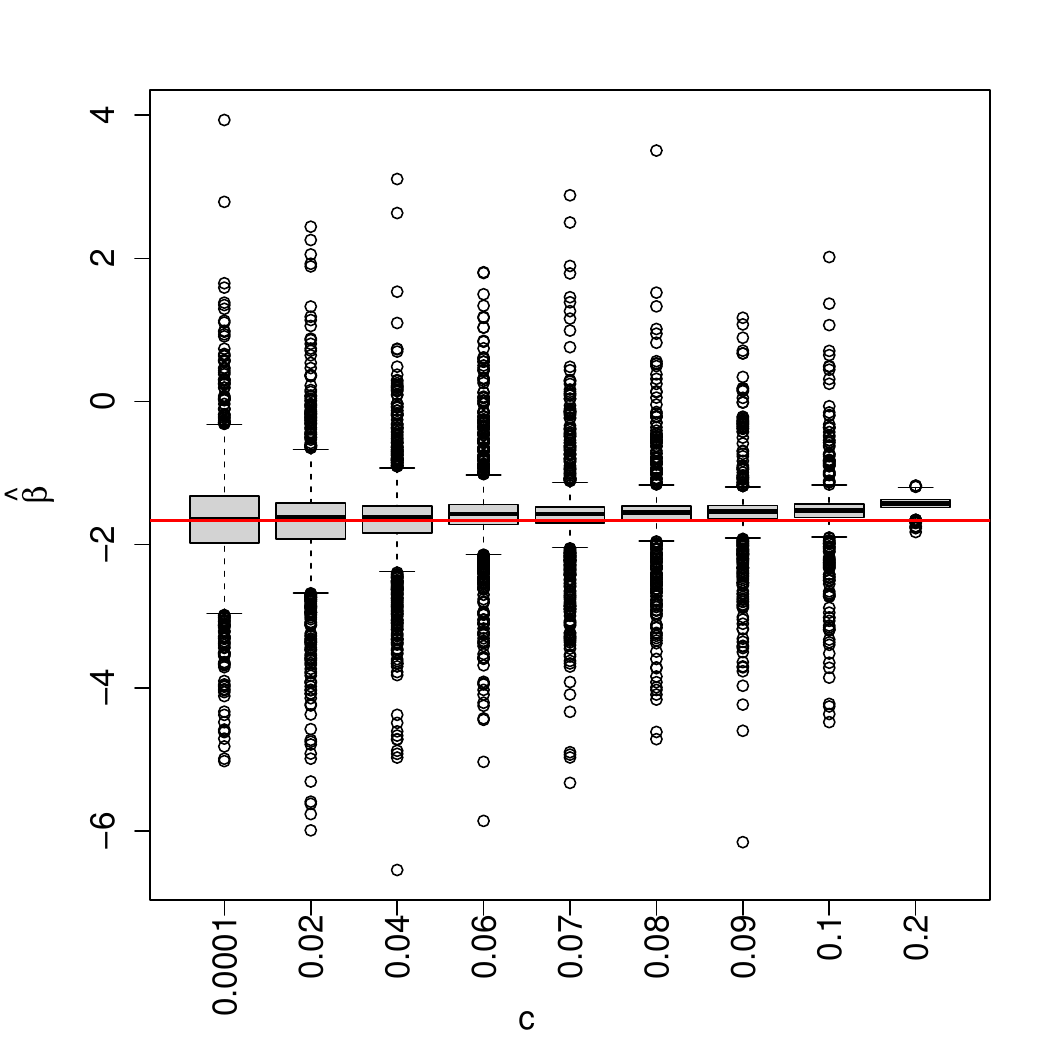}
&
\includegraphics[scale=0.33]{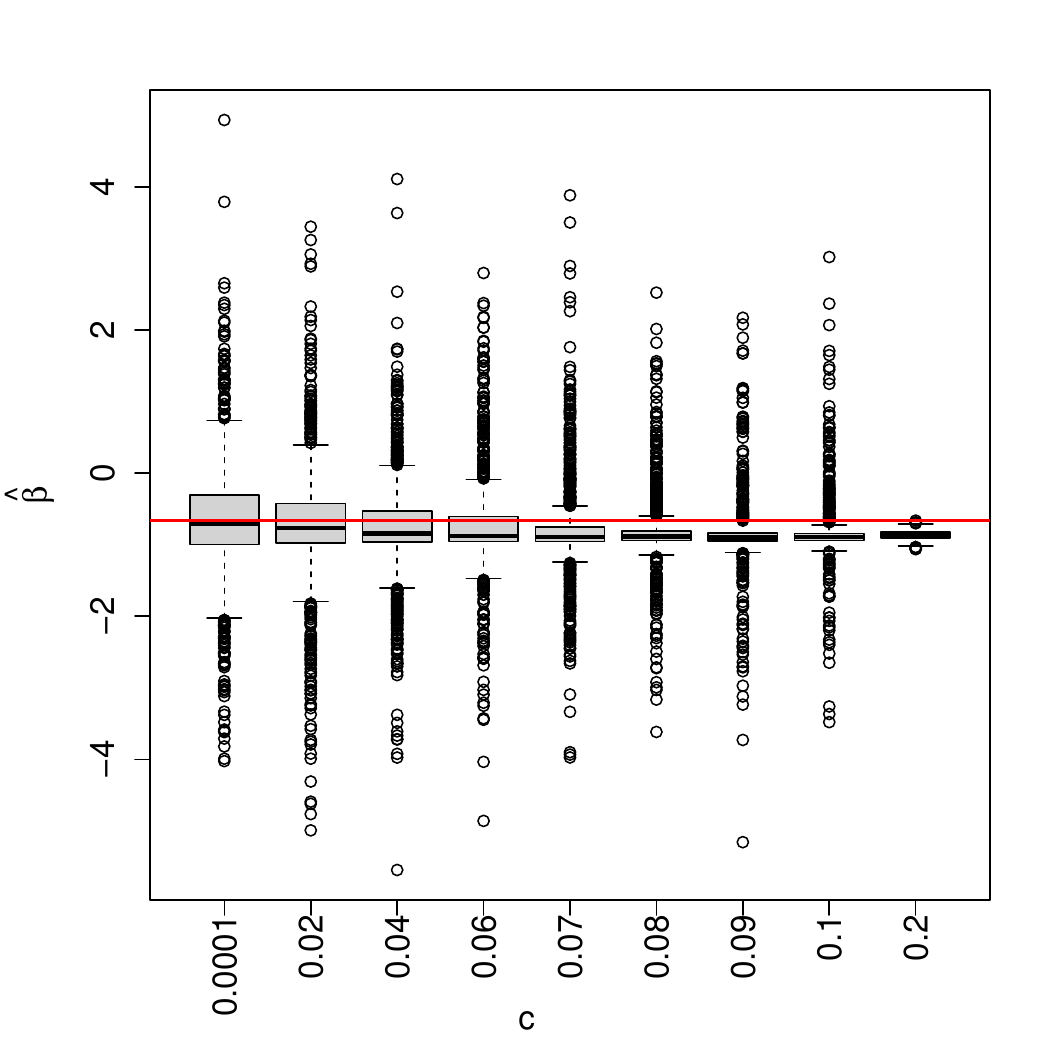}
\end{tabular}
\vskip-0.1in
\caption{\small \label{fig:bxp-beta-10}	 Boxplots of the estimators $\widehat{\beta}$ for different choices of the  penalizing constant $c$,   when $n=500$  and $\sigma=0.10$. The horizontal red line corresponds to the  value $\beta_0$.} 
\end{center}
\end{figure}

\clearpage
\section{A real data example}\label{sec:realdata}

In this section, we analyse the \texttt{airquality} real data set studied in  \citet{cleveland1985}. This data  set was used therein  as a wire conductor to present different graphical tools.  
As mentioned in the Introduction, the aim of the analysis is to study the relation between the ozone concentration and the wind speed,  that is, the response variable $Y$ is the ozone concentration and the predictor $X$ corresponds to the  wind speed. As mentioned in the Introduction, we consider the 111 cases studied in \citet{cleveland1985} which correspond to the data that do not contain missing observations.

%ESTAS  FIGURAS VAN
\begin{figure}[ht!]
\renewcommand{\arraystretch}{0.1}
\newcolumntype{G}{>{\centering\arraybackslash}m{\dimexpr.33\linewidth-1\tabcolsep}}
\hskip-0.3in\begin{tabular}{GGG}
a) & b) & c)\\
\includegraphics[width=1.6in]{ozone-loess.pdf} &
\includegraphics[width=1.6in]{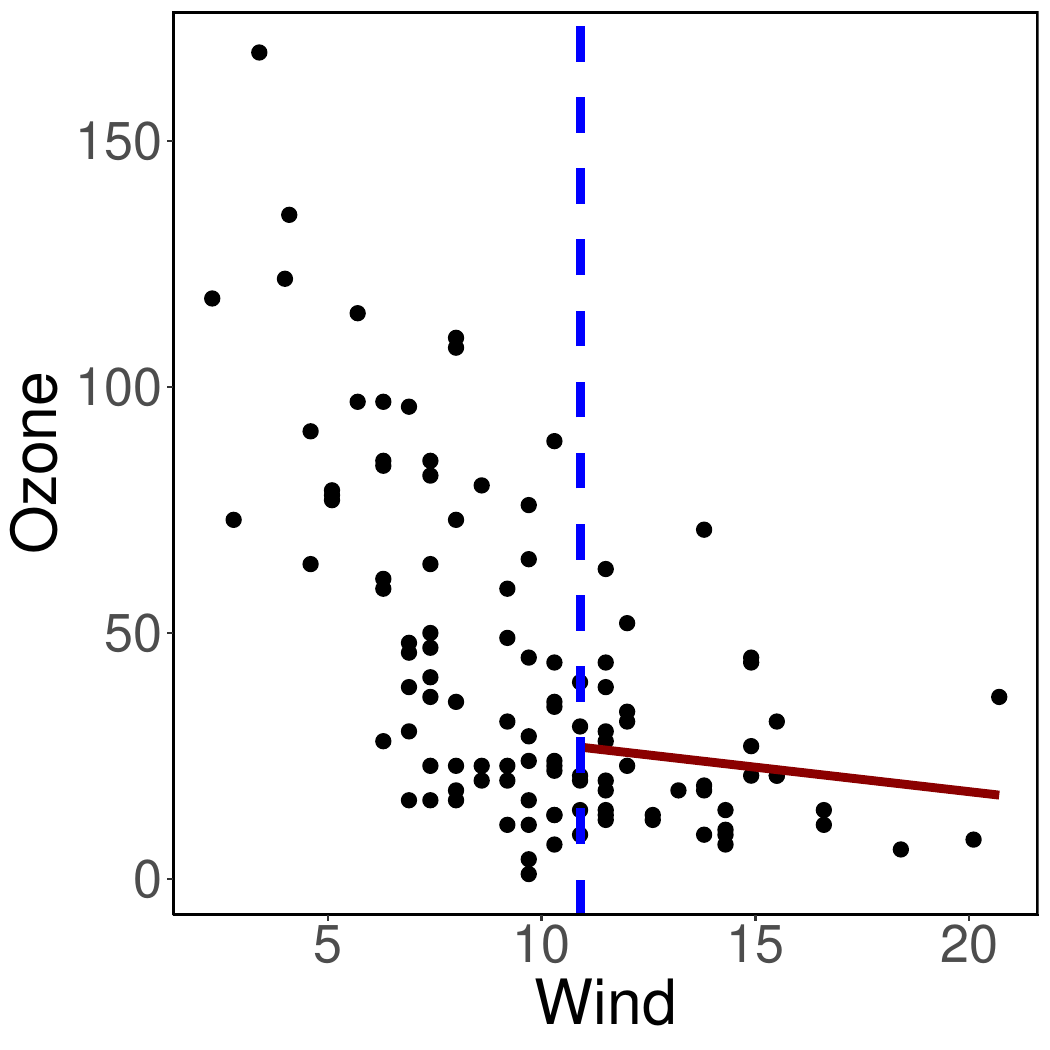} &
\includegraphics[width=1.6in]{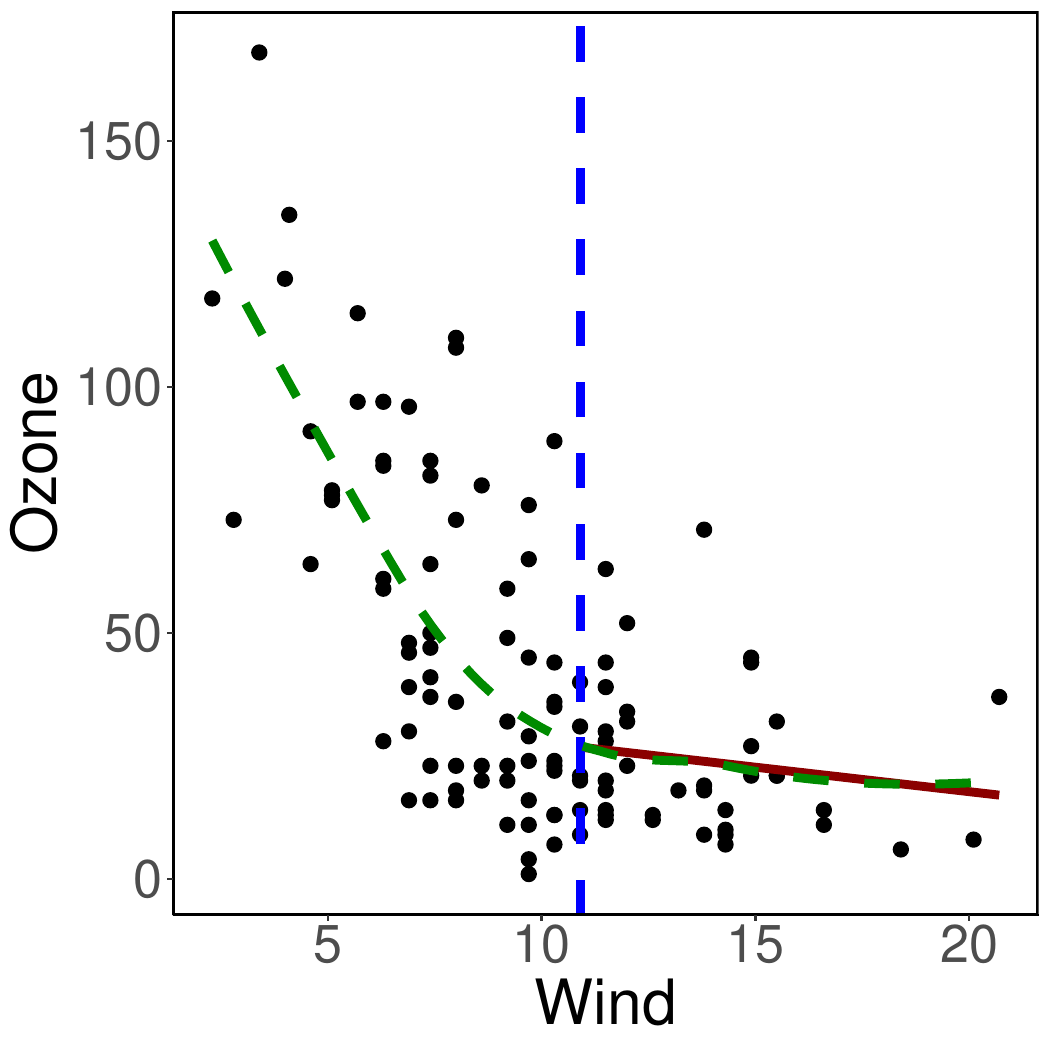}
\end{tabular}
\vskip-0.1in
\caption{\small\label{fig:los_dos}Air quality data (black points) with two different fits: a) \texttt{lowess} fit    and b)
threshold regression model with $\widehat{\umbral}=10.9$,  $\widehat{\alpha}_{\widehat{\umbral}}=37.658$ and $\widehat{\beta}_{\widehat{\umbral}}=\,-\,0.996$. Panel c) displays both fits over imposed.}
\end{figure}

The left panel  in Figure \ref{fig:los_dos} displays the data (in black points) together with a solid   green line corresponding to the fit obtained using local polynomials through the function \texttt{lowess} in \texttt{R}, see \citet{cleveland:1979,cleveland:1981}. This plot  mimics Figure 1.4 of the above mentioned book and as mentioned therein the nonparametric smoother suggests that the linear fit is not appropriate all over the support of the covariates. In this sense, an important point is to find the wind speed from which the model may be assumed to be linear and our approach though threshold regression models becomes useful.  

The middle panel in Figure \ref{fig:los_dos}  presents the data and the result obtained for the threshold regression model represented with a solid red line.  More precisely, we use  the threshold estimator introduced in this paper, to get the best wind speed above which the linear model can be used. For that purpose, the possible values of   $u$ vary  along the observed values of the predictor, up to its $98\%$ empirical quantile that corresponds to $u=18.4$.

\begin{figure}[ht!]
\centering
\includegraphics[scale=0.25]{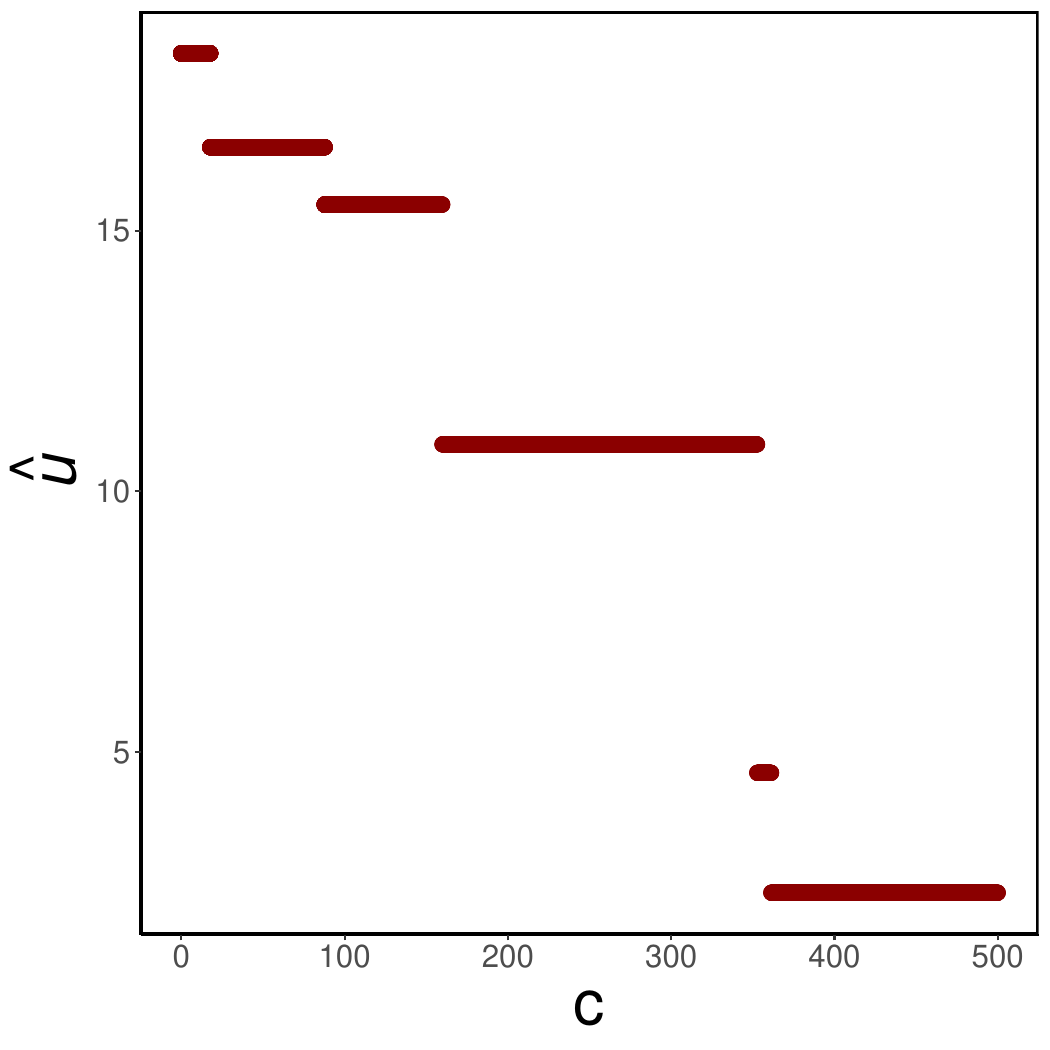}
\caption{\small \label{fig:estimaciones}Estimated thresholds (18.4, 16.6, 15.5, 10.9,  4.6 and 2.3) as a function of $c$, using the penalizing term $\lambda_n=cn^{-0.4}$ and $f(u)=u$. The cut-off    $\gamma_n$ is the  $98\%-$empirical quantile of the predictors.}
\end{figure}

Figure \ref{fig:estimaciones} displays the obtained values of the estimated threshold according to different values of the constant $c$ included the penalizing term defined as $\lambda_n f(u) = c\; n^{-0.4} f(u)$, where as in the simulation study $f(u)=u$ for $u\ge 0$ and $f(u)=0$ for $u\le 0$. The values of $c$ were chosen in the interval $[0,500]$, using an grid with  different grid steps, to be more precise, in the interval $[0,10]$ the increment between consecutive points was $0.001$, between 10.01 and  150 it was 0.01, while in the interval $[150.1, 500]$ the spacing was $0.1$. Figure \ref{fig:estimaciones} reveals that the threshold is a step function of $c$ leading to six possible thresholds which were obtained as $c$ varies along the considered interval.
As expected, for $c$ too small, we can not prevent an  \textsl{overfit phenomenon} which leads to   estimated thresholds equal to $\widehat{\umbral}=18.4$, $16.6$ and $15.5$ meaning that in almost all the support the model is considered nonparametric. In contrast, for very large values of $c$  the penalization term dominates the objective function, estimating the threshold as the left extreme of the interval ($\widehat{\umbral}=2.3$), suggesting that the linear regression could be used in the entire domain of the predictor. Interestingly, $\widehat{\umbral}=10.9$ is observed for an intermediate regime, indicating that 10.9 may be a possible value above which the linear model may be used. The fit displayed in the middle panel of Figure \ref{fig:los_dos} corresponds to this last value. Panel c) in Figure \ref{fig:los_dos}  displays the local smoother in dashed green lines together with the linear fit above the estimated  threshold plotted as a solid red line. The plot suggests that the considered linear fit is appropriate for values of the wind speed larger or equal than 10.9.

Taking into account the asymptotic behaviour provided in Theorem \ref{teo:distriasint} for the intercept and slope estimators, when considering values larger than the estimated threshold,  we  chose $\psi=1$ and we performed a linear fit  for those observations above    $\widehat{\umbral}+\psi=11.9$ obtaining $\widehat{\alpha}_{\widehat{\umbral}+\psi}= 42.096$ and $\widehat{\beta}_{\widehat{\umbral}+\psi}=\,-\, 1.280$. For this cut--off value,  the  slope estimator  $\widehat{\beta}_{\widehat{\umbral}+\psi}$  is not significantly different from 0 suggesting that for large values of the wind speed the ozone remains constant.

\section{Final comments}\label{sec:coments}
In this paper, we introduce a new model, called \textsl{threshold regression semiparametric model},  that attempts to overcome the limitations presented by the linear model while maintaining its simplicity and interpretability. The considered model tries to keep the validity of the linear model in the biggest possible domain for large values of the covariate, while no shape assumptions on the regression function are assumed  for values smaller than the threshold. This is an advantage over segmented linear regression or the so--called bent linear regression with an unknown change point, since the threshold regression semiparametric model does not require that the regression function is also linear before the change point.

A procedure to consistently estimate the threshold as well as the intercept and slope of the linear component of the regression function is introduced. The numerical results reflects that, as expected, the non--penalized threshold estimates do not converge to the true threshold and confirms the consistency results derived in Theorem \ref{teo:tiro_del_final} when a penalty term converging at a rate smaller than root-$n$ is included. Furthermore, the obtained results reflect that  the rate of convergence may depend on the regression smoothness. The benefits of our model are illustrated over the air quality data set  providing simple but flexible model. The threshold model allows to understand the relation between the ozone concentration and the wind speed for high values of the speed through a simple linear regression model. 

An important topic to be mentioned is that our approach allows for more general  semiparametric models, where non-parametric and parametric regimes are allowed under a unified framework. For instance, the model and the estimation method introduced may be easily extended to other parametric models beyond the simple linear regression postulated for the regression function beyond the threshold. Extensions to a threshold generalized linear model are also an important issue which is beyond the scope of the paper and will be object of future work. 

As is well known,  estimators based on least squares are usually affected by atypical data, that is, by observations with large residuals in particular if combined with  covariates with high leverage. In this sense, robust procedures may be important to mitigate the impact of outliers in the estimation process, both for linear or more general parametric regression models beyond a threshold. The result provided in Proposition \ref{prop:consistencia_ver2} suggests that other loss functions instead of the squared loss may be used to define resistant and consistent estimators. This interesting topic is now part of ongoing research. 

{\small 
\section*{Acknowledgments}
This paper was the result of the activities of the \emph{Research, 
Innovation and Dissemination Center for Neuromathematics}, Grant FAPESP 2013/07699-0, 
FAPESP's project \emph{Stochastic Modeling of Interacting Systems},  
Grant 2017/10555-0, S\~{a}o Paulo Research Foundation, Brazil.  This research was partially supported by    CNPq's research fellowship, Grant 311763/2020-0 (Florencia Leonardi),  grants 20020170100330BA  (Daniela Rodriguez and Mariela Sued) and 20020220200037BA (Graciela Boente) from Universidad de Buenos Aires,  PICT-2020-01302 (Daniela Rodriguez and Mariela Sued)  and   PICT-2021-I-A-00260 (Graciela Boente)  from ANPYCT,  and the Spanish Project {MTM2016-76969P} from the Ministry of Economy, Industry and Competitiveness, Spain (MINECO/AEI/FEDER, UE)  (Graciela Boente).  We would like to thank Professor Ricardo Fraiman for the helpful discussions and for  useful suggestions.}

 \setcounter{section}{0}
\renewcommand{\thesection}{\Alph{section}}

\setcounter{equation}{0}
\renewcommand{\theequation}{A.\arabic{equation}}

\section{Appendix: Proofs}
\subsection{Proof of Lemma \ref{lema-basico}}
In the sequel,  to alleviate the notation, we will use the sub index $\umbral$ to operate conditioned on $\{X\geq \umbral\}$. For instance,
$\esp_{\umbral}(W)$,  $\var_{\umbral} (X)$ and $\cov_{\umbral}(X,Y)$  stand for $\esp(W\mid X\geq \umbral)$,  $\esp(X^2\mid X\geq \umbral)-\esp^2(X\mid X\geq \umbral)$ and $\esp(XY \mid X\geq \umbral)-\esp(X \mid X\geq \umbral)\esp(Y\mid X\geq \umbral)$, respectively.  As mentioned above, if $\umbral=-\infty$ no restrictions are made  when conditioning.  

\vskip0.1in
\begin{proof}[Proof of Lemma \ref{lema-basico}]
Recall that, for $\umbral<b$,  $(\alpha_{\umbral}, \beta_{\umbral})$ are the best coefficient for linearly approximate $Y$ based on $X$ when $X\geq \umbral$, that is, they are defined through \eqref{eq:abu}. Therefore, we have that
$$\beta_{\umbral} =\frac{\cov_{\umbral}(X,Y)}{\var_{\umbral}(X)}\qquad \mbox{and}\qquad \alpha_{\umbral}=\esp_{\umbral}(Y)-\beta_{\umbral} \esp_{\umbral}(X)\,. 
$$
{Note that, as mentioned in Section \ref{sec:modelo}, when $\umbral_0\ne -\infty$, for any  $\umbral\geq \umbral_0$, we have that  $(\alpha_{\umbral}, \beta_{\umbral})=(\alpha_0,\beta_0)$, while in the case,   when $\umbral_0=-\infty$,   $(\alpha_0,\beta_0)=(\alpha_{\umbral}, \beta_{\umbral})$ for any   $\umbral <b$.}

a) Using that from \ref{ass:dist-continua}, the marginal distribution function of $X$ is continuous, from the Lebesgue Dominated  Convergence Theorem we obtain that  $\alpha_{\umbral}$ and $\beta_{\umbral}$ depend continuously on $\umbral$ and that   $\ell(\cdot)$ is also a continuous function on $(-\infty,b)$.

b) Using that $Y=r(X)+\varepsilon$, the independence between the errors and the covariates and that $\esp (\varepsilon) =0$ from \ref{ass:momento}, we obtain that   $\ell(\umbral)$ can be written as
\begin{equation}
\label{eq:elledeu}
\ell(\umbral)=\esp_{\umbral} \left[\left\{Y-(\alpha_{\umbral} +\beta_{\umbral} X)\right\}^2\right]=\esp_{\umbral}\left [\left\{r(X)-(\alpha_{\umbral} +\beta_{\umbral} X)\right\}^2\right]+\esp\left(\varepsilon^2\right)\,.
\end{equation}
{On the one hand, when  $\umbral_0\ne -\infty$, the desired result follows   from the facts that $\prob(r(X)=\alpha_0+\beta_0 \,X\mid X\ge\umbral)=1$ and  $(\alpha_0,\beta_0)=(\alpha_{\umbral}, \beta_{\umbral})$, for any  $\umbral\geq \umbral_0$. On the other hand,  when $\umbral_0=-\infty$,  we have that  $\prob(r(X)=\alpha_0+\beta_0 \,X )=1$ and $(\alpha_0,\beta_0)=(\alpha_{\umbral}, \beta_{\umbral})$, for any $\umbral <b$, which concludes the proof of b).}

c)  Let us consider the situation where $a<\umbral<\umbral_0$  and assume that $\ell(\umbral)=\ell(\umbral_0)$. Using \eqref{eq:elledeu}, we conclude that  $\esp_{\umbral} [\{r(X)-(\alpha_{\umbral} +\beta_{\umbral} X)\}^2]=0$ which leads to    $\prob\left(r(X)=\alpha_{\umbral} +\beta_{\umbral} X\mid X\geq \umbral\right)=1$  contradicting the definition of $\umbral_0$.

On the other hand, if  $a\ne -\infty$ and $\umbral\le a$, we get that we get that  $\alpha_u=\alpha_a$ and $\beta_u=\beta_a$, so
\begin{equation}
	\ell(\umbral)=\esp\left [\left\{r(X)-(\alpha_{a} +\beta_{a} X)\right\}^2\right]+\esp\left(\varepsilon^2\right)=\ell(a)\,,
\end{equation}
which is bigger than $\ell(\umbral_0)=\esp(\varepsilon^2)$, by \ref{ass:u0mayorquea}. Hence, $\ell(\umbral)>\ell(\umbral_0)$ for any $u<\umbral_0$, concluding the proof. 

d) We need to show that 
$$A_{\delta}=\inf_{\{u\le \umbral_0-\delta\}} \ell(u) > \ell(\umbral_0)\;.$$

To do so, let $\{\umbral_k\}_{k\in \natu}$  be a sequence  such that $\umbral_k\le \umbral_0-\delta$ and   $\lim_{k\to\infty} \ell(\umbral_k)=A_{\delta}$. If the sequence  $\{\umbral_k\}_{k\in \natu}$ is bounded, there exists a convergent subsequence  $\{\umbral_{k_j}\}_{j\in \natu}$. Let $\umbral^{\star}$ be such that $\lim_{j\to\infty} \umbral_{k_j}=\umbral^{\star}$, then $\umbral^{\star}\le \umbral_0-\delta$ and the continuity of the function $\ell$ implies that 
 $A_{\delta}=\lim_{j\to\infty} \ell(\umbral_{k_j})=\ell(\umbral^{\star})$, so using c) we obtain that $\ell(\umbral^{\star})>   \ell(\umbral_0)$, concluding the proof.

If   $a\ne \,-\,\infty$, the approximating sequence  $\{\umbral_k\}_{k\in \natu}$ may always be assumed to be bounded below since $\ell$ is constant for $\umbral\leq a$. In that case the proof is completed. 

 If $a= \,-\,\infty$ and the sequence  $\{\umbral_k\}_{k\in \natu}$ is not bounded below, there exists a subsequence  $\{\umbral_{k_j}\}_{j\in \natu}$ such that $\lim_{j\to\infty} \umbral_{k_j}=\,-\,\infty$. Denote
\begin{equation}
\label{eq:betastar}
\beta^{*} =\frac{\cov(X,Y)}{\var(X)}\qquad \mbox{and}\qquad \alpha^{*}=\esp(Y)-\beta^{*}\esp(X)\,.
\end{equation}
Arguing as in b), we obtain that 
\begin{align}
A_{\delta}&=\lim_{j\to\infty} \ell(\umbral_{k_j})= \esp \left\{Y-\left(\alpha^{*} +\beta^{*} X\right)\right\}^2
\nonumber \\
& =\esp \left\{r(X)-\left(\alpha^{*} +\beta^{*} X\right)\right\}^2 +\esp\left(\varepsilon^2\right)> \ell(\umbral_0)\,,
\label{eq:Adelta}
\end{align}
 where the last inequality follows from \ref{ass:u0mayorquea}, since $\prob\left(r(X)=\alpha^{*} +\beta^{*} X \right)<1$.
\end{proof}

\subsection{Proof of  Proposition \ref{prop:consistencia_ver2}}
In this section, we include the proof of Proposition \ref{prop:consistencia_ver2} which is a general result allowing to derive  Theorem \ref{teo:tiro_del_final}.

\begin{proof}[Proof of  Proposition \ref{prop:consistencia_ver2}]
Take $\widetilde{\gamma}>\gamma$, such that $1-F_X(\widetilde{\gamma})>0$ and define 
$$
W_n=\sup_{u\leq \umbral_0} \vert\widehat{\ell}(u, {\mathcal{Z}}_n)-\ell (u)\vert \quad \text{ and }\quad T_n=\sup_{u\in [\umbral_0,\widetilde{\gamma}]} \vert\widehat{\ell}(u, {\mathcal{Z}}_n)-\ell (u)\vert\,.
$$
Using that $\gamma_n \convpp \gamma$, we have that there exists a null probability set $\mathcal{N}$ such that $\gamma_n \to \gamma$, for any $\omega\notin \mathcal{N}$. Denote $\delta_0= (\gamma-\umbral_0)/2$, $\delta_0^{\star}= (\widetilde{\gamma}-\gamma)/2$   and $\delta_1=\min(\delta_0, \delta_0^{\star})$. Then,  we have that $\umbral_0+\delta_1 < \umbral_0+\delta_0 = \gamma- \delta_0$ and $\gamma+\delta_1<\gamma+\delta_0^{\star}=\widetilde{\gamma}$, so  
\begin{equation}
\label{eq:conv1}
\lim_{n\to \infty}	\prob(\mathcal{A}_n)=\prob( \umbral_0 +\delta_1 <\gamma_n<\widetilde{\gamma})=1\,,
\end{equation}
implying that for any  $\omega \in \mathcal{A}_n$, we have $\umbral_0 +\delta <\gamma_n<\widetilde{\gamma}$, for any $0<\delta<\delta_1$.

From \ref{ass:fcreciente}, there exists $\delta_2>0$  such that $f$ is strictly increasing in $[ \umbral_0-\delta_2, \umbral_0+\delta_2]$. We will show that for any $0<\delta<\min(\delta_1, \delta_2)$ there exists $\nu>0$ such that
\begin{equation}
\label{eq:Wnsubset}
\{W_n\leq \nu\}\;\subseteq\; \left\{\pl(\umbral _0,{\mathcal{Z}}_n)< \pl(\umbral ,{\mathcal{Z}}_n)\:,\quad \hbox{for $\umbral <\umbral_0-\delta$}\right\} 
\end{equation}
and that, for $n$ large enough, we have that 
\begin{equation}
\label{eq:Tnsubset}
\mathcal{A}_n\cap \{T_n\leq a_n\} \subseteq\; \left\{ \pl(\umbral _0,{\mathcal{Z}}_n)< \pl(\umbral ,{\mathcal{Z}}_n)\:,\quad \hbox{for $\umbral_0+\delta<\umbral <\widetilde{\gamma}$}  \right\}\;.    
\end{equation}
Note that from \eqref{eq:Wnsubset} and  \eqref{eq:Tnsubset} using that $\widehat{\umbral}_n < \gamma_n$ , we obtain that
$$
\mathcal{A}_n\,\cap\, \{W_n\leq \nu\}\,\cap\,  \{T_n\leq a_n\}\;\subseteq\;
\left\{\, \vert \widehat{\umbral}_n-\umbral_0\vert \leq \delta\,\right\}\,,
$$
which, together with \eqref{eq:conv1} and the convergences assumed in (a) and (b), entail  that \linebreak $\prob\left(\vert \widehat{\umbral}_n-\umbral_0\vert \leq \delta\right)\to 1$ as $n\to \infty$. Then,   if we prove \eqref{eq:Wnsubset} and  \eqref{eq:Tnsubset} we obtain the desired result.

\vskip0.1in
Let us begin by showing that \eqref{eq:Tnsubset} holds.   Taking into account that $\ell$ satisfies \eqref{eq:l_minimiza},  we have that $\ell(\umbral )=\ell(\umbral_0)$,  for any  $\umbral_0\le \umbral\leq \widetilde{\gamma}$, therefore  we obtain that  
\begin{align*}
	\pl(\umbral ,{\mathcal{Z}}_n)-\pl(\umbral _0,{\mathcal{Z}}_n)&\;=\;\widehat{\ell}(\umbral,  {\mathcal{Z}}_n)-\widehat{\ell}(\umbral_0,  {\mathcal{Z}}_n)
	+\lambda_n(f(\umbral)-f(\umbral_0))\\
	&\;=\;\widehat{\ell}(\umbral,  {\mathcal{Z}}_n)-\ell(\umbral)+\ell(\umbral_0)-\widehat{\ell}(\umbral_0,  {\mathcal{Z}}_n)
	+\lambda_n(f(\umbral)-f(\umbral_0))\\
	&\;\geq\; -2a_n+\lambda_n(f(\umbral)-f(\umbral_0)) \,,
\end{align*}	
where the last inequality follows from the fact that $T_n\leq a_n$.  
Note that   \ref{ass:fcreciente} implies that $f(\umbral)\ge f(\umbral_0+\delta)$, for any $\umbral>\umbral_0+\delta$. Furthermore, using that $\delta<\delta_2$, we get that  $f(\umbral_0+\delta) - f(\umbral_0)>0$. Then, using that $\mathcal{A}_n\subset \{\umbral_0+\delta < \gamma_n<\widetilde{\gamma}\}$, we get that on $\mathcal{A}_n\cap\{T_n\leq a_n\}$, for any  $\umbral_0+\delta<\umbral< \widetilde{\gamma} $, we have that
\begin{align*}
\pl(\umbral ,{\mathcal{Z}}_n)-\pl(\umbral _0,{\mathcal{Z}}_n)
%\;& \geq\; -2a_n+\lambda_n(f(\umbral_0+\delta) - f(\umbral_0))\\
& \ge 
\lambda_n \left\{ (f(\umbral_0+\delta) - f(\umbral_0))-2 \frac{a_n}{\lambda_n}  \right\}>0\,,
\end{align*}	
for  $n$ large enough,  since $a_n/\lambda_n\to 0$ as $n\to\infty$.

It remains to prove that \eqref{eq:Wnsubset} holds. Note that   since $\ell$ satisfies \eqref{eq:lmayorlu0}, we have that 
\begin{equation}
\inf_{\umbral\leq \umbral_0-\delta} \ell(\umbral)-\ell(\umbral_0)=\nu_\delta >0\,.
\end{equation}
Choosing $\nu=\nu_\delta/4$ and  using that $f$ is a non-negative function, we obtain that, for any  $\umbral<\umbral_0-\delta$, if $ W_n\leq \nu $ then
\begin{align*}
\pl(\umbral ,{\mathcal{Z}}_n)-\pl(\umbral _0,{\mathcal{Z}}_n)&\;=\;\widehat{\ell}(\umbral,  {\mathcal{Z}}_n)-\widehat{\ell}(\umbral_0,  {\mathcal{Z}}_n)
	+\lambda_n(f(\umbral)-f(\umbral_0)) \\
&\;=\;\widehat{\ell}(\umbral,  {\mathcal{Z}}_n)-\ell(\umbral)+\ell(\umbral_0)-\widehat{\ell}(\umbral_0,  {\mathcal{Z}}_n)+\ell(\umbral)-\ell(\umbral_0) \\
& \hskip0.4in +\lambda_n(f(\umbral)-f(\umbral_0)) \\
&\;\geq\; -2 W_n +\ell(\umbral)-\ell(\umbral_0)-\lambda_n f(\umbral_0)\\
&\;\geq\; -2 \alpha +\nu_\delta -\lambda_n f(\umbral_0) = \frac{\nu_\delta}2 -\lambda_n f(\umbral_0)>0
\end{align*}
for $n$ large enough, since $\lambda_n\to 0$. 
\end{proof}

\subsection{Some preliminary results}
In this section, we include two results which are a key step to derive Theorem \ref{teo:tiro_del_final} from  Proposition \ref{prop:consistencia_ver2}. They will also be useful to prove Corollary \ref{coro:au} and Theorem \ref{teo:distriasint}. Their proof use standard  empirical processes tools.
 
In the sequel,   we adopt the following notation that strengths  the dependence on the regression coefficients
\begin{align*}
\widehat{\ell} (\umbral, \alpha,\beta)&=\frac{\prob_n \left[\{Y-( \alpha + \beta X)\}^2 \indica_{\{X \geq \umbral\}}\right]}{\prob_n\left(\indica_{\{X\geq \umbral\}}\right)}\;,\\
\ell(\umbral,  \alpha, \beta)&= \frac{\prob\left[\{Y-( \alpha +\beta X)\}^2\indica_{\{X\geq \umbral\}}\right]}{\prob\left(\indica_{\{X\geq \umbral\}}\right)}\,. 
\end{align*}
In this way, we get that  $\widehat{\ell} (\umbral,{\mathcal{Z}}_n)=\widehat{\ell} (\umbral, \widehat{\alpha}_{\umbral},\widehat{\beta}_{\umbral})$ and $ \ell(\umbral)=\ell(\umbral, \alpha_{\umbral}, \beta_{\umbral})$.

The proof of Lemma \ref{lema:todo}  needs also a basic result stated below.

\begin{lemma}{\label{lema:limite_empiricos}}
Let $\{(X_i, Y_i): i\geq 1\}$ are i.i.d., distributed as  $(X, Y)$ and fix $s, m\in \natu$.  Then,   if $\esp(X^{2s}\, Y^{2m})<\infty$, we have that the class of functions ${\mathcal{H}}=\{h(x,y)= x^s y^m \indica_{\{x\geq \umbral\}},\; \umbral\in \real\}$  is $P-$Donsker, , where $P$ stands for the joint probability measure of $(X,Y)$, so
\begin{equation}
\label{eq:clase_buena}
\sqrt{n}\displaystyle\sup_{\umbral\in \mathbb R} \left|\, \prob_n\left(X^s Y^m\indica_{\{X\geq \umbral\}}\right)-\prob \left(X^s Y^m\indica_{\{X\geq \umbral\}}\right)\, \right|=O_{\prob}(1)\;. 
\end{equation}
\end{lemma}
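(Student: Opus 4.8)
The plan is to show that $\mathcal{H}$ is a $P$-Donsker class; granting this, the empirical process $\{\sqrt{n}(\prob_n-\prob)h:h\in\mathcal{H}\}$ is asymptotically tight in $\ell^\infty(\mathcal{H})$, so in particular its supremum norm is $O_{\prob}(1)$, which is exactly the bound \eqref{eq:clase_buena}. A natural envelope for $\mathcal{H}$ is $F(x,y)=|x|^s|y|^m$, and the hypothesis $\esp(X^{2s}Y^{2m})<\infty$ is precisely $\esp F^2<\infty$; hence the only real work is to control the entropy of $\mathcal{H}$.

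First I would observe that the family of half-line indicators $\mathcal{C}=\{(x,y)\mapsto\indica_{\{x\geq\umbral\}}:\umbral\in\real\}$ is a VC-subgraph class: the sets $\{x\geq\umbral\}$ cannot shatter two points of the line, so $\mathcal{C}$ has VC index $2$. Writing $\mathcal{H}=\{g\cdot c:c\in\mathcal{C}\}$ with the fixed measurable weight $g(x,y)=x^sy^m$, the standard preservation results for VC-subgraph classes (multiplication of every member of a VC-subgraph class by a fixed function again yields a VC-subgraph class) show that $\mathcal{H}$ is VC-subgraph. Consequently $\mathcal{H}$ satisfies a uniform entropy bound of the form $N(\epsilon\|F\|_{Q,2},\mathcal{H},L^2(Q))\leq C\,\epsilon^{-W}$ for constants $C,W$ independent of the finitely supported probability measure $Q$, and therefore $\int_0^1\sup_Q\sqrt{\log N(\epsilon\|F\|_{Q,2},\mathcal{H},L^2(Q))}\,d\epsilon<\infty$.

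With this uniform entropy integral finite and $\esp F^2<\infty$, the Donsker theorem for classes with finite uniform entropy integral (e.g.\ van der Vaart and Wellner, 1996, Theorem 2.5.2) applies, once one checks the required measurability condition; the latter is routine here because $\umbral\mapsto\indica_{\{x\geq\umbral\}}$ is left-continuous in $\umbral$, so every member of $\mathcal{H}$ is a pointwise limit of members indexed by rationals and the suprema in \eqref{eq:clase_buena} may be taken over a countable set. Hence $\mathcal{H}$ is $P$-Donsker, so $\sqrt{n}(\prob_n-\prob)$ converges weakly in $\ell^\infty(\mathcal{H})$ to a tight Gaussian process; in particular $\sqrt{n}\sup_{\umbral\in\real}|\prob_n(X^sY^m\indica_{\{X\geq\umbral\}})-\prob(X^sY^m\indica_{\{X\geq\umbral\}})|=O_{\prob}(1)$, which is \eqref{eq:clase_buena}.

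The step I expect to require the most care is the entropy bound for $\mathcal{H}$, because the weight $g(x,y)=x^sy^m$ is unbounded and changes sign, so $\mathcal{H}$ is not a uniformly bounded class and one must invoke the correct preservation lemma rather than arguing directly. A more self-contained alternative, which I would use if a cleaner write-up is wanted, is to decompose $g=g^+-g^-$ into positive and negative parts and treat each family $\{g^{\pm}\indica_{\{x\geq\umbral\}}:\umbral\in\real\}$ separately: each is monotone in $\umbral$ and is squeezed between $0$ and the fixed $L^2(P)$ function $g^{\pm}$, so by partitioning $\real$ into finitely many intervals each carrying at most $\epsilon^2$ of the finite measure $(g^{\pm})^2\,dP$ one gets $L^2(P)$ bracketing numbers of polynomial order $\epsilon^{-2}$, hence $\int_0^1\sqrt{\log N_{[\,]}(\epsilon,\cdot,L^2(P))}\,d\epsilon<\infty$; the bracketing Donsker theorem then makes each of these families $P$-Donsker, and $\mathcal{H}$, being contained in the difference of two Donsker classes, is $P$-Donsker as well. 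Either route delivers \eqref{eq:clase_buena}; all other verifications are routine.
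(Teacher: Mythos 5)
Your main argument is correct and is essentially the paper's own proof: show the half-line indicators form a VC-subgraph class, invoke the preservation of the VC-subgraph property under multiplication by the fixed function $x^sy^m$, use the square-integrable envelope $|x|^s|y|^m$ to get the uniform entropy bound, and conclude via Theorem 2.5.2 of van der Vaart and Wellner that $\mathcal{H}$ is $P$-Donsker, whence the supremum is $O_{\prob}(1)$. The measurability remark and the alternative bracketing argument are sound additions but not needed beyond what the paper does.
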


\begin{proof}
Lemma 2.6.15 in \citet{vander:wellner:1996} implies that the class of functions $\mathcal{G}=\{g(x,y)=x-\umbral,\; \umbral\in \real\}$ is a VC-subgraph class of functions with VC-index smaller or equal that 3.  
%Then, from Lemma    9.9 (iii)  in \citet{kosorok:2008}, we conclude that $\mathcal{C}=\{g\ge 0; g\in \mathcal{G}\}$ is a VC-class of sets with the same VC-index as $\mathcal{G}$ which together with Lemma 9.8 in \citet{kosorok:2008} entails that $\mathcal{G}^{\star}=\{f(x,y)=\indica_{\{x\geq \umbral\}}, \;\umbral\in \real\}$ is also  a VC-subgraph class of functions. 
Then, from Lemmas 9.8 and 9.9 (iii)  in \citet{kosorok:2008}, we conclude that  $\mathcal{G}^{\star}=\{g(x,y)=\indica_{\{x\geq \umbral\}}, \;\umbral\in \real\}$ is also  a VC-subgraph class of functions. 
Using the permanence properties of  VC-classes stated in Lemma 9.9 (iv)  from \citet{kosorok:2008}, we obtain that  ${\mathcal{H}}$ is also  a VC-subgraph class of functions. 
Note that ${\mathcal{H}}$ has envelope $H(X,Y)=|X|^s |Y|^m$ which belongs to $L^2(P)$, since we assumed that $\esp(X^{2s}\, Y^{2m})<\infty$. Thus, from Lemma 2.6.7 in \citet{vander:wellner:1996}, we get that $\mathcal{H}$ satisfies the uniform entropy bound, which together with Theorem 2.5.2 in \citet{vander:wellner:1996} implies that $\mathcal{H}$ is a $P-$Donsker's class, concluding the proof of   \eqref{eq:clase_buena}. 
 \end{proof}

\bigskip

%We finish this section with the proof of Lemma \ref{lema:todo} invoked in the proof of Theorem \ref{teo:tiro_del_final}. 

\begin{lemma}{\label{lema:todo}}
Assume $\{(X_i, Y_i): i\geq 1\}$ are i.i.d., distributed as  $(X, Y)$   satisfying the regression model \eqref{eq:regression} and let $\umbral_0$ be defined as in \eqref{eq:u_0}..    Assume that \ref{ass:momento},  \ref{ass:dist-continua}  and \ref{ass:u0mayorquea} to  \ref{ass:momento4}   hold. 
Let $\widehat{\umbral}_n$ be defined as in \eqref{eq:def_hat_umbral}  and  $\widehat{\ell}(\umbral, {\mathcal{Z}}_n)$  defined in \eqref{eq:ell_empirica}.  Then, for any fixed $\widetilde{\gamma}=F_X^{-1}(1-\eta)$ with $\eta>0$, we have that
	\begin{enumerate}
		\item[(a)  ] $\sup_{\umbral \leq \widetilde{\gamma}}\sqrt{n}( \widehat{\alpha}_{\umbral}-\alpha_{\umbral}) =O_{\prob}(1)\;\quad\hbox{and}\quad 
		\sup_{\umbral \leq \widetilde{\gamma}}\sqrt{n}( \widehat{\beta}_{\umbral}-\beta_{\umbral}) =O_{\prob}(1)$.
		\item[(b)  ] $\sup_{\umbral \leq \widetilde{\gamma}}\sqrt{n}\vert \widehat{\ell}  (\umbral,\widehat{\alpha}_{\umbral},\widehat{\beta}_{\umbral})- \widehat\ell(\umbral,\alpha_{\umbral},\beta_{\umbral})\vert =O_{\prob}(1)$.
		\item[(c)  ] $\sup_{\umbral \leq \widetilde{\gamma}}\sqrt{n}\vert  \widehat\ell(\umbral,\alpha_{\umbral},\beta_{\umbral})- \ell  (\umbral,\alpha_{\umbral},\beta_{\umbral})\vert =O_{\prob}(1)$.
	\end{enumerate}
\end{lemma}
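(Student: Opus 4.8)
The plan is to reduce all three statements to the uniform behaviour of a fixed, finite family of empirical moment functionals, and only then reassemble. First I would introduce, for $(s,m)\in\{(0,0),(1,0),(0,1),(2,0),(1,1),(0,2)\}$, the processes $M_n^{(s,m)}(\umbral)=\prob_n\big(X^sY^m\indica_{\{X\geq\umbral\}}\big)$ and their population versions $M^{(s,m)}(\umbral)$; by Lemma \ref{lema:limite_empiricos}, $\sup_{\umbral\in\real}\sqrt n\,\big|M_n^{(s,m)}(\umbral)-M^{(s,m)}(\umbral)\big|=O_{\prob}(1)$ for each such $(s,m)$ whose moment condition $\esp(X^{2s}Y^{2m})<\infty$ holds, these being supplied by \ref{ass:momento} and \ref{ass:momento4} (the $X^2$- and $Y^2$-terms using the fourth-moment conditions from which \ref{ass:momento4} is noted to follow). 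I would then record two uniform facts on $\{\umbral\le\widetilde\gamma\}$: the denominators stay bounded below, since $\prob(X\geq\umbral)\geq\prob(X\geq\widetilde\gamma)=\eta>0$ while $\prob_n(\indica_{\{X\geq\umbral\}})\geq\prob_n(\indica_{\{X\geq\widetilde\gamma\}})\convpp\eta$, so $\prob_n(\indica_{\{X\geq\umbral\}})\geq\eta/2$ uniformly with probability tending to one; and, from the proof of Lemma \ref{lema-basico}, $\umbral\mapsto\alpha_{\umbral},\beta_{\umbral},\var_{\umbral}(X)$ are continuous on $(-\infty,b)$ with $|\alpha_{\umbral}|,|\beta_{\umbral}|$ uniformly bounded on $\{\umbral\le\widetilde\gamma\}$ (by Cauchy--Schwarz and the lower bound on $\prob(X\geq\umbral)$), while $\var_{\umbral}(X)>0$ on $(-\infty,b)$ (by continuity of $F_X$) and $\var_{\umbral}(X)\to\var(X)>0$ as $\umbral\to-\infty$, so $\inf_{\umbral\le\widetilde\gamma}\var_{\umbral}(X)\ge v_0>0$.

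For part (a), I would write $\widehat\beta_{\umbral}=\widehat{\cov}_{\umbral}(X,Y)/\widehat{\var}_{\umbral}(X)$ and $\widehat\alpha_{\umbral}=\widehat{\esp}_{\umbral}(Y)-\widehat\beta_{\umbral}\widehat{\esp}_{\umbral}(X)$, exactly as the population coefficients are expressed in the proof of Lemma \ref{lema-basico} but with $\prob$ replaced by $\prob_n$. Each building block $\widehat{\cov}_{\umbral}(X,Y),\widehat{\var}_{\umbral}(X),\widehat{\esp}_{\umbral}(X),\widehat{\esp}_{\umbral}(Y)$ is a fixed rational expression in $M_n^{(0,0)},M_n^{(1,0)},M_n^{(0,1)},M_n^{(2,0)},M_n^{(1,1)}$, hence within $O_{\prob}(n^{-1/2})$ of its population value uniformly over $\{\umbral\le\widetilde\gamma\}$. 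Using $a_n/b_n-a/b=(a_n-a)/b_n-a(b_n-b)/(b_nb)$ together with $\widehat{\var}_{\umbral}(X)\ge v_0/2$ (with probability tending to one), $\var_{\umbral}(X)\ge v_0$ and the uniform boundedness of $\cov_{\umbral}(X,Y)$, I get $\sup_{\umbral\le\widetilde\gamma}\sqrt n\,|\widehat\beta_{\umbral}-\beta_{\umbral}|=O_{\prob}(1)$, and then $\sup_{\umbral\le\widetilde\gamma}\sqrt n\,|\widehat\alpha_{\umbral}-\alpha_{\umbral}|=O_{\prob}(1)$ by the product rule, using boundedness of $\widehat{\esp}_{\umbral}(X)$ and $\beta_{\umbral}$.

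For part (b), the key point is that $(\widehat\alpha_{\umbral},\widehat\beta_{\umbral})$ minimises $\widehat\ell(\umbral,\alpha,\beta)$, so the normal equations $\prob_n\{(Y-\widehat\alpha_{\umbral}-\widehat\beta_{\umbral}X)\indica_{\{X\geq\umbral\}}\}=0$ and $\prob_n\{(Y-\widehat\alpha_{\umbral}-\widehat\beta_{\umbral}X)X\indica_{\{X\geq\umbral\}}\}=0$ yield the Pythagorean identity $\widehat\ell(\umbral,\alpha_{\umbral},\beta_{\umbral})-\widehat\ell(\umbral,\widehat\alpha_{\umbral},\widehat\beta_{\umbral})=\prob_n\{[(\widehat\alpha_{\umbral}-\alpha_{\umbral})+(\widehat\beta_{\umbral}-\beta_{\umbral})X]^2\indica_{\{X\geq\umbral\}}\}/\prob_n(\indica_{\{X\geq\umbral\}})$. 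Bounding the numerator by $2(\widehat\alpha_{\umbral}-\alpha_{\umbral})^2+2(\widehat\beta_{\umbral}-\beta_{\umbral})^2\prob_n(X^2)$ and invoking part (a), the law of large numbers under \ref{ass:momento} (so $\prob_n(X^2)=O_{\prob}(1)$), and the lower bound on the denominator, this is $O_{\prob}(n^{-1})$ uniformly; multiplying by $\sqrt n$ gives part (b). For part (c), I would expand $(Y-\alpha_{\umbral}-\beta_{\umbral}X)^2=Y^2-2\alpha_{\umbral}Y-2\beta_{\umbral}XY+\alpha_{\umbral}^2+2\alpha_{\umbral}\beta_{\umbral}X+\beta_{\umbral}^2X^2$, so that $\widehat\ell(\umbral,\alpha_{\umbral},\beta_{\umbral})$ and $\ell(\umbral,\alpha_{\umbral},\beta_{\umbral})$ become the \emph{same} fixed rational combination of $M_n^{(s,m)}(\umbral)$, respectively $M^{(s,m)}(\umbral)$, with coefficients $1,\alpha_{\umbral},\beta_{\umbral},\alpha_{\umbral}^2,\alpha_{\umbral}\beta_{\umbral},\beta_{\umbral}^2$ that are uniformly bounded on $\{\umbral\le\widetilde\gamma\}$; subtracting, applying Lemma \ref{lema:limite_empiricos} term by term and using the uniform lower bound on the denominators, I obtain $\sup_{\umbral\le\widetilde\gamma}\sqrt n\,|\widehat\ell(\umbral,\alpha_{\umbral},\beta_{\umbral})-\ell(\umbral,\alpha_{\umbral},\beta_{\umbral})|=O_{\prob}(1)$.

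The main obstacle is that the index $\umbral$ appears simultaneously inside the indicator $\indica_{\{X\geq\umbral\}}$ and inside the (random) coefficients $\widehat\alpha_{\umbral},\widehat\beta_{\umbral}$ and their limits $\alpha_{\umbral},\beta_{\umbral}$, so the uniform rates cannot be read off from any single Donsker class; the way around this is precisely the reduction above — pass everything through the finitely many atomic processes $M_n^{(s,m)}$, whose uniform $\sqrt n$-control is Lemma \ref{lema:limite_empiricos}, and only then reassemble $\widehat\alpha_{\umbral},\widehat\beta_{\umbral},\widehat\ell$ as rational maps with uniformly bounded coefficients and denominators bounded away from zero on $\{\umbral\le\widetilde\gamma\}$. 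The points that will need the most care are the uniform positivity of $\widehat{\var}_{\umbral}(X)$ (so that the quotient defining $\widehat\beta_{\umbral}$ is stable) and the verification of the hypotheses of Lemma \ref{lema:limite_empiricos} for the $XY$-, $X^2$- and $Y^2$-functionals — this is exactly where the strengthened moment condition \ref{ass:momento4} (needed because $r$ is not assumed linear below $\umbral_0$) and the quantile condition \ref{ass:cuantil} do their work.
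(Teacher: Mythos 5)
Your proposal is correct, and for parts (a) and (c) it is essentially the paper's own argument: reduce everything to the finitely many moment processes $\prob_n\left(X^sY^m\indica_{\{X\geq \umbral\}}\right)$, get uniform $\sqrt{n}$-control from Lemma \ref{lema:limite_empiricos}, and reassemble via the quotient decomposition with the population denominator bounded below by $\eta$ (and $\left[1-F_X(\widetilde{\gamma})\right]^2\inf_{\umbral\le\widetilde{\gamma}}\var_{\umbral}(X)$ for $D(\umbral)$), the empirical one bounded below with probability tending to one, and $\alpha_{\umbral},\beta_{\umbral}$ uniformly bounded on $\{\umbral\le\widetilde{\gamma}\}$ (the paper gets this boundedness from continuity plus the limits $\alpha^{*},\beta^{*}$ as $\umbral\to-\infty$; your Cauchy--Schwarz route is equivalent). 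Where you genuinely diverge is part (b): the paper expands $\widehat{\Delta}(u)=\{Y-(\alpha_{\umbral}+\beta_{\umbral}X)\}^2-\{Y-(\widehat{\alpha}_{\umbral}+\widehat{\beta}_{\umbral}X)\}^2$ into five terms and controls each by combining part (a) with Lemma \ref{lema:limite_empiricos}, obtaining an $O_{\prob}(n^{-1/2})$ uniform bound, whereas you invoke the normal equations to get the exact orthogonality identity
$$\widehat\ell(\umbral,\alpha_{\umbral},\beta_{\umbral})-\widehat\ell(\umbral,\widehat{\alpha}_{\umbral},\widehat{\beta}_{\umbral})=\frac{\prob_n\left[\left\{(\widehat{\alpha}_{\umbral}-\alpha_{\umbral})+(\widehat{\beta}_{\umbral}-\beta_{\umbral})X\right\}^2\indica_{\{X\geq\umbral\}}\right]}{\prob_n\left(\indica_{\{X\geq\umbral\}}\right)}\,,$$
which reduces (b) to (a) alone, makes the sign of the difference transparent, and in fact yields the sharper uniform rate $O_{\prob}(n^{-1})$. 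Both arguments are valid; yours is shorter and avoids re-invoking the Donsker bounds. One small point of care shared with the paper: applying Lemma \ref{lema:limite_empiricos} to the $(s,m)=(2,0)$ class formally asks for $\esp(X^4)<\infty$ (the envelope $X^2$ must be square integrable), and your parenthetical about \ref{ass:momento4} and fourth moments has the implication backwards --- $\esp(X^4)<\infty$ and $\esp(Y^4)<\infty$ imply \ref{ass:momento4}, not conversely --- but since the paper's proof makes the same tacit use, this is not a gap you introduced.
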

It is worth mentioning that when the support of $\itS=[a,b]$ is bounded below, that is, when $a\ne -\infty$, the supremum over $\umbral \leq \widetilde{\gamma}$ equals that computed over $a\le \umbral \leq \widetilde{\gamma}$, since for values of $\umbral$ smaller than $a$, $\indica_{\{X\ge \umbral\}}=1$. For that reason, in the proof below we consider the limits when $\umbral\to a$ instead of $\umbral\to-\infty$. The case  of unbounded supports
is included having in mind that in such situation $a= -\infty$.

\begin{proof}[Proof of Lemma \ref{lema:todo}] 
To prove (a) note that 
$$\sqrt{n}\,(\widehat{\alpha}_{\umbral}-\alpha_{\umbral})= A_{1,n}(\umbral) - \beta_{\umbral}\, A_{2,n}(\umbral) -\sqrt{n}\,\left(\widehat{\beta}_{\umbral}-\beta_{\umbral}\right)\frac{\prob_n\left(Y\indica_{\{X\geq \umbral\}}\right)}
{\prob_n \left(\indica_{\{X\geq \umbral\}}\right)}\,,$$
where
\begin{align*}
A_{1,n}(\umbral) &=\sqrt{n}\,\left\{\frac{\prob_n\left(Y\indica_{\{X\geq \umbral\}}\right)}{\prob_n\left(\indica_{\{X\geq \umbral\}}\right)}-\frac{\prob\left( Y\indica_{\{X\geq \umbral\}}\right)}{\prob\left(\indica_{\{X\geq \umbral\}}\right)}\right\}\\
%& = \frac{\prob_n\left(Y\indica_{\{X\geq \umbral\}}\right)\, \prob\left(\indica_{\{X\geq \umbral\}}\right)-\prob\left(Y\indica_{\{X\geq \umbral\}}\right)\, \prob_n\left(\indica_{\{X\geq \umbral\}}\right) }{\prob\left(\indica_{\{X\geq \umbral\}}\right)\prob_n\left(\indica_{\{X\geq \umbral\}}\right)}\\
& = \frac{\sqrt{n}\left\{\prob_n\left(Y\indica_{\{X\geq \umbral\}}\right)-\prob\left(Y\indica_{\{X\geq \umbral\}}\right)\right\}\, \prob\left(\indica_{\{X\geq \umbral\}}\right)-\prob\left(Y\indica_{\{X\geq \umbral\}}\right)\, \sqrt{n}\left\{\prob_n\left(\indica_{\{X\geq \umbral\}}\right) -\prob\left(\indica_{\{X\geq \umbral\}}\right)\right\}}{\prob\left(\indica_{\{X\geq \umbral\}}\right)\prob_n\left(\indica_{\{X\geq \umbral\}}\right)}\,,\\
A_{2,n}(\umbral) &= \sqrt{n}\,\left\{ \frac{\prob_n\left(X\indica_{\{X\geq \umbral\}}\right)}{\prob_n\left(\indica_{\{X\geq \umbral\}}\right)} -
\frac{\prob\left( X\indica_{\{X\geq \umbral\}}\right)}{\prob\left(\indica_{\{X\geq \umbral\}}\right)}\right\}\\
& = \frac{\sqrt{n}\left\{\prob_n\left(X\indica_{\{X\geq \umbral\}}\right)-\prob\left(X\indica_{\{X\geq \umbral\}}\right)\right\}\, \prob\left(\indica_{\{X\geq \umbral\}}\right)-\prob\left(X\indica_{\{X\geq \umbral\}}\right)\,\sqrt{n} \left\{\prob_n\left(\indica_{\{X\geq \umbral\}}\right) -\prob\left(\indica_{\{X\geq \umbral\}}\right)\right\}}{\prob\left(\indica_{\{X\geq \umbral\}}\right)\prob_n\left(\indica_{\{X\geq \umbral\}}\right)}\,.
\end{align*}
Therefore, noticing that from Lemma \ref{lema:limite_empiricos}, 
\begin{align*}
\sqrt{n}\sup_{\umbral \leq \widetilde{\gamma}}\left|\prob_n\left(Y\indica_{\{X\geq \umbral\}}\right)- \prob\left(Y\indica_{\{X\geq \umbral\}}\right)\right|&=O_{\prob}(1)\,, \\
\sqrt{n}\sup_{\umbral \leq \widetilde{\gamma}}\left|\prob_n\left(X\indica_{\{X\geq \umbral\}}\right)- \prob\left(X\indica_{\{X\geq \umbral\}}\right)\right|&=O_{\prob}(1)\,, \\
\sqrt{n}\sup_{\umbral \leq \widetilde{\gamma}}\left|\prob_n\left( \indica_{\{X\geq \umbral\}}\right)- \prob\left( \indica_{\{X\geq \umbral\}}\right)\right|&=O_{\prob}(1)\,, 
\end{align*}
since $\esp(X^2)<\infty$ and $\esp(Y^2)<\infty$, to show that 
  $\sup_{\umbral \leq \widetilde{\gamma}}\sqrt{n}( \widehat{\alpha}_{\umbral}-\alpha_{\umbral}) =O_{\prob}(1)$ it will be enough to show that
   $\sup_{\umbral \leq \widetilde{\gamma}}\sqrt{n}( \widehat{\beta}_{\umbral}-\beta_{\umbral}) =O_{\prob}(1)$.

For that purpose, notice that
$$
\beta_{\umbral}\;=\;\frac{\prob\left(X\,Y\,\indica_{\{X\geq \umbral\}}\right)\;\prob\left(\indica_{\{X\geq \umbral\}}\right)-\prob\left(X \, \indica_{\{X\geq \umbral\}}\right)\;\prob\left(Y\,\indica_{\{X\geq \umbral\}}\right)}{\prob\left(X^2\,\indica_{\{X\geq \umbral\}}\right)\;\prob\left(\indica_{\{X\geq \umbral\}}\right)-\left\{\prob\left(X\,\indica_{\{X\geq \umbral\}}\right)\right\}^2}=\frac{A(\umbral)}{D(\umbral)}\,,
$$
and
$$
\widehat{\beta}_{\umbral}\;=\;\frac{\prob_n\left(X\,Y\,\indica_{\{X\geq \umbral\}}\right)\;\prob_n\left(\indica_{\{X\geq \umbral\}}\right)-
\prob_n\left(X \, \indica_{\{X\geq \umbral\}}\right)\;\prob_n\left(Y\,\indica_{\{X\geq \umbral\}}\right)}
{\prob_n\left(X^2\,\indica_{\{X\geq \umbral\}}\right)\;\prob_n\left(\indica_{\{X\geq \umbral\}}\right)-\left\{\prob_n\left(X\,\indica_{\{X\geq \umbral\}}\right)\right\}^2}=\frac{A_n(\umbral)}{D_n(\umbral)}\,.
$$
Hence,
\begin{align}
\sqrt{n}( \widehat{\beta}_{\umbral}-\beta_{\umbral}) &= \frac{1}{D_n(\umbral)\, D(\umbral)}\;\sqrt{n} \left\{A_n(\umbral) D(\umbral)- A(\umbral) D_n(\umbral)\right\}
\nonumber\\
& = \frac{1}{D_n(\umbral)\, D(\umbral)} \left\{ D(\umbral)\, \sqrt{n}\left(A_n(\umbral)- A(\umbral)\right) - A(\umbral) \, \sqrt{n}\left(D_n(\umbral)- D(\umbral)\right)\right\}
\label{eq:numerobeta}
\end{align}
First of all, observe that $\var_{\umbral}(X)$ is a continuous function of $\umbral$ which converges to $\var (X)$ when  $\umbral\to a$, hence $\inf_{u\leq \widetilde{\gamma}}\var_{\umbral}(X)>0$. Thus, for all $\umbral\leq \widetilde{\gamma}$ we have that
 \begin{align*}
 D(\umbral)=\prob\left(X^2\indica_{\{X\geq \umbral\}}\right)\prob\left(\indica_{\{X\geq \umbral\}}\right)-
 \left\{\prob\left(X\indica_{\{X\geq \umbral\}}\right)\right\}^2\;&=\;\left\{\prob \left(X\geq \umbral\right)\right\}^2\; \var_{\umbral}(X)\\
 &\ge \; \left[1-F_X(\widetilde{\gamma})\right]^2\inf_{u\leq \widetilde{\gamma}}\var_{\umbral}(X) >\;0\;, 
 \end{align*}
 since $1-F_X(\widetilde{\gamma})=\eta>0$, meaning that $D=\inf_{\umbral\leq \widetilde{\gamma}}  D(\umbral)>\;0$.  
 
 Lemma \ref{lema:limite_empiricos} entails that  $\sup_{\umbral\leq \widetilde{\gamma}} |D_n(\umbral)-D(\umbral)| \convprob 0$, so $\lim_{n\to \infty} \prob\left(\inf_{\umbral\leq \widetilde{\gamma}}  D_n(\umbral)>D/2\right)=1$. Thus, taking into account that   $\sup_{\umbral\leq \widetilde{\gamma}}  D(\umbral)\le  \esp(X^2)$ and $\sup_{\umbral\leq \widetilde{\gamma}} | A(\umbral)|\le \esp |X\, Y| + \esp |X|\,\esp |Y|<\infty$, from \eqref{eq:numerobeta} we conclude that to show that $\sqrt{n}( \widehat{\beta}_{\umbral}-\beta_{\umbral})=O_{\prob}(1)$, it will be enough to prove that
 $$\sqrt{n}\left(A_n(\umbral)- A(\umbral)\right)=O_{\prob}(1)\qquad \mbox{and} \qquad \sqrt{n}\left(D_n(\umbral)- D(\umbral)\right)=O_{\prob}(1)\,,$$
which follow easily from Lemma \ref{lema:limite_empiricos}, since $\esp(X^2Y^2)<\infty$, $\esp(X^2)<\infty$, $\esp(Y^2)<\infty$, see equation \eqref{eq:clase_buena}.

\noindent b) To prove (b), observe that 
\begin{equation}
\label{eq:difelles}
\widehat\ell(\umbral,\alpha_{\umbral},\beta_{\umbral})-\widehat{\ell}  (\umbral,\widehat{\alpha}_{\umbral},\widehat{\beta}_{\umbral}
	) \; = \; \frac{\prob_n\left(\widehat{\Delta}(u)\,\indica_{\{X\geq \umbral\}}\right)}{\prob_n\left(\indica_{\{X\geq \umbral\}}\right)}\,,
\end{equation}	
where
$$ \widehat{\Delta}(u)= \Big\{Y-(\alpha_{\umbral} +\beta_{\umbral} X)\Big\}^2 -\left\{Y-(\widehat{\alpha}_{\umbral} +\widehat{\beta}_{\umbral} X)\right\}^2\,.$$
Straightforward calculations allow to see that
\begin{equation*}
\widehat{\Delta}(u)=\left(\alpha^2_{\umbral}-\widehat{\alpha}^2_{\umbral}\right)+ \left(\beta^2_{\umbral} -\widehat{\beta}^2_{\umbral}\right)\,X^2 -2\left(\alpha_{\umbral} -\widehat{\alpha}_{\umbral}\right)\, Y\,-\,2\,\left(\beta_{\umbral}-\widehat{\beta}_{\umbral}\right)Y X\,+\,2\left(\alpha_{\umbral} \beta_{\umbral}-\widehat{\alpha}_{\umbral} \widehat{\beta}_{\umbral}\right)\, X\,.
\end{equation*}
Then, replacing in \eqref{eq:difelles} we obtain that 
\begin{align*} 
\widehat\ell(\umbral,\alpha_{\umbral},\beta_{\umbral})& -\widehat{\ell}  (\umbral,\widehat{\alpha}_{\umbral},\widehat{\beta}_{\umbral}) \; 
=  \; \frac{1}{\prob_n\left(\indica_{\{X\geq \umbral\}}\right)}\Bigl[\left(\beta^2_{\umbral} -\widehat{\beta}^2_{\umbral}\right)\,\prob_n \left(X^2\indica_{\{X\geq \umbral\}}\right)   -2\left(\alpha_{\umbral} -\widehat{\alpha}_{\umbral}\right) \prob_n\left(Y\indica_{\{X\geq \umbral\}}\right) \\
	&-\,2\,\left(\beta_{\umbral}-\widehat{\beta}_{\umbral}\right)\prob_n\left(Y\, X\, \indica_{\{X\geq \umbral\}}\right)
	\,+\,2\left(\alpha_{\umbral} \beta_{\umbral}-\widehat{\alpha}_{\umbral} \widehat{\beta}_{\umbral}\right)\prob_n \left(X\indica_{\{X\geq \umbral\}}\right)\Bigr]+ \left(\alpha^2_{\umbral}-\widehat{\alpha}^2_{\umbral}\right) \,.
\end{align*}
Combining the results established in (a), Lemma \ref{lema:limite_empiricos} and the lower bound for $\prob(X\geq \umbral)$   when $\umbral\leq \widetilde{\gamma}$ given by $\prob(X\geq \umbral)\ge \eta>0$,  the desired result is obtained. 

\noindent c) Finally, to prove (c) note that 
\begin{align*}
\widehat{\ell}(\umbral, \alpha_{\umbral}, \beta_{\umbral})	&=  \frac{\prob_n \left(\left[Y^2-2 Y \alpha_{\umbral}\,-\,2\,Y \, X\,\beta_{\umbral}+\,\alpha_{\umbral}^2\,+\, \beta_{\umbral}^2 \, X^2\, +\,2\,\alpha_{\umbral} \beta_{\umbral} X\right] \indica_{\{X\geq \umbral\}}\right)}{\prob_n\left(\indica_{\{X\geq \umbral\}}\right)}=\frac{B_n(u)}{D_n(u)}\,, 
\end{align*}	
while 
\begin{align*}
	\ell(\umbral, \alpha_{\umbral}, \beta_{\umbral})&=  \frac{\prob\left(\left[Y^2-2 Y \alpha_{\umbral}\,-\,2\,Y \, X\,\beta_{\umbral}+\,\alpha_{\umbral}^2\,+\, \beta_{\umbral}^2 \, X^2\, +\,2\,\alpha_{\umbral} \beta_{\umbral} X\right] \indica_{\{X\geq \umbral\}}\right)}{\prob\left(\indica_{\{X\geq \umbral\}}\right)}=\frac{B(u)}{D(u)}\,.
\end{align*}	
 Denote $\widehat{\Delta}_n=\sup_{\umbral \leq \widetilde{\gamma}}\sqrt{n}\vert  \widehat\ell(\umbral,\alpha_{\umbral},\beta_{\umbral})- \ell  (\umbral,\alpha_{\umbral},\beta_{\umbral})\vert  $ and define $\itI=[a, \widetilde{\gamma}]$ when  $a\ne -\infty$  and $\itI=(-\infty, \widetilde{\gamma}]$, otherwise. Moreover, let  $\xi( \widetilde{\gamma})=\sup_{\umbral \in \itI} \vert B(\umbral)\vert$. To bound $\widehat{\Delta}_n$ recall that 
$$\sup_{\umbral \leq \widetilde{\gamma}}\sqrt{n}\vert  \widehat\ell(\umbral,\alpha_{\umbral},\beta_{\umbral})- \ell  (\umbral,\alpha_{\umbral},\beta_{\umbral})\vert =  \sup_{a\le \umbral \leq \widetilde{\gamma}}\sqrt{n}\vert  \widehat\ell(\umbral,\alpha_{\umbral},\beta_{\umbral})- \ell  (\umbral,\alpha_{\umbral},\beta_{\umbral})\vert\, ,$$
when $a\ne -\infty$ and that from  the proof of Lemma \ref{lema-basico}, $\alpha_{\umbral}$ and $\beta_{\umbral}$ are continuous functions of $\umbral$. Hence, when  $a\ne -\infty$, $\alpha_{\umbral}$ and $\beta_{\umbral}$ are bounded in $[a, \widetilde{\gamma}] $, so $\xi( \widetilde{\gamma}) <\infty$. In contrast, when $a= -\infty$,   $\alpha_{\umbral}\to \alpha^{*}$ and $\beta_{\umbral}\to \beta^{*}$ when $\umbral\to -\infty$, where $\alpha^{*}$ and $ \beta^{*}$ are defined in \eqref{eq:betastar}, hence using this convergence and the boundedness of $\alpha_{\umbral}$ and $\beta_{\umbral}$ in any compact set, we obtain that $\alpha_{\umbral}$ and $\beta_{\umbral}$ are bounded in  $ \{ \umbral \leq \widetilde{\gamma} \}$ implying that we also have $\xi( \widetilde{\gamma}) <\infty$, when  $a= -\infty$. 

 Therefore, using that  $\inf_{ \{ \umbral \leq \widetilde{\gamma} \}}\prob(X\geq \umbral)\ge \eta>0$, we obtain that
\begin{align}
\widehat{\Delta}_n
& \le
\frac{\sup_ {\umbral \in \itI} \sqrt{n} \vert B_n(\umbral)-B(\umbral) \vert +\sup_{\umbral \in \itI}  \sqrt{n} \vert D_n(\umbral)-D(\umbral) \vert\, 
\xi( \widetilde{\gamma})}{\eta \;\inf_{ \{ \umbral \leq \widetilde{\gamma} \}}D_n(\umbral)}\,.
\label{eq:cota}
\end{align}
Note that   Lemma \ref{lema:limite_empiricos} entails that $\lim_{n\to \infty} \prob\left(\inf_{\umbral\leq \widetilde{\gamma}}  D_n(\umbral)>\eta/2\right)=1$. Furthermore, invoking  again Lemma \ref{lema:limite_empiricos} and using the boundedness of $\alpha_{\umbral}$ and $\beta_{\umbral}$ on $\itI$, we get that  
\begin{align*}
\sup_{a\le \umbral \leq \widetilde{\gamma}} \sqrt{n} \vert B_n(\umbral)-B(\umbral) \vert   =O_{\prob}(1)\qquad \mbox{ and }\qquad 
\sup_{a\le \umbral \leq \widetilde{\gamma}} \sqrt{n} \vert D_n(\umbral)-D(\umbral) \vert   =O_{\prob}(1)\;.
\end{align*}
 The result follows now using \eqref{eq:cota}.  
\end{proof}

\subsection{Proof of Theorem \ref{teo:tiro_del_final}, Corollary \ref{coro:au} and Theorem \ref{teo:distriasint}}

\begin{proof}[Proof of Theorem \ref{teo:tiro_del_final}]
To obtain the weak consistency of  $\widehat{\umbral}_n$, it will be enough to show that the assumptions in Proposition \ref{prop:consistencia_ver2} hold.

First note that Lemma \ref{lema-basico} implies that \eqref{eq:l_minimiza}, \eqref{eq:l_minimiza2} and \eqref{eq:lmayorlu0} hold. Then, from Proposition \ref{prop:consistencia_ver2} to derive that $\widehat{\umbral}_n \convprob \umbral_0$ it will be enough to show that the function $\widehat{\ell}(\umbral, {\mathcal{Z}}_n)$  defined in \eqref{eq:ell_empirica} satisfies the requirements (a) and (b) of Proposition \ref{prop:consistencia_ver2}, for some sequence $a_n$ such that $a_n/\lambda_n\to 0$. Taking into account that $\lambda_n \to 0$,  and   
$n^{1/2}\lambda_n/ \varphi_n\to +\infty$, where  $\varphi_n\to +\infty$, we have that  $n^{1/2}/ \varphi_n\to +\infty$ , so the choice $a_n=n^{-1/2}\,  \varphi_n$,   ensures that  $a_n/\lambda_n\to 0$. 

Let $0<\eta<1$ be fixed  and define $\widetilde{\gamma}=F_X^{-1}(1-\eta)$, combining the results given in  items (b) and (c) of  Lemma \ref{lema:todo}, we obtain that
\begin{equation}
\label{eq:procesos_empiricos}
\sqrt{n}\sup_{u\leq \widetilde{\gamma}} \vert \widehat{\ell}(\umbral, {\mathcal{Z}}_n)-\ell(u)\vert =O_{\prob}(1)\,.  
\end{equation}
Hence, using that $a_n=n^{-1/2}\,  \varphi_n$ and $\varphi_n\to +\infty$, we get that $a_n^{-1}\, n^{-1/2}=1/ \varphi_n\to 0$, so from \eqref{eq:procesos_empiricos} we conclude that
\begin{equation}{\label{eq:condiciona}}
\lim_{n\to \infty} \prob\left(\sup_{\umbral\leq \widetilde{\gamma}} \vert \widehat{\ell}(u, {\mathcal{Z}}_n)-\ell (u)\vert\leq a_n\right)=1\;.
\end{equation}
Therefore, condition (b) of Proposition \ref{prop:consistencia_ver2} is satisfied.  Note that assumptions  \ref{ass:u0mayorquea} and \ref{ass:cuantil} entail that $0<F_X(u_0)<1$, hence condition (a) follows immediately from \eqref{eq:condiciona} taking $\eta=F_X(u_0)$.
\end{proof}

\begin{proof}[Proof of Corollary \ref{coro:au}]
Taking into account that $\widehat{\umbral}_n \convprob \umbral_0$	and that  $\alpha_{\umbral}$ and $\beta_{\umbral}$ depend continuously on $\umbral$ (see the proof of Lemma \ref{lema-basico}), we have that
\begin{equation}
\label{eq:alphauhat}
\alpha_{\widehat{\umbral}_n }\convprob \alpha_{\umbral_0} \quad \mbox{and} \quad \beta_{\widehat{\umbral}_n }\convprob \beta_{\umbral_0}\,.
\end{equation}
The  uniform convergences stated at item (a)  of Lemma \ref{lema:todo}  implies that for any fixed $\widetilde{\gamma}>\gamma$ such that $1-F_X(\widetilde{\gamma})>0$, we have 
$$\sup_{\umbral \leq \widetilde{\gamma}} ( \widehat{\alpha}_{\umbral}-\alpha_{\umbral}) \convprob 0\;\quad\hbox{and}\quad 
		\sup_{\umbral \leq \widetilde{\gamma}} ( \widehat{\beta}_{\umbral}-\beta_{\umbral}) \convprob 0\,.$$
By \eqref{eq:conv1}, $\prob(\mathcal{A})=\prob( \exists n_0: \forall n\ge n_0 \quad \gamma_n<\widetilde{\gamma})=1$, so using that $\widehat{\umbral}_n\le \gamma_n$, we get that
\begin{equation}
\label{eq:hatalphauhat}
\widehat{\alpha}_{\widehat{\umbral}_n }-\alpha_{\widehat{\umbral}_n } \convprob 0 \quad \mbox{and} \quad \widehat{\beta}_{\widehat{\umbral}_n }-\beta_{\widehat{\umbral}_n } \convprob 0 \,.
\end{equation}
Combining \eqref{eq:alphauhat} and 	\eqref{eq:hatalphauhat}, the result follows.
	\end{proof}

\begin{proof}[Proof of Theorem \ref{teo:distriasint}] 
	In what follows, we denote $\theta_{\umbral}=(\alpha_{\umbral},\beta_{\umbral})$ and $\widehat{\theta}_{\umbral} =(\widehat{\alpha}_{\umbral},\widehat{\beta}_{\umbral})$. To prove the result it is enough to show that $\widehat{\theta}_{\widehat{\umbral}+\psi}$ is asymptotically equivalent to $\widehat{\theta}_{\umbral_0+\psi}$, meaning that 
$\sqrt{n}\{\widehat{\theta}_{\widehat{\umbral}+\psi}-\widehat{\theta}_{\umbral_0+\psi}\}=o_{\prob}(1)$. 
Recall that  $\theta_{\umbral}=\theta_{\umbral_0}$ for $u\geq \umbral_0$ and 
	$\lim_{n\to \infty}\prob\left(\widehat{\umbral}_n+\psi > \umbral_0\right)=1$, since $\prob(|\widehat{\umbral}_n- \umbral_0|<\psi/2)\to 1$. 
	Then, we have that 
	\begin{equation}
	\label{eq:difthetas}
	\lim_{n\to \infty}\prob\left(\theta_{\widehat{\umbral}+\psi}=\theta_{\umbral_0}=\theta_{\umbral_0+\psi}\right)=1\,.
	\end{equation}
Define $E_n(u)=\sqrt{n}\{\widehat{\theta}_{\umbral}-\theta_{\umbral}\}$ and $R_n= \sqrt{n}\left(\theta_{\widehat{\umbral}+\psi}-\theta_{\umbral_0+\psi}\right)$ and notice that
\begin{equation}
	\sqrt{n}\left(\widehat{\theta}_{\widehat{\umbral}+\psi}-\widehat{\theta}_{\umbral_0+\psi}\right)=E_n(\widehat{\umbral}+\psi)-E_n(\umbral_0+\psi) + R_n \,.
\end{equation}
On the one hand, \eqref{eq:difthetas} implies that $R_n=o_{\prob}(1)$.
On the other hand, from   Lemma \ref{lema:limite_empiricos}, for  $s,m=0,1,2$, $s+m\le 2$,  the classes of functions $\mathcal{H}_{s,m} = \left\{h(x,y)=x^s\,y^m\indica_{\{x\geq \umbral\}}; \umbral \in \real\right\}$,  are Donsker classes. Therefore, if we define    ${\mathbb G}_n h =\sqrt{n}(\prob_n h(X,Y)- \prob h(X,Y))$  we get that the empirical processes $\{{\mathbb G}_n h : h\in \mathcal{H}_{s,m}\} $ are asymptotically equicontinuous. This fact, along with the expansions used in the proof of Lemma \ref{lema:todo} for $\alpha_{\umbral}$, $\beta_{\umbral}$, $\widehat{\alpha}_{\umbral}$ and $\widehat{\beta}_{\umbral}$ and the consistency of $\widehat{\umbral}$   to $\umbral_0$, allows us to deduce that $E_n(\widehat{\umbral}+\psi) - E_n(\umbral_0+\psi) = o_{\prob}(1)$, concluding the proof. 
\end{proof}

\bibliographystyle{apalike}

\bibliography{biblio_reg_change}

\end{document}